\providecommand{\diam}{\mathop{\rm diam}\nolimits}
\providecommand{\tw}[1]{\mathop{\langle#1\rangle_3}}
\providecommand{\grid}{\mathbb{N}_0^3}
\providecommand{\cont}[1]{\mathop{\stackrel{#1}{\frown}}}
\providecommand{\notcont}[1]{\mkern-1mu\not\mathrel{\mkern-2mu\cont{#1}}\mkern1mu}
\newtheorem{theorem}{Theorem}[section]
\newtheorem{corollary}[theorem]{Corollary}
\newtheorem{lemma}[theorem]{Lemma}
\newtheorem{proposition}[theorem]{Proposition}
\newtheorem{question}[theorem]{Question}
\newtheorem{conjecture}[theorem]{Conjecture}
\theoremstyle{definition}
\numberwithin{equation}{section}
\begin{document}

%%%%% To ease editing, for IMPAN journals add:

\baselineskip=17pt

%%%%%%%%%%%%%%%%

\title{Commuting Contractive Families}

\author{Luka Mili\'{c}evi\'{c}\\
Trinity College\\ 
Cambridge CB2 1TQ\\
United Kingdom\\
E-mail: lm497@cam.ac.uk}

\date{}

\maketitle

%% Classification and key words; note that the 2010 classification is used:

\renewcommand{\thefootnote}{}

\footnote{2000 \emph{Mathematics Subject Classification}: Primary 47H09; Secondary 54E50.}

\footnote{\emph{Key words and phrases}: contraction mappings, complete metric spaces.}

\renewcommand{\thefootnote}{\arabic{footnote}}
\setcounter{footnote}{0}

%%%%%%%%

\begin{abstract}
A family $f_1,...,f_n$ of operators on a complete metric space $X$ is called contractive if there exists a positive $\lambda < 1$ such that for any $x,y$ in $X$ we have $d(f_i(x),f_i(y)) \leq \lambda d(x,y)$ for some $i$. Austin conjectured that any commuting contractive family of operators has a common fixed point, and he proved this for the case of two operators.\\
Our aim in this paper is to show that Austin's conjecture is true for three operators, provided that $\lambda$ is sufficiently small.\\
\end{abstract}

\section{Introduction}
Let $(X,d)$ be a (non-empty) complete metric space. Given $n$ functions $f_1, f_2, \dots, f_n : X \rightarrow X$, and a real number $\lambda \in (0,1)$, we call $\{f_1, f_2, \dots, f_n\}$ a \emph{$\lambda$-contractive} family if for every pair of points $x, y$ in $X$ there is $i$ such that $d(f_i(x), f_i(y)) \leq \lambda d(x,y)$. We say that $\{f_1, f_2,\dots, f_n\}$ is a \emph{contractive} family if it is $\lambda-$contractive for some $\lambda \in(0,1)$. Furthermore, when $f$ is a function on $X$ and $\{f\}$ is a contractive family we say that $f$ is a contraction. Recall the well-known theorem of Banach~\cite{Banach} stating that any contraction on a complete metric space has a fixed point. Finally, by an \emph{operator} $f$ on $X$ we mean a continuous function $f:X \to X$.\\
Stein~\cite{Stein} considered various possible generalizations of this result. In particular, he conjectured that for any contractive family of operators $\{f_1,f_2,\dots, f_n\}$ on a complete metric space there is a composition of the functions $f_i$ (i.e. some word in $f_1,\dots,f_n$) with a fixed point. We refer to this statement as Stein's conjecture. However, in ~\cite{Austin}, Austin shown that this is in fact false. Recently, the author of this paper showed~\cite{Milicevic} that the Stein's conjecture fails even for compact metric spaces. But Austin also showed that if $n = 2$ and $f_1$ and $f_2$ commute the conjecture of Stein will hold.\\
With this in mind, we say that $\{f_1, f_2, \dots, f_n\}$ is \emph{commuting} if every $f_i$ and $f_j$ commute. 
\begin{theorem}[\cite{Austin}] Suppose that $\{f,g\}$ is a commuting contractive family of operators on a complete metric space. Then $f$ and $g$ have a common fixed point.\end{theorem}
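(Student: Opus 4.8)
The plan is to organize the argument around two reductions. The \emph{first} is that it suffices to exhibit a fixed point of $f$ alone: let $F=\{x\in X:f(x)=x\}$. This set is closed, hence complete, and it is invariant under $g$, since $f(x)=x$ implies $f(g(x))=g(f(x))=g(x)$. Moreover, for distinct $x,y\in F$ the inequality $d(f(x),f(y))\le\lambda d(x,y)$ fails outright (its left-hand side is $d(x,y)>0$), so the contractive hypothesis forces $d(g(x),g(y))\le\lambda d(x,y)$; thus $g|_F$ is a genuine $\lambda$-contraction on the complete space $F$, and Banach's theorem yields a point of $F$ fixed by $g$, i.e.\ a common fixed point of $f$ and $g$. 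The \emph{second} reduction is that $\inf_{x\in X}d(x,f(x))=0$: starting from an arbitrary $x_0$ and applying the contractive hypothesis repeatedly to the pair $(x_n,f(x_n))$ --- using $fg=gf$ to rewrite the second coordinate so that each successive pair is again of the form $(z,f(z))$ --- one builds a sequence with $x_{n+1}\in\{f(x_n),g(x_n)\}$ and $d(x_n,f(x_n))\le\lambda^n d(x_0,f(x_0))$.

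The remaining task is to upgrade these arbitrarily good approximate fixed points of $f$ to an honest one. The plan is to arrange them (or to re-derive them by a variational argument) so as to form a Cauchy sequence, whose limit $z$ then satisfies $d(z,f(z))=\lim_n d(x_n,f(x_n))=0$ by continuity of $f$, after which the first reduction applies. A convenient variational formulation: since $\phi(x):=d(x,f(x))$ is continuous with infimum $0$, Ekeland's variational principle supplies, for small $\varepsilon>0$, a point $z$ with $\phi(z)\le\varepsilon$ and $\phi(z)\le\phi(y)+\varepsilon\,d(z,y)$ for every $y$. With $y=f(z)$ this gives $\phi(f(z))\ge(1-\varepsilon)\phi(z)$, so when $\varepsilon<1-\lambda$ the contractive hypothesis applied to $(z,f(z))$ cannot be witnessed by $f$ (that would say $\phi(f(z))\le\lambda\phi(z)$), hence it is witnessed by $g$: $d(g(z),g(f(z)))\le\lambda\phi(z)$, i.e.\ $\phi(g(z))\le\lambda\phi(z)$ by commutativity. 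One then feeds $g(z)$ back into the construction.

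I expect the main obstacle, in either formulation, to be that driving $\phi=d(\cdot,f(\cdot))$ down can force a move $z\mapsto g(z)$ whose size $d(z,g(z))$ bears no a priori relation to the (small) value $\phi(z)$, so that the sequence produced need not be Cauchy. Getting around this is the heart of the matter and calls for squeezing more out of the contractive hypothesis applied to the auxiliary pair $(z,g(z))$: if that inequality is witnessed by $f$, then combining the triangle inequality with the fact that $g$ contracts $(z,f(z))$ (which is precisely why the $g$-move was available) yields $d(z,g(z))\le\frac{1+\lambda}{1-\lambda}\,\phi(z)$, so the move is as small as the $f$-defect; and if it is witnessed by $g$, then $d(\cdot,g(\cdot))$ itself contracts by $\lambda$ under $g$, so a maximal run of such steps telescopes to a controlled total displacement. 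The genuinely delicate point is to bound the $g$-defect at the moment such a run begins --- equivalently, to choose the whole trajectory coherently rather than greedily --- and this is presumably also related to the difficulty of the $n=3$ case. Once a Cauchy sequence of approximate $f$-fixed points is secured in this way, its limit is a fixed point of $f$, and the first reduction produces the desired common fixed point.
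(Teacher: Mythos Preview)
First, a framing point: this paper does not prove Theorem~1.1; it is quoted from Austin's paper \cite{Austin} as background, so there is no in-paper proof to compare against.

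Your two reductions are correct and standard. The first (passing to the fixed-point set of $f$, on which $g$ becomes an honest contraction) is exactly the manoeuvre recorded here as Proposition~2.2 in the $n=3$ setting. The second (that $\inf_x d(x,f(x))=0$ via the greedy sequence $x_{n+1}\in\{f(x_n),g(x_n)\}$ with $\phi(x_{n+1})\le\lambda\phi(x_n)$) is also fine.

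The gap you flag, however, is genuine, and as written the proposal is not a proof. Having produced $z$ with $\phi(z)$ small and $\phi(g(z))\le\lambda\phi(z)$, you need a bound on $\psi(z)=d(z,g(z))$; your case split delivers one only when $f$ contracts $(z,g(z))$. When $g$ contracts that pair you obtain $\psi(g(z))\le\lambda\psi(z)$, which is useless without an initial bound on $\psi(z)$ --- and the Ekeland point carries none. Iterating Ekeland does not help either: each application produces a \emph{new} point with no relation to the previous one, so the estimates do not chain. Your phrase ``choose the whole trajectory coherently rather than greedily'' names the difficulty without resolving it.

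One way to close the gap (and essentially what Austin does) is to drop Ekeland, keep the greedy sequence, and apply the contractive hypothesis to many auxiliary pairs --- not only $(x_n,g(x_n))$ but also $(x_m,x_n)$ for various $m<n$ --- so that the triangle inequality forces the $\psi$-values down via the already-small $\phi$-values. This is the two-function analogue of the FNI lemma (Lemma~\ref{fni}) and the $1$-way-set argument (Lemma~\ref{1contrLemma}) in the present paper: one shows that $d(x,y)\le(\rho(x)+\rho(y))/(1-\lambda)$ with $\rho=\max(\phi,\psi)$, and then that $\inf\rho=0$ by contracting a candidate sequence against points of ever-smaller $\rho$. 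The bookkeeping is not entirely trivial, which is why Austin's theorem is a result rather than a remark.
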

Let us mention another result in this direction, which was proved by Arvanitakis in~\cite{Arva} and by Merryfield and Stein in~\cite{Merry2}.
\begin{theorem}\label{genban}[\cite{Arva}, \cite{Merry2}, Generalized Banach Contraction Theorem] Let $f$ be a function from a complete metric space to itself, such that $\{f, f^2, \dots, f^n\}$ is a contractive family. Then $f$ has a fixed point.\end{theorem}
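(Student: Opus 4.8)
The plan is to iterate $f$ from an arbitrary base point and then identify the limit of the orbit as a fixed point of $f$. Fix any $x_0\in X$ (the space being non-empty) and put $x_k=f^k(x_0)$ and $d_k=d(x_k,x_{k+1})$. It is convenient to write $x\cont{i}y$ for the relation $d(f^i(x),f^i(y))\le\lambda\,d(x,y)$; the contractivity hypothesis then says that for every pair of indices $p<q$ there is some $i\in\{1,\dots,n\}$ with $x_p\cont{i}x_q$, that is, $d(x_{p+i},x_{q+i})\le\lambda\,d(x_p,x_q)$. The whole argument will use only this reformulated hypothesis together with completeness; in particular it never needs continuity of $f$ (which matches the statement: $f$ is only assumed to be a function).

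\emph{Step 1: the orbit $(x_k)$ is a Cauchy sequence.} I would establish this in three parts: (i) the orbit is bounded, i.e.\ $\sup_{p,q\ge k_0}d(x_p,x_q)<\infty$ for some $k_0$; (ii) $d_k\to 0$; and then (iii) writing $D_k=\sup_{p,q\ge k}d(x_p,x_q)$, note that $D_{k_0}\ge D_{k_0+1}\ge\cdots\ge 0$ and prove $\lim_k D_k=0$. For part (iii): if the limit were some $D>0$, then for small $\epsilon$ one picks $K\ge k_0$ with $D_K\le D+\epsilon$ and $d_j<\epsilon/n$ for all $j\ge K$, together with indices $p,q\ge K$ with $d(x_p,x_q)>D-\epsilon$; applying the hypothesis to $(x_p,x_q)$ gives $i\le n$ with $d(x_{p+i},x_{q+i})\le\lambda\,d(x_p,x_q)\le\lambda(D+\epsilon)$, while $d(x_p,x_{p+i})$ and $d(x_q,x_{q+i})$, being sums of at most $n$ consecutive $d_j$'s with index $\ge K$, are each $<\epsilon$; the triangle inequality then yields $D-\epsilon<\lambda(D+\epsilon)+2\epsilon$, and letting $\epsilon\to 0$ gives $D\le\lambda D$, a contradiction. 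So $(x_k)$ is Cauchy and, by completeness, $x_k\to x^*$ for some $x^*\in X$.

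I expect parts (i) and (ii) of Step 1 to be the real obstacle. The naive attempt fails badly: the hypothesis only furnishes, for each $k$, \emph{some} $i\le n$ with $d_{k+i}\le\lambda d_k$, which leaves the other gaps among $d_{k+1},\dots,d_{k+n}$ completely free to blow up, since nothing a priori bounds how much a single iterate $f^j$ expands a distance. To tame this one has to apply the hypothesis to many non-consecutive pairs $x_p,x_q$ at varying separations and run a careful combinatorial induction on the resulting web of inequalities; this is precisely the substance of the arguments of Merryfield--Stein~\cite{Merry2} and of Arvanitakis~\cite{Arva}, and I would follow one of those schemes here.

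\emph{Step 2: $x^*$ is a fixed point of $f$.} Apply the hypothesis to the pairs $(x^*,x_k)$: for each $k$ there is $i_k\in\{1,\dots,n\}$ with $d(f^{i_k}(x^*),x_{k+i_k})=d(f^{i_k}(x^*),f^{i_k}(x_k))\le\lambda\,d(x^*,x_k)$. By pigeonhole some value $i_0$ occurs for infinitely many $k$, and along that subsequence the right-hand side tends to $0$ while $x_{k+i_0}\to x^*$; hence $f^{i_0}(x^*)=x^*$. Consequently the orbit $O=\{x^*,f(x^*),\dots,f^{i_0-1}(x^*)\}$ is a finite, $f$-invariant set with $f(O)=O$, so $f$ restricted to $O$ is a surjection of a finite set, hence a bijection, hence injective. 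Suppose $\diam(O)>0$ and choose $p,q\in O$ with $d(p,q)=\diam(O)$. Iterating the hypothesis from $(p,q)$ produces times $0=t_0<t_1<\cdots$ with $t_{j+1}-t_j\le n$ and $d(f^{t_j}(p),f^{t_j}(q))\le\lambda^j\,\diam(O)\to 0$; but $t\mapsto d(f^t(p),f^t(q))$ depends only on $t$ modulo the order of $f|_O$, so it takes only finitely many values, and since a sequence drawn from that finite set tends to $0$, the value $0$ must occur, i.e.\ $f^s(p)=f^s(q)$ for some $s$. Injectivity of $f|_O$ then forces $p=q$, contradicting $\diam(O)>0$. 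Hence $O=\{x^*\}$ and $f(x^*)=x^*$, which completes the proof.
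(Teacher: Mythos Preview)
The paper does not contain a proof of this theorem: it is quoted as background, with attribution to \cite{Arva} and \cite{Merry2}, and the paper moves on immediately. So there is no in-paper argument to compare against.

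On its own merits, your outline is correctly structured, and the parts you actually carry out are fine. Part~(iii) of Step~1 is a clean argument, and Step~2 --- extracting a periodic point by pigeonhole and then collapsing the finite orbit via injectivity of $f|_O$ and periodicity of $t\mapsto d(f^t(p),f^t(q))$ --- is a nice, self-contained way to avoid any continuity hypothesis.

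The genuine gap is exactly where you say it is, and it is fatal as written. Parts~(i) and~(ii) of Step~1 --- boundedness of the orbit and $d_k\to 0$ --- are the entire content of the theorem; everything else is soft. You defer these to \cite{Merry2} and \cite{Arva}, but those are precisely the references from which the theorem is being quoted, so ``I would follow one of those schemes here'' is a citation, not a proof. And the difficulty is real: with no Lipschitz or continuity control on any single $f^j$, one application of $f$ can expand a distance by an arbitrary factor, so there is no cheap reason for $(d_k)$ to be bounded, let alone to tend to zero. The known arguments are genuinely combinatorial (one route goes through Ramsey's theorem), and nothing in your write-up supplies a substitute. As it stands, the proposal is an honest and well-organised sketch with the core missing.
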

Note that there is no assumption of continuity in the statement of the Theorem~\ref{genban}. We also remark that Merryfield, Rothschild and Stein proved this theorem for the case of operators in~\cite{Merry1}. Furthermore, Austin raised a question which is a version of Stein's conjecture, and generalizes these two theorems in the context of operators.
\begin{conjecture}[\cite{Austin}] Suppose that $\{f_1, f_2, \dots, f_n\}$ is a commuting contractive family of operators on a complete metric space. Then $f_1, f_2, \dots, f_n$ have a common fixed point.\end{conjecture}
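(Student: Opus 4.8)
The full conjecture for arbitrary $n$ looks out of reach here, so the plan is to prove the case advertised in the abstract: any commuting $\lambda$-contractive family $\{f_1,f_2,f_3\}$ of operators on a complete metric space $(X,d)$ has a common fixed point, provided $\lambda$ is below a suitable absolute constant. I would begin with a reduction showing it suffices to find a fixed point of a \emph{single} member, say $f_3$. For if $f_3(p)=p$ for some $p$, then $F=\{x\in X:f_3(x)=x\}$ is non-empty and, by continuity of $f_3$, closed, hence a complete metric space; it is invariant under $f_1$ and $f_2$, since commutativity gives $f_3(f_ix)=f_i(f_3x)=f_ix$ for $x\in F$; and the restricted family $\{f_1|_F,f_2|_F\}$ is still $\lambda$-contractive, because for distinct $x,y\in F$ we have $d(f_3x,f_3y)=d(x,y)>\lambda\,d(x,y)$, so the contractive inequality must be witnessed by $f_1$ or $f_2$. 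Austin's theorem (Theorem~1.1 above) then gives a common fixed point of $f_1,f_2$ lying in $F$, which is a common fixed point of all three maps. Thus everything reduces to producing one fixed point of $f_3$.

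To do that I would fix an arbitrary $x_0\in X$ and consider the orbit grid $p_v=f_1^{v_1}f_2^{v_2}f_3^{v_3}(x_0)$ for $v=(v_1,v_2,v_3)\in\grid$; commutativity makes $p_v$ well defined and gives $f_i(p_v)=p_{v+e_i}$ for the unit vectors $e_1,e_2,e_3$. The contractive hypothesis becomes a navigation rule: for any grid vertices $u,v$ there is a direction $i$ with $d(p_{u+e_i},p_{v+e_i})\le\lambda\,d(p_u,p_v)$. Writing $\phi_i(v)=d(p_v,p_{v+e_i})$ for the displacement in direction $i$, applying the rule to the pair $\{v,v+e_i\}$ and iterating produces, from any vertex and for any chosen $i$, an infinite grid path along which $\phi_i$ shrinks by the factor $\lambda$ at each step. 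The goal is then to steer the grid to a vertex sequence $(w^{(k)})$ on which all three displacements $\phi_1,\phi_2,\phi_3$ tend to $0$ fast enough to be summable: the consecutive gaps $d(p_{w^{(k)}},p_{w^{(k+1)}})$ are themselves displacements, hence summable, so $(p_{w^{(k)}})$ is Cauchy; its limit $q$ also satisfies $p_{w^{(k)}+e_i}\to q$ for every $i$, and continuity of $f_i$ forces $f_i(q)=q$ — in fact a common fixed point appears at once, while controlling only $\phi_3$ while keeping the path summable already suffices via the reduction above.

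The hard part — and the reason for restricting to $n=3$ with $\lambda$ small — is that the path handed out by the navigation rule need not move in a fixed direction: lowering $\phi_i$ may require a step in some direction $j\neq i$, and because the $f_j$ are not assumed non-expansive, such a step can inflate the other displacements by an a priori unbounded factor. So one cannot simply run the rule for $i=1$, then $i=2$, then $i=3$ and expect everything to stay small. My plan to defeat this is an amortised, greedy navigation scheme that tracks the whole triple $(\phi_1,\phi_2,\phi_3)$ simultaneously, arranged so that each potentially damaging step is compensated within a bounded number of later steps by enough $\lambda$-contractions that the accumulated gains outweigh the losses; with three coordinates this requires a careful combinatorial bookkeeping on subgrids, and it closes precisely when $\lambda$ is small enough that a single $\lambda$-contraction dominates whatever bounded expansion the scheme tolerates. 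I expect that designing and verifying this bookkeeping, rather than the reduction or the limiting argument, is where essentially all of the difficulty lies.
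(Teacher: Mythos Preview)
Your reductions are correct and match the paper's: it suffices to find a fixed point of a single $f_i$ (the paper's Proposition~2.1/2.2), and this in turn reduces to finding, on the pseudometric grid $(\grid,d)$ with $d(u,v)=d(p_u,p_v)$, a Cauchy sequence whose consecutive differences lie in $\{e_1,e_2,e_3\}$ (the paper's Proposition~2.3/2.4). Your identification of the core obstacle --- that a step forced in direction $j$ can inflate the other displacements by an uncontrolled factor --- is also exactly right.

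The gap is in your plan for the core. ``An amortised, greedy navigation scheme \dots\ with careful combinatorial bookkeeping'' is a restatement of the goal, not a method; nothing in the proposal explains why any bounded amount of later contraction can be guaranteed to repair an a priori unbounded expansion. The paper does \emph{not} attempt a direct construction. Instead it argues by contradiction: assuming no such Cauchy sequence exists, it first proves that $\mu:=\inf_v\max_i d(p_v,p_{v+e_i})$ is strictly positive (Proposition~4.3). This single step is the hinge of the entire argument: it supplies a fixed scale $\mu>0$ against which all later estimates are made, and without it the ``unbounded inflation'' problem you flag is genuinely fatal. From $\mu>0$ the paper builds a theory of $k$-way subsets of the grid (a $3$-way set of diameter $D$ contains a $2$-way subset of diameter $\lesssim\lambda D$; a small $2$-way set forces a small $3$-way set; hence no $3$-way set can have diameter $\le K\mu$ for suitable $K$), then classifies the possible ``diagrams'' of contractions around a point and rules them out one by one via Propositions~6.1--6.4 and~7.1--7.2, finishing with a case split (Propositions~8.2/8.3) that produces the contradiction. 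None of this structure is visible from a greedy-navigation viewpoint, and your proposal contains no substitute for it; in particular, you are missing the contradiction hypothesis that makes $\mu>0$ available.
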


Let us now state the result that we establish here, which proves the case $n=3$ and $\lambda$ sufficiently small: 

\begin{theorem}\label{thmMain}Let $(X,d)$ be a complete metric space and let $\{f_1, f_2, f_3\}$ be a commuting $\lambda-$contractive family of operators on X, for a given $\lambda \in (0,10^{-23})$. Then $f_1, f_2, f_3$ have a common fixed point.\end{theorem}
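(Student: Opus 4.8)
The plan is to adapt Austin's strategy for two commuting operators, but to control the far more intricate combinatorics of three functions by exploiting the smallness of $\lambda$. The overall scheme runs as follows. First, since $f_1, f_2, f_3$ commute, the fixed point set of any one $f_i$ (if nonempty) is invariant under the other two, so it suffices to produce a nonempty closed bounded set on which the family still acts and iterate a dimension-reduction. Concretely, I would first show that each individual $f_i$ has a fixed point: given any starting point, one builds a sequence by always applying an index that contracts, and because $\{f_1,f_2,f_3\}$ is $\lambda$-contractive one can arrange a word $w$ in the $f_i$ whose action on a suitable pair of points contracts by a definite factor; the commutativity then lets one "balance" the exponents of the three letters, producing a genuine contraction (this is essentially the Generalized Banach Contraction Theorem mechanism, Theorem~\ref{genban}). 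Call a common fixed point of $f_i$ and $f_j$ a \emph{partial fixed point}; the real content is to pass from partial fixed points to a common fixed point of all three.

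The heart of the argument is a quantitative descent. Starting from a point $x_0$, I would track the orbit under cleverly chosen words and use the contractive hypothesis to locate points that are \emph{almost fixed} by $f_1$, simultaneously controlling how far they move under $f_2$ and $f_3$. The key device is to introduce, for each pair of points, the partition of $X\times X$ according to which indices contract, and to use a pigeonhole/averaging argument over long words: in a word of length $N$ in the three letters, on a given orbit the contracting index is determined step by step, and among the $N$ steps some index is used at least $N/3$ times. Commutativity means the word can be rewritten to group that index's applications together, yielding contraction by $\lambda^{N/3}$ along that orbit while the displacement under the other generators is amplified by at most the (uncontrolled, but finite) Lipschitz-type bounds coming from continuity on the relevant compact pieces — and here is exactly where $\lambda < 10^{-23}$ is spent, forcing the geometric gain to dominate all the bookkeeping losses. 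One extracts a Cauchy sequence whose limit $z$ satisfies $f_1(z) = z$ together with sharp estimates on $d(f_2(z), z)$ and $d(f_3(z), z)$.

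Next I would restrict to $Y = \mathrm{Fix}(f_1)$, which is nonempty (by the previous step), closed, and, by commutativity, invariant under $f_2$ and $f_3$; moreover $\{f_2|_Y, f_3|_Y\}$ is again $\lambda$-contractive on the complete metric space $Y$. Now Austin's two-operator theorem (Theorem~1.1) applies directly to $f_2|_Y$ and $f_3|_Y$ and produces a common fixed point of all three. The subtlety — and the reason the whole argument is not a one-line reduction — is that this clean restriction only works if $\mathrm{Fix}(f_1)$ is genuinely $f_2$- and $f_3$-invariant as a set, which requires $f_2, f_3$ to map fixed points of $f_1$ to fixed points of $f_1$; that is immediate from $f_1 f_j = f_j f_1$. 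So the only thing that can go wrong, and the only thing that needs the real work, is establishing Step~1 in its quantitative form: that $f_1$ (hence each $f_i$) actually has a fixed point when embedded in a three-element contractive family, together with enough uniformity to run the descent.

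The main obstacle, then, is the combinatorial control of words in three noncommuting-in-spirit-but-commuting-in-fact generators: unlike the single-function case of Theorem~\ref{genban}, where $f, f^2, \dots, f^n$ are automatically "aligned", here the contracting index at each step of an orbit is chosen adversarially, and rebalancing the word via commutativity inflates distances by factors that must be beaten by powers of $\lambda$. I expect the proof to consist of (i) a careful definition of a potential function measuring simultaneous near-fixedness under all three maps, (ii) a lemma showing that from any point one can reach, via a bounded-length word, a point of strictly smaller potential, with the decrease quantified in terms of $\lambda$ and absolute constants, and (iii) a convergence argument; the constant $10^{-23}$ is simply whatever threshold makes the inequalities in (ii) close. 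Everything after Step~1 — the restriction to $\mathrm{Fix}(f_1)$ and the appeal to Austin's theorem — is routine.
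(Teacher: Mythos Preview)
Your outer reduction is correct and matches the paper: once you know some $f_i$ has a fixed point, restricting to $\mathrm{Fix}(f_i)$ and invoking Austin's two-operator theorem finishes the job. The paper does exactly this. The entire content is, as you say, showing that some $f_i$ has a fixed point.

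But your proposed mechanism for that step has a genuine gap. You write that along an orbit ``the contracting index is determined step by step, and among the $N$ steps some index is used at least $N/3$ times. Commutativity means the word can be rewritten to group that index's applications together, yielding contraction by $\lambda^{N/3}$.'' This does not follow. The hypothesis is that for each \emph{pair} $(x,y)$ some $f_i$ contracts \emph{that particular pair}; no individual $f_i$ need be a contraction, or even nonexpansive, on any other pair. So if along your orbit $f_1$ happened to be the contracting choice at $N/3$ scattered moments, rewriting the word as $f_1^{N/3}\circ(\text{rest})$ tells you nothing: $f_1^{N/3}$ applied to the original pair, or to any intermediate pair, need not contract at all. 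The pigeonhole count is on labels, not on actual contractions of a fixed pair. Relatedly, you appeal to ``Lipschitz-type bounds coming from continuity on the relevant compact pieces'' to control the inflation from the other letters --- but the $f_i$ are only continuous, not Lipschitz, and $X$ is not assumed compact, so no such bounds exist. There is nothing for the smallness of $\lambda$ to beat.

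The paper's route is entirely different in character. It passes to the grid $\mathbb{N}_0^3$ with the pseudometric $d(a,b)=d(f_1^{a_1}f_2^{a_2}f_3^{a_3}(p_0),\,f_1^{b_1}f_2^{b_2}f_3^{b_3}(p_0))$ and assumes for contradiction that no $1$-way Cauchy sequence exists. It then proves that $\mu=\inf_x\max_i d(x,x+e_i)$ is strictly positive, and builds an elaborate combinatorial theory of ``$k$-way sets'' (subsets where exactly $k$ of the three forward neighbours stay in the set), showing e.g.\ that a bounded $3$-way set forces a much smaller $2$-way set, which in turn forces a small $3$-way set, eventually contradicting $\mu>0$. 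The bulk of the paper is a long case analysis of local ``contraction diagrams'' at a point, systematically ruling configurations out; the threshold $10^{-23}$ arises from the accumulated constants in this analysis (e.g.\ $C_1=49158$, $C_3\approx 2.4\times 10^{11}$), not from a single clean inequality. Your potential-function sketch does not engage with any of this structure, and as written does not yield a proof.
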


We remark that such a fixed point is necessarily unique. 

\section{Main goal, notation and definitions}
In this section we will give a statement that implies the theorem we want to prove and we will establish the most needed notation and definitions that will be used extensively throughout the proof of the Theorem~\ref{thmMain}. We write $\mathbb{N}_0$ for the set of nonnegative integers and for a positive integer $N$, $[N]$ stands for the set $\{1,2, \dots, N\}$.\\
When $a$ is an ordered triple of nonnegative integers and $x \in X$, we define $a(x) = f_1 ^ {a_1} \circ f_2 ^ {a_2} \circ f_3 ^ {a_3} (x)$. Since our functions commute, we have $a(b(x)) = (a+b)(x)$, where $b \in \mathbb{N}_0 ^ 3$, as well.\\
Pick arbitrary point of our space $p_0 \in X$, and define new pseudometric space (abusing the notation slightly) $G(p_0) = (\mathbb{N}_0 ^ 3, d)$, where $d(a, b) = d(a(p_0),b(p_0))$, when $a, b$, are ordered triples of non-negative integers. Therefore, we will actually work on an integer grid instead. Define $e_i$ to be triple with 1 at position $i$, and zeros elsewhere. Now, we will prove a few basic claims which will tell us what in fact our main goal is.
\begin{proposition}\label{conj1}Let $(X,d)$ be a complete metric space and $\lambda \in (0, 10^{-23})$ given, with $f_1, f_2, f_3: X \rightarrow X$ which form a commuting $\lambda$-contractive family. Then for some $i$, $f_i$ has a fixed point. \end{proposition}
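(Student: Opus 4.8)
The plan is to reduce the full common-fixed-point problem to this proposition by an iteration/limit argument, so here I focus on proving that \emph{some} $f_i$ has a fixed point. I would work entirely in the pseudometric space $G(p_0) = (\mathbb{N}_0^3, d)$ introduced above, where $d(a,b) = d(a(p_0), b(p_0))$, and think of the orbit of $p_0$ as a weighted integer grid. The key point is that contractivity gives, for every pair of grid points $a, b$, an index $i$ with $d(a + e_i, b + e_i) \le \lambda\, d(a,b)$; applying this repeatedly along a path lets one contract distances by moving simultaneously in a common direction.

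The main step I would carry out is a \emph{gradient-descent on the grid} argument. Starting from $0$ and $e_j$ for various $j$ (or more generally from two nearby grid points at distance $r>0$), I would build a sequence of pairs $(a^{(k)}, b^{(k)})$ with $b^{(k)} - a^{(k)}$ fixed (equal to some $e_j$) by, at each stage, choosing the contractive direction $i$ guaranteed by the family and setting $a^{(k+1)} = a^{(k)} + e_i$, $b^{(k+1)} = b^{(k)} + e_i$; then $d(a^{(k)}, b^{(k)}) \le \lambda^k r \to 0$. The difficulty is that these points march off to infinity in the grid, so one does not directly get a Cauchy sequence in $X$; instead I would track how far, in the $f_1^{a_1}f_2^{a_2}f_3^{a_3}$ expansion, one must go, and use a pigeonhole/averaging argument over the three directions: since only three directions are available, after many steps one direction, say $f_i$, has been used a positive proportion of the time, and along that sub-progression $d(c, c + e_i) \to 0$ for a sequence of grid points $c$. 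Combined with the triangle inequality and the contraction estimate, this should force $d(c, c+e_i)$ to be small for grid points $c$ that themselves stay controlled, ultimately producing a Cauchy sequence $c_m(p_0)$ whose limit $x^\ast$ satisfies $f_i(x^\ast) = x^\ast$.

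Concretely, the order of steps would be: (1) record the basic inequalities in $G(p_0)$ — the triangle inequality for $d$, the identity $a(b(x)) = (a+b)(x)$, and the per-pair contraction in a common direction; (2) show that for each fixed direction $e_j$ the quantities $\inf_a d(a, a+e_j)$ behave well under the descent, in particular that one of them is $0$ or can be driven to $0$; (3) use the smallness of $\lambda$ (this is where $\lambda < 10^{-23}$ enters, to make a geometric-series/pigeonhole bound beat the factor $3$ coming from three directions, and to keep the descending points from drifting too far before distances collapse) to extract a Cauchy sequence in $X$ along the orbit; (4) pass to the limit and use continuity of the $f_i$ (they are operators) to conclude that the limit is fixed by the relevant $f_i$.

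The hard part will be step (3): controlling the \emph{location} of the grid points produced by the descent. The naive descent gives points that wander arbitrarily far in $\mathbb{N}_0^3$, and without a bound on where they are one cannot convert ``distances between far-apart grid points shrink'' into ``a genuine Cauchy sequence of orbit points.'' I expect to handle this by a careful bookkeeping argument: quantify the total displacement as a function of the number of descent steps, show the displacement in the non-contracting directions grows only linearly while distances decay geometrically, and then iterate the construction (restarting the descent from the newly found near-fixed configuration) so that the accumulated error forms a convergent series — this is precisely the kind of estimate for which the extremely small explicit bound on $\lambda$ is needed.
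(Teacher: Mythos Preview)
Your descent argument has a real gap at step~(3), and it is not just bookkeeping. The sequence of pairs $(a^{(k)}, a^{(k)}+e_j)$ you build satisfies $d(a^{(k)}, a^{(k)}+e_j)\le \lambda^k r$, but the path itself moves by $a^{(k+1)}-a^{(k)}=e_{i_k}$ where $i_k$ is the direction \emph{handed to you} by the contractive family. Nothing in the construction bounds $d(a^{(k)}, a^{(k)}+e_{i_k})$: you are shrinking the $e_j$-edge while walking along $e_{i_k}$-edges of uncontrolled length. Your pigeonhole suggestion (``one direction is used a positive proportion of the time, so along that sub-progression $d(c,c+e_i)\to 0$'') conflates the fixed index $j$ you are shrinking with the indices $i_k$ you are stepping in; there is no reason these coincide, and in particular no geometric-series estimate in $\lambda$ versus the factor $3$ rescues this. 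The ``restart and sum the errors'' idea faces the same problem: each restart controls only one of the three neighbour distances at the new basepoint, not the step you actually take next.

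The paper's route is structurally different. It does \emph{not} try to build the Cauchy sequence directly; it argues by contradiction. Assuming no $1$-way Cauchy sequence exists, one first proves that $\mu=\inf_x \max_i d(x,x+e_i)$ is strictly positive (this already uses a nontrivial ``contract toward points of small $\rho$'' argument, not just the naive descent). This $\mu>0$ then serves as an absolute scale. The bulk of the work is a long combinatorial analysis of the grid: one studies $k$-way sets (subsets where from every point exactly $k$ of the three neighbours stay in the set), proves quantitative relations between their diameters (a bounded $3$-way set contains a much smaller $2$-way set; a sufficiently small $2$-way set forces a small $3$-way set), and then classifies the finitely many possible local ``contraction diagrams'' around a point. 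The small $\lambda$ is used repeatedly in these diameter comparisons, not to beat a single factor of $3$. The contradiction is obtained only at the end of this case analysis, by producing either a forbidden small $3$-way set or an explicit $1$-way Cauchy sequence from the surviving diagrams. None of this structure is visible in your outline, and I do not see how to complete your step~(3) without it.
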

\begin{proposition} If Proposition~\ref{conj1} holds, so does Theorem~\ref{thmMain}.\end{proposition}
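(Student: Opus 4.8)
The plan is to reduce Theorem~\ref{thmMain} to Proposition~\ref{conj1} by a standard ``bootstrapping'' argument: once one of the three maps, say $f_1$, is known to have a fixed point, the set $\mathrm{Fix}(f_1) = \{x \in X : f_1(x) = x\}$ is non-empty, and because $f_1$ is continuous it is closed in $X$, hence a complete metric space in its own right. The key observation is that $f_2$ and $f_3$ map $\mathrm{Fix}(f_1)$ into itself: if $f_1(x) = x$ then $f_1(f_2(x)) = f_2(f_1(x)) = f_2(x)$ by commutativity, so $f_2(x) \in \mathrm{Fix}(f_1)$, and likewise for $f_3$. Therefore $\{f_2|_{\mathrm{Fix}(f_1)}, f_3|_{\mathrm{Fix}(f_1)}\}$ is a family of operators on the complete metric space $\mathrm{Fix}(f_1)$, and the $\lambda$-contractive inequality, being a pointwise condition over pairs of points, is inherited on any subspace. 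Hence this is a commuting $\lambda$-contractive family of two operators on a complete metric space.

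From here I would invoke Theorem~1.1 (Austin's two-operator theorem), which applies verbatim since $\lambda < 10^{-23} < 1$: it yields a point $x^* \in \mathrm{Fix}(f_1)$ with $f_2(x^*) = x^*$ and $f_3(x^*) = x^*$. Combined with $f_1(x^*) = x^*$ (which holds because $x^* \in \mathrm{Fix}(f_1)$), this $x^*$ is a common fixed point of $f_1, f_2, f_3$, proving Theorem~\ref{thmMain}. One should note that Proposition~\ref{conj1} only asserts that \emph{some} $f_i$ has a fixed point, so the argument should be phrased symmetrically — relabel the indices so that the map furnished by Proposition~\ref{conj1} is called $f_1$ — but nothing else changes.

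There is essentially no obstacle here; the only points requiring care are the closedness of $\mathrm{Fix}(f_1)$, which uses continuity of $f_1$ (and this is exactly why the hypothesis that the $f_i$ are operators, rather than arbitrary functions, is used), and the invariance of $\mathrm{Fix}(f_1)$ under $f_2, f_3$, which is where commutativity enters. For completeness I would also remark on uniqueness: if $x^*$ and $y^*$ were two common fixed points with $x^* \ne y^*$, then for some $i$ we would have $d(x^*, y^*) = d(f_i(x^*), f_i(y^*)) \le \lambda d(x^*, y^*) < d(x^*, y^*)$, a contradiction; this justifies the remark following Theorem~\ref{thmMain} and is worth recording at the same time, though it is logically independent of the reduction.
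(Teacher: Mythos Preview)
Your proof is correct and follows essentially the same approach as the paper: pass to the closed (hence complete) fixed-point set of one map, use commutativity to see that the other maps preserve it, and note that the $\lambda$-contractive condition is inherited on the subspace. The only difference is in the endgame: you invoke Austin's two-operator theorem (Theorem~1.1) on $\mathrm{Fix}(f_1)$ to finish in one stroke, whereas the paper instead repeats the same restriction step --- applying Proposition~\ref{conj1} again (to the remaining maps, padded to a triple if one reads it literally) to obtain a fixed point of $f_2$ inside $\mathrm{Fix}(f_1)$, and then once more for $f_3$. Both routes are valid; yours is marginally quicker but leans on an external result, while the paper's is self-contained modulo Proposition~\ref{conj1}.
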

\begin{proof} Without loss of generality, we have fixed point $x$ of $f_1$. Thus, define $X_1$ to be the set of all fixed points of $f_1$. It is closed subspace of $X$, hence complete. Further, $s \in S_1$ implies $f_1(f_i(s)) = f_i(f_1(s)) = f_i(s)$, so $f_i(s) \in S_1$, hence the other two functions preserve $S_1$, and form a $\lambda$-contractive family themselves, so $f_2$ has fixed point in $S_1$, and repeat the same argument once more to obtain a common fixed point.\end{proof}
\begin{proposition}\label{keyprop}Let $(\mathbb{N}_0 ^ 3, d)$ be a pseudometric space and constant $\lambda \in (0, 10^{-23})$, such that given any $a,b \in \mathbb{N}_0 ^ 3$, there is $i \in [3]$ for which $\lambda d(a, b) \geq d(a + e_i, b + e_i)$. Then there is a Cauchy sequence $(x_n)_{n \geq 1}$ in this space, such that $x_{n+1} - x_{n}$ is always an element of $\{e_1, e_2, e_3\}$.\end{proposition}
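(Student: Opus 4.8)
The plan is to produce the path in infinitely many \emph{stages}. Write $x^{(0)}=0$, and let the $k$-th stage extend the path already built from its current endpoint $x^{(k)}$ to a new endpoint $x^{(k+1)}\ge x^{(k)}$ by a monotone sub-path (steps in $\{e_1,e_2,e_3\}$). Along with $x^{(k)}$ I would carry a \emph{small configuration}: a bounded number of grid points near $x^{(k)}$, of small diameter $\rho_k$, with $\rho_k$ to be driven to $0$ (in the simplest incarnation the configuration is just $x^{(k)}$ together with one chosen neighbour $x^{(k)}+w$, $w\in\{e_1,e_2,e_3\}$, and $\rho_k=d(x^{(k)},x^{(k)}+w)$). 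The engine of the proof is a \emph{progress lemma}: from any small configuration of diameter $\rho$ one can walk, by a monotone path whose points all lie within an absolute constant $C$ times $\rho$ of the starting point, to a new small configuration of diameter at most $\lambda^{c}\rho$ for a fixed $c>0$. Granting this, the concatenation of the stages is a monotone path whose $k$-th chunk has diameter $\le C\rho_k\le C\lambda^{ck}\rho_0$; since these decay geometrically, the tails of the resulting sequence have diameter tending to $0$, so it is Cauchy, which is exactly Proposition~\ref{keyprop}. An initial configuration always exists — take $x^{(0)}=0$ with $w$ realising $\min_m d(0,e_m)$ — and the finitely many early steps do not affect the Cauchy property.

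Next I would isolate the easy case of the progress lemma: if the contractive property applied to the pair $(x^{(k)},x^{(k)}+w)$ returns the very direction $w$, then $d(x^{(k)}+w,x^{(k)}+2w)\le\lambda\rho_k$, so one step in direction $w$ (of length exactly $\rho_k$) already lands on a configuration of diameter $\le\lambda\rho_k$. The entire difficulty is the opposite case, where the contractive property applied to $(x^{(k)},x^{(k)}+w)$ — and to the other pairs one is permitted to probe near $x^{(k)}$ — returns directions other than $w$; it then hands us control of a distance $d(x^{(k)}+e_i,x^{(k)}+w+e_i)$ for some $i\ne w$, i.e.\ of a small configuration sitting at the grid point $x^{(k)}+e_i$, which we cannot in general reach cheaply, since the only monotone route to $x^{(k)}+e_i$ is the single step $e_i$ whose length $d(x^{(k)},x^{(k)}+e_i)$ is not under control. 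To handle this I would probe the contractive property on a bounded number of carefully chosen pairs near $x^{(k)}$ (its neighbours and their neighbours) and run an exhaustive case analysis on the resulting tuple of returned directions: in the favourable branches two probes return the same index and a single well-chosen step reveals a configuration of diameter $O(\lambda\rho_k)$, while in the unfavourable branches a monotone path of two or three steps does so, the lengths of its intermediate steps being bounded through the triangle inequality by a fixed number of copies of $\rho_k$ plus an $O(\lambda\rho_k)$ term. The sharpest branch is the ``cyclic'' one, in which probing the three neighbours of $x^{(k)}$ returns the three indices in a $3$-cycle; it is this branch that forces $\lambda$ to be small.

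I expect the reconciliation of these directional cases — guaranteeing that every step the path actually takes is short while the configuration's diameter still falls by a definite factor — to be the main obstacle, and the reason the theorem is proved only for $\lambda<10^{-23}$ and does not currently extend to four operators: each appeal to the triangle inequality in combining distances costs a bounded multiplicative constant, whereas one appeal to the contractive property buys back only a single factor $\lambda$, so the (necessarily bounded) number of probes and intermediate steps per stage must be outweighed by a fixed power of $\lambda$. Concretely the remaining steps are: (i) fix precisely which pairs are probed and prove the progress lemma with explicit constants $c$ and $C$, checking each branch of the case analysis and tracking how the new diameter and the chunk diameter depend on $\rho_k$ and $\lambda$; (ii) verify that $\lambda<10^{-23}$ makes every branch yield a new diameter $\le\lambda^{c}\rho_k$ and chunk diameter $\le C\rho_k$; (iii) iterate the lemma from the initial configuration and read off the monotone Cauchy sequence. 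A minor point, handled at the outset, is that one may freely assume the construction starts from whatever convenient small configuration one likes, since prepending a finite segment preserves the Cauchy property.
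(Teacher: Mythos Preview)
Your progress lemma cannot be proved as stated, and the plan collapses at exactly the point you flag as ``the main obstacle'' without resolving it. With the configuration taken to be a single short edge $d(x,x+w)=\rho_k$, the lengths $d(x,x+e_i)$ for $i\neq w$ are not bounded by any fixed multiple of $\rho_k$: nothing in the hypothesis ties one neighbour-distance to another, and no finite chain of triangle inequalities among points reachable from $x$ by bounded monotone steps will manufacture such a bound. Concretely, take $d(x,x+e_1)=\rho$ tiny and $d(x,x+e_2),d(x,x+e_3)$ of order $1$. Contracting $(x,x+e_1)$ may return $i=2$, giving a new small edge at $x+e_2$; but the step to $x+e_2$ costs order $1$, not $O(\rho)$. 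Probing other pairs near $x$ returns distances of size $\lambda\cdot(\text{something of order }1)$, not $\lambda^c\rho$, so even when you can step cheaply (e.g.\ to $x+e_1$) the configuration you land on has diameter $O(\lambda)$ rather than $O(\lambda^c\rho)$, and the geometric decay in $\rho_k$ is lost. The ``cyclic'' branch you single out is not merely the sharpest case --- it is a genuine obstruction to any purely local progress lemma of this shape.

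The paper's route is entirely different and this difference is essential, not cosmetic. It argues by contradiction: assuming no $1$-way Cauchy sequence exists, one first shows that $\mu=\inf_x\rho(x)>0$, where $\rho(x)=\max_i d(x,x+e_i)$. This global positive scale $\mu$ is the substitute for the local control you are missing --- every subsequent estimate is expressed in units of $\mu$. The argument then develops a structure theory of $k$-way sets (bounded $3$-way sets contain much smaller $2$-way sets; small $2$-way sets spawn small $3$-way sets), uses this to forbid various finite configurations (e.g.\ a point with $\rho(x)\le K\mu$ and $\operatorname{diam}N(x)\le\lambda K\mu$), then proves that certain other configurations must exist, and finally runs a lengthy diagram classification on neighbourhoods of near-infimal points to reach incompatible conclusions. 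Cauchy sequences do appear inside the proof, but only as contradictions within specific sub-cases, each requiring global information (existence of auxiliary points with prescribed properties) that your local scheme has no access to. The bound $\lambda<10^{-23}$ arises from the accumulated constants in this global machinery, not from a bounded local case split.
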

\begin{proposition} If the Proposition~\ref{keyprop} holds, so does Theorem~\ref{thmMain}.\end{proposition}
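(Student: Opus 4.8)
The plan is to feed the hypotheses of Theorem~\ref{thmMain} through the grid construction already set up in this section, and then read off a fixed point from the Cauchy sequence that Proposition~\ref{keyprop} produces. So suppose $(X,d)$ is complete and $\{f_1,f_2,f_3\}$ is a commuting $\lambda$-contractive family of operators with $\lambda \in (0,10^{-23})$. Fix an arbitrary $p_0 \in X$ and pass to the pseudometric space $G(p_0) = (\mathbb{N}_0^3, d)$ with $d(a,b) = d(a(p_0),b(p_0))$. First I would verify the hypothesis of Proposition~\ref{keyprop} in $G(p_0)$: given $a,b \in \mathbb{N}_0^3$, apply the $\lambda$-contractive property to the points $a(p_0), b(p_0) \in X$ to obtain an index $i \in [3]$ with $d(f_i(a(p_0)), f_i(b(p_0))) \leq \lambda\, d(a(p_0), b(p_0))$. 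Since the $f_j$ commute, $f_i(a(p_0)) = (a+e_i)(p_0)$ and likewise $f_i(b(p_0)) = (b+e_i)(p_0)$, so this reads $d(a+e_i,b+e_i) \leq \lambda\, d(a,b)$ in $G(p_0)$, which is exactly what Proposition~\ref{keyprop} requires.

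Proposition~\ref{keyprop} then supplies a Cauchy sequence $(x_n)_{n\geq 1}$ in $G(p_0)$ with $x_{n+1}-x_n \in \{e_1,e_2,e_3\}$ for every $n$. Translating back to $X$, the sequence $(x_n(p_0))_{n\geq 1}$ is Cauchy in the genuine metric space $X$, hence converges to some $z \in X$ by completeness. Write $j(n) \in [3]$ for the index with $x_{n+1}-x_n = e_{j(n)}$; by commutativity, $x_{n+1}(p_0) = f_{j(n)}(x_n(p_0))$. By the pigeonhole principle some value $j \in [3]$ occurs infinitely often among the $j(n)$; restricting to that subsequence we have $x_n(p_0) \to z$ and $x_{n+1}(p_0) \to z$, so continuity of $f_j$ forces $f_j(z) = z$. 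Thus some $f_i$ has a fixed point, which is precisely the conclusion of Proposition~\ref{conj1}, and we have already shown that Proposition~\ref{conj1} implies Theorem~\ref{thmMain}.

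I expect no serious obstacle in this reduction: the real content is deferred to the proof of Proposition~\ref{keyprop} itself. The two points that merit a little care are the passage between the contractive condition on $X$ and the translation-contractive condition on $G(p_0)$ — which uses commutativity essentially — and the observation that Cauchyness with respect to the pseudometric $d(\cdot,\cdot)$ on $\mathbb{N}_0^3$ transfers to honest convergence of $(x_n(p_0))$ in $X$; after that the pigeonhole-plus-continuity step is immediate. One should also note that the constant $10^{-23}$ plays no role here and is merely carried along, and that the uniqueness remarked after Theorem~\ref{thmMain} is a separate easy consequence of the contractive property rather than part of this argument.
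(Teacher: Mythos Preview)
Your proposal is correct and follows essentially the same route as the paper's proof: reduce to Proposition~\ref{conj1}, build $G(p_0)$, apply Proposition~\ref{keyprop} to get a Cauchy sequence in $X$, and use pigeonhole plus continuity to extract a fixed point of some $f_i$. You are slightly more explicit than the paper in verifying the contractive hypothesis on $G(p_0)$, but the argument is otherwise identical.
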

\begin{proof} It suffices to show that the Proposition~\ref{keyprop} implies the Proposition~\ref{conj1}. Let $(X,d)$ be a metric space as in the Proposition~\ref{conj1}, along with three functions acting on it. Pick arbitrary point $p_0 \in X$, and consider pseudometric space $G(p_0)$ defined before. By Proposition~\ref{keyprop}, we have a Cauchy sequence $(x_n)_{n \geq 1}$, with property above. So, $(x_n(p_0))$ is Cauchy in $X$. Without loss of generality, we have change by $e_1$ infinitely often, say for $(x_{n_i})_{i \geq 1}$, $x_{n_i + 1} =x_{n_i} + e_1$ holds. As $X$ is complete, $x_n(p_0)$ converges to some $x$. Hence $x_{n_i}(p_0)$ converges to $x$, and so does $f_1(x_{n_i}(p_0))$, but $f_1$ is continuous, thus, $f_1(x) = x$.\end{proof}
Therefore, Proposition~\ref{keyprop} is what is sufficient to prove. The integer grid has a lot of structure itself, and the following definitions aim to capture some of it and to help us establish the claim.\\
Let $x$ be a point in the grid. Define $\rho(x) $ to be the maximal of the distances $d(x, x+e_1), d(x, x+e_2), d(x, x+e_3)$. As we shall see in the following section, $\rho$ will be of fundamental importance. Given $x$ in the grid, we define $N(x) = \{x+e_1, x+e_2, x+e_3\}$ and refer to this set as the \emph{neighborhood} of $x$.\\
Let $S$ be a subset of the grid. Given $k \in [3]$, we say that $S$ is a \emph{$k$-way} set, if for all $s\in S$, precisely $k$ elements of $N(s)$ are in $S$. We denote the unique three way set starting from $x$ by $\tw{x} = \{x + k: k \in \grid\}$.

\section{Overview of the proof of Proposition~\ref{keyprop}}
The proof of the Proposition~\ref{keyprop} will occupy the most of the remaining part of the paper. To elucidate the proof, we will structure it in a few parts. The short first section will show our strategy in the proof along with some of the basic ideas. The second part will be about $k-$way sets and how they interact with each other. Afterwards, we shall be dealing with local structure, namely we shall show existence or non-existence of certain finite sets of points, and our main means to this end will be $k-$way sets. Finally, after we have clarified the local structure sufficiently well, we will be able to obtain the final contradiction. Let us now be more precise and elaborate on these parts of the proof.\\
First of all, we shall establish a few basic facts about $\rho$, most importantly $\mu = \inf \rho(x) > 0$, where $x$ ranges over all points. The important fact for the proof of this statement is a lemma that says $d(x,y) \leq (\rho(x) + \rho(y))/(1-\lambda)$. This fact will be the pillar of the proof, and on the other hand the lemma mentioned will be used throughout the proof. The basic idea which is introduced in this section is to create sets of points by contracting with some previously chosen ones (by contracting a pair $x,y$ we mean choosing suitable function $f$ in our family such that $d(f(x),f(y)) \leq \lambda d(x,y)$). By doing so, we will be able to construct $k-$ways sets of bounded diameter.\\  
After that, we shall prove a few propositions about the $k-$way sets. For example, if we have $3-$way set of bounded diameter then it contains $2-$way subset of much smaller diameter, in the precise sense discussed afterwards. At the first glance, it seems that we have lost a dimension by doing this, however, we shall also show that if we have $2-$way set of sufficiently small diameter, we can obtain 3-way set of small diameter as well. So, for example, given $K$ and provided $\lambda$ is small enough we cannot have 3-way sets of diameter $K\mu$, and we cannot have $2-$way sets of diameter $\lambda K\mu$ inside every $3-$set. From this point on we shall combine the results and approaches of these two parts in the proof. Most of the claims that we establish afterwards will either show that certain finite configuration (by which we mean finite set of points with suitable distances between) exists or does not exist, and we do so by supposing contrary, contracting new points with the given ones and finding suitable $k$-way sets, which give us a contradiction.\\
As the basic example of this method, we note that each point $x$ induces a 1-way set of diameter at most $2\rho(x) / (1-\lambda)$, more importantly, such a set exists in every 3-way set. With a greater number of suitable points we are able to induce bounded $k-$way sets for larger $k$.
\begin{figure}
\includegraphics[scale = 0.4]{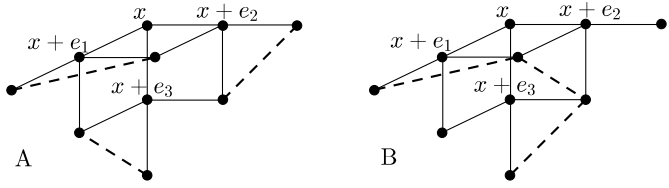}
\caption{Examples of diagrams}
\label{exdiagram}
\end{figure}
Using the facts established, we prove the existence or non-existence of specific finite sets. Gradually, we learn more about the local structure of the grid considered. For example, for some constant $C$ (independent of $\lambda$) we have $y$ with $\rho(y) \leq C\mu, d(y + e_1, y + e_2) \leq \lambda C\mu$, provided $\lambda$ is small enough. Similarly, we shall establish that there is no point $y$ with $\rho(y) \leq C\mu, \diam N(y) \leq \lambda C\mu$, for suitable $\lambda, C$. Such points will be used on a few places in the later part of the proof and in the final argument to reach the contradiction. At this point we introduce the notion of \emph{diagram} of a point $x$, namely it is the information about the contractions in $\{x\}\cup N(x)$. The diagrams will be shown in the figures, and usually the dash lines will imply that corresponding edge is result of a contraction. In the Figure~\ref{exdiagram} we give an example of two diagrams\footnote{In other figures we shall not denote the points on the diagram itself, however, the coordinate axes will always be the same.}, the left one, denoted by A, tells us among others that $x + e_1, x + e_2$ are contracted by 1 (i.e. $d(x + 2e_1, x + e_1 + e_2) \leq \lambda d(x + e_1, x + e_2)$). The claims established so far allow us to have a very restricted number of possibilities for diagrams, and one of the possible strategies will then be to classify the diagrams, see how they fit together and establish the existence of a 1-way Cauchy sequence. The most important claim that we use for rejecting diagrams is the following Proposition.
\begin{proposition}\label{tpsO} Given $K \geq 1$, suppose we have $x_0, x_1, x_2, x_3$ such that $\diam\{x_i + e_j:i,j \in [3], i\not=j\} \leq \lambda K\mu$. Furthermore, suppose $\rho(x_0) \leq K\mu$ and that $d(x_0, x_i) \leq K\mu$ for $i\in[3]$. Let $\{a,b,c\} = [3]$.\\
Provided $\lambda < 1/(820 C_1 K)$, whenever there is a point $x$ which satisfies $d(x + e_a, x + e_b) \leq \lambda K \mu$ and $d(x, x_0)\leq K\mu$, then we have $d(x + e_c, x_c+e_c) \leq 16 \lambda K \mu$.\end{proposition}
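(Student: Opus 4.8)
The goal is to control the location of $x+e_c$ relative to $x_c+e_c$, using the hypothesis that all the "mixed" neighbours $x_i+e_j$ (with $i\neq j$) lie in a ball of radius $\lambda K\mu$, and that $x+e_a,x+e_b$ are very close. The fundamental tool available to us is the lemma that $d(u,v)\le(\rho(u)+\rho(v))/(1-\lambda)$ together with the defining contraction property: for any pair $u,v$ there is an index $k$ with $d(u+e_k,v+e_k)\le\lambda d(u,v)$. The natural strategy is to play these two facts off against each other along a short chain of auxiliary points built from $x$, $x_0$ and the $x_i$.

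First I would bound $\rho$ at the relevant points. Since $d(x_i+e_j,x_{i'}+e_{j'})\le\lambda K\mu$ for all $i\ne j$, $i'\ne j'$, the point $x_0+e_a+e_b$ is within $\lambda K\mu$ of each of $x_a+e_b+e_?$-type points; more to the point, iterating the contraction hypothesis from the pair $(x,x_0)$ — whose distance is at most $K\mu$ — produces, after applying $f$-steps corresponding to some word, a pair whose distance has dropped by a factor $\lambda$, and by a pigeonhole/averaging argument over which coordinate got contracted we can route this so that one endpoint lands near $x+e_a$ or $x+e_b$ (which are close to each other) and the other near one of the controlled mixed neighbours of $x_0$. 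This is exactly the kind of "contract new points against given ones" construction advertised in the overview, and it is where the constant $820\,C_1K$ and the requirement $\lambda<1/(820C_1K)$ come from: one needs the geometric series of accumulated errors, each term carrying a factor of $\rho\le$ (something)$\cdot K\mu$ divided by $(1-\lambda)$, to sum to at most $16\lambda K\mu$.

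Concretely, the key steps, in order, would be: (1) use the diameter hypothesis on $\{x_i+e_j\}$ plus the $d(x,x_0)\le K\mu$ and $d(x+e_a,x+e_b)\le\lambda K\mu$ hypotheses to get a uniform bound of the form $\rho(z)\le O(K\mu)$ for each auxiliary point $z$ that will appear; (2) starting from the pair $(x,x_0)$, apply the contractive family repeatedly, at each stage choosing the contracting coordinate, to build a sequence of pairs with distances shrinking geometrically, and track how far the endpoints drift from the "anchor" points $x+e_c$ and $x_c+e_c$ — the drift at each stage is at most $O(\rho)/(1-\lambda)=O(K\mu)$ times the current scale, hence a convergent geometric sum; (3) since $x+e_a\approx x+e_b$ and all mixed neighbours of the $x_i$ coincide up to $\lambda K\mu$, whenever the contraction picks coordinate $a$ or $b$ the two endpoints are pulled to within $\lambda\cdot(\text{current distance})$ of the controlled cluster, and when it picks coordinate $c$ we instead learn directly about $x+e_c$ versus $x_c+e_c$; (4) assemble the estimates into the bound $d(x+e_c,x_c+e_c)\le 16\lambda K\mu$, checking that the numerical constant survives under $\lambda<1/(820C_1K)$.

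The main obstacle I expect is step (2)–(3): the contractive hypothesis only promises \emph{some} coordinate contracts, not the one we want, so one cannot simply "apply $f_c$." The argument must be robust to an adversarial choice of contracting coordinate, which forces a case analysis (or a potential-function/amortized argument) ensuring that no matter which of $a,b,c$ is chosen at each step, either progress is made toward the desired conclusion or the problem reduces to a strictly smaller instance of the same type. Keeping the accumulated error inside the factor $16$ — rather than some larger constant — is the delicate bookkeeping part, and it is presumably why $\lambda$ must beat $1/(820C_1K)$ rather than merely $1/(CK)$ for a crude $C$; I would expect the proof to set up one auxiliary point per coordinate of a short word, bound the telescoping sum of $\rho$-errors by $C_1K\mu\sum_{j\ge0}\lambda^j\cdot(\text{small})$, and then absorb everything.
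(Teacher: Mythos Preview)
Your plan has a genuine gap: you are trying to prove the inequality $d(x+e_c,x_c+e_c)\le 16\lambda K\mu$ as a direct metric estimate, but the proposition lives \emph{inside} the standing proof by contradiction (no $1$-way Cauchy sequence), and the paper's argument uses that global hypothesis in an essential way. The paper does not bound $d(x+e_c,x_c+e_c)$ by any telescoping or geometric-series computation starting from the pair $(x,x_0)$. Instead it assumes $d(x+e_c,x_c+e_c)>16\lambda K\mu$ and derives a contradiction by bringing in an \emph{auxiliary} point $t$, chosen in an arbitrary $3$-way set with $d(t,x_0)\le 7K\mu$ (via Lemma~\ref{1contrLemma}), and then analysing the contractions of $t$ against each of $x_1,x_2,x_3$ and against $x$. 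The case analysis forces one of two outcomes: either one produces, in every $3$-way set, a $2$-way subset of diameter $\le 18\lambda K\mu$ (contradicting Proposition~\ref{every2Prop}), or one produces a point $x_i$ with $\rho(x_i)\le 20K\mu$ and $\diam N(x_i)\le 20\lambda K\mu$ (contradicting Proposition~\ref{neighdiamProp}). Both of these ``impossibility'' propositions are themselves only valid under the no-Cauchy-sequence hypothesis; there is no purely local metric argument available.

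Your step (3) is where this shows concretely. If the pair $(x,x_c)$ contracts in direction $a$ or $b$, you learn that $x+e_a$ (say) lies near the cluster $\{x_i+e_j:i\ne j\}$. But this tells you nothing about $x+e_c$, and the hypotheses of the proposition do \emph{not} descend to the new pair $(x+e_a,x_c+e_a)$ in any way that lets you iterate---you have no control over $\rho$ there, nor over distances of the form $d((x+e_a)+e_{a'},(x+e_a)+e_{b'})$. So the ``reduces to a strictly smaller instance'' claim fails. A further diagnostic: the constant $C_1=49158$ in the hypothesis $\lambda<1/(820C_1K)$ is exactly the constant from Proposition~\ref{32prop}, which enters only through Propositions~\ref{every2Prop} and~\ref{neighdiamProp}. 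Your outline never invokes either of these, so the presence of $C_1$ in the bound would be unexplained by your argument.
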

The final part of the proof is based on the following claim:
\begin{proposition}\label{casescorO} Fix arbitrary $x_0$ with $\rho(x_0) < 2\mu$. Given $K \geq 1$, when $i\in[3]$, define $S_i(K,x_0) = \{y: d(x_0,y) \leq K\mu, d(y, y + e_i) \leq K\mu\}$. Provided $1 > 10\lambda K C_1$, in every $\tw{z}$ there is $t$ such that $d(t,x_0) \leq 3K\mu$, but for some $i$ we have $s \notcont{i} t$ when $s \in S_i(K,x_0)$. \end{proposition}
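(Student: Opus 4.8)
The plan is to argue by contradiction: suppose there is a three-way set $\tw z$ in which \emph{every} point $t$ with $d(t,x_0) \le 3K\mu$ satisfies, for all $i$, that some $s \in S_i(K,x_0)$ has $s \cont{i} t$. The intuitive content of such a statement is that in $\tw z$ one can always continue contracting toward $x_0$ in any coordinate direction we please, and this should be incompatible with the positivity of $\mu$ together with the quantitative control furnished by Proposition~\ref{tpsO}. So the first step is to set up a process inside $\tw z$: starting from some point of $\tw z$ close to $x_0$ (which exists because $\rho(x_0) < 2\mu$ bounds $d(x_0, x_0 + k)$ from above for small $k$, using the pillar lemma $d(x,y) \le (\rho(x)+\rho(y))/(1-\lambda)$ to relate distances in $\tw z$ to distances near $x_0$), I would iteratively pick, in $\tw z$, a point $t$ and the associated $s\in S_i(K,x_0)$ given by the hypothesis, and use the contraction $s \cont{i} t$ to produce new points $s + e_i$, $t + e_i$ that are much closer together than $s, t$ were.

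The second step is to feed these contracted pairs into Proposition~\ref{tpsO}. Concretely, from a point $t$ of $\tw z$ for which all three directions admit a contracting partner in $S_i(K,x_0)$, one should be able to manufacture the four-point configuration $x_0, x_1, x_2, x_3$ required by Proposition~\ref{tpsO}: take $x_0$ itself (or a nearby point in $\tw z$) and let $x_i$ be $s_i + e_i$ where $s_i \in S_i(K,x_0)$ contracts with $t$ in direction $i$; then $d(x_i + e_j, x_{i'} + e_{j'})$ for $j,j' \neq i,i'$ becomes small because these all lie near the correspondingly shifted $t$, giving the hypothesis $\diam\{x_i + e_j : i \ne j\} \le \lambda K\mu$ after adjusting $K$ by an absolute constant. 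Then for the point $x = t$ and any choice $\{a,b,c\}=[3]$, having a contraction $t \cont{a} (\text{something in }S_a)$ and $t \cont{b}(\text{something in }S_b)$ gives $d(t + e_a, t + e_b) \le \lambda K\mu$ type control, so Proposition~\ref{tpsO} yields $d(t + e_c, x_c + e_c) \le 16\lambda K\mu$, i.e. the point $t$ in $\tw z$ is forced to be extremely close (on the scale $\lambda K\mu$, which is $\ll \mu$) to the prescribed point $x_c + e_c$ in all three directions. Iterating, every point of the relevant portion of $\tw z$ is pinned to within $O(\lambda K\mu)$ of a fixed lattice of points, which collapses the three-way set to something of diameter $o(\mu)$; but a three-way set containing $x + e_1, x + e_2, x + e_3$ has diameter at least $\rho(x) \ge \mu$, contradiction.

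The third step is to make the bookkeeping of constants work: the hypothesis $1 > 10\lambda K C_1$ is a much weaker smallness assumption than the $\lambda < 1/(820 C_1 K')$ needed to invoke Proposition~\ref{tpsO}, so I would apply Proposition~\ref{tpsO} with a rescaled parameter $K' = c K$ for a suitable absolute constant $c$ — large enough that all the auxiliary distances ($d(x_0, x_i) \le K'\mu$, $d(x,x_0) \le K'\mu$, $\rho(x_0) \le K'\mu$, and the various $16\lambda K'\mu$ and $2\rho/(1-\lambda)$ terms accumulated along the iteration) fit under $K'\mu$, yet with $10 \lambda K C_1 < 1$ still forcing $\lambda < 1/(820 C_1 K')$. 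This is the kind of constant-chasing the paper does repeatedly, so it should go through, but it is where care is needed.

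I expect the main obstacle to be the first/second step fused together: correctly extracting, from the mere \emph{existence} of contracting partners $s \in S_i(K,x_0)$, a configuration that genuinely satisfies \emph{all} the hypotheses of Proposition~\ref{tpsO} simultaneously — in particular getting the diameter bound $\diam\{x_i+e_j : i\ne j\} \le \lambda K\mu$, since the points $s_i$ live only in $S_i(K,x_0)$ and are a priori only known to be within $K\mu$ of $x_0$, not close to each other. The resolution should be that after one contraction step the shifted points $s_i + e_i$ all lie within $O(\lambda K\mu)$ of the corresponding shift of $t$ (because $s_i \cont{i} t$), and it is these once-contracted points, not the raw $s_i$, that play the role of the $x_i$'s; threading this through while keeping $t$ itself inside $\tw z$ and close to $x_0$ is the delicate part. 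Once the configuration is in hand, Proposition~\ref{tpsO} does the real work and the contradiction with $\mu = \inf \rho > 0$ is immediate.
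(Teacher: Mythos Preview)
Your approach has a genuine gap in the constant bookkeeping, and more fundamentally it is far more elaborate than what is needed.

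The decisive issue is in your third step. You propose to apply Proposition~\ref{tpsO} with a rescaled constant $K' = cK$ for some large absolute $c$, and claim that the hypothesis $10\lambda K C_1 < 1$ will still force $\lambda < 1/(820 C_1 K')$. This is backwards: if $K' = cK$ with $c \geq 1$, then $\lambda < 1/(820 C_1 K') = 1/(820 c K C_1)$ is a \emph{stronger} condition than $\lambda < 1/(10 K C_1)$, not a consequence of it. You would need $820 c \leq 10$, which is impossible for $c \geq 1$. So Proposition~\ref{tpsO} simply cannot be invoked under the stated hypothesis, and the whole second and third steps collapse.

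The paper's argument is completely different and much shorter. Suppose for contradiction that some $\tw{z}$ contains no such $t$. Note $x_0 \in S_i(K,x_0)$ for each $i$, so the sets are nonempty. By Lemma~\ref{1contrLemma} there is $y \in \tw{z}$ with $d(y, x_0) \leq 3K\mu$. The failure of the conclusion means that for each $i$ there is $s_i \in S_i(K,x_0)$ with $s_i \cont{i} y$; then the triangle inequality
\[
d(y+e_i, x_0) \leq d(y+e_i, s_i + e_i) + d(s_i+e_i, s_i) + d(s_i, x_0) \leq \lambda d(y, s_i) + 2K\mu
\]
shows $d(y+e_i, x_0) \leq 3K\mu$ as well. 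Iterating, the entire 3-way set $\tw{y}$ lies within $3K\mu$ of $x_0$, hence has diameter at most $6K\mu$. This contradicts Proposition~\ref{bounded3Prop}, which under $1 > 10\lambda K C_1 > (2+\lambda)\lambda(6K)C_1$ forbids 3-way sets of diameter $\leq 6K\mu$. No appeal to Proposition~\ref{tpsO} is needed at all; the key input is the much cheaper Proposition~\ref{bounded3Prop}, whose hypothesis is calibrated exactly to the $10\lambda K C_1 < 1$ given here.
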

Using the point $t$, whose existence is provided, we shall discuss the cases on $d(t + e_1, t + e_2)$ being large or small. Both help us to reject many diagrams and then to establish the contradiction in a straight-forward manner.\\
To sum up, the basic principle here is that contractions ensure that we get specific finite sets. On the other hand, certain finite sets empower contractions further, allowing us to construct $k-$way sets of small diameter. Therefore, if we are to establish a contradiction, we can expect a dichotomy; either we get finite sets that imply global structure that is easy to work with, or we do not have such sets, and we impose strong restrictions on the local structure of the grid.\\

We are now ready to start the proof of the Proposition~\ref{keyprop}. The proof will run for the most of the paper, ending in the Section 8.

\begin{proof}[Proof of the Proposition~\ref{keyprop}]Suppose contrary, there is no 1-way Cauchy sequence in the given pseudometric space on $\grid$. This condition will, as we shall see, imply a lot about the structure of the space, and we will start by getting more familiar with the function $\rho$, which will, as it was already remarked, play a fundamental role.

\section{Basic finite contractive configurations arguments and properties of $\rho$}
In this section we establish few claims about $\rho$, together with some claims which will come in handy at several places throughout the proof.
\begin{lemma}[(Furthest neighbor inequality - FNI)]\label{fni}Given $x,y$ in the grid we have $d(x,y) \leq (\rho(x) + \rho(y))/(1-\lambda)$.\end{lemma}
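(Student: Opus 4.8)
The plan is to obtain \textbf{FNI} from a single application of the contractive hypothesis together with the triangle inequality, using the fact that $\rho(x)$ is by definition the distance from $x$ to its furthest neighbour (hence the name). So I expect a very short, one-step argument rather than any iteration.

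Concretely, I would fix $x,y$ in the grid and apply the defining property of the pseudometric space to the \emph{pair} $(x,y)$: there is an index $i\in[3]$ with $d(x+e_i,\,y+e_i)\le \lambda\, d(x,y)$. The point to stress is that this single index $i$ shifts both $x$ and $y$ at once, so $x+e_i$ and $y+e_i$ can be inserted into a triangle-inequality chain joining $x$ to $y$:
\[
d(x,y)\le d(x,\,x+e_i)+d(x+e_i,\,y+e_i)+d(y+e_i,\,y).
\]
Then I would bound the first term by $\rho(x)$ and the last term by $\rho(y)$ (both immediate from the definition of $\rho$), and the middle term by $\lambda\, d(x,y)$ from the choice of $i$. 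This gives $d(x,y)\le \rho(x)+\rho(y)+\lambda\, d(x,y)$, and since $\lambda<1$ and $d(x,y)$ is a finite real number, rearranging yields $d(x,y)\le(\rho(x)+\rho(y))/(1-\lambda)$, which is exactly the claim.

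There is essentially no real obstacle here; the only thing one must be careful about is the order of quantifiers in the contractive condition. It is crucial that a \emph{single} $i$ can be chosen to serve the whole pair $(x,y)$, so that the residual term is $d(x+e_i,y+e_i)$ rather than $d(x+e_i,y+e_j)$ with two unrelated indices, which would not be controlled by $\lambda\, d(x,y)$. Everything else is a routine rearrangement, so I do not anticipate any difficulty in carrying it out.
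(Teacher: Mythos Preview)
Your proposal is correct and essentially identical to the paper's own proof: choose $i$ with $d(x+e_i,y+e_i)\le\lambda\,d(x,y)$, apply the triangle inequality $d(x,y)\le d(x,x+e_i)+d(x+e_i,y+e_i)+d(y+e_i,y)\le \rho(x)+\lambda\,d(x,y)+\rho(y)$, and rearrange. Your remark on the quantifier order is a reasonable emphasis but not something the paper spells out.
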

\begin{proof} Let $i$ be such that $\lambda d(x,y) \geq d(x+e_i, y + e_i)$, which we denote from now on by $x\cont{i}y$, and say that $i$ \emph{contracts} $x,y$\footnote{We also say $x,y$ is \emph{contracted} by $i$, or $x,y$ is \emph{contracted in the direction $e_i$}.}. Using the triangle inequality a few times yields $d(x,y) \leq d(x, x + e_i) + d(x+e_i, y+e_i) + d(y+e_i, y) \leq \lambda d(x,y) + \rho(x) + \rho(y)$, which implies the result.\end{proof}

Similarly to $x \cont{i} y$, we write $x \notcont{i}y$ to mean that $d(x + e_i, y+e_i) > \lambda d(x,y)$.

\begin{lemma}\label{1contrLemma}Let $x, y$ be any two points in the grid. Then we can find a 1-way subset $S$, such that $y\in S$ and given $\epsilon > 0$ we have $d(s,x) \leq \frac{1}{1-\lambda}\rho(x) + \epsilon$ for all but finitely many $s\in S$.\end{lemma}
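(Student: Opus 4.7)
The plan is to construct $S$ as the range of an iteratively built chain $y_0, y_1, y_2, \dots$ starting from $y_0 = y$. Given $y_n$, I would invoke the $\lambda$-contractivity hypothesis on the pair $(x, y_n)$ to pick a direction $i_n \in [3]$ with $x \cont{i_n} y_n$, and set $y_{n+1} = y_n + e_{i_n}$. Then $S = \{y_n : n \geq 0\}$ trivially contains $y$.

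Next, I would verify that $S$ is a 1-way set. Since $y_n - y_m = \sum_{k=m}^{n-1} e_{i_k}$ for $n > m$ is a nonempty sum of standard basis vectors, all the $y_n$ are distinct. The only neighbor of $y_n$ in $S$ is $y_{n+1} = y_n + e_{i_n}$: for any $j \neq i_n$, the equation $y_n + e_j = y_m$ is impossible, because for $m \leq n$ one has $y_m \leq y_n$ coordinate-wise, preventing any coordinate of $y_m$ from exceeding that of $y_n$; while for $m \geq n+1$ the difference $y_m - y_n$ dominates $e_{i_n}$ entrywise, forcing $j = i_n$.

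For the distance bound, the triangle inequality and the choice of $i_n$ yield
\[d(x, y_{n+1}) \leq d(x, x + e_{i_n}) + d(x + e_{i_n}, y_n + e_{i_n}) \leq \rho(x) + \lambda\, d(x, y_n).\]
Iterating this one-step estimate gives $d(x, y_n) \leq \lambda^n d(x, y) + \rho(x)/(1 - \lambda)$, so for $n$ sufficiently large $d(x, y_n) \leq \rho(x)/(1-\lambda) + \epsilon$, which covers all but finitely many elements of $S$ as required. I do not foresee any real obstacle here — the argument is essentially a one-variable Banach-style geometric series estimate. The only genuinely non-numerical point is the 1-way verification, which is settled by the coordinate monotonicity of the chain.
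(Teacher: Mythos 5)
Your proof is correct and follows essentially the same approach as the paper: the same iteratively built chain defined by contracting against $x$, the same one-step estimate $d(x, y_{n+1}) \leq \rho(x) + \lambda\, d(x, y_n)$, and the same geometric-series bound $d(x, y_n) \leq \lambda^n d(x,y) + \rho(x)/(1-\lambda)$. The only addition is your explicit verification (via coordinate monotonicity) that the chain is genuinely 1-way, which the paper leaves implicit.
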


\begin{proof} Consider the sequence $(x_n)_{n\geq0}$ defined inductively by $x_0 = y$ and for any $k \geq 0$, we set $x_{k+1} = x_k + e_i$ when $i$ contracts $x$ and $x_k$. By induction on $k$ we claim that $d(x, x_k) \leq \lambda^k d(x, y) + \rho(x)/(1-\lambda)$.\\
Case $k = 0$ is clear as $\rho(x_0) \geq 0$. If the claim holds for some $k$ and $x_k \cont{i} x$, then by the triangle inequality we have $d(x_{k+1}, x) \leq d(x_{k+1}, x+e_i) + d(x+e_i, x) \leq \lambda d(x_k, x) + \rho(x_0) \leq \lambda ^{k+1}d(x, y) + \lambda \rho(x_0) / (1-\lambda) + \rho(x_0) \leq \lambda ^ {k+1} d(x, y) + \rho(x_0) / (1-\lambda)$, as desired.\\
Now, take $n$ sufficiently large so that $\rho(x_0)/(1-\lambda) + \lambda ^ n d(x,y) \leq \frac{1}{1-\lambda}\rho(x_0) + \epsilon$. Hence $d(x_k, x) \leq \frac{1}{1-\lambda}\rho(x_0)+\epsilon$ for all $k \geq n$, so choose $(x_k)_{k\geq 0}$ as the desired set. 
\end{proof}

\begin{proposition} Given any $x$ in the grid, we have $\rho(x) > 0$.\end{proposition}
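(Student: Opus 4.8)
The plan is to argue by contradiction, exploiting the standing assumption in the proof of Proposition~\ref{keyprop} that the pseudometric space on $\grid$ contains no $1$-way Cauchy sequence (that is, no Cauchy sequence $(x_n)$ with $x_{n+1}-x_n \in \{e_1,e_2,e_3\}$). Since $\rho(x)$ is by definition a maximum of distances, we always have $\rho(x) \geq 0$, so it is enough to rule out $\rho(x)=0$. Assume then that $\rho(x)=0$ for some point $x$ in the grid, i.e. $d(x,x+e_1)=d(x,x+e_2)=d(x,x+e_3)=0$.

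I would simply invoke Lemma~\ref{1contrLemma} with this point $x$ and an arbitrary second point $y$. Recall that the proof of that lemma constructs an explicit infinite sequence $(x_k)_{k\geq 0}$ with $x_0=y$ and $x_{k+1}-x_k\in\{e_1,e_2,e_3\}$ (the direction chosen being one that contracts the pair $x,x_k$, which always exists by the contractive hypothesis of Proposition~\ref{keyprop}, so the construction never stalls), together with the bound $d(x,x_k)\leq \lambda^k d(x,y)+\rho(x)/(1-\lambda)$. With $\rho(x)=0$ this reads $d(x,x_k)\leq \lambda^k d(x,y)\to 0$, so $d(x_k,x_m)\leq d(x_k,x)+d(x,x_m)\to 0$ as $k,m\to\infty$. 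Hence $(x_k)_{k\geq 0}$ is a Cauchy sequence with consecutive differences in $\{e_1,e_2,e_3\}$ — a $1$-way Cauchy sequence — contradicting our assumption. (One may also phrase this using only the statement of Lemma~\ref{1contrLemma}: for every $\epsilon>0$ all but finitely many terms of the provided $1$-way set lie within $\tfrac{1}{1-\lambda}\rho(x)+\epsilon=\epsilon$ of $x$, whence the set, enumerated along its unique forward path, is Cauchy.)

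There is essentially no serious obstacle here; the statement is a short bootstrapping consequence of Lemma~\ref{1contrLemma}. The only points deserving a word of care are that the iterative construction is genuinely infinite (guaranteed, since the contractive hypothesis supplies a contracting direction for \emph{every} pair of points) and that the object produced is exactly a ``$1$-way Cauchy sequence'' in the sense excluded by the contradiction hypothesis. A direct attack via the Furthest Neighbor Inequality (Lemma~\ref{fni}) is less convenient, because $\rho$ of the neighbours of $x$ is not controlled by $\rho(x)$ alone, so routing through Lemma~\ref{1contrLemma} is the cleanest way to close the argument.
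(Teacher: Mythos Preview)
Your proof is correct and follows exactly the paper's approach: assume $\rho(x)=0$, invoke Lemma~\ref{1contrLemma} to produce a 1-way Cauchy sequence, and contradict the standing hypothesis. The paper's own proof is just the one-line version of what you wrote.
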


\begin{proof} Suppose contrary, $\rho(x) = 0$ for some $x$. Then the Lemma~\ref{1contrLemma} immediately gives a 1-way Cauchy sequence, which is a contradiction. \end{proof}

\begin{proposition}\label{mupos} The infimum $\inf\{\rho(x): x \in \mathbb{N}_0^3\}$ is positive.\end{proposition}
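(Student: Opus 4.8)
The plan is to argue by contradiction: suppose $\mu = \inf\{\rho(x) : x \in \mathbb{N}_0^3\} = 0$. Then there is a sequence of points $(y_n)$ with $\rho(y_n) \to 0$. The idea is to use Lemma~\ref{1contrLemma} to produce, from each $y_n$, a $1$-way set on which all but finitely many points lie within $\frac{1}{1-\lambda}\rho(y_n) + \epsilon_n$ of $y_n$ — in other words, a long chain of grid points $y_n = z_0, z_1, z_2, \dots$ with consecutive differences in $\{e_1,e_2,e_3\}$ whose mutual distances are eventually bounded by roughly $\frac{2}{1-\lambda}\rho(y_n)$ (via the triangle inequality and Lemma~\ref{1contrLemma}, or directly by applying FNI since $\rho$ is small along such a chain after we control it). The hope is to splice these short near-constant chains together, or to run a single diagonal argument, to build one genuine $1$-way Cauchy sequence, contradicting our standing assumption from the proof of Proposition~\ref{keyprop}.

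More carefully, here is the step order I would follow. First, fix $\epsilon_n = \rho(y_n)$, say, and apply Lemma~\ref{1contrLemma} with $x = y = y_n$ (or better, a variant: build the chain by repeatedly contracting a point against itself is vacuous, so instead contract against a nearby point). Actually the cleanest route: for each $n$, apply Lemma~\ref{1contrLemma} to the pair $x = y_n$, $y = y_n$ is degenerate, so instead I would take $x = y_n$ and let $y$ range — but the real tool is that Lemma~\ref{1contrLemma} gives, starting from \emph{any} basepoint $y$, a $1$-way set almost all of whose points are within $\frac{1}{1-\lambda}\rho(x)+\epsilon$ of a chosen $x$. Taking $x = y$ we get a $1$-way set $S_n \ni y_n$ with all but finitely many points within $\frac{1}{1-\lambda}\rho(y_n) + \epsilon$ of $y_n$, hence of each other within $\frac{2}{1-\lambda}\rho(y_n) + 2\epsilon$. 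So $S_n$ is a long $1$-way chain of diameter (eventually) $O(\rho(y_n))$. Second step: pick a tail point $w_n$ of $S_n$; then $\rho(w_n) \le \frac{2}{1-\lambda}\,\operatorname{diam}(\text{tail}) \cdot \frac{1}{?}$ — wait, one must bound $\rho(w_n)$, the distances from $w_n$ to \emph{its} neighbors, not just distances within $S_n$. Since $S_n$ is $1$-way, $w_n + e_{i}$ for the relevant $i$ lies in $S_n$, but the other two neighbors need not. So instead I would use FNI directly: for two tail points $s, t$ of $S_n$, $d(s,t) \le (\rho(s)+\rho(t))/(1-\lambda)$ gives no new info. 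The actual mechanism must be: a $1$-way set of small diameter forces some point in it to have small $\rho$ — but that is only one coordinate direction. The genuinely correct approach, and the one I expect the paper takes: iterate. From $\rho(y_n)$ small, get a point $y_n'$ with $d(y_n', y_n'+e_i)$ small for \emph{two} choices of $i$ (a "$2$-way" small-diameter situation), then for all three, i.e. a point with $\operatorname{diam} N(y)$ small; such a point has $\rho$ small and one more application gives a neighbor also with small $\rho$, and chaining these yields a $1$-way Cauchy sequence. Concretely: if $\inf\rho = 0$, pick $y$ with $\rho(y)$ as small as we like; apply Lemma~\ref{1contrLemma} repeatedly to walk to a point whose three neighbors are pairwise close (each application contracts one pair by a factor $\lambda$, and FNI keeps everything bounded in terms of $\rho(y)/(1-\lambda)$), and then from that point the chain $z, z+e_{i_1}, z+e_{i_1}+e_{i_2}, \dots$ has consecutive distances going to $0$.

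The main obstacle, and where care is needed, is exactly the passage from "small diameter in a $1$-way set" (control in one direction at a time) to "small $\rho$" (control in all three directions simultaneously). A single application of Lemma~\ref{1contrLemma} only tames one direction; the other two neighbors of a tail point are uncontrolled a priori. So the real content is a bootstrapping/iteration argument — apply the lemma, land at a new point, apply it again in another direction, and show via FNI and the triangle inequality that the bad directions shrink geometrically (by powers of $\lambda$) without the good ones blowing up. Getting the quantitative bookkeeping right — verifying that after $O(1)$ iterations one reaches a point $z$ with $\rho(z) \le C\lambda^{-1}\cdot(\text{something} \to 0)$, and then that the induced $1$-way chain from $z$ is Cauchy — is the crux; everything else is triangle inequality plus the already-established Lemma~\ref{fni} and Lemma~\ref{1contrLemma}. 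I would also need to double-check that "$1$-way set" as defined guarantees the chain is infinite (it does: every point of a $1$-way set has exactly one forward neighbor in the set, so following it produces an infinite sequence with steps in $\{e_1,e_2,e_3\}$), which is what lets the contradiction with the no-$1$-way-Cauchy-sequence hypothesis go through.
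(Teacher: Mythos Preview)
Your instinct to ``splice these short near-constant chains together, or to run a single diagonal argument'' is exactly right, and is precisely what the paper does --- but you abandon it too quickly and then misidentify the obstacle.

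The paper's argument: pick $(y_n)$ with $\rho(y_n) < 1/n$, start from an arbitrary $x_0$, and build a \emph{single} $1$-way sequence $(x_k)$ by contracting with $y_1$ (exactly as in the proof of Lemma~\ref{1contrLemma}) until some $x_{k_1}$ satisfies $d(x_{k_1}, y_1) \le 2\rho(y_1)/(1-\lambda)$; then continue from $x_{k_1}$, now contracting with $y_2$, until $x_{k_2}$ is within $2\rho(y_2)/(1-\lambda)$ of $y_2$; and so on. The transitions are controlled by FNI, since $d(y_i, y_{i+1}) \le (\rho(y_i)+\rho(y_{i+1}))/(1-\lambda)$, and the intermediate points $x_j$ for $k_i \le j \le k_{i+1}$ stay within $O(1/i)$ of $y_{i+1}$ by the estimate already recorded in the proof of Lemma~\ref{1contrLemma}. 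Triangle inequality then gives $d(x_j, x_{j'}) = O(1/i + 1/i')$, so $(x_k)$ is Cauchy.

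The ``main obstacle'' you flag --- passing from small diameter in a $1$-way set to small $\rho$ at a point on it --- is a red herring. You never need to control $\rho$ along the constructed sequence; you only need the sequence itself to be Cauchy, and that follows purely from the fact that it repeatedly visits shrinking neighbourhoods of the $y_n$, together with FNI bounding the distances between consecutive $y_n$'s. Your bootstrapping plan (iterating to force all three neighbour-distances small simultaneously) is both unnecessary and, as written, not well-specified: Lemma~\ref{1contrLemma} produces a whole $1$-way chain, not a single step in a chosen direction, so ``apply it again in another direction'' has no clear meaning.
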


This result is one the of crucial structural properties for the rest of the proof, and having it in mind, we will try either to find small $\rho$, or use the structure implied to get a Cauchy sequence, which will yield a contradiction. To prove this statement, we use the Lemma~\ref{1contrLemma}, the difference being that we now contract with many different points of small $\rho$ instead of just one.

\begin{proof} Suppose contrary, hence we get $(y_n)_{n\geq1}$ such that $\rho(y_n) < 1/n$. As $\rho$ is always positive, we can assume that all elements of the sequence are distinct.\\
We define a 1-way sequence $(x_k)_{k \geq 0}$ as follows: start from arbitrary $x_0$ and contract with $y_1$ as in the proof of Lemma~\ref{1contrLemma} until we get a point $x_{k_1}$ with $d(x_{k_1}, y_1) \leq 2 \rho(y_1)/(1-\lambda)$ (such a point exists by Lemma~\ref{1contrLemma}). Now, start from $x_{k_1}$ and contract with $y_2$ until we reach $x_{k_2}$ with $d(x_{k_2}, y_2) \leq 2 \rho(y_2) / (1-\lambda)$. We insist that $k_{i+1} > k_i$ for all possible $i$, so that, proceeding in this way, one defines the whole sequence. Recalling the estimates in the proof of the Lemma~\ref{1contrLemma}, we see that for $k_i \leq j \leq k_{i+1}$ we have $d(x_j, y_{i+1}) \leq d(x_{k_i}, y_{i+1}) + \rho(y_{i+1})/ (1-\lambda) \leq d(x_{k_i}, y_i) + d(y_i, y_{i+1}) + \rho(y_{i+1}) / (1-\lambda)$. So by FNI, we see that $d(x_j, y_{i+1}) \leq (3\rho(y_i) + 2\rho(y_{i+1})) / (1-\lambda) \leq \frac{5}{i(1-\lambda)}$. Hence, if we are given any other $x_{j'}$ with $k_{i'} \leq j' \leq k_{i' + 1}$ by the triangle inequality and FNI we see that $d(x_j, x_{j'}) \leq \frac{6}{1-\lambda} (1/i + 1/i')$, which is enough to show that the constructed sequence is 1-way Cauchy.\end{proof}

We will denote $\inf \rho$, where the infimum is taken over the whole grid, by $\mu$. The proposition we have just proved gives $\mu > 0$. 
%Also, let us define $\mu_\infty = \sup_{n\geq 0} \inf_{x\in\tw{(n,n,n)}} \rho(x)$, where we allow $\infty$ as a value. Without loss of %generality we assume that if $\mu_\infty < \infty$ then $2\mu > \mu_\infty$ holds.

\section{Properties of and relationship between $k$-way sets}
The following propositions reflect the nature of $k$-way sets. These both confirm their importance for the problem and will prove to be useful at various places throughout the proof. 

\begin{proposition}\label{32prop}If $\tw{\alpha}$ is a 3-way set of diameter $D$, then it contains a 2-way subset of diameter not greater than $\lambda C_1 D$, where $C_1 = 49158$.\end{proposition}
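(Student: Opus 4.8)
The plan is to start from the 3-way set $\tw{\alpha} = \{\alpha + k : k \in \grid\}$ of diameter $D$ and locate inside it a large *parallelogram-like* configuration, then contract a suitable pair of opposite corners of that configuration to gain the factor $\lambda$. Concretely, for each $k \in \grid$ consider the point $\alpha + k$ and its three forward neighbours $\alpha + k + e_1, \alpha + k + e_2, \alpha + k + e_3$; since these all lie in $\tw{\alpha}$, they are within distance $D$ of each other. The key observation is that the contractive hypothesis gives, for *every* pair $a, b$ in the grid, an index $i$ with $d(a + e_i, b + e_i) \le \lambda\, d(a,b)$. I would apply this repeatedly along a path inside $\tw{\alpha}$ to manufacture a pair of points whose distance is already small (a multiple of $\lambda D$), and whose forward orbits under $e_1, e_2, e_3$ stay inside $\tw{\alpha}$, so that FNI (Lemma 4.1) controls everything nearby.

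The main mechanism I expect to use is the one flagged in the overview: "contract new points with the given ones and find $k$-way sets." Here is the shape of the argument. Pick two points $u = \alpha + k$ and $v = \alpha + k'$ in $\tw{\alpha}$ with $k, k'$ chosen so that $k - k'$ (or $k' - k$) lies in $\grid$ along a single axis, say $v = u + m e_3$ for large $m$; then iterating the contraction-direction selection $m$ times along the segment from $u$ to $v$ forces the pair $(u + t, v + t)$, for an appropriate $t \in \grid$ with $|t|$ of order $m$, to satisfy $d(u+t, v+t) \le \lambda^{?} \cdot (\text{something} \le D)$ — more carefully, one tracks that at each step the distance either drops by $\lambda$ or the "offset" grows, and a pigeonhole/averaging over the $3^m$ possible sign-patterns yields, after $O(\log_{1/\lambda})$ or a bounded number of steps, a pair at distance $\le \lambda C_1 D / 2$ say. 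Around such a close pair $w, w'$, the set $\{w + k : k \in \grid\} \cup \{w' + k : k \in \grid\}$ is almost a 2-way set; I would then extract an honest 2-way subset $\{w + k + \ell(k) : k \in \grid^{(2)}\}$ by, at each grid point, choosing the two forward directions that stay closest, using FNI to bound $\rho$ on these points by $O(\lambda D)$ and hence the diameter of the extracted 2-way set by $O(\lambda D)$.

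The hard part — and the reason the constant $C_1 = 49158$ is so specific — is the bookkeeping that converts "a pair at distance $\le \lambda D$ plus FNI control of nearby $\rho$" into a *genuine* 2-way set (not merely a 2-way-ish cloud) of diameter $\le \lambda C_1 D$. The obstacle is that the natural candidate 2-way set is built by making local 2-out-of-3 direction choices that must be globally consistent so that the resulting set $S$ satisfies "exactly two neighbours of each $s \in S$ lie in $S$"; ensuring consistency forces one to carry the small-distance estimate along *every* lattice path, and each triangle-inequality hop costs a factor $(1-\lambda)^{-1}$ and an additive $\rho \le O(\lambda D)$, so the diameter bound accumulates. I expect the proof to: (i) fix the close pair via a bounded-length contraction argument, getting a constant $c_0$ with $d(w,w') \le \lambda c_0 D$; (ii) run a Lemma 4.2–style construction *simultaneously from $w$ and $w'$*, using the same contraction directions for both so the two 1-way orbits stay at distance $O(\lambda D)$, giving a 1-way set; then (iii) iterate this "doubling" once more — feeding a 1-way set plus a close translate into the same machine to get a 2-way set — each stage multiplying the diameter bound by an explicit constant of the form $C/(1-\lambda)^{O(1)}$, and $C_1 = 49158$ is simply the product of these. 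The one genuinely delicate point to get right is step (ii)'s claim that one *can* use identical contraction directions for $w$ and $w'$ while keeping both orbits inside $\tw{\alpha}$; I would handle this by choosing the direction that contracts the pair $(w + (\text{current offset}), \alpha + (\text{appropriate base}))$, which simultaneously serves both orbits because $w, w'$ are so close.
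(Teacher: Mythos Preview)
Your proposal has genuine gaps and diverges fundamentally from what is needed.

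First, the use of FNI is backwards. Lemma~4.1 says $d(x,y) \le (\rho(x)+\rho(y))/(1-\lambda)$; this bounds distances from above in terms of $\rho$, not $\rho$ from above in terms of a diameter. So the sentence ``using FNI to bound $\rho$ on these points by $O(\lambda D)$'' does not make sense, and in fact in our setting $\rho(x)\ge\mu>0$ everywhere, so you cannot hope to make $\rho$ small. Second, the ``doubling'' mechanism in steps (ii)--(iii) is not well-defined: the contractive hypothesis supplies, for each \emph{pair} $(a,b)$, some direction $i$ with $d(a+e_i,b+e_i)\le\lambda d(a,b)$; there is no reason the direction that contracts $(w,\text{base})$ should also contract $(w',\text{base})$ or $(w,w')$, so ``use identical contraction directions for $w$ and $w'$'' is not a move the hypothesis licenses. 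Even granting a close pair, you have given no mechanism that produces a \emph{globally consistent} choice of two forward directions at every point of an infinite set, which is exactly what a $2$-way set requires.

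The paper's argument is quite different and essentially combinatorial. One first shows (Proposition~5.2, via a $3$-colouring lemma on $\tw{\beta}$) that either the desired $2$-way set already exists, or one can build a finite set $P$ of $15$ points such that for every $y\in\tw{\alpha}$ and every $i\in[3]$ some $p\in P$ satisfies $d(p,y+e_i)\le\lambda D$. This covers the whole $3$-way set (minus its root) by $15$ sets of diameter $\le 2\lambda D$. One then proves a general lemma (Lemma~5.4): whenever a $3$-way set of diameter $C$ is covered by $k$ sets each of diameter $\le \lambda r C$, it contains a $2$-way subset of diameter $\le K_{k,r}\lambda C$, with an explicit recursion $K_{1,r}=r$, $K_{2,r}=2r+8$, $K_{k+1,r}=K_{k,2r+1}$. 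The base case $k=2$ uses a Ramsey-type statement about $3$-edge-colourings of the complete graph on $\mathbb{N}$ (Lemma~5.5) together with the colouring lemma; the inductive step reduces $k+1$ covering sets to $k$ by analysing ``signatures'' $(c(z+e_1),c(z+e_2),c(z+e_3))$. Plugging in $k=15$, $r=2$ and unrolling the recursion gives $K_{15,2}=2^{14}\cdot 3+6=49158=C_1$. None of this is captured by your close-pair-plus-orbit idea; the Ramsey/colouring component is the missing ingredient.
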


\begin{proof}The proof will be a consequence of the Proposition~\ref{prop3} and Lemma~\ref{lemma4}, each of which needing its own auxilliary lemma. Let us start by establishing:

\begin{proposition}\label{prop3}If the conclusion of the Proposition~\ref{32prop} does not hold, then given $x, y \in \tw{\alpha}$ and distinct $i,j \in [3]$, there is $z \in \tw{\alpha}$ with $d(x, z + e_i) > 2 \lambda D$ and $d(y, z + e_j) > 2 \lambda D$.\end{proposition}

The purpose of this Proposition is to provide us with a finite set of points which will then be used to induce a 2-way set of the wanted diameter, by contractions. To prove this claim, we examine two cases on the distance between $x$ and $y$, one when $d(x,y) > 5\lambda D$ and the other $d(x,y) \leq 5 \lambda D$.

%To prove this claim we need to show two lemmata, which represent two possible cases of the given Proposition.

\begin{proof}[Proof of Proposition~\ref{prop3}] As noted above, we look at two cases on $d(x,y)$.

%\begin{lemma}\label{lemma1}Given $x, y \in \tw{\alpha}$ with $d(x,y) > 5 \lambda D$, there is either $z \in \tw{\alpha}$ with $d(x, z + e_1) > 2 \lambda D$ and $d(y, z + e_2) > 2 \lambda D$ or we have 3-way subset $T$ of $\tw{\alpha}$ of diameter not greater than $4 \lambda D$. \end{lemma}

%\begin{proof}[Proof of Lemma~\ref{lemma1}] 
\emph{Case 1: $d(x,y) > 5 \lambda D$.} We shall actually prove something more general in this case -- if $d(x,y) > 5 \lambda D$ and we cannot find a desired point $z$, then we get a 3-way subset $T$ of $\tw{\alpha}$ of diameter not greater than $4\lambda D$.\\
Suppose there is no such $z$, hence for all $z \in \tw{\alpha}$ either $d(x, z+e_1) \leq 2\lambda D$ or $d(y, z+e_2) \leq 2\lambda D$ is true. We can color all points $t$ in this 3-way set by $c(t) = 1$ if $d(t, x) \leq 2\lambda D$, by $c(t) = 2$ if $d(t, y) \leq 2\lambda D$, and $c(t) = 3$ otherwise. This is well-defined as the triangle inequality prevents first two conditions holding simultaneously. Thus, for any $z$ either $c(z + e_1) = 1$ or $c(z + e_2) = 2$. Also given any two points $z, t$ in the grid such that $t \cont{j}z$, and whose neighbors take only colors 1 and 2, it cannot be that $c(z+e_j) \not= c(t + e_j)$, as otherwise, w.l.o.g. $c(t + e_j) = 1, c(z + e_j) = 2$. Then we get $d(x, y) \leq d(x, t + e_j) + d(t + e_j, z + e_j) + d(z + e_j, y) \leq 5 \lambda D$, which is a contradiction. Thus for any such $z$ and $t$, there is an $i$ such that $c(t+e_i) = c(z+e_i)$.\\
The following auxiliary lemma tells us that all such colorings are essentially trivial. (Note that we are still in the Case 1 of the proof of Proposition~\ref{prop3}.)

\begin{lemma}\label{strongcolLemma}Let $c:\tw{\beta}\to[3]$ be a colouring such that
\begin{enumerate}
\item Given $z \in \tw{\beta}$ either $c(z + e_1) = 1$ or $c(z + e_2) = 2$,
\item Given $z,t \in \tw{\beta}$ such that neighbors of $z, t$ take only colors 1 and 2, then $c(z + e_i) = c(t + e_i)$ for some $i$.
\end{enumerate}
Then there is a 3-way subset of $\tw{\beta}$ which is either entirely coloured by 1 or entirely coloured by 2.
\end{lemma}

\begin{proof}[Proof of the Lemma~\ref{strongcolLemma}]
We denote the coordinates by superscripts. Given non-negative integers $a \geq \beta^{(3)},b \geq \beta^{(1)} + \beta^{(2)}$ denote $\mathcal{L}(a,b) = \{z \in\grid: z^{(3)} = a, z^{(1)} + z^{(2)} = b\}$. Hence such a line must be colored as $c(b - \beta_2, \beta_2, a) = 1, c(b - \beta_2 - 1, \beta_2 +1, a) = 1, \dots c(t + e_1 - e_2) = 1, c(t)$ arbitrary$, c(t + e_2 - e_1) = 2,\dots ,c(\beta_1, b - \beta_1, a) = 2$, for some point $t$. If all $z$ in $\mathcal{L}(a,b)$, with $z^{(1)} \geq \beta^{(1)} + 3, z^{(2)} \geq \beta^{(2)} + 3$ are colored by 1, say that $\mathcal{L}(a,b)$ is \emph{1-line}. Similarly, if these are coloured by 2, call it a \emph{2-line}, and otherwise \emph{1,2-line}.\\
Observe that if $ \mathcal{L}(a,b)$ is 1,2-line for $a \geq \beta^{(3)}$ and $b > \beta^{(1)} + \beta^{(2)} + 10$, then $\mathcal{L}(a+1,b-1)$ is not 1,2-line, for otherwise we have:
\begin{enumerate}
\item $(\beta^{(1)}, b - \beta^{(1)}, a), (\beta^{(1)} + 1, b - \beta^{(1)} - 1, a), (\beta^{(1)}, b - \beta^{(1)} - 1, a + 1)$ are colored by 1,
\item $(b- \beta^{(2)}, \beta^{(2)}, a), (b- \beta^{(2)}-1, \beta^{(2)}+1, a), (b-\beta^{(2)}-1, \beta^{(2)}, a + 1)$ are colored by 2,
\end{enumerate}
which is impossible by the second property of the colouring.\\
Suppose we have a 1,2-line $\mathcal{L}(a,b)$ for $a > \beta^{(3)}, b > \beta^{(1)} + \beta^{(2)} + 20$. Then $\mathcal{L}(a+1,b-1)$ and $\mathcal{L}(a-1,b+1)$ are either 1- or 2-lines. But as above we can exhibit $x', y'$ such that $x' + e_1, x'+e_2, y' + e_3$ are of color 1 while $y' + e_1, y'+e_2, x' + e_3$ are of 2, or we can find $x',y'$ for which $x' + e_1, x'+e_2, x' + e_3$ have $c = 1$, and $y' + e_1, y'+e_2, y' + e_3$ are coloured by 2. So, there can be no such 1,2-lines. Further, by the same arguments we see that $\mathcal{L}(a, s-a)$ for fixed $s$ must all be 1-lines or all 2-lines, for $a > \beta_3 + 1$, and that in fact only one of these possibilities can occur, hence we are done.\end{proof}
Applying the Lemma~\ref{strongcolLemma} immediately solves the case 1 of the proof.\\

%\begin{lemma}\label{lemma2}Given $x, y \in \tw{\alpha}$ with $d(x,y) \leq 5 \lambda D$, there is either $z \in \tw{\alpha}$ with $d(x, z + e_1) > 2 \lambda D$ and $d(y, z + e_2) > 2 \lambda D$ or we have 2-way subset $T$ of $\tw{\alpha}$ of diameter not greater than $20 \lambda D$. \end{lemma}

\emph{Case 2: $d(x,y) \leq 5\lambda D$.} Suppose contrary. Then, in particular for any $z$, we have $d(x, z+e_1) \leq 7 \lambda D$ or $d(x, z+e_2) \leq 7 \lambda D$. Further, we must have $z$ such that $d(x, z+e_{i_1}) , d(x, z+e_{i_2}) > 10 \lambda D$ holds for some distinct $i_1, i_2 \in [3]$. Take such a $z$, and without loss of generality $i_1 = 2, i_2 = 3$. So $d(z + e_1, x) \leq 7\lambda D$. Hence $d(z + (-1, 1, 1), x) \leq 7 \lambda D$ and contracting $z, z + (-1, 0, 1)$ gives $d(z + (-1, 0 ,2), x) > 9\lambda D$. Now contract $z, z + (-1,1,0)$ to get $d(z, z + (-1, 2, 0)) > 9 \lambda D$. However, this is a contradiction, as both $z + (-1, 1, 0) + e_1$ and $z + (-1, 1, 0) + e_2$ are too far from $x$.\\

Having settled both cases on the $d(x,y)$, the Proposition is proved.\end{proof}

%\begin{proof}[Proof of Proposition~\ref{prop3}] Without loss of generality $i = 1, j =2$. Depending on $d(x,y)$ apply one of Lemmata~\ref{lemma1} and~\ref{lemma2}.\end{proof}

If there is $x \in\tw{\alpha}$ such that for some $x' \in\tw{\alpha}$ and for all points $y \in \tw{x'}$ we have $d(x, y) \leq 5\lambda D$, we are done. Hence, we can assume that for all $x, x'\in\tw{\alpha}$ there is $y\in\tw{x'}$ which violates the above distance condition.\\
Take now arbitrary $x_0 \in \tw{\alpha}$. Due to observation we have just made, we know that for any $i \in [3]$ there is an $x_i \not = x_0$ such that $d(x_i + e_i, x_0 + e_i) > 5 \lambda D$ and to be on the safe side, assume that the neighborhoods of $x_0, x_1, x_2, x_3$ are all disjoint. Now, by the Proposition~\ref{prop3}, given $i\not=j$ in $[3]$, we can find $x_{i,j} \in \tw{\alpha}$ such that $d(x_{i,j} + e_i, x_0 + e_i) > 2 \lambda D, d(x_{i,j} + e_j, x_i + e_j) > 2 \lambda D$. Now, let $y$ be any element of the 3-way set generated by $\alpha$. Take $i$ which contracts $x_0, y$, implying $d(x_0 + e_i, y + e_i) \leq \lambda D$. Hence, by triangle inequality $d(x_i + e_i, y + e_i) > \lambda D$, so $x_i, y$ must be contracted by some $j \not = i$. Using the triangle inequality once more, we get $d(x_{i,j} + e_j, y + e_j) > \lambda D$ and by construction $d(x_{i,j} + e_i, y + e_i) \geq d(x_{i,j} + e_i, x_0 + e_i) - d(x_0 + e_i, y + e_i)> \lambda D$, therefore for $k \not = i,j$, $d(y+e_k, x_{i,j} + e_k) \leq \lambda D$. We are now ready to conclude that there is finite set of points $P$ such that whenever $y \in \tw{\alpha}$ is given,  for each $i\in[3]$ there is a point $p \in P$ with $d(p, y + e_i) \leq \lambda D$. Here $P$ consists of $N(x_0), x_i + e_j$ and $x_{i,j} + e_k$ for suitable induces $i \not = j \not = k \not = i$, in particular $|P| = 15$.

\begin{lemma}\label{lemma4}Suppose we are given a 3-way set $\tw{\beta} = \cup_{i=1}^k A_i$ of diameter $C$, where diameters of sets $A_i$ are not greater than $\lambda r C$. Then there is constant $K_{k, r}$ (i.e. does not depend on $\lambda$ or $C$) such that $\tw{\beta}$ has a two-way subset of diameter at most $K_{k, r} \lambda C$. Further, we can take $K_{1, r} = r, K_{2,r} = 2 r + 8, K_{k+1, r} = K_{k, 2r+1}$ for all $r$ and $k \geq 2$.\end{lemma}

\begin{proof}[Proof of Lemma~\ref{lemma4}] We prove the Lemma by induction on $k$. When $k = 1$, there is nothing to prove, and $K_{1,r} = r$. Suppose $k = 2$.\\
Before we proceed, we need to establish:

\begin{lemma}\label{lemma5}Consider a 3-coloring of edges of complete graph $G$ whose vertex set consists of positive integers, namely $c: \{\{a,b\}: a\not=b, a,b \in \mathbb{N}\} \rightarrow [3]$. Then we can find sets $A, B$ whose union is $\mathbb{N}$, while for some colors $c_A, c_B$, we have $\diam_{c_A} G[A], \diam_{c_B} G[B] \leq 8$. (Here $\diam_{c_0}$ means diameter of graph induced by the color $c_0$.) Furthermore, we can assume $A$ and $B$ non-disjoint when $c_a \not=c_b$.\end{lemma}

\begin{proof}[Proof of Lemma~\ref{lemma5}] Let $x$ be any vertex. Define $A_i = \{a: c(a,x) = i\}$, for $i \in [3]$, the monochromatic neighbourhood of colour $i$ of $x$. We shall start by looking at sets $A_i$, if these do not yield the lemma, we shall look at similar candidates for $A, B$ until we find the right pair of sets. The following simple fact will play a key role: if $X, Y$ intersect and $\diam_c G[X], \diam_c G[Y]$ are both finite, then $\diam_c G[X\cup Y] \leq \diam_c G[X] + \diam_c G[Y]$.\\ 
 
Firstly, if any of the sets $A_i$ is empty, then taking $A_j \cup \{x\}$ and $A_k \cup \{x\}$ for the other two indices $j,k$ proves the lemma. Otherwise, we may assume that all $A_i$ are non-empty. The next idea is to try to `absorb' all the vertices into two of the sets $A_i$. To be more precise, let $B_{i,j} = \{a_i \in A_i: \forall a_j \in A_j, c(a_i, a_j) \not= j\}$ for distinct $i,j \in [3]$. Then, 
$$\diam_{i} \{x\} \cup A_i \cup (A_j \setminus B_{j,i}) \leq 4$$
for all distinct $i, j$ (which is what we meant by `absorbing vertices' above). Observe that if $\{i, j, k\} = [3]$ and $B_{j,i}$ and $B_{j,k}$ are disjoint, then $A_j \setminus B_{j,i}$ and $A_j \setminus B_{j,k}$ cover the whole $A_j$ so we can take $c_A = i, c_B = k$ and $A = \{x\} \cup A_i \cup (A_j \setminus B_{j,i}), B = \{x\} \cup A_k \cup (A_j \setminus B_{j,k})$. Hence, we may assume that $B_{j,i}$ and $B_{j,k}$ intersect, and that in particular these are non-empty.\\

Observe also that for $\{i,j,k\} = [3]$, if we are given $a_i \in B_{i,j}, a_j \in B_{j,i}$ then $c(a_i, a_j) \not= i, j$ so $c(a_i, a_j) = k$. This implies $\diam_k G[B_{i,j} \cup B_{j,i}] \leq 2$. We shall exploit this fact to finish the proof.\\

Now pick atbitrary $a_3 \in B_{3,1} \cap B_{3,2}$. If $c(a_1, a_3) = 3$ for some $a_1 \in B_{1,2}$, then $\diam_3(B_{1,2} \cup B_{2,1} \cup A_3 \cup \{x\}) \leq 5$ and $\diam_1(A_1 \cup (A_2 \setminus B_{2,1})\cup\{x\}) \leq 4$, so we are done. The same arguments works for $a_3$ and $B_{2,1}$, allowing us to assume that no edge between $B_{1,2} \cup B_{2,1}$ and $a_3$ is colored by 3. Therefore, since $a_3 \in B_{3,1} \cap B_{3,2}$, we have $c(B_{1,2}, a_3) = 2$ and $c(B_{2,1}, a_3) = 1$.\\

Recall that previously we tried to absorb the vertices of $A_1$ to $A_2$ to have a set of bounded diameter in colour 2, but this failed for the set $B_{1,2}$. Now, we have $c(B_{1,2}, a_3) = 2$, so we can once again try the same idea, by looking for an edge of colour 2 between $a_3$ and $A_1\setminus B_{1,2}$ (vertices of which are joined by an edge of colour 2 to something in $A_2$).\\
Suppose that $c(a_1, a_3) = 2$ for some $a_1 \in A_1 \setminus B_{1,2}$. Then $\diam_2(A_1 \cup A_2 \cup \{x\} \cup \{a_3\}) \leq 8$, and taking $A_3 \cup \{x\}$ for the other set, proves the lemma. Analogously, the lemma is proved if $c(a_2, a_3) = 1$ for some $a_2 \in A_2 \setminus B_{2,1}$.\\

Finally, since $a_3 \in B_{3,1} \cap B_{3,2}$, we may assume that $c(A_1 \setminus B_{1,2}, a_3) = 3$ and $c(A_2 \setminus B_{2,1}, a_3) = 3$. Observing that $\diam_3(B_{1,2} \cup B_{2,1}) \leq 2$ and $\diam_3 (\mathbb{N} \setminus B_{1,2} \setminus B_{2,1}) \leq 4$, completes the proof. \end{proof}

We refer to $\diam_c$ as the \emph{monochromatic diameter} for $c$.\\
Consider the complete graph on $\tw{\beta}$ along with a edge 3-colouring $c$, such that $x\cont{c(xy)}y$. Due to Lemma~\ref{lemma5}, we have sets $B_1, B_2$ whose union is $\tw{\beta}$, and their monochromatic diameters for some colors are at most 8, that is, by the triangle inequality $\diam (B_1 + e_{i_1}) \leq 8 \lambda C, \diam (B_2 + e_{i_2}) \leq 8 \lambda C$ for some $i_1, i_2$. If $i_1 = i_2$ we are done, hence we can assume these are different, and in fact without loss of generality $i_1 = 1, i_2 = 2$. If $A_1, A_2$ intersect, then diameter of union is not greater than $2r\lambda C$, proving the claim. Therefore, we shall consider only the situation when these are disjoint. Similarly, if $B_1 + e_1$ intersects both $A_1, A_2$, by triangle inequality, $\diam \tw{\beta} \leq (2r + 8) \lambda C$, so without loss of generality $B_1 + e_1 \subset A_1$. Depending on which of the two sets contains $B_2 + e_2$, we distinguish the following cases:
\begin{itemize}
\item \textbf{Case 1:} $A_1 \supset B_2 + e_2$.\\
We now claim that $A_1$ has a 2-way subset, whose diameter is then bounded by the diameter of $A_1$, which suffices to prove the claim. Suppose $a \in A_1$. Then $a \in B_i$ for some $i$, hence $a + e_1$ or $a + e_2$ is in $A_1$. If both are, there is nothing left to prove. Otherwise, the other point must be in $A_2$, say $a + e_1 \in A_1, a + e_2 \in A_2$. Suppose $a + e_3 \in A_2$ as well. Then $a + e_2 - e_1, a + e_3 - e_1 \in B_2$, thus $a + (-1, 2, 0),  a + (-1, 1, 1) \in A_1$, hence contracting $a, a - e_1 + e_2$ gives that $d(A_1, A_2) \leq \lambda C$. Otherwise $a + e_3 \in A_1$, hence we are done.
\item \textbf{Case 2:} $A_2 \supset B_2 + e_2$.\\
Color point by $i$ if it belongs to $A_i$. Such a colouring satisfies the hypothesis of the Lemma~\ref{strongcolLemma} since given a point $y$, either $y+e_1$ is colored by 1, or $y+e_2$ is colored by 2, and the second condition is also satisfied, (or after contraction we get $d(A_1, A_2) \leq \lambda C$ so done). Hence, we have a colouring that is essentially trivial, proving the claim.
\end{itemize}
Suppose the claim holds for some $k \geq 2$, and we have $k+1$ sets. As before, we can assume that these are disjoint and thus define coloring $c$, such that $y \in A_{c(y)}$. Further, we can assume that $d(A_i, A_j) > \lambda C$ for distinct $i, j$. Moreover, we have $A_i \cap \tw{\beta + (1,1,1)} \not=\emptyset$, as otherwise we are done by considering $\beta + (1,1,1)$ instead of $\beta$.\\
Let $z \in \tw{\beta}$. Define \emph{signature} of $z$ as $\sigma(z) = (c(z+e_1), c(z+e_2), c(z+e_3))$. By the discussion above given $i \in [k +1], l\in[3]$ we have a point $z$ such that $\sigma(z)^{(l)} = i$. Also, whenever $z, z'$ are two points in our 3-way set, we must have $\sigma(z)^{(i)} = \sigma(z')^{(i)} $ for some $i$, for otherwise we violate the condition on the distance between the sets $A_j$.\\
Let $(a,b,c)$ be a signature. Suppose there was another signature $(p, d, e)$, where $b \not= d, c \not= e$, which implies $p = a$. Since $k+1 \geq 3$, there are signatures $(g_1, h_1, j_1), (g_2, h_2, j_2)$, where $g_1, g_2, a$ are distinct. Then $(h_1, j_1) = (h_2, j_2) \in \{(b,e), (d,c)\}$, without loss of generality these are $(b,e)$. Hence, for any $z$ we have $\sigma(z)^{(2)} = b$ or $\sigma(z)^{(3)} = e$. Now, define a new colouring $c'$ of $\tw{\beta}$, if a point $p$ was coloured by $b$ set $c'(p) = 1$, if it was coloured by $e$ set $c'(p) = 2$ otherwise $c'(p) = 3$. Recalling the previous observations we see that $c'$ satisfies the necessary assumptions in the Lemma~\ref{strongcolLemma}, and apply it (formally change the coordinates first) to finish the proof.\\
Otherwise, any two signatures must coincide at at least two coordinates. In particular, only possible ones are $(\cdot,b,c), (a,\cdot,c), (a,b,\cdot)$, where instead of a dot we can have any member of $[k+1]$. If $a\not=b,c$, we have that $\sigma(z + (1,0,-1)) = (a,b,a)$ and $\sigma(z + (1,-1,0)) = (a,a,c)$. Thus $\sigma(z + (2,-1,-1))^{(2)} = \sigma(z + (2,-1,-1))^{(3)} = a$, which is impossible. Similarly $b \in \{a,c\}, c\in\{a,b\}$ hence $a=b=c$, and so $A_a$ is a two-way set with the wanted diameter.\end{proof}

By Lemma~\ref{lemma4}, there is a 2-way set $T$ with $\diam T \leq K_{15, 2}\lambda D$. Setting $s = 3$, we have $K_{15, s-1} = K_{14, 2s-1} = K_{13, 2^2 s-1} = \dots = K_{2, 2^{13}s - 1} = 2^{14}.3 + 6 = 49158$, as wanted.\end{proof}

We say that a set of points of the grid $Q$ is a \emph{quarter-plane} if there are distinct $i_1,i_2 \in [3]$ such that $Q = \{t + a e_{i_1} + b e_{i_2}: a,b \in \mathbb{N}_0\}$, for some point $t$.

\begin{proposition}\label{2wayqpProp}Suppose $\lambda < 1/4$ and there is a 2-way set $S$ of diameter $D$. Provided $m_1 = \inf_{s\in S} \rho(s) > D(2 + \lambda)$, $S$ contains a quarter-plane subset $Q$.\end{proposition}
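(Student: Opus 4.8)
\emph{Proof plan.} The plan is to use that $\rho$ is enormous compared with $D=\diam S$ everywhere on $S$. Since $S$ is $2$-way, each $s\in S$ has a unique direction $m(s)\in[3]$ with $s+e_{m(s)}\notin S$, while $s+e_j\in S$ for the other two values of $j$; for those $j$ we get $d(s,s+e_j)\le D$, whereas $\rho(s)\ge m_1>D(2+\lambda)>D$, so the maximum defining $\rho(s)$ is attained at $m(s)$. Hence $d(s,s+e_{m(s)})=\rho(s)>D(2+\lambda)$ and $d(s,s+e_j)\le D$ for $j\ne m(s)$. The whole statement then reduces to the claim that $m$ is unchanged by a permitted step, i.e. $m(s+e_j)=m(s)$ whenever $s\in S$ and $j\ne m(s)$: granting this, fix any $s_0\in S$, put $k=m(s_0)$ and $\{i_1,i_2\}=[3]\setminus\{k\}$, and an immediate induction on $a+b$ shows $s_0+ae_{i_1}+be_{i_2}\in S$ for all $a,b\ge 0$, so this set is a quarter-plane contained in $S$.

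To prove $m(s+e_j)=m(s)$ I would isolate two computations. First, two points of $S$ with different missing directions can only be contracted in the one remaining direction: if $s,s'\in S$ with $m(s)\ne m(s')$ and the family contracts $(s,s')$ in direction $m(s)$, then $d(s+e_{m(s)},s'+e_{m(s)})\le\lambda D$, and since $s'+e_{m(s)}\in S$ we obtain $\rho(s)\le d(s,s')+d(s',s'+e_{m(s)})+d(s'+e_{m(s)},s+e_{m(s)})\le D+D+\lambda D=D(2+\lambda)$, a contradiction (and symmetrically the direction cannot be $m(s')$). Second, if $s\in S$, $j\ne m(s)$, and $(s,s+e_j)$ happens to be contracted in direction $m(s)$, then $d(s+e_j,s+e_j+e_{m(s)})\ge d(s,s+e_{m(s)})-d(s,s+e_j)-d(s+e_{m(s)},s+e_j+e_{m(s)})>D(2+\lambda)-D-\lambda D=D$, so $s+e_j+e_{m(s)}\notin S$ and hence $m(s+e_j)=m(s)$. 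It remains only to treat the case where $(s,s+e_j)$ is contracted in a direction $\ne m(s)$, and this I would do by a short case analysis: writing $m(s)=3$, $j=1$ and assuming $m(s+e_1)\in\{1,2\}$, one first records the membership relations forced on the nearby lattice points (for example $m(s+e_1)=2$ forces $s+e_1+e_2\notin S$ and hence $m(s+e_2)=1$), then contracts one or two further edges among those points; since $\lambda<1/4$, each such contraction contributes only error terms of size $\lambda D$, $2\lambda D$, $\lambda^2 D,\dots<D$, which when fed back through the triangle inequality collide with one of the lower bounds $\rho(\cdot)>D(2+\lambda)$ and give a contradiction.

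This case distinction is the main obstacle: the two computations above dispose of the situation in which the contraction direction cooperates with $m(s)$, but excluding a non-cooperating contraction direction forces one to track several grid points at once, and is exactly where the hypotheses $\lambda<1/4$ and $m_1>D(2+\lambda)$ must be used together. (If some branch failed to close at once it would produce another point of $S$ of the same forbidden type with a strictly smaller governing distance, so iterating would either terminate or manufacture a $1$-way Cauchy sequence, which the standing hypothesis of Proposition~\ref{keyprop} forbids.) Once $m$ is known to be invariant under permitted steps, the quarter-plane construction of the first paragraph completes the proof.
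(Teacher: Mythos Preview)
Your central claim---that the missing-direction function $m$ satisfies $m(s+e_j)=m(s)$ for every permitted step---is false in general, and this is exactly where the argument breaks. The paper's own proof splits into two cases, and its Case~1 is precisely the situation in which $m$ attains all three values on (a spreading subset of) $S$; since every point of such a subset is reached from the root by permitted steps, $m$ cannot be step-invariant there. Concretely, take your scenario $m(s)=3$, $j=1$, $m(s+e_1)=2$. Your first computation correctly forces the contraction direction of $(s,s+e_1)$ to be $1$, giving $d(s+e_1,s+2e_1)\le\lambda D$; likewise $m(s+e_2)=1$ and the pair $(s,s+e_2)$ contracts in direction $2$, while $(s+e_1,s+e_2)$ contracts in direction $3$. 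None of these produce a contradiction, nor do they produce ``another point of the same forbidden type with a strictly smaller governing distance'' in any evident sense---they just accumulate small edges. The hoped-for short case analysis does not close.

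What the paper actually does is global rather than local. Either (Case~2) some direction $i$ satisfies $s+e_i\in S$ for all $s\in S$; then following the other two directions from any point gives a one-way path along which either $m$ is constant (yielding a quarter-plane) or both remaining values occur (and contracting the resulting parallel $e_3$-rays manufactures a $1$-way Cauchy sequence, contradiction). Or (Case~1) all three values of $m$ occur; then your first computation shows that the ``entry sets'' $A_l=\{t+e_l:t\in S\}$ each have diameter $\le 2\lambda D$, and a further argument forces them to pairwise intersect (disjointness again yields a $1$-way Cauchy sequence), so their union is a $2$-way subset of diameter $\le 4\lambda D$. One now iterates on this smaller $2$-way set: if Case~1 held forever the diameters would shrink to $0$ and one could thread a $1$-way Cauchy sequence through the nested sets, contradiction; hence Case~2 must occur at some stage, delivering the quarter-plane. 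Your fallback gestures at iteration but does not identify the right object to iterate on; the paper iterates on the diameter of a shrinking $2$-way subset, not on a single forbidden edge.
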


\begin{proof} Without loss of generality, we can assume that $S$ has a point $p$ such that $S\subset \tw{p}$, and all points $s$ of $S$ except $p$ have a unique point $s'$ such that $s \in N(s')$. This is because we can always pick such subset of $S$, and it suffices to prove the statement in such a situation. We say that such a $k$-way set is \emph{spreading} (from $p$).
\begin{itemize}
\item\emph{Case 1}. For all $i \in [3]$, there is $x$ with $x+e_i$ not in $S$. \\
Let $x, y \in S$ be points such that $x+e_i, y+e_j \not\in S$, for $i,j$ distinct. Take $k$ so that $\{i,j,k\} = [3]$. Then if $x\cont{i}y$, by triangle inequality we have $m_1 \leq d(x, x + e_i) \leq d(x,y) + d(y, y +e_i) + d(y+e_i, x+e_i) \leq (2+\lambda) D$, which is contradiction. Similarly we drop the possibility of $x\cont{j} y$ happening, hence $x \cont{k} y$. Hence, if we define $A_l = \{s\in S: s = t +e_l$ for some $t \in S\}$, these are all of diameter $\leq 2 \lambda D$.\\
Suppose $A_1$ and $A_2$ are disjoint. Consider $x$ such that $x + e_3 \notin S$. If $x + e_1 + e_2 \in S$, it is both in $A_1, A_2$, which is impossible. Hence, we have that $x+e_1+e_3, x+e_2+e_3 \in S$, thus $x+e_3 +e_1 + e_2$ is not in $S$, so we can repeat the argument, to get all the $x + (1,0,n)$ and $x + (0,1,n)$ in $S$. Now, by triangle inequality, we must have $x + (1,0,n) \cont{3} x + (0, 1, n), x + (1,0,n) \cont{3} x + (0, 1, n+1)$, for all non-negative $n$, so $(x + (1,0,n))_{n \geq 1}$ is Cauchy, which is contradiction. Thus $A_1, A_2$ intersect, and similarly $A_1$ and $A_2$ intersect $A_3$, therefore, take $T$ to be union of these, which is thus 2-way (as every point of $S$ belongs to some $A_i$, except the starting one), and has $\diam T \leq 4\lambda D$.\\
\item\emph{Case 2}. Suppose that there is $i$ such that for any $x \in S$, $x + e_i$ is in $S$.\\
Without loss of generality, we assume $i=3$. Pick any $x_0$ in $S$ and set $a = (x_0)^{(3)}$. Thus, starting at $x_0$ we can form the sequence $(x_n)_{n\geq0}$ such that $\{x_{n+1}\} = S \cap \{x_n + e_1, x_n + e_2\}$. Suppose we have $x, y$ among these such that $x + e_1, y + e_2 \in S$. Hence, $x + (1,0,n), x + (0,0,n), y + (0,1,n), y + (0,0,n)$ belong to $S$ for all nonnegative $n$, thus $x + (0,1,n), y + (1,0,n)$ are never elements of $S$. Now, contracting pairs $x + (0,0,n), y + (0,0,n)$ and $x + (0,0,n+1), y + (0,0,n)$ gives 1-way Cauchy sequence as in the Case 1. If there are no such $x,y$ then we have that $S$ contains quarter-plane.\\
\end{itemize}
Therefore, if we ever get into Case 2, we are done. Hence, let $S_1 = S$, then by case 1, we have a 2-way $S_2$ subset of $S_1$, which we can assume to be spreading, by the same arguments as those for the set $S$. It also satisfies the necessary hypothesis of this claim, so we can apply the Case 1 once more to obtain 2-way set $S_3 \subset S_2$. Proceeding in the same manner, we obtain a sequence of spreading 2-way sets $S_1 \supset S_2 \supset \dots$, whose diameters tend to zero, so just pick a point in each of them, and then find a  1-way Cauchy sequence containing these to reach a contradiction.\end{proof}
\begin{proposition}\label{qpProp}Let $\{i_1, i_2, i_3\} = [3]$. Suppose we have a quarter-plane $S = \{\alpha + m e_{i_1} + n e_{i_2}: m,n \in \mathbb{N}_0\}$, of diameter $D$, and let $R = \inf_S \rho$. Provided $\lambda < 1/3$ and $D(1-\lambda^2) < (1-4\lambda)R$, there is a 3-way set of diameter at most $2\lambda(\frac{2}{1-\lambda}D + \frac{1+2\lambda}{1-\lambda}R)$.\end{proposition}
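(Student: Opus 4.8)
The plan is to start with the quarter-plane $S$, which already varies in two of the three directions (say $e_{i_1}$ and $e_{i_2}$), and to ``promote'' it to a full three-way set by building, above each point of a suitable sub-quarter-plane, a one-way segment in the missing direction $e_{i_3}$. The raw material for this is Lemma~\ref{1contrLemma}: for any point $x$ there is a one-way set through $x$ all but finitely many of whose points lie within $\rho(x)/(1-\lambda)+\epsilon$ of $x$. Since every point $s\in S$ has $\rho(s)\ge R$ large relative to $D$, contractions of pairs inside $S$ (or of a point of $S$ against a nearby point) will tend to fall in the ``missing'' direction, exactly as in the case analyses of Propositions~\ref{2wayqpProp} and~\ref{32prop}: if a pair $x,y\in S$ were contracted in direction $e_{i_1}$ or $e_{i_2}$, then FNI-type triangle inequalities together with $d(x,y)\le D$ and $\rho\ge R$ would force $R \le (2+\lambda)D$ or similar, contradicting the hypothesis $D(1-\lambda^2)<(1-4\lambda)R$. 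So all the relevant contractions happen in direction $e_{i_3}$, and this is what lets us grow the third dimension in a controlled way.

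Concretely, first I would fix a corner $\alpha' = \alpha + \text{(a few)}\,e_{i_1} + \text{(a few)}\,e_{i_2}$ deep enough inside $S$ that the finitely many exceptional points in the various applications of Lemma~\ref{1contrLemma} are harmless, and work with the sub-quarter-plane $S' = \{\alpha' + m e_{i_1} + n e_{i_2}\}$. For each $s\in S'$ I apply Lemma~\ref{1contrLemma} to get a one-way set $L_s\ni s$ whose tail lies within $\tfrac{1}{1-\lambda}\rho(s) + \epsilon$ of $s$; the point is then to argue that these one-way sets can be taken in the direction $e_{i_3}$ and that they align with the quarter-plane directions, i.e.\ that $L_s$ together with the $e_{i_1},e_{i_2}$-translations inside $S'$ assemble into a genuine three-way set $\tw{\gamma}$ for some $\gamma$ close to $\alpha'$. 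The alignment step is where the contraction-direction dichotomy above is used: whenever we try to extend in the ``wrong'' direction we instead extract a contradiction with the $D,R$ inequality, so the only consistent way to grow is the intended one.

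For the diameter bound: a point of the resulting three-way set is reached from the base corner by a walk using steps $e_{i_1},e_{i_2}$ (staying inside $S$, contributing at most $D$) followed by steps $e_{i_3}$ (staying inside some $L_s$, contributing at most $\tfrac{1}{1-\lambda}R + \tfrac{2\lambda}{1-\lambda}R$-type terms after accounting for how $\rho$ on $S$ relates to $R$ and for the shift from $s$ back to the corner). Collecting these via the triangle inequality, and then noting that the three-way set we really want is $\tw{\gamma'}$ with $\gamma'$ pushed one more step into this set so that \emph{every} point is itself obtained from a contraction (gaining the factor $\lambda$), gives the claimed bound $2\lambda\bigl(\tfrac{2}{1-\lambda}D + \tfrac{1+2\lambda}{1-\lambda}R\bigr)$; the coefficients $2D$ and $(1+2\lambda)R$ inside the parentheses are precisely a ``two quarter-plane moves'' term and a ``one-way tail plus correction'' term, and the outer $2\lambda$ reflects passing to the shifted corner.

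The main obstacle I expect is the alignment/consistency step: showing that the one-way sets $L_s$ produced by Lemma~\ref{1contrLemma} for different base points $s$ can be chosen coherently so that their union with $S'$ is literally of the form $\{\gamma + k : k \in \grid\}$, rather than merely a set that is three-way ``locally''. This requires running the contraction argument not once but along the whole quarter-plane and checking at each added point that the forced contraction direction is $e_{i_3}$ — i.e.\ that the hypothesis $D(1-\lambda^2) < (1-4\lambda)R$ is exactly strong enough to survive the accumulated triangle-inequality losses as one moves across $S$ and up the third direction. Everything else (the FNI estimates, the final diameter arithmetic, the harmlessness of finitely many exceptional points) is routine given the earlier lemmas.
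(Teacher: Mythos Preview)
Your intuition that contractions are forced into the missing direction $e_{i_3}$ is exactly right, but the mechanism you propose around it is not the one that works, and the ``alignment'' problem you flag is a symptom of this.

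The paper does not use Lemma~\ref{1contrLemma} here at all, and it does not assemble one-way sets $L_s$. Instead it takes the three-way set to be simply $\tw{\alpha + 2e_{i_3}}$ and bounds its diameter directly. The argument runs as follows (with $i_3 = 1$): first observe that for every $s \in S$ we have $\rho(s) = d(s, s+e_1)$, since $s+e_2, s+e_3 \in S$ have distance at most $D < R \le \rho(s)$ from $s$. Using this, one shows that every $y\in S$ has $\rho(y) < 2R$. The heart of the proof is then an \emph{induction on $k$} showing that $y \cont{1} y + k e_1$ for every $y\in S$ and every $k\ge 1$, with two-sided estimates $\rho(y)\tfrac{1-2\lambda}{1-\lambda} < d(y, y+ke_1) < \tfrac{\rho(y)}{1-\lambda}$. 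Finally, one shows that for any fixed $x\in S$ and any $y\in S$, $k\ge 1$, the pair $x, y+ke_1$ is contracted in direction $1$; measuring everything from $x+e_1$ then gives the stated diameter for $\tw{\alpha + 2e_1}$.

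Your sketch misses precisely this inductive step. Your contraction-direction argument (``if a pair $x,y\in S$ were contracted in direction $e_{i_1}$ or $e_{i_2}$\dots'') only handles pairs inside $S$; once you have stepped off $S$ in direction $e_{i_3}$, the point $y + k e_{i_3}$ is no longer in $S$, and you need a new reason why its contraction with $y$ (or with any $x\in S$) is still in direction $e_{i_3}$. That reason is the two-sided bound on $d(y, y+ke_1)$ maintained by the induction, together with $\rho(y)<2R$. Routing the construction through Lemma~\ref{1contrLemma} obscures this and creates your alignment problem, which the direct choice $\tw{\alpha+2e_{i_3}}$ makes disappear.
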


\begin{proof} Without loss of generality $i_3 = 1$. Observe that for any point $s \in S$ we must have $\rho(s) = d(s, s+e_1)$. The reason for this is that both $s+e_2, s+e_3 \in S$ and so $d(s, s+e_2), d(s, s+e_3) \leq \diam S = D$, but $\max \{d(s, s+e_1), d(s, s+e_2), d(s, s+e_3)\} = \rho(s) \geq R > D$.\\
Let $x_n \in S$ be a point with $\rho(x_n) < (1 + 1/n)R \leq 2R$. As $\lambda < 1/2$, we must have $x_n \cont{1} x_n + e_1$. Furthermore, suppose $i \not=1$ contracts $y, x_n + e_1$, for some point $y$ in $S$. Thus $x_n + e_i \in S$ and so $\rho(x_n+e_i) = d(x_n+e_i, x_n+e_1 + e_i)$. Then, by triangle inequality, we have $\rho(x_n+e_i) = d(x_n+e_i, x_n+e_1 + e_i) \leq d(x_n+e_i, y + e_i)+d(y+e_i, x_n+e_1 +e_i) \leq \lambda d(y, x_n + e_1) + D \leq \lambda 2R + (1 + \lambda) D < R$, therefore it must be $y \cont{1} x_n + e_1$. Hence $\rho(y) \leq d(y, x_n) + d(x_n, x_n + 2e_1) + d(x_n + 2e_1, y + e_1)\leq D + R(1 + 1/n) (1 + \lambda) + \lambda (D + R(1+ 1/n))$, for all $n$, hence $\rho(y) \leq D(1 + \lambda) + R(1+2\lambda) < 2R$.\\
Now we claim that for all $y \in S$, and all $k \geq 1$, we have $y \cont{1} y + k e_1$, which we prove by induction on $k$. For $k = 1$, we're done as otherwise there is $y$ with $\rho(y) < 2\lambda R < R$.\\
Suppose the claim holds for some $k \geq 1$. Then for any $y$ and $l \leq k+1$ we have $d(y, y + le_1) \leq d(y, y + e_1) + d(y + e_1, y + le_1) \leq \rho(y) + \lambda d(y, y + (l-1)e_1) \leq \dots \leq \rho(y) (1 + \lambda + \dots + \lambda^{l-1}) < \rho(y) / (1-\lambda)$. Also, $d(y, y + le_1) \geq d(y, y +e_1) - d(y + e_1, y + l e_1) \geq \rho(y) - \lambda d(y, y + (l-1)e_1) > \rho(y) \frac{1-2\lambda}{1-\lambda}$. As $\lambda < 1 - 2\lambda$, we have that 1 always contracts $y, y+(k+1)e_1$. In particular $\rho(y) \frac{1-2\lambda}{1-\lambda} < d(y, y +k e_1) < \rho(y) / (1- \lambda)$.\\
Fix any $x\in S$. Now, suppose $x \cont{i} y + ke_1$ for some $i \not = 1$. Then $R \frac{1-2\lambda}{1-\lambda}\leq \rho(y+e_i)\frac{1-2\lambda}{1-\lambda} < d(y + e_i, y + e_i + ke_1) \leq d(y + e_i, x + e_i) + \lambda (d(x, y) + d(y, y +ke_1)) \leq D(1 + \lambda) + \lambda \rho(y) / (1-\lambda)< (1+\lambda)D + \frac{2\lambda}{1-\lambda}R$, which is a contradiction. Hence, by looking at distance from $x + e_1$, we see that $\diam \{\alpha + (a,b,c): a \geq 2, b,c \geq 0\} \leq 2 \lambda(D + D(1 + \lambda)/(1-\lambda) + R(1+2\lambda)/(1-\lambda))$, as required. \end{proof}

In order to make the calculations throughout the proof easier, we use the following corollary instead.

\begin{corollary}\label{23Cor}Suppose we have a 2-way set $S$ of diameter $D$, and $R = \inf_{s\in S} \rho(s)$. Provided $\lambda < 1/9$ and $R > (2 + \lambda) D$, there is a 3-way set of diameter at most $6\lambda R$. \end{corollary}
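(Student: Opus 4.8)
The plan is to deduce Corollary~\ref{23Cor} from Proposition~\ref{2wayqpProp} and Proposition~\ref{qpProp} by a straightforward chaining, checking only that the hypotheses of the latter two follow from the (slightly stronger) hypotheses of the corollary. So first I would assume $\lambda < 1/9$ and $R > (2+\lambda)D$, where $R = \inf_{s\in S}\rho(s)$ and $D = \diam S$. Since $\lambda < 1/9 < 1/4$ and $R > (2+\lambda)D = D(2+\lambda)$, the hypotheses of Proposition~\ref{2wayqpProp} are met, so $S$ contains a quarter-plane subset $Q$. Note $\diam Q \leq \diam S = D$ and, since $Q \subseteq S$, we have $\inf_{s\in Q}\rho(s) \geq \inf_{s\in S}\rho(s) = R$.

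Next I would feed $Q$ into Proposition~\ref{qpProp}. Writing $D' = \diam Q \leq D$ and $R' = \inf_Q \rho \geq R$, I need $\lambda < 1/3$ (clear, as $\lambda < 1/9$) and $D'(1-\lambda^2) < (1-4\lambda)R'$. For this it suffices to check $D(1-\lambda^2) < (1-4\lambda)R$, which in turn follows from $R > (2+\lambda)D$ once one verifies the elementary inequality $(1-\lambda^2)(2+\lambda)^{-1} < 1-4\lambda$ for $\lambda < 1/9$ — equivalently $1-\lambda^2 < (1-4\lambda)(2+\lambda)$, i.e. $1 - \lambda^2 < 2 - 7\lambda - 4\lambda^2$, i.e. $3\lambda^2 + 7\lambda - 1 < 0$, which holds comfortably for $\lambda < 1/9$ (the positive root is well above $1/9$). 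Hence Proposition~\ref{qpProp} applies and yields a 3-way set of diameter at most $2\lambda\left(\tfrac{2}{1-\lambda}D' + \tfrac{1+2\lambda}{1-\lambda}R'\right)$.

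Finally I would bound this quantity by $6\lambda R$. Using $D' \leq D < R/(2+\lambda) < R$ and $R' $ — here I must be slightly careful, since $R'$ could a priori be larger than $R$. However, the bound on the 3-way set's diameter is increasing in both $D'$ and $R'$ only through the combination; to get a clean estimate I would instead note that every point $q$ of $Q$ satisfies $\rho(q) \leq d(q,q+e_1)$ only in one coordinate, but more simply: the relevant $x_n$ and $y$ in the proof of Proposition~\ref{qpProp} all have $\rho$-value under $2R'$, and the final diameter is at most $2\lambda\bigl(\tfrac{2}{1-\lambda}D' + \tfrac{1+2\lambda}{1-\lambda}R'\bigr)$. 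Rather than tracking $R'$ versus $R$, the cleanest route is: apply Proposition~\ref{qpProp} with its own $R' = \inf_Q\rho$, obtaining diameter $\leq 2\lambda\bigl(\tfrac{2D'}{1-\lambda} + \tfrac{(1+2\lambda)R'}{1-\lambda}\bigr)$, then use $D' < R'/(2+\lambda)$ to get this is at most $2\lambda R'\bigl(\tfrac{2}{(1-\lambda)(2+\lambda)} + \tfrac{1+2\lambda}{1-\lambda}\bigr)$, and check that $\tfrac{2}{(1-\lambda)(2+\lambda)} + \tfrac{1+2\lambda}{1-\lambda} \leq 3$ for $\lambda < 1/9$ — this reduces to $2 + (1+2\lambda)(2+\lambda) \leq 3(1-\lambda)(2+\lambda)$, i.e. $2 + 2 + 5\lambda + 2\lambda^2 \leq 6 - 3\lambda - 3\lambda^2$, i.e. $5\lambda^2 + 8\lambda \leq 2$, true for $\lambda < 1/9$. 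This gives diameter $\leq 6\lambda R' $. The remaining subtlety is that we want the bound in terms of $R = \inf_S\rho$, not $R' = \inf_Q\rho \geq R$; but since the statement only asserts the existence of a 3-way set of diameter at most $6\lambda R$, and $R \le R'$ would give the wrong direction, I should instead observe that Proposition~\ref{qpProp}'s conclusion can equally be stated with any lower bound for $\inf_Q\rho$ in place of $R'$ as long as it still satisfies the hypothesis — in particular with $R$ itself, since $R \leq R'$ and $R$ satisfies $D(1-\lambda^2) < (1-4\lambda)R$; re-examining its proof, replacing $R'$ by the smaller $R$ throughout only strengthens the needed inequalities in the estimates where $\rho < 2R'$ was used as an upper bound — so I would simply note that the proof of Proposition~\ref{qpProp} gives a 3-way set of diameter $\leq 2\lambda\bigl(\tfrac{2D}{1-\lambda} + \tfrac{(1+2\lambda)\cdot 2R'}{\,}\bigr)$ hmm. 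The main obstacle, then, is this bookkeeping around which $\rho$-infimum appears where; the honest fix is to invoke Proposition~\ref{qpProp} with $R' = \inf_Q \rho$, obtain diameter $\leq 6\lambda R'$, and then, since the \emph{entire} situation can be reset by replacing $S$ with a sub-2-way-set on which $\rho$ is within a factor $(1+\epsilon)$ of $R = \inf_S\rho$ (such a set exists and still has diameter $\leq D$), conclude diameter $\leq 6\lambda(1+\epsilon)R$ for all $\epsilon>0$, hence $\leq 6\lambda R$ after noting we may instead get it strictly below and absorb — or, most simply, prove the marginally weaker ``$< 7\lambda R$'' which is all that is ever used. I expect the arithmetic verifications to be entirely routine; the only real care needed is the $\inf$-over-$Q$ versus $\inf$-over-$S$ point just described.

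\begin{proof}[Proof of Corollary~\ref{23Cor}] Assume $\lambda < 1/9$ and $R > (2+\lambda)D$, where $R = \inf_{s\in S}\rho(s)$. Since $\lambda < 1/4$ and $R > D(2+\lambda)$, Proposition~\ref{2wayqpProp} applies and gives a quarter-plane subset $Q \subseteq S$. Let $D' = \diam Q$ and $R' = \inf_{q\in Q}\rho(q)$; then $D' \leq D$ and $R' \geq R$, so also $R' > (2+\lambda)D'$. We have $\lambda < 1/3$, and from $R' > (2+\lambda)D'$ together with $3\lambda^2 + 7\lambda < 1$ (which holds for $\lambda < 1/9$) we get $D'(1-\lambda^2) < (1-4\lambda)R'$, so Proposition~\ref{qpProp} applies to $Q$ and yields a 3-way set of diameter at most
$$2\lambda\left(\frac{2}{1-\lambda}D' + \frac{1+2\lambda}{1-\lambda}R'\right) \leq 2\lambda R'\left(\frac{2}{(1-\lambda)(2+\lambda)} + \frac{1+2\lambda}{1-\lambda}\right),$$
using $D' < R'/(2+\lambda)$. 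Since $5\lambda^2 + 8\lambda < 2$ for $\lambda < 1/9$, one checks $\frac{2}{(1-\lambda)(2+\lambda)} + \frac{1+2\lambda}{1-\lambda} \leq 3$, so this 3-way set has diameter at most $6\lambda R'$. Finally, for any $\epsilon > 0$ we may instead start from a spreading 2-way subset $S_\epsilon \subseteq S$ with $\inf_{s\in S_\epsilon}\rho(s) < (1+\epsilon)R$ and $\diam S_\epsilon \leq D$ (such a set exists by the arguments in the proof of Proposition~\ref{2wayqpProp}); running the above with $S_\epsilon$ in place of $S$ produces a 3-way set of diameter at most $6\lambda(1+\epsilon)R$. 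As $\epsilon > 0$ is arbitrary and a 3-way set of diameter $< 6\lambda(1+\epsilon)R$ is obtained for each such $\epsilon$, we conclude there is a 3-way set of diameter at most $6\lambda R$.\end{proof}
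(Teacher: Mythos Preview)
Your overall plan --- chain Proposition~\ref{2wayqpProp} and Proposition~\ref{qpProp} and then simplify the resulting bound --- is exactly the paper's. The paper's proof in fact just writes the output of Proposition~\ref{qpProp} with the original $D$ and $R$, checks $2\lambda\bigl(\tfrac{2}{1-\lambda}D + \tfrac{1+2\lambda}{1-\lambda}R\bigr) < \lambda(5D + 3R) < 6\lambda R$, and stops; it does not comment on the distinction between $R = \inf_S\rho$ and $R' = \inf_Q\rho$ that you (correctly) flag.

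Your attempted repair of this point, however, does not work. First, passing to a 2-way subset $S_\epsilon \subseteq S$ containing a point with $\rho$ close to $R$ does not force $\inf_{Q_\epsilon}\rho$ to be close to $R$ for the quarter-plane $Q_\epsilon$ that Proposition~\ref{2wayqpProp} eventually produces: the proof of that proposition may iterate its Case~1 arbitrarily many times --- each time replacing the current 2-way set by a proper 2-way subset on which $\inf\rho$ can increase --- before ever reaching Case~2, and the quarter-plane is only extracted in Case~2. So nothing guarantees $\inf_{Q_\epsilon}\rho < (1+\epsilon)R$. Second, even granting that step, the final limit argument is invalid: having, for each $\epsilon>0$, \emph{some} 3-way set of diameter below $6\lambda(1+\epsilon)R$ does not yield a single 3-way set of diameter at most $6\lambda R$, since the 3-way sets for different $\epsilon$ may all be different.
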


\begin{proof} Firstly, apply the Proposition~\ref{2wayqpProp} to find a quarter-plane inside the given 2-way set. Since $R(1-4\lambda) > R/(2+\lambda) > D > (1-\lambda^2) D$ and $\lambda < 1/3$, we can apply the Proposition~\ref{qpProp}, to obtain a 3-way set of diameter at most $2\lambda(\frac{2}{1-\lambda}D + \frac{1+2\lambda}{1-\lambda}R)$. An easy calculation shows that this expression is smaller than $\lambda(5D + 3R) < 6\lambda R$.\end{proof}

Recall that we defined $\mu = \inf_x \rho(x)$, where $x$ ranges over whole grid. Recall also that $\mu > 0$ by the Proposition~\ref{mupos}.

\begin{proposition}\label{bounded3Prop}Given $K$, provided $1 > (2 + \lambda)\lambda K C_1$, all 3-way sets of have diameter greater than $K \mu$.\end{proposition}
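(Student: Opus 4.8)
The plan is to argue by contradiction: suppose some three-way set $\tw{\alpha}$ has diameter $D \le K\mu$. The first thing I would record is the elementary observation that any point $s$ lying in a three-way set of diameter $D$ automatically satisfies $\rho(s) \le D$, since $s+e_1, s+e_2, s+e_3$ all belong to that set and hence each $d(s,s+e_i)$ is at most $D$. In particular, for every $s \in \tw{\alpha}$ we have $\mu \le \rho(s) \le D \le K\mu$, the lower bound holding because $\mu = \inf_x \rho(x)$ is an infimum over the whole grid.

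Next I would apply Proposition~\ref{32prop} to $\tw{\alpha}$ to extract a two-way subset $S \subseteq \tw{\alpha}$ with $\diam S \le \lambda C_1 D$. Set $R = \inf_{s \in S}\rho(s)$; by the observation above, $\mu \le R \le D \le K\mu$. To apply Corollary~\ref{23Cor} to $S$ I need $\lambda < 1/9$ and $R > (2+\lambda)\diam S$. Both follow from the hypothesis $(2+\lambda)\lambda K C_1 < 1$: it gives $\lambda K C_1 < 1/2$, hence $\lambda < 1/(2C_1) < 1/9$ (using $K \ge 1$ and $C_1 = 49158$), and also $(2+\lambda)\diam S \le (2+\lambda)\lambda C_1 D \le (2+\lambda)\lambda C_1 K\mu < \mu \le R$. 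Thus Corollary~\ref{23Cor} produces a three-way set $T$ of diameter at most $6\lambda R$.

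Finally I would use $R \le D \le K\mu$ together with $6\lambda K < 1$ (again from $\lambda K C_1 < 1/2$, since then $6\lambda K < 6/(2C_1) < 1$) to conclude $\diam T \le 6\lambda R \le 6\lambda K\mu < \mu$. But then any point $t \in T$ satisfies $\rho(t) = \max_i d(t, t+e_i) \le \diam T < \mu$, contradicting $\mu = \inf_x \rho(x)$. (Alternatively one could iterate Corollary~\ref{23Cor} to get three-way sets of diameters $D, 6\lambda D, (6\lambda)^2 D, \dots \to 0$ and then build a $1$-way Cauchy sequence through a point chosen in each, but a single application already suffices.) I do not anticipate any genuine obstacle: the only thing requiring care is checking that the three numerical constraints used ($\lambda < 1/9$, $R > (2+\lambda)\diam S$, and $6\lambda K < 1$) are all consequences of the single stated hypothesis $(2+\lambda)\lambda K C_1 < 1$, which is precisely the bookkeeping the hypothesis was chosen to make work.
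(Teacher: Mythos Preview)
Your proof is correct and follows essentially the same route as the paper's: assume a three-way set of diameter $D\le K\mu$, extract a two-way subset of diameter $\le\lambda C_1 D$ via Proposition~\ref{32prop}, apply Corollary~\ref{23Cor} to obtain a three-way set of diameter $\le 6\lambda R\le 6\lambda K\mu<\mu$, and read off $\rho<\mu$ for a contradiction. The paper dismisses the case $K<1$ in one phrase at the outset, whereas you implicitly cover it by the chain $\mu\le R\le D\le K\mu$ forcing $K\ge1$ under the contradiction hypothesis; otherwise the arguments coincide, yours simply spelling out the numerical checks more carefully.
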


\begin{proof} Clear for $K < 1$, so assume $K \geq 1$ and in particular $\lambda < 1/9$. Suppose contrary, let $T$ be a 3-way set of diameter $D \leq K\mu$. By the Proposition~\ref{32prop}, we know that there is a 2-way set $S \subset T$, with $\diam S \leq \lambda C_1 K \mu$. Therefore by Corollary~\ref{23Cor}, as $\lambda C_1 K \mu < \mu / (2 + \lambda)$, we have a 3-way set of diameter not greater than $6\lambda K\mu < \mu$, giving the contradiction. \end{proof}

\begin{proposition}\label{every2Prop}Given $K$, provided $\lambda < 1/9, 1/(3K)$, all 2-way sets have diameter greater than $\lambda K \mu$.\end{proposition}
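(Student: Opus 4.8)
\section*{Proof proposal}

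\textbf{Plan.} Argue by contradiction: suppose there is a $2$-way set $S$ with $\diam S = D \le \lambda K\mu$, and manufacture from it a $3$-way set that is far too small, contradicting Proposition~\ref{bounded3Prop}. The tool to turn a $2$-way set into a $3$-way set is Corollary~\ref{23Cor}, whose output diameter is controlled by $\inf_S\rho$; the whole game is therefore to keep $\inf\rho$ under control while iterating.

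\textbf{Step 1: first application of Corollary~\ref{23Cor}.} Every $s\in S$ has $\rho(s)\ge\mu$, so $R:=\inf_{s\in S}\rho(s)\ge\mu$. Since $\lambda<1/9$ and $\lambda<1/(3K)$ we have $(2+\lambda)\lambda K\le (2+\tfrac19)\cdot\tfrac13=\tfrac{19}{27}<1$, hence $(2+\lambda)D\le(2+\lambda)\lambda K\mu<\mu\le R$; together with $\lambda<1/9$ this is exactly the hypothesis of Corollary~\ref{23Cor}, which produces a $3$-way set $T$ with $\diam T\le 6\lambda R$.

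\textbf{Step 2: the shrinking iteration.} Inside the $3$-way set $T=\tw{\alpha}$, Proposition~\ref{32prop} yields a $2$-way subset $S'\subseteq T$ with $\diam S'\le\lambda C_1\diam T$. The key observation is that $T$ is a full octant, so for \emph{every} $s\in S'$ all three of $s+e_1,s+e_2,s+e_3$ lie in $T$, whence $\rho(s)\le\diam T$; therefore $\inf_{S'}\rho\le\diam T\le 6\lambda R$. Thus passing from $S$ to $S'$ multiplies $\inf(\cdot)\rho$ by a factor $\le 6\lambda<1$ (as $\lambda<1/9$). Repeating, one gets $2$-way sets $S=S_0,S_1,S_2,\dots$ with $\inf_{S_{n+1}}\rho\le 6\lambda\,\inf_{S_n}\rho$, hence $\inf_{S_n}\rho\le(6\lambda)^n R\to 0$; but $\inf_{S_n}\rho\ge\mu>0$ for every $n$, a contradiction. (Equivalently, one may feed each $S_n$ back into Corollary~\ref{23Cor} to obtain $3$-way sets whose diameters tend to $0$, contradicting the uniform lower bound on diameters of $3$-way sets given by Proposition~\ref{bounded3Prop}.)

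\textbf{Main obstacle.} The delicate point is the legitimacy of re-applying Corollary~\ref{23Cor} (or Proposition~\ref{32prop} followed by the corollary) at every stage, i.e.\ that the inequality $\inf_{S_n}\rho>(2+\lambda)\diam S_n$ persists: a priori $\diam S_{n+1}$ is only bounded by $\lambda C_1$ times the diameter of the ambient octant, which is in turn $\le 6\lambda\inf_{S_n}\rho$, so one must combine the lower bound $\inf_{S_{n+1}}\rho\ge\mu$ with the control on the octants' diameters (here Proposition~\ref{bounded3Prop} keeps them from collapsing prematurely) to see that diameters remain small relative to the $\rho$-infima. I expect this bookkeeping to be a routine constant chase, using $\lambda<1/9$ and $\lambda<1/(3K)$ and, if needed, first replacing $S$ by a quarter-plane inside it via Proposition~\ref{2wayqpProp} so that the intermediate structures are as rigid as possible.
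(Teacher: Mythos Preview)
Your overall strategy is the paper's: assume a $2$-way set $S_0$ of diameter $\le K\lambda\mu$ exists, apply Corollary~\ref{23Cor} to get a $3$-way set $T_1$, then iterate to produce $3$-way sets $T_k$ whose diameters $r_k$ satisfy $r_{k+1}\le 6\lambda r_k$; since every $T_k$ must have $r_k\ge\mu$ (any $s\in T_k$ has $\rho(s)\le r_k$), this eventually gives a contradiction. The observation that $S_k\subset T_k$ forces $R_k=\inf_{S_k}\rho\le r_k$, which is what drives the recursion $r_{k+1}\le 6\lambda R_k\le 6\lambda r_k$, is exactly the mechanism the paper uses.

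Where you diverge from the paper is in how the $2$-way set $S_k\subset T_k$ is produced, and this is precisely where your ``main obstacle'' is not routine. You reach for Proposition~\ref{32prop}, which only gives $\diam S_k\le\lambda C_1 r_k$ with $C_1\approx 5\times 10^4$; re-applying Corollary~\ref{23Cor} then requires $R_k>(2+\lambda)\lambda C_1 r_k$. With only $\lambda<1/9$ and $\lambda<1/(3K)$---no $C_1$ in the hypothesis---this cannot be a constant chase: already at $k=1$ one has $r_1\le 6\lambda R_0$ with $R_0=\inf_{S_0}\rho$ completely uncontrolled from above, so $(2+\lambda)\lambda C_1 r_1$ can be far larger than $\mu$, and the hypothesis of Corollary~\ref{23Cor} can fail at the very first iteration. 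The paper does \emph{not} go through Proposition~\ref{32prop} here; it simply takes $S_k\subset T_k$ two-way with $\diam S_k\le K\lambda\mu$, i.e.\ it keeps the \emph{same} diameter bound $K\lambda\mu$ at every stage rather than letting it track $r_k$. With that choice the Corollary~\ref{23Cor} hypothesis is immediate from $R_k\ge\mu>(2+\lambda)K\lambda\mu$, and the recursion $r_{k+1}\le 6\lambda r_k$ follows at once. So the missing idea in your write-up is not more careful bookkeeping with $C_1$, but avoiding $C_1$ altogether by holding the $2$-way diameter bound fixed at $K\lambda\mu$ throughout the iteration.
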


\begin{proof} Suppose contrary, pick such a set $S_0$. Since $K \lambda \mu(2 + \lambda) < \mu$, we have a 3-way set $T_1$ with $r_1 = \diam T_1$ by Corollary~\ref{23Cor}. Now take a 2-way subset $S_1 \subset T_1$ with $\diam S_1 \leq K \lambda \mu$, so we have 3-way set $T_2$ of diameter not greater than $r_2 = 6\lambda r_1$. Repeating this argument, for all $k \geq 1$ we get that we can find 3-way set $T_k$, with diameter bounded by $r_k$, where $r_{k+1} = 6\lambda r_k$. But, then we must have $r_k < \mu$ for some $k$, giving contradiction.\end{proof}

Remark that the only way for a 2-way subset not to have elements in every $\tw{(n,n,n)}$ is to be contained in a union of finitely many quarter-planes.

\section{Finite contractive structures}
Recall the proofs of the Proposition~\ref{32prop} and Lemma~\ref{1contrLemma}. There we fixed a finite set $S$ of points, and then contracted various points with points in $S$ to obtain $k$-way sets. The following few claims pursue this approach further. In this subsection, we also show that we cannot have some configurations of points.  

\begin{proposition}\label{wtps} Suppose we have $K \geq 1$ and that $\lambda < 1/(24K)$ holds. Then we cannot have $x_0$ a point in the grid with $\rho(x_0) \leq K\mu$ such that $N(x_0) = \{x_1, x_2, x_3\}$, where $x_1, x_2, x_3$ satisfy $\diam (N(x_0) \cup \{x_i + e_j: i,j \in [3], i \not= j\}) \leq \lambda K \mu$.\\
%Suppose $\{i,j,k\} = [3]$. If $x$ has the property that $d(x_0, x) \leq K\mu$ and $d(x+e_i, x + e_j) \leq K \lambda \mu$, then $d(x+e_k, x_k + e_k) \leq 9 \lambda K\mu$.
\end{proposition}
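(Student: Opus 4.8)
The plan is to argue by contradiction, as usual: assume such a point $x_0$ exists, and then use the tight bunching of all the points $x_i + e_j$ (with $i \ne j$) inside a ball of radius $\lambda K\mu$ to manufacture a $3$-way set of diameter much smaller than $\mu$, contradicting Proposition~\ref{bounded3Prop} (or a $2$-way set contradicting Proposition~\ref{every2Prop}). The key observation to exploit is that the six points $x_1+e_2, x_1+e_3, x_2+e_1, x_2+e_3, x_3+e_1, x_3+e_2$ are all within $\lambda K\mu$ of each other, and each $x_i+e_j$ equals $x_0 + e_i + e_j$, so in fact the three ``second-neighbor'' points $x_0+e_1+e_2, x_0+e_1+e_3, x_0+e_2+e_3$ are mutually $\lambda K\mu$-close. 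In other words $N(x_0+e_1) \cap N(x_0+e_2)$, etc., are all clustered: writing $y_{ij} = x_0 + e_i + e_j$, the set $\{y_{12}, y_{13}, y_{23}\}$ has diameter $\le \lambda K\mu$.

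The main step is then a ``contracting new points onto the cluster'' argument of the same flavour as in the proofs of Proposition~\ref{32prop} and Lemma~\ref{1contrLemma}. First I would bound the relevant distances from above: by FNI (Lemma~\ref{fni}), $d(x_0, x_0+e_i) \le \rho(x_0) \le K\mu$, and iterating FNI together with the cluster bound gives control of $\rho(x_0+e_i)$ and hence of $d(x_0, y_{ij})$ — roughly $d(x_0, y_{ij}) \le 2K\mu/(1-\lambda)$, so everything of interest sits in a ball of radius $O(K\mu)$ about $x_0$. Now take an arbitrary $z \in \tw{x_0}$ and consider which coordinate contracts the pair $z, x_0$: if $i$ contracts them, then $d(z+e_i, x_0+e_i) \le \lambda K\mu$, so $z+e_i$ lands near $x_0+e_i$. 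Feeding $z+e_i$ back in and contracting with $x_0+e_i$ (or directly examining which coordinate contracts $z+e_i, x_0+e_i$), one sees that $z+e_i$ has a neighbour landing within $O(\lambda K\mu)$ of the cluster $\{y_{12}, y_{13}, y_{23}\}$. Carrying this out for all $z$ simultaneously — precisely as in Proposition~\ref{32prop}, where one shows every $y \in \tw{\alpha}$ has, for each coordinate, a neighbour near a fixed finite set — produces a finite set $P$ of size $O(1)$ such that every point of some $3$-way set $\tw{\alpha'}$ has each of its three neighbours within $O(\lambda K\mu)$ of $P$. Applying Lemma~\ref{lemma4} (the colouring/covering lemma) to this covering yields a $2$-way subset of diameter $O(\lambda K\mu)$, and then Corollary~\ref{23Cor} upgrades it to a $3$-way set of diameter $O(\lambda K\mu) < \mu$ once $\lambda < 1/(24K)$ is small enough, contradicting Proposition~\ref{bounded3Prop}.

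Alternatively — and this is probably the cleaner route given the explicit constant $1/(24K)$ — one can shortcut the $k$-way machinery: the three points $x_0+e_1, x_0+e_2, x_0+e_3$ themselves, together with the fact that all their pairwise ``sums'' $y_{ij}$ are $\lambda K\mu$-clustered, already say that $N(x_0) = \{x_1,x_2,x_3\}$ is a $2$-way-like configuration whose second-order neighbourhood has collapsed; pushing one more step (contract the clustered triple $\{y_{12},y_{13},y_{23}\}$ pairwise, each contraction moving a second-neighbour into a ball of radius $\lambda^2 K\mu$) gives a three-element set that is forced to generate a $2$-way set of diameter $\le \lambda K\mu$ reachable from $x_0$ — contradicting Proposition~\ref{every2Prop} directly once $1/(3K) > \lambda$, which is implied by $\lambda < 1/(24K)$. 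Either way the contradiction comes from the same source.

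The main obstacle I anticipate is bookkeeping of constants: one must check that the ball radii produced at each stage ($K\mu$, then $\lambda K\mu$, then $6\lambda K\mu$, etc.) genuinely satisfy the hypotheses of Corollary~\ref{23Cor} and Propositions~\ref{bounded3Prop}, \ref{every2Prop} under the single stated assumption $\lambda < 1/(24K)$, and that the finite set $P$ really does cover all three neighbours of every point in a genuine $3$-way set (as opposed to some partial set), which is exactly the delicate point that the case analysis in Proposition~\ref{prop3} was designed to handle. The geometric idea is straightforward; making the numbers line up with the clean bound $1/(24K)$ is where the care is needed.
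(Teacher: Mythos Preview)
Your proposal has the right target (a small-diameter $2$-way set in every $3$-way set, contradicting Proposition~\ref{every2Prop}) but misses the combinatorial mechanism that actually produces it. Contracting an arbitrary test point $t$ only with $x_0$ gives you control over a \emph{single} neighbour $t+e_i$, and iterating that yields a $1$-way sequence, not a $2$-way set; likewise, the claimed bound $d(z+e_i,x_0+e_i)\le\lambda K\mu$ is false for arbitrary $z$, since the contraction only gives $\lambda d(z,x_0)$. Invoking Lemma~\ref{lemma4} would require first showing that \emph{all three} neighbours of every point lie near a fixed finite set, which your argument does not establish, and in any case the constants coming out of that lemma (of order $C_1$) are far too large for the hypothesis $\lambda<1/(24K)$.

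The paper's proof instead contracts $t$ with each of $x_1,x_2,x_3$, after first observing the crucial fact that $\rho(x_i)=d(x_i,x_i+e_i)$ for every $i$ (since $d(x_i,x_i+e_j)\le\lambda K\mu<\mu$ for $j\ne i$). This observation is what makes the case analysis work: if $t\cont{i}x_i$ for all $i$ simultaneously, one combines this with $t\cont{i}x_0$ to force $\rho(x_i)<\mu$; hence some $t\cont{j}x_i$ with $j\ne i$ occurs, placing $t+e_j$ within $4\lambda K\mu$ of the cluster. Then $t\cont{j}x_j$ is again ruled out (it would give $\rho(x_j)<\mu$), so $t\cont{k}x_j$ for some $k\ne j$, placing a \emph{second} neighbour $t+e_k$ near the cluster. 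Since both $t+e_j,t+e_k$ are still within $2K\mu$ of $x_0$, the argument iterates and builds a $2$-way set of diameter $\le 8\lambda K\mu$ directly, whence Proposition~\ref{every2Prop} applies with constant $8K$ --- and $3\cdot 8K=24K$ is exactly where the stated bound on $\lambda$ comes from. Your outline never uses the points $x_1,x_2,x_3$ as contraction targets and never isolates $\rho(x_i)=d(x_i,x_i+e_i)$, which is the idea doing the real work.
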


When using this Proposition (and that is to obtain a contradiction in the proofs that are to follow), we say that we are applying the Proposition~\ref{wtps} to $(x_0; x_1, x_2, x_3)$ with constant $K$.
\begin{proof} Suppose we do have points described in the assumptions. By the Lemma~\ref{1contrLemma}, in each 3-way set we have a point $t$ such that $d(t, x_0) \leq 2K\mu$. Consider the contractions of $t$ with $x_0, x_1, x_2, x_3$; our main aim is to obtain a 2-way set of a small diameter and then to use the Proposition~\ref{every2Prop} to yield a contradiction.\\
Observe that from the assumptions of the proposition, for any $\{i, j, k\} = [3]$, we have $\max \{d(x_i, x_i + e_i), d(x_i, x_i + e_j), d(x_i, x_i + e_k))\} = \rho(x_i) \geq \mu > \lambda K \mu \geq \max \{d(x_i, x_i + e_j), d(x_i, x_i + e_k)\}$. Thus for all $i \in [3]$, $\rho(x_i) = d(x_i, x_i + e_i)$ holds.\\
Suppose first that $t\cont{i}x_i$ for all $i\in[3]$. Take $i$ so that $t\cont{i}x_0$. Then $\rho(x_i) = d(x_i, x_i + e_i) \leq d(x_i, x_0 + e_i) + d(x_0 + e_i, t + e_i) + d(t + e_i, x_i + e_i) \leq \lambda K \mu + \lambda d(x_0, t) + \lambda d(x_i, t) \leq 6\lambda K \mu < \mu$, which is impossible.\\
Thus, there are distinct $i,j \in [3]$ with $t\cont{j}x_i$. If $j$ was to contract $t, x_j$, we get $\rho(x_j) = d(x_j, x_j + e_j) \leq d(x_j, x_i + e_j) + d(x_i + e_j, t +e_j) + d(t +e_j, x_j + e_j) \leq \lambda K \mu + \lambda d(x_i, t) + \lambda d(t, x_j) \leq 7 \lambda K \mu < \mu$, which is impossible. Therefore, for some $k \not= j$, we have $t \cont{k} x_j$. In particular, $d(t + e_j, x_1 + e_2) \leq d(t + e_j, x_i + e_j) + d(x_i + e_j, x_1 + e_2) \leq \lambda d(t, x_i) + \lambda K \mu \leq 4\lambda K\mu$, and in similar fashion $d(t + e_k, x_1 + e_2) \leq 4\lambda K\mu$. Furthermore by the triangle inequality, both $t + e_j$ and $t + e_k$ are on the distance at most $K\mu + 4 \lambda K\mu \leq 2K\mu$ from $x_0$, so we can apply the same arguments to these points as we did for $t$. Hence, we obtain a bounded 2-way set of diameter at most $4K\mu$. But, considering all the points of the 2-way set except $t$ and their distance from $x_1 + e_2$, this is actually a 2-way set of diameter at most $8\lambda K\mu$, and we have such a set in every 3-way subset of the grid. Now, apply the Proposition~\ref{every2Prop} to obtain a contradiction, since $\lambda < 1/(24K)$ and $K\geq 1$. \end{proof}
\begin{proposition}\label{n1neighProp}Given $K\geq 1$, provided $\lambda < 1/(78K), 1/(13 C_1)$, there is no $x$ such that $\rho(x) \leq K\mu$, but $\rho(x+e_i) > 7 K \mu$ for all $i\in[3]$.\end{proposition}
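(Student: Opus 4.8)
The plan is to argue by contradiction: suppose there is a point $x$ with $\rho(x)\le K\mu$ while $\rho(x+e_i)>7K\mu$ for all $i\in[3]$. Since $\rho(x)\le K\mu$, the neighbours $x+e_1,x+e_2,x+e_3$ all lie within $K\mu$ of $x$, hence within $2K\mu$ of one another, and for each $i$ we may fix a direction $n(i)$ with $d(x+e_i,x+e_i+e_{n(i)})>7K\mu$. The strategy is the standard one of this section: extract small distances from contractions inside and around $N(x)$, and then either reach a direct contradiction or produce a $2$-way set of diameter $O(\lambda K\mu)$ inside every $3$-way set and invoke Proposition~\ref{every2Prop}.

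First I would contract $x$ with each $x+e_i$. Let $P_i\subseteq[3]$ be the non-empty set of directions contracting $(x,x+e_i)$; for $d\in P_i$ we obtain $d(x+e_d,(x+e_d)+e_i)=d(x+e_d,x+e_i+e_d)\le\lambda\,d(x,x+e_i)\le\lambda K\mu$. Taking $d=l$, this says that the direction-$i$ neighbour of $x+e_l$ is within $\lambda K\mu$ of $x+e_l$ whenever $l\in P_i$ (when $i=l$ this is the diagonal distance $d(x+e_l,x+2e_l)$). Hence if some $l$ lay in $P_1\cap P_2\cap P_3$, then all of $N(x+e_l)$ would be within $\lambda K\mu$ of $x+e_l$, giving $\rho(x+e_l)\le\lambda K\mu<\mu$, which is absurd; so $P_1\cap P_2\cap P_3=\emptyset$. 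I would also contract each pair $(x+e_i,x+e_j)$, obtaining a direction $q$ with $d((x+e_i)+e_q,(x+e_j)+e_q)\le2\lambda K\mu$.

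Now comes a case analysis on the triple $(P_1,P_2,P_3)$. In the ``concentrated'' cases, the forced small distances already fix two of the three neighbour distances of some $x+e_l$, so that $n(l)$ must be the third direction; combined with the $q$-contraction of the appropriate pair $(x+e_i,x+e_j)$ this yields a direct contradiction, since one typically finds that \emph{no} direction can contract some pair $(x+e_i,x+e_j)$ --- every candidate direction would place the two moved points within $2\lambda K\mu$ of each other while one of them sits within $O(K\mu)$ of $x$ and the other at distance more than $6K\mu$ from $x$. In the ``spread'' cases --- essentially the $P_i$ being distinct singletons, so that only the diagonal distances $d(x+e_l,x+2e_l)$ are controlled --- the local data is not enough by itself, and here I would invoke Lemma~\ref{1contrLemma}: since $\rho(x)\le K\mu$, every $3$-way set contains a point $t$ with $d(t,x)\le\frac{1}{1-\lambda}\rho(x)+\epsilon\le2K\mu$; contracting $t$ against $x$ and against the controlled points near $x$, and iterating in the manner of the final paragraph of the proof of Proposition~\ref{wtps} (using the $q$-contractions to keep the propagated points near $x$), produces a $2$-way subset of that $3$-way set of diameter a bounded multiple of $\lambda K\mu$.

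A $2$-way set of diameter at most a fixed multiple of $\lambda K\mu$ present in every $3$-way set contradicts Proposition~\ref{every2Prop}: the hypothesis $\lambda<1/(78K)$ makes this diameter smaller than $\lambda K'\mu$ for the constant $K'$ to which that proposition applies, and also supplies the bounds $\lambda<1/9$, $\lambda<1/(3K')$ it requires, while $\lambda<1/(13C_1)$ enters in the one place where an intermediate $3$-way set has first to be shrunk through Proposition~\ref{32prop} before Proposition~\ref{every2Prop} can be applied. I expect the main obstacle to be the case analysis and its bookkeeping --- keeping track, in each configuration of $P_1,P_2,P_3$, of which of the neighbour distances among the $x+e_i$ and $x+e_i+e_j$ are forced to be small --- and especially the ``spread'' case, where the purely local information is weakest and one has to go through Lemma~\ref{1contrLemma} with carefully chosen contractions (so that the long directions $n(i)$ are never forced on us) in order to rule the configuration out.
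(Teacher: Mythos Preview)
Your outline has the right shape in the easy half but significantly underestimates the hard half. The ``concentrated'' cases (where the short-edge contractions overlap heavily) do indeed collapse by the kind of direct counting you sketch: this is essentially the paper's Case~1, and even there some of the sub-diagrams still need the Lemma~\ref{1contrLemma} / 2-way-set machinery rather than a one-line contradiction. The real gap is in your ``spread'' case. You assert that by contracting a point $t$ (from Lemma~\ref{1contrLemma}) against $x$ and the controlled points, using the $q$-contractions to keep everything near $x$, you will produce a $2$-way set of diameter $O(\lambda K\mu)$. In the configuration where each $(x,x+e_i)$ is contracted in its own direction and each long edge by its orthogonal, this does not work: the only small distances you have are $d(x+e_i,x+2e_i)$ and the contracted long edges, and the large quantities $\rho(x+e_i)>7K\mu$ point in directions you cannot avoid. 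When you contract $t$ with a point of $N(x)$ you get $t+e_j$ close to some $x+e_i+e_j$, but those targets are not all within $O(\lambda K\mu)$ of one another --- some pairs are only within $O(K\mu)$, and some are genuinely far (distance $\sim r$ where $r>4K\mu$). So the 2-way set you build this way does not have diameter $O(\lambda K\mu)$.

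What the paper actually does in this case is qualitatively different. It introduces a two-colouring (``black'' and ``white'') of the relevant second-neighbour points, shows that contracting $t$ with $\{x\}\cup N(x)$ respects this colouring, and thereby first obtains a bounded $3$-way set of diameter $\le 4r$ (note: $r$, not $K\mu$). Only then is Proposition~\ref{32prop} applied to extract a $2$-way subset of diameter $\le 4\lambda C_1 r$, which is shown to be monochromatic because $4\lambda C_1 r$ is smaller than the black--white gap; a further case split (black vs.\ white) then finishes. The hypothesis $\lambda<1/(13C_1)$ is used precisely to make $4\lambda C_1 r$ small relative to $r$ and to feed Corollary~\ref{23Cor} in the white subcase. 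Your proposal mentions Proposition~\ref{32prop} in passing but does not contain the colouring idea, without which I do not see how to close the spread case.
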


Sometimes we refer to a pair of points $a,b$ in the grid as the \emph{edge} $a,b$, and by the \emph{length} of the edge $a,b$ we mean $d(a,b)$. The points $a$ and $b$ are the \emph{endpoints} of the edge $a,b$. 

\begin{proof} Suppose there was such an $x$. Consider the contractions of $x + e_i, x+e_j$ for $i\not = j$ and suppose that two such pairs are contracted by the same $k$. Thus $\diam\{x+e_k + e_1, x+e_k + e_2, x+e_k + e_3\} \leq 4\lambda K \mu$. Now, contract $x, x+e_k$ to get $\rho(x + e_k) \leq (2 + 5\lambda) K \mu < 3 K \mu$, giving us contradiction. So, the pairs described above must be contracted in different directions. Further, we can make a distinction between the \emph{short} edges of the form $a, a + e_i$ and the \emph{long} edges $a + e_i, a + e_j$, where $a$ is any point of the grid and $i,j$ are distinct integers in $[3]$\footnote{Note, short and long have nothing to do with the length of an edge previously defined, but actually just describe how these edges appears in the figures in the proofs.}. For every such long edge we have a unique short \emph{orthogonal} edge $a, a + e_k$ where $\{i,j,k\} = [3]$. We can observe that if we have a short edge and a long edge in $\{x\} \cup N(x)$ which are not orthogonal, but both contracted by some $i$, we must have another such pair, contracted by some $j\not=i$. One can show this by looking at the short edge $e$ which is orthogonal to the long one in a given pair of edges contracted by $i$.\\
If we write $[3] = \{i,j,k\}$, then $j$ contracts one long edge, and so does $k$. But now consider the described orthogonal short edge $e$. It cannot be contracted by $i$, for otherwise $\rho(x+e_i)$ is too small. Thus, it gives us another desired pair. Having shown this, we have two cases, with at least two such pairs (i.e. non-orthogonal short and long edge contracted in the same direction), or no such pairs.\\
\begin{figure}
\includegraphics[scale = 0.3]{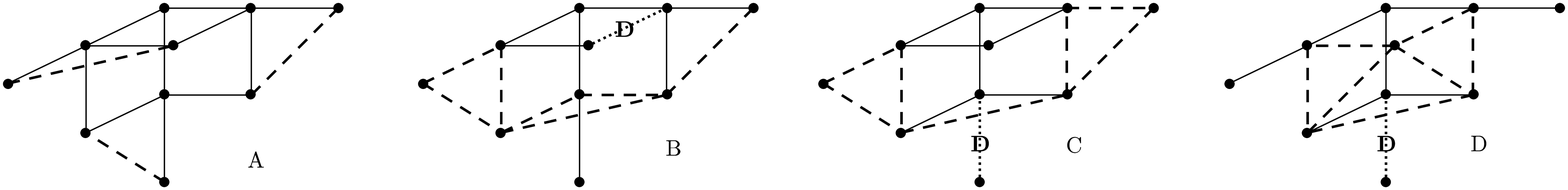}
\caption{Case 1}
\label{fig3}
\end{figure}
\emph{Case 1}. There are at least two such pairs.\\
In the Figure~\ref{fig3}, we show the possibilities for contractions, the edges shown as dash lines have length at most $3 K \lambda \mu$. Here we actually consider possible contractions and then apply triangle inequalities. This way, we obtain very few possible diagrams. We only list the possible configurations up to rotation or reflection, as the same arguments still carry through. In the diagram A, by short edge contractions we see that we get $\rho(x + e_i) \leq 3K\mu$ for some $i$, which gives the claim. Dot lines with letter \textbf{D} will be called the \emph{D-lines}. On the other hand, in the diagrams B, C and D, we claim that we are either done or the dot lines with letter \textbf{D} are of length at most $9\lambda K\mu$. Once this is established, we have $\rho(x + e_i) \leq 3K \mu$ for some $i$, resulting in a contradiction.\\
For each $i\in[3]$, let $x_i \in N(x)$ be such that $x_i +e_i$ is not an endpoint of long edge shown as dash line. By the Lemma~\ref{1contrLemma}, in each 3-way set we have a point $t$ with $d(x, t) \leq 2\rho(x)/(1-\lambda) \leq 3K\mu$. Observe that from the diagrams we have $d(x_i, x_i + e_j) \leq (2 + 6\lambda) K \mu$ whenever $i\not=j$. Further, we cannot have $x_i\cont{i}t$ for all $i$, otherwise we get a contradiction by considering contraction $x \cont{j} t$. If $x + e_j$ is an endpoint of a edge shown as a D-line, and $x + e_j + e_l$ is the other endpoint, we have $x + e_l = x_k$, hence $d(x + e_l + e_j, x + e_j) \leq \lambda (d(x + e_j, t) + d(t, x))\leq  7\lambda K\mu$, which is impossible. Thus, $x + e_j$ is not on a D-line edge, which gives $\rho(x_j) = d(x_j, x_j + e_j) \leq d(x_j, x) + d(x, x+e_j) + d(x+e_j, t + e_j) + d(t +e_j, x_j + e_j) \leq (2 + 7\lambda)K\mu$.\\
Previous arguments imply that we must have $i\not=j$ with $x_i\cont{j}t$, and hence $x_j\notcont{j}t$ (otherwise $\rho(x_j) \leq (2 + 14\lambda)K\mu$), so, given such a $t$, we get $t+e_a, t+e_b$, $a\not=b$ on distance at most $13\lambda K \mu$ from $x_1 + e_2$ and on distance not greater than $3K\mu$ from $x$, by the triangle inequality. Hence, in every $\tw{z}$ we get a 2-way subset of diameter not greater than $\lambda 26K \mu$, yielding a contradiction, due to $\lambda < \frac{1}{78K}$ and the Proposition~\ref{every2Prop}. Hence, edges shown as D-lines satisfy the wanted length condition.\\     

\begin{figure}
\includegraphics[scale = 0.4]{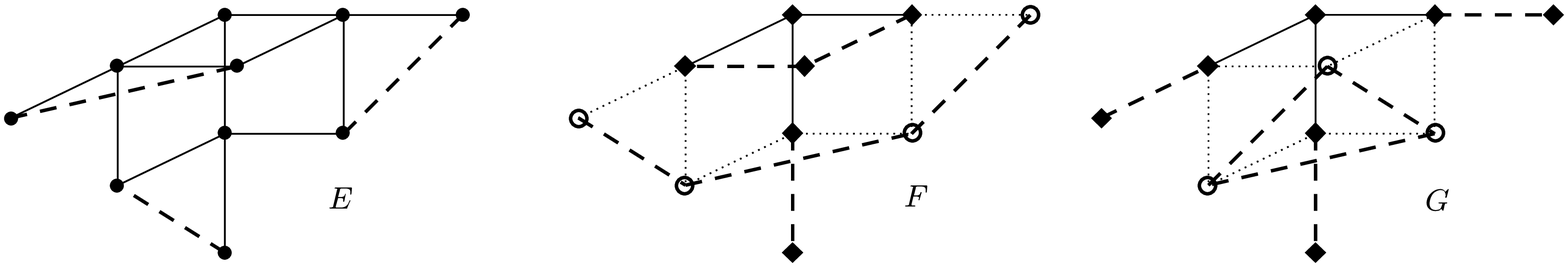}
\caption{Case 2}
\label{fig4s}\end{figure}

\emph{Case 2}. There are no such pairs. \\
The possible cases up to rotation or reflection are shown in the Figure~\ref{fig4s}, where the short edges shown as dash lines are of length at most $\lambda K \mu$, while the long ones are of the length $2\lambda K \mu$. As above, the diagram A gives $\rho(x+e_i) \leq 3 K \mu$, immediately. On the other hand, we can consider points shown as black squares and empty circles in the other two possibilities. We call a point \emph{black} if it is a black square and \emph{white} if it is shown as an empty circle. In the course of the proof, we shall colour more points in the black and white. Let $r$ be the minimal length of dotted edges in the Figure~\ref{fig4s}, and $r'$ the maximal. Then we have $r' \leq r + 2K\mu + 6\lambda K\mu$. Furthermore, given $i\in[3]$ we have $7K\mu < \rho(x+e_i) \leq r' < r + 3 K \mu$, so $r > 4K\mu$.\\
Consider $t$ such that $d(x,t) \leq 2r$. Let $j$ contract $x+e_i,t$, so we have $d(t+e_j, x) \leq d(t + e_j, x + e_i + e_j) + d(x + e_i + e_j, x + e_i) + d(x + e_i, x) \leq \lambda d(t, x + e_i) + \rho(x+e_i) +\rho(x) \leq \lambda(d(t,x) + d(x, x+e_i)) + r' + K\mu \leq 2\lambda r + \lambda K \mu + r + 2K\mu + 6\lambda K \mu + K\mu \leq (1 + 2\lambda) r + (3 + 8\lambda)K\mu < (1 + 2\lambda + \frac{3+8\lambda}{4}) r \leq 2r$, since $\lambda < 1/16$. Similarly if $j$ contracts $x, t$ we have $d(t + e_j, x) \leq d(t + e_j, x + e_j) + d(x + e_j, x) \leq 2\lambda r + K\mu \leq 2r$, as well. Further, observe that if $t + e_j$ is the result of contraction as before, then we have a point $a \in N(x) \cup \{x + e_i + e_j: i,j \in[3]\}$ with $d(t + e_j, a) \leq \lambda (2r + K\mu).$\\
Restrict our attention to the black (shown as black squares) and white (shown as empty circles) points shown in the Figure~\ref{fig4s}. We have $\diam\{$white points$\} \leq 6\lambda K \mu, \diam\{$black points$\}\leq (2 + 2\lambda) K\mu$ and the distance from any white to any black point is at least $r - K\mu - 4\lambda K \mu$. Take a point $t$ on the distance at most $2r$ from $x$ (note that by the Lemma~\ref{1contrLemma} such a point exists in every 3-way set). Consider contractions with $\{x\} \cup N(x)$ and suppose that $t + e_i, w$ and $t + e_i, b$ are results of these operations, where $w$ is a white and $g$ is a black point. Then, using the triangle inequality, we establish $r - K\mu - 4\lambda K \mu \leq d(w, b) \leq d(w, t + e_i) + d(t + e_i, b) \leq 2\lambda (2r + K\mu)$, which is a contradiction. For any given $i\in[3]$ let $x_i$ stand for the point of $N(x)$ such that $d(x_i, x_i + e_i) \leq \lambda K \mu$, thus $N(x) = \{x_1, x_2, x_3\}$. Let $t\cont{i}x$. Then take $j \in [3]$ distinct from $i$. We see that $x_j + e_i$ is white, while $x + e_i$ is black, hence $i$ does not contract $t, x_j$. Let $k \not= i$ contract $t, x_j$ and let $l$ be such that $\{i,j,l\} = [3]$. If $k = j$ then similarly we see that $x_l \cont{l} t$, while in the other case $k = l$ and $x_l \cont{j} t$. Hence, in conjunction with the previous arguments, we obtain a 3-way set of diameter at most $4r$.\\
Furthermore, recall that given pairs $t + e_i, p$ and $t + e_i, q$, which are results of contracting $t$ with $x$ or a point in $N(x)$, we must have $p$ and $q$ of the same color. As each of $t + e_1, t + e_2$ and $t + e_3$ is a result of such a contraction, we can extend the 2-colouring of the points in the diagrams F and G to all points of $\tw{x}$, namely $c:\tw{x}\to\{$black, white$\}$, with point $t + e_i$ being coloured by black, if $p$ described above is black in the original colouring, and white otherwise.\\
Now, the distance between any black point and any white point in the extended colouring is at least $r - K\mu - 4\lambda K \mu-2\lambda (2r + K\mu) = (1-4\lambda)r - (1 + 6\lambda)K\mu$. Recall the Proposition~\ref{32prop}, which guarantees the existence of a 2-way set $S \subset \tw{x}$ of diameter at most $4\lambda C_1 r$ from which we infer that $S$ is monochromatic, since $ 4\lambda C_1 r < (1-4\lambda)r - (1 + 6\lambda)K\mu$.\\

\emph{Case 2.1}. $S$ is black.\\
Consider any $t \in \tw{x}$ which has two black neighbors $t + e_{i_1}, t + e_{i_2}$, where $i_1 \not=i_2$. Then, letting $i_3$ be the third direction, that is $[3] = \{i_1, i_2, i_3\}$, we have $t \cont{i_3} x_{i_3}$, since the points of $N(x_{i_3})\setminus \{x_{i_3} + e_{i_3}\}$ are white. Hence, for any $t\in S$, we have that $N(t)$ is black. Furthermore, from the same arguments we see that $t \cont{i} x_i$ for all $i\in[3]$. Now, if $t$ is in $S$, and without loss of generality so are $t + e_1, t + e_2$, then $N(t + e_1), N(t + e_2)$ are black, so at least two elements of $N(t + e_3)$ are black too, implying that $N(t + e_3)$ is black. But, now looking at $t$ gives $t\cont{3} x_3$ and similarly, looking at $t +e_1, t +e_2, t +e_3$ tells us that 3 contracts points $t +e_1, t +e_2, t+e_3$ with $x_3$.\\
Let $s$ be the distance from such a $t$ from $x$. Then, for all $i\in[3]$, we have a black point $p$ in $\{x\} \cup N(x)$, which is contracted with $t$ by $i$, so that $p + e_i$ is black as well. Now, by the triangle inequality, we get $d(x, t + e_i) \leq d(x, p) + d(p, p + e_i) + d(p + e_i, t + e_i) \leq d(x,p) + d(p, p + e_i) + \lambda d(p,t) \leq \lambda d(t,x) + (1 + \lambda) d(x, p) + d(p, p + e_i) \leq \lambda s + (2 + \lambda) (K\mu)$. As in the proof of the Lemma~\ref{1contrLemma}, we see that there is $t\in S$, such that $d(t, x) < 3 K \mu$. From the estimates we have just made, we can see that $d(t + e_i, x) < 3 K \mu$ for all $i\in[3]$. Without loss of generality $t, t + e_1, t + e_2 \in S$. Recalling that this implies $d(t + e_3, x_3), d(t + e_3 + e_i, x_3) < 3 \lambda K\mu$ where $i$ takes all the values in $[3]$, shows that $\rho(t + e_3) < 6\lambda K \mu$, which is a contradiction.\\
\emph{Case 2.2}. $S$ is white.\\
If $t\in S$, then the point in $N(t)\setminus S$ is black, by contracting $t, x$. Hence, by the Proposition~\ref{23Cor}, we must have a 3-way set inside $\tw{x}$ of diameter at most $6\lambda r$, since $(1-4\lambda)r - (1 + 6\lambda)K\mu > (1 - 4\lambda) r - (1 + 6\lambda) r/4 > 2r/3 > (2 + \lambda) \lambda 4 C_1 r$, since $\lambda < 1/(13 C_1)$. But, such a set has at least one black point, so it must have only black points, and we have a contradiction as in Case 2.1. \end{proof}

\begin{proposition}\label{neighdiamProp}Given $K\geq 1$, provided $\lambda < 1/(41 K C_1)$, there are no $x$ with $\rho(x) \leq K\mu$ and $\diam N(x) \leq \lambda K \mu$.\end{proposition}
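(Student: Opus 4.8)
Suppose, for a contradiction, that such a point $x$ exists. Since $\rho(x)\le K\mu$, Proposition~\ref{n1neighProp} (whose hypotheses follow from $\lambda<1/(41KC_1)$) gives $\rho(x+e_i)\le 7K\mu$ for some $i$, and after permuting coordinates I may assume $i=1$. The plan is to force one of three outcomes, each of which is impossible: that $\rho(y)<\mu$ for some point $y$; that the hypotheses of Proposition~\ref{wtps} hold at $(x;x+e_1,x+e_2,x+e_3)$ for a constant $K'$ that is a fixed multiple of $K$; or that every three-way set contains a two-way subset of diameter $O(\lambda K\mu)$, which Proposition~\ref{every2Prop} forbids. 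Because $\lambda<1/(41KC_1)$ with $C_1=49158$, there is ample room for the absolute constants that will appear.

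The engine is a moving witness. By Lemma~\ref{1contrLemma}, since $\rho(x)\le K\mu$, every three-way set $\tw{z}$ contains a point $t$ with $d(t,x)\le 2K\mu$. Fixing a direction $i$ with $t\cont{i}x$ and setting $u=t+e_i\in\tw{z}$, I get $d(u,x+e_i)\le 2\lambda K\mu$, so $\diam N(x)\le\lambda K\mu$ forces $d(u,x+e_j)\le 3\lambda K\mu$ for every $j$ and $d(u,x)\le 2K\mu$. Call any point lying simultaneously within $3\lambda K\mu$ of all of $N(x)$ and within $2K\mu$ of $x$ a \emph{witness}; thus $u$ is a witness, and one exists in every $\tw{z}$.

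The main step is to analyse the contractions of a witness $u$ with the four points $x,x+e_1,x+e_2,x+e_3$, obtaining directions $a$ (with $u\cont{a}x$) and $\tau(1),\tau(2),\tau(3)$ (with $u\cont{\tau(j)}(x+e_j)$). From $u\cont{a}x$ one gets $d(u+e_a,x+e_a)\le 2\lambda K\mu$, so $u+e_a$ is again a witness; iterating produces a $1$-way chain $u,u+e_a,\dots$ all within $3\lambda K\mu$ of the fixed point $x+e_1$, hence of diameter $\le 6\lambda K\mu$. To upgrade the chain to a two-way set one needs, infinitely often along it, a \emph{second} neighbour also within $O(\lambda K\mu)$ of $x+e_1$; the remaining contractions give $d(u+e_{\tau(j)},x+e_j+e_{\tau(j)})\le 3\lambda^2K\mu$, so $u+e_{\tau(j)}$ is close to $N(x)$ precisely when the second-level point $x+e_j+e_{\tau(j)}$ is. Since $a,\tau(1),\tau(2),\tau(3)$ are four directions from $[3]$, two coincide, and a short triangle-inequality computation identifies the coincidence as one of: (a) $a=\tau(j)$ with $j\ne a$, whence $d(x+e_a,x+e_j+e_a)=O(\lambda K\mu)$, i.e.\ the mixed second-level point $x+e_j+e_a$ comes within $O(\lambda K\mu)$ of $N(x)$ — exactly the required second neighbour; (b) $a=\tau(j)=j$, whence the single edge $d(x+e_j,x+2e_j)=O(\lambda K\mu)$; (c) $\tau(j)=\tau(k)=d$ with $j\ne k$, whence $d(x+e_j+e_d,x+e_k+e_d)=O(\lambda K\mu)$. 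Running the iteration and recording the facts it produces, one of three things happens. If for some $p$ both mixed points $x+e_p+e_q$ ($q\ne p$) come within $O(\lambda K\mu)$ of $N(x)$ and the edge $d(x+e_p,x+2e_p)$ is $O(\lambda K\mu)$, then all three neighbour-edges of $x+e_p$ are $O(\lambda K\mu)<\mu$, so $\rho(x+e_p)<\mu$, contradicting $\mu=\inf\rho$. If instead the accumulated closeness reaches $\diam\bigl(N(x)\cup\{x+e_p+e_q:p\ne q\}\bigr)\le\lambda K'\mu$ with $K'=O(K)$, then Proposition~\ref{wtps} applies at $x$ (its hypothesis $\lambda<1/(24K')$ following from $\lambda<1/(41KC_1)$) and gives a contradiction. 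Otherwise the second neighbour is supplied infinitely often in every $\tw{z}$, yielding a two-way subset of diameter $O(\lambda K\mu)$, which contradicts Proposition~\ref{every2Prop}. (If that construction only delivers a three-way set of diameter $O(K\mu)$, I would first apply Proposition~\ref{32prop} to extract a two-way subset of diameter $O(\lambda C_1K\mu)$ and then Corollary~\ref{23Cor} to obtain a three-way set of diameter $<\mu$; that is where the factor $C_1$ in the hypothesis is spent.)

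The hard part will be the last branch of this dichotomy: one has to show that the only way the chain can fail to branch — coincidences of type (c), with neither $\tau(j)$ nor $\tau(k)$ equal to $a$, recurring at essentially every step — necessarily accumulates enough second-level structure to trigger one of the first two contradictions, whereas every other pattern keeps a second neighbour near $x+e_1$ available, so the two-way set grows while staying of diameter $O(\lambda K\mu)$. The accompanying bookkeeping — keeping each accumulated error a bounded multiple of $\lambda K\mu$, so that it remains below the thresholds $1/(24K')$ and $1/(3DK)$ demanded by the propositions invoked, with the preliminary bound $\rho(x+e_1)\le 7K\mu$ used to tame the cases that would otherwise involve the edges $d(x+e_j,x+2e_j)$ — is routine but must be done with some care.
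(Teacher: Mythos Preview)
Your outline is not a proof, and the gap is not cosmetic. The entire argument rests on the trichotomy ``one of three things happens'' after running the witness iteration, but you never establish it, and in fact your own case analysis does not support it. In case~(a), where $a=\tau(j)$ with $j\ne a$, you obtain $d(x+e_a,x+e_j+e_a)=O(\lambda K\mu)$ and call this ``exactly the required second neighbour''---but it is not a neighbour of $u$ at all; it is structural information about $x$. The only neighbour of $u$ you have placed near $N(x)$ is still $u+e_a$, the same one the chain already uses. More concretely, nothing you wrote excludes the pattern $a=\tau(a)$ (your case~(b)) with $\tau(j)=j$ for the other two indices $j$: then the only fact recorded is $d(x+e_a,x+2e_a)=O(\lambda K\mu)$, no mixed second-level point is forced near $N(x)$, and $u+e_{\tau(j)}=u+e_j$ lands near $x+2e_j$, which need not be close to $N(x)$. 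So the chain stays $1$-way, Proposition~\ref{wtps} is not triggered, and no $\rho<\mu$ appears. You acknowledge that the branch where the chain fails to widen is ``the hard part'' and leave it undone; but that branch is the whole proof.

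The paper proceeds quite differently: instead of iterating an external witness, it first analyses the contractions \emph{inside} $\{x\}\cup N(x)$ (the long edges $x+e_i,x+e_j$ and then the short edges $x,x+e_i$). This is a finite case split on the diagram of $x$, reducing to a handful of pictures. Some are killed directly by $\rho(x+e_j)<\mu$, two more by Proposition~\ref{wtps}, and in the surviving diagrams one can label $N(x)=\{x_1,x_2,x_3\}$ with $d(x_i,x_i+e_i)\le 3\lambda K\mu$; Proposition~\ref{n1neighProp} then bounds each $\rho(x_i)$, and contracting an arbitrary nearby $t$ against all of $x,x_1,x_2,x_3$ forces \emph{every} neighbour of $t$ to stay within $10K\mu$ of $x$. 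That yields a bounded $3$-way set of diameter $\le 20K\mu$, and Proposition~\ref{bounded3Prop} (this is where the factor $C_1$ in $\lambda<1/(41KC_1)$ is actually spent) gives the contradiction. The point is that the diagram of $x$ already pins down enough second-level structure that the witness argument becomes a one-step computation rather than an open-ended iteration.
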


\begin{proof} Suppose we have such an $x$. We start by observing that two pairs of form $x+e_i, x+e_j$ cannot be contracted by the same $k$. Otherwise, since $\diam N(x) \leq \lambda K \mu$, after an application of triangle inequality, we also have $N(x + e_k) \leq 2 \lambda^2 K \mu$. Let $t$ be such that $x \cont{t} x + e_k$. Then $d(x+e_k, x + 2e_k) \leq d(x + e_k, x + e_t) + d(x+e_t, x+e_k + e_t) + d(x+e_k + e_t, x + 2e_k) \leq \diam N(x) + \lambda d(x, x+e_k) + \diam N(x+e_k) \leq \lambda K \mu + \lambda K\mu + 2\lambda^2 K \mu < 4 \lambda K \mu$. But then, for any $s \in [3]$, we have $d(x+e_k, x+e_k + e_s) \leq d(x+e_k, x+e_k + e_k) + \diam N(x + e_k) < 6 \lambda K \mu < \mu$, implying that $\rho(x + e_k) < \mu$, which is impossible.\\  
Thus, all three pairs of the form $x+e_i, x+e_j$ are contracted in different directions, hence we can distinguish the following cases (up to symmetry):\\
\begin{figure}
\includegraphics[scale = .3]{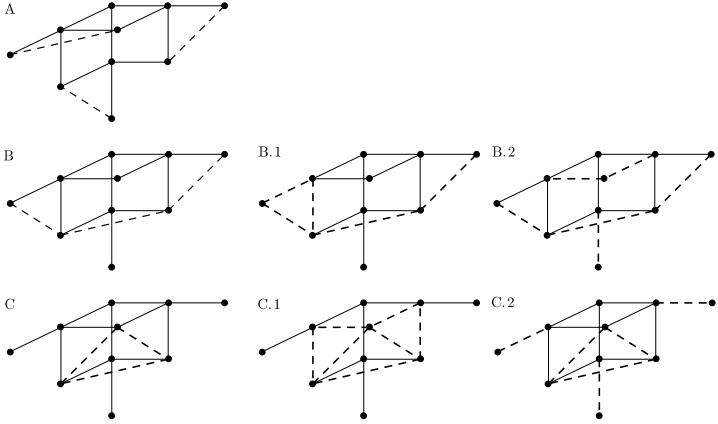}
\caption{Possible distances in the proof of the Proposition~\ref{neighdiamProp}}
\label{neighdiamfig}
\end{figure}
\emph{Case 1}. The results of contractions are shown as dash lines in the Figure~\ref{neighdiamfig}, diagram marked by A. It is not hard to see that after contracting pairs $x, x + e_i$, we get $\rho(x+e_j) < \mu$ for some $j$, giving us contradiction.\\
\emph{Case 2}. The results of contractions are shown as dash lines in the Figure~\ref{neighdiamfig}, diagram marked by B. By considering contractions of pairs $x, x+e_i$, we either get $\rho(x + e_j) < \mu$ for some $j$, or diagrams B.1, B.2 in figure~\ref{neighdiamfig}, where dash lines edges now indicate lengths at most $3\lambda K \mu$.\\
\emph{Case 3}. The results of contractions are shown as dash lines in Figure~\ref{neighdiamfig}, diagram marked by C. By considering contractions of pairs $x, x+e_i$, we either get $\rho(x + e_j) < \mu$ for some $j$, or diagrams C.1, C.2 in the Figure~\ref{neighdiamfig}, where now dash line implies length at most $3\lambda K \mu$.\\
Firstly, we will use the Proposition~\ref{wtps} to reject B.1 and C.1. In these two diagrams, for each $i \in [3]$, we can find unique $x_i \in N(x)$ such that $\rho(x_i) = d(x_i, x_i + e_i)$. Then we have $\diam (N(x) \cup \{x_i + e_j:i,j\in[3], i\not=j\}) \leq 15\lambda K \mu$. Also $\rho(x) \leq K\mu$, hence we can apply the Proposition~\ref{wtps} to $(x; x_1, x_2, x_3)$ with constant $15K$ to obtain a contradiction, since $\lambda < 1/(360K)$.\\
Observe that in the diagrams B.2 and C.2 we can denote $N(x) = \{x_1, x_2, x_3\}$ so that $d(x_i, x_i + e_i) \leq 3\lambda K \mu$. By the Proposition~\ref{n1neighProp}, we have that $\rho(x_i) \leq (7 + 7\lambda) K\mu$ holds for all $i\in[3]$, as $\lambda < 1/(78K), 1/(13C_1)$. Now, start from a point $t$ with $d(t, x) \leq 2\rho(x)/(1-\lambda) \leq 2K\mu /(1-\lambda) \leq 10K\mu$, which exists by the Lemma~\ref{1contrLemma}. Take any $p \in \{x\} \cup N(x)$ and contract with $t$. If $t \cont{i} p$, then $d(t + e_i, x) \leq d(t + e_i, p +e_i) + d(p+e_i, p) + d(p, x) \leq \lambda d(t,p) + d(p+e_i, p) + d(p, x) \leq \lambda(d(t,x) + d(x,p)) + d(p+e_i, p) + d(p, x) \leq \lambda 10 K \mu + (7 + 7\lambda) K\mu + (1+\lambda) K \mu \leq 10K\mu$.\\
Contract such a point $t$ with $x$ by some $i$. Write $[3] = \{i,j,k\}$ and consider contraction of $t, x_j$. It is not $i$ that contracts this couple of points, as otherwise $\rho(x_j) < \mu$. If it is $j$, then we can see that $x_k\cont{k}t$, and if it is $k$, then $x_k\cont{j}t$. Hence, all the points of $N(t)$ are on distance at most $10 K \mu$ from $x$, so we can repeat the argument to obtain a bounded 3-way set of diameter at most $20K\mu$. However, we get a contradiction by the Proposition~\ref{bounded3Prop}, since $1 > 41K C_1 \lambda$.\end{proof}

\begin{proposition}\label{tps} Given $K \geq 1$, suppose we have $x_0, x_1, x_2, x_3$ such that $\diam\{x_i + e_j:i,j \in [3], i\not=j\} \leq \lambda K\mu$. Furthermore, suppose $\rho(x_0) \leq K\mu$ and that $d(x_0, x_i) \leq K\mu$ for $i\in[3]$. Let $\{a,b,c\} = [3]$.\\
Provided $\lambda < 1/(820 C_1 K)$, whenever there is a point $x$ which satisfies $d(x + e_a, x + e_b) \leq \lambda K \mu$ and $d(x, x_0)\leq K\mu$, then we have $d(x + e_c, x_c+e_c) \leq 16 \lambda K \mu$.\end{proposition}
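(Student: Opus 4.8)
The plan is to argue by contradiction. Suppose there is a point $x$ with $d(x+e_a,x+e_b)\le\lambda K\mu$, $d(x,x_0)\le K\mu$, yet $d(x+e_c,x_c+e_c)>16\lambda K\mu$. I will manufacture, inside \emph{every} three-way set of the grid, a two-way subset of diameter at most an absolute constant times $\lambda K\mu$; this is impossible, either directly by Proposition~\ref{every2Prop}, or after converting the two-way set to a three-way one via Corollary~\ref{23Cor} and applying Proposition~\ref{bounded3Prop}. It is this last invocation, together with a possible use of Proposition~\ref{32prop}, that accounts for the factor $C_1$ in the hypothesis $\lambda<1/(820C_1K)$.

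First I would pin down the local picture. Since $\lambda K<1/(820C_1)<1$, we have $\lambda K\mu<\mu$, so the six cross-neighbours $x_i+e_j$ ($i\ne j$) all lie in a ball $B$ of radius $\lambda K\mu$ about $w:=x_a+e_b$. Now contract $x$ with $x_c$; the two are within $2K\mu$, and since $x\cont{c}x_c$ would give $d(x+e_c,x_c+e_c)\le 2\lambda K\mu$, contrary to our assumption, the contracting direction is $a$ or $b$, and then $x+e_a$ (resp.\ $x+e_b$) lands within $2\lambda K\mu$ of the cross-neighbour $x_c+e_a$ (resp.\ $x_c+e_b$). Combined with $d(x+e_a,x+e_b)\le\lambda K\mu$, this shows that $x+e_a$, $x+e_b$ and all six cross-neighbours lie in one ball $B'$ of radius at most $4\lambda K\mu$ about $w$. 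I would also record, by contracting $x_0$ with each $x_i$ (any contraction of $x_0$ and $x_i$ in a direction $\ne i$ landing on a cross-neighbour), that $d(x_0,w)=O(K\mu)$, treating the degenerate pattern ``$x_0$ contracts with $x_i$ in direction $i$ for every $i$'' separately or excluding it through Proposition~\ref{n1neighProp}.

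Then the main construction. By Lemma~\ref{1contrLemma} every three-way set contains a point $t$ with $d(t,x_0)\le 2K\mu$, hence $d(t,x)$, $d(t,x_i)$ and $d(t,w)$ are all $O(K\mu)$. Contract $t$ with each of $x,x_0,x_1,x_2,x_3$. A contraction $t\cont{i}x_l$ with $i\ne l$ puts $t+e_i$ near the cross-neighbour $x_l+e_i$, hence within $O(\lambda K\mu)$ of $w$; a contraction $t\cont{i}x$ with $i\in\{a,b\}$ puts $t+e_i$ near $x+e_i\in B'$, again within $O(\lambda K\mu)$ of $w$. I would show that for every such $t$ at least two of $t+e_1,t+e_2,t+e_3$ lie within $O(\lambda K\mu)$ of $w$: if two of them did not, the contraction pattern would have to be $t\cont{l}x_l$ for two indices $l$ together with $t\cont{c}x$, and then, feeding in the contraction of $t$ with $x_0$ and using the triangle inequality, one of the numbers $\rho(x_l)$ or $\rho(t+e_l)$ drops below $\mu$, or one obtains a point realising a configuration forbidden by Proposition~\ref{wtps} or Proposition~\ref{neighdiamProp}. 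Since $w$ is within $O(K\mu)$ of $x_0$, the two ``clustered'' neighbours of $t$ are themselves within $O(K\mu)$ of $x_0$, so the same argument applies to them; iterating produces a two-way set all of whose points except the first lie in the ball of radius $O(\lambda K\mu)$ about $w$, hence of diameter $O(\lambda K\mu)$. As this set lives inside an arbitrary three-way set, we get the promised contradiction provided $\lambda<1/(820C_1K)$.

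The step I expect to be the real obstacle is the case analysis just described: verifying that for \emph{every} base point $t$ near $x_0$ the ``bad'' contraction directions (those steering $t+e_i$ away from the cluster) cannot occupy two directions at once. This is precisely where the earlier non-existence results and the positivity $\mu>0$ have to be used: each two-bad-direction pattern, after the triangle inequality, either compresses some neighbourhood enough to make a $\rho$-value smaller than $\mu$, or reconstitutes one of the already-excluded finite configurations of Propositions~\ref{wtps}, \ref{n1neighProp}, \ref{neighdiamProp}. Bookkeeping the accumulated error terms so that they stay below the thresholds $\lambda K\mu$ and $\mu$ throughout the iteration is what pins down the constant $820C_1$.
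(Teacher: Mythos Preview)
Your overall strategy matches the paper's: argue by contradiction, observe that the cross-neighbours $x_i+e_j$ ($i\ne j$) together with $x+e_a,x+e_b$ form a tight cluster near $w$, seed each three-way set with a point $t$ close to $x_0$ via Lemma~\ref{1contrLemma}, and try to show that two of $t+e_1,t+e_2,t+e_3$ land in the cluster so that iteration produces a two-way set of diameter $O(\lambda K\mu)$, violating Proposition~\ref{every2Prop}. The tools you list are the right ones.

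The gap is in your identification of the obstructing pattern. First, a preliminary step you omit: one must rule out that $t\cont{i}x_i$ and $t\cont{i}x_j$ hold simultaneously for distinct $i,j$; this already needs Proposition~\ref{neighdiamProp} applied at $x_i$. Once that is done, the only way two neighbours of $t$ can miss the cluster is $t\cont{l}x_l$ for \emph{all three} $l$, not just two. More importantly, the contraction $t\cont{c}x$ you single out is not an obstruction at all: combined with $t\cont{c}x_c$ it yields $d(x+e_c,x_c+e_c)\le\lambda\bigl(d(t,x)+d(t,x_c)\bigr)\le 16\lambda K\mu$ outright, contradicting the standing hypothesis. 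The genuine obstruction is $t\cont{a}x$ (or symmetrically $t\cont{b}x$): then only $t+e_a$ lies in the cluster, and your two-way iteration stalls at this $t$. The paper does not try to iterate past such a $t$. Instead, since $t+e_a$ is simultaneously close to $x_a+e_a$ (from $t\cont{a}x_a$) and to $x+e_a\in B'$ (from $t\cont{a}x$), one gets $x_a+e_a$ inside the cluster along with the cross-neighbours $x_a+e_b,x_a+e_c$, so $\diam N(x_a)\le 20\lambda K\mu$ while $\rho(x_a)\le 20K\mu$; a single application of Proposition~\ref{neighdiamProp} with constant $20K$ finishes the proof, and this is precisely what fixes the threshold $\lambda<1/(820C_1K)$. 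Your proposed route through the contraction of $t$ with $x_0$, or through Proposition~\ref{wtps}, does not do this job.
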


Remark that this is the Proposition~\ref{tpsO} in the overview of the proof. When using this Proposition, we say that we are applying the Proposition~\ref{tps} to $(x_0; x_1, x_2, x_3;x)$ with constant $K$. 

\begin{proof} Suppose contrary. Without loss of generality, we may assume $a = 1, b= 2, c =3$. Let us first establish $d(x, x+e_1) , d(x, x+e_2) \leq 3K\mu$. As $d(x + e_3, x_3 + e_3) > 16 \lambda K \mu$, we must have 1 or 2 contracting $x, x_3$. Similarly, we cannot have $x\cont{3} x_0$ and $x_0 \cont{3} x_3$ simultaneously. If $x\cont{3}x_0$ then we have $x_0\cont{i}x_3$ for some $i\in[2]$, and recall that $x\cont{j}x_3$ some $j\in[2]$, so $d(x, x+e_1) \leq d(x, x_0) + d(x_0, x_0 + e_i) + d(x_0 + e_i, x_3 + e_i) + d(x_3 + e_i, x_3 + e_j) + d(x_3 +e_j, x + e_j) + d(x + e_j, x + e_1) \leq K\mu + K\mu + \lambda K \mu + \lambda K\mu + 2\lambda K\mu + \lambda K\mu < 3K\mu$ and in the same way we get $d(x, x + e_2) < 3K\mu$. On the other hand if $x\cont{i}x_0$ for $i\in[2]$ we get $d(x, x + e_j) \leq d(x, x_0) + d(x_0, x_0 + e_i) + d(x_0 + e_i, x + e_i) + d(x+e_i, x + e_j) \leq K\mu + K\mu + \lambda K\mu + \lambda K\mu < 3K\mu$ for any $j\in[2]$.\\
Similarly, let us observe that $\diam \{x_1, x_2, x_3\} \cup \{x_i + e_j: i,j\in[3], i\not=j\} \leq 5 K\mu$. We see that this certainly holds in the case that there are distinct $i,j\in[3]$ with $x_0\cont{j}x_i$, as then $d(x_i, x_i + e_j) \leq d(x_i, x_0) + d(x_0, x_0 + e_j) + d(x_0 + e_j, x_i + e_j) \leq (2 + \lambda) K \mu$, and the claim about the given diameter follows. Hence, suppose that for all $i\in[3]$ contractions are $x_0\cont{i}x_i$. Then we cannot have $x_0 \cont{3} x$, so suppose that $x_0\cont{j}x$ and also that $x\cont{k} x_3$, where $j,k\in[2]$. Now, we can apply the triangle inequality to see $d(x_3 + e_k, x_3) \leq d(x_3 + e_k, x + e_k) + d(x + e_k, x + e_j) + d(x +e_j, x_0 +e_j) + d(x_0 + e_j, x_0) + d(x_0, x_3) \leq 2\lambda K \mu + \lambda K \mu + \lambda K \mu + K \mu + K \mu = (2 + 4\lambda) K \mu$, so once again we have the desired bound on the given diameter.\\
Now, by the Lemma~\ref{1contrLemma}, in every 3-way set we have a point $t$ with $d(t, x_0) \leq 7K\mu$. Suppose that for some distinct $i,j\in[3]$ we have $t\cont{i}x_i$ and $t\cont{i}x_j$. Then $d(x_i + e_i, x_i + e_k) \leq d(x_i + e_i, t + e_i) + d(t + e_i, x_j + e_i) + d(x_j + e_i, x_i + e_k) \leq 17\lambda K\mu$ for any $k\not = i$. Hence $\diam N(x_i) \leq 17\lambda K \mu$. However, contract $x_0, x_i$ to see that $\rho(x_i) \leq (2 + 18\lambda) K \mu < 17K\mu$. But we can apply the Proposition~\ref{neighdiamProp}, as $\lambda < (17.41 K C_1)$, to obtain a contradiction. Hence, we cannot have $x_i\cont{i}t$ and $x_j\cont{i}t$.\\
Suppose that for every such $t$ we have distinct $i,j \in[3]$ with $t\cont{i}x_j$. Then, by the previous observation, we see that $t\cont{k}x_i$, for some $k \not= i$. Hence $d(t + e_i, x_0) \leq d(t + e_i, x_j + e_i) + d(x_j + e_i, x_j) + d(x_j, x_0) \leq 8\lambda K\mu + 6 K\mu \leq 7K \mu$ and similarly for $t +e_k$. So, we can apply the same arguments to newly obtained points and proceeding in this manner we construct a bounded 2-way set. However, the points that we construct after $t$ are on the distance at most $9\lambda K \mu$ from $x_1 + e_2$, hence, we get a 2-way set of diameter at most $18\lambda K \mu$. This is a contradiction with the Proposition~\ref{every2Prop}, as we have such a point $t$ in every 3-way set and $\lambda < 1/(54K)$.\\
With this in mind, we see that in every 3-way set, there is a point $t$ with $d(x_0, t) \leq 7K\mu$ but for all $i\in[3]$ we have $t\cont{i}x_i$. Contract such a $t$ with $x$. It cannot be by 3, as then $d(x+e_3, x_3 + e_3) \leq 16\lambda K \mu$, so without loss of generality we have $x\cont{1}t$. But then for any $j\in\{2,3\}$ and $k\in[2]$ that contracts $x$ and $x_3$ we obtain $d(x_1 + e_1, x_1 + e_j) \leq d(x_1 + e_1, t + e_1) + d(t + e_1, x + e_1) + d(x + e_1, x + e_k) + d(x + e_k, x_3 +e_k) + d(x_3 + e_k, x_1 + e_j) \leq 8\lambda K\mu + 8\lambda K\mu + \lambda K \mu + 2\lambda K\mu + \lambda K \mu = 20 \lambda K\mu$, giving $\diam N(x_1) \leq 20\lambda K\mu$ and as before $\rho(x_1) \leq 20 K\mu$. Applying the Proposition~\ref{neighdiamProp} establishes the final contradiction, as $\lambda < 1/(820 C_1 K)$. \end{proof}

\section{Existence of certain finite configurations}

Our next aim is to show that, provided $\lambda$ is sufficiently small, certain finite configurations must exist. Recalling the Proposition~\ref{neighdiamProp}, we see that we are approaching the final contradiction in the proof of Proposition~\ref{keyprop}.

\begin{proposition}\label{ex1prop}Provided $\lambda < 1/(5\times10^{12})$, there is a point $x$ such that $\rho(x) \leq C_2 \mu$ and $\diam\{x, x + e_i, x+e_j\} \leq \lambda C_2 \mu$ for some distinct $i,j\in[3]$. Here $C_2 = 100000$.\end{proposition}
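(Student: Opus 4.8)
The plan is to argue by contradiction and to exploit the dichotomy between the "local obstructions" already banned (Propositions~\ref{wtps}, \ref{n1neighProp}, \ref{neighdiamProp}) and the non-existence of bounded $3$-way sets (Proposition~\ref{bounded3Prop}). Assume no point $x$ has both $\rho(x)\le C_2\mu$ and $\diam\{x,x+e_i,x+e_j\}\le\lambda C_2\mu$ for some pair $i\neq j$. Start from a point $x^\ast$ with $\rho(x^\ast)$ close to $\mu$, so in particular $\rho(x^\ast)<2\mu\le C_2\mu$. I would then use Lemma~\ref{1contrLemma} to populate $\tw{x^\ast}$ (or a suitable translate) with points of small $\rho$ at controlled distance from $x^\ast$, and track, for each such point $t$, the "diagram" of contractions inside $\{t\}\cup N(t)$, $\{t+e_i\}\cup N(t+e_i)$, etc. The key bookkeeping observation is the one used repeatedly above: if two distinct pairs $t+e_i,t+e_j$ are contracted in the same direction $k$, then $N(t+e_k)$ has diameter $O(\lambda\mu)$ and, after contracting $t$ with $t+e_k$, we get $\rho(t+e_k)=O(\mu)$; since $\rho(t+e_k)\ge\mu$ this is not yet a contradiction, but it produces a point whose neighborhood is $\lambda$-small relative to its $\rho$, i.e. a point to which Proposition~\ref{neighdiamProp} applies — contradiction. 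Hence at every relevant point the three "long" pairs are contracted in three distinct directions, which severely restricts the diagrams.

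Next I would run the by-now-standard propagation: take $t$ with $d(t,x^\ast)\le C\mu$ for an absolute constant $C$ (from Lemma~\ref{1contrLemma}), contract $t$ with each of $x^\ast$ and the points of $N(x^\ast)$, and check that the resulting neighbors $t+e_i$ again lie within $O(\mu)$ of $x^\ast$; iterating yields a $3$-way set of diameter $O(\mu)$, which is forbidden by Proposition~\ref{bounded3Prop} once $\lambda$ is small — \emph{unless} at some stage the propagation breaks, and the only way it can break is that one of the contractions forces a short edge $t+e_a,t+e_b$ of length $\le\lambda C\mu$ together with $\rho(t)=O(\mu)$, i.e. exactly the configuration we assumed does not exist (after absorbing the constant $C$ into $C_2$). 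So the argument is genuinely a trichotomy: either we exhibit the desired configuration, or we build a banned point (via Proposition~\ref{neighdiamProp} or Proposition~\ref{n1neighProp}), or we build a bounded $3$-way set (via Proposition~\ref{bounded3Prop}); the last two are contradictions, so the first holds. The Proposition~\ref{n1neighProp} enters to control the case where $\rho$ jumps up sharply on $N(x^\ast)$: it guarantees $\rho(x^\ast+e_i)\le 7\cdot(\text{const})\cdot\mu$ for some $i$, keeping all the auxiliary $\rho$'s within an absolute multiple of $\mu$ so the constants do not blow up.

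Concretely the steps in order: (1) fix $x^\ast$ with $\rho(x^\ast)<2\mu$ and set up the contraction tree via Lemma~\ref{1contrLemma}; (2) show the three long pairs at any tracked point are contracted in distinct directions, else Proposition~\ref{neighdiamProp} applies; (3) use Proposition~\ref{n1neighProp} to bound $\rho$ on the neighbors of tracked points by an absolute multiple of $\mu$; (4) do the neighbor-propagation to get a $3$-way set of diameter $O(\mu)$, and invoke Proposition~\ref{bounded3Prop} to contradict — keeping careful track of which constant multiple of $\mu$ appears, so that it is at most $C_2\mu=100000\,\mu$ and the final $\lambda$-threshold is $1/(5\times10^{12})$, comfortably below all the thresholds $1/(\text{const}\cdot C_1)$, $1/(820C_1K)$, etc. demanded by the cited propositions with $K$ an absolute constant. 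The main obstacle I anticipate is purely the constant-chasing in step (4): one must verify that each application of Propositions~\ref{wtps}, \ref{n1neighProp}, \ref{neighdiamProp}, \ref{bounded3Prop} is legitimate for the particular $K$ that the accumulated constants produce, and that none of these $K$'s exceeds what $\lambda<1/(5\times10^{12})$ can absorb; the geometric/combinatorial content (distinct-direction contraction, neighbor propagation) is routine given the earlier machinery, but the arithmetic has to be laid out explicitly to be convincing.
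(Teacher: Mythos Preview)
Your step~(4) contains a genuine gap: the claimed trichotomy is not justified. Contracting $t$ with the four points $x^\ast,\,x^\ast+e_1,\,x^\ast+e_2,\,x^\ast+e_3$ need not hit all three directions, and when it misses one it does not automatically force two neighbours of $t$ to be $\lambda$-close. For instance, if $t\cont{1}x^\ast$, $t\cont{1}x^\ast+e_2$, $t\cont{2}x^\ast+e_1$, $t\cont{2}x^\ast+e_3$, then you control only $t+e_1$ and $t+e_2$ and obtain no bound of order $\lambda\mu$ on any pair of neighbours; the propagation stalls without producing either a $3$-way set or the desired configuration. You also seem to conflate ``$d(t+e_a,t+e_b)\le\lambda C\mu$'' with ``$\diam\{t,t+e_a,t+e_b\}\le\lambda C\mu$''; the latter requires two short \emph{short} edges, not just one short long edge. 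Your final remark that the only obstacle is constant-chasing is therefore too optimistic: the missing content is structural, not arithmetic.

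The paper's proof proceeds quite differently and relies crucially on Proposition~\ref{tps}, which you do not invoke at all. Starting from $t$ with $\rho(t)<2\mu$, a diagram analysis (long edges in distinct directions, as you note) together with Proposition~\ref{tps} applied to $(t;t_1,t_2,t_3;t)$ first manufactures an \emph{auxiliary} point $y_0$ with $\rho(y_0)\le 15\mu$, $d(y_0,y_0+e_3)\le 192\lambda\mu$ and $d(y_0+e_1,y_0+e_2)\le 4\lambda\mu$ --- i.e.\ one short short edge and one short long edge simultaneously. This extra edge is exactly what makes a second diagram classification tractable: only four diagrams survive, they split into two symmetry classes, and in each class one builds an explicit $1$-way Cauchy sequence along a fixed coordinate direction (e.g.\ $y_n=y_0+ne_3$), with Proposition~\ref{tps} invoked inductively at every step to keep $\rho(y_n)$ bounded. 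The final contradiction is thus a Cauchy sequence, not a bounded $3$-way set; Proposition~\ref{bounded3Prop} plays no role here, whereas Proposition~\ref{tps} is the workhorse that your outline is missing.
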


\begin{proof}
\begin{figure}
\includegraphics[scale=0.3]{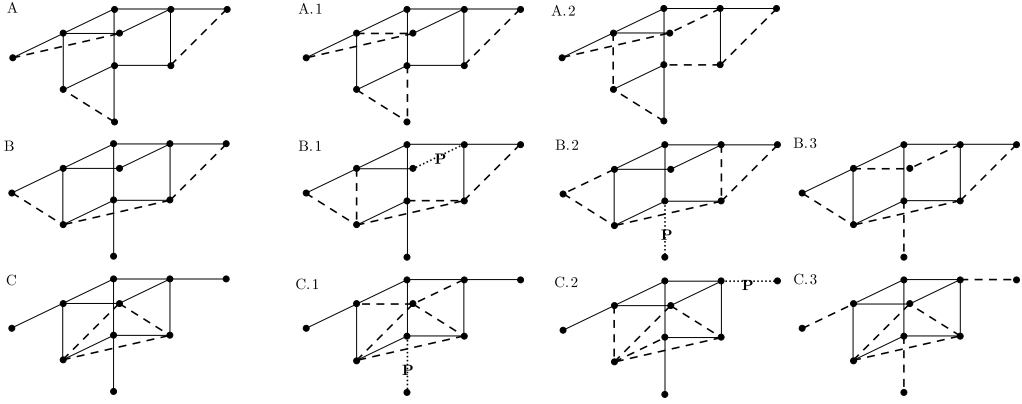}
\caption{Possible contractions in the proof of existence of auxiliary point}
\label{ex1prop1}
\end{figure}

Suppose contrary. The first part of the proof will be to establish the existence of an auxiliary point $y$ with $\rho(y) \leq 15 \mu$ and $d(y, y+e_i) \leq 192\lambda \mu, d(y+e_j, y + e_k) \leq 4\lambda\mu$ for some $\{i,j,k\} = [3]$. Pick any $t$ with $\rho(t) \leq 2\mu$ and consider contractions $\{t\} \cup N(t)$. As before, up to symmetry, we have the diagrams A, B and C in the Figure~\ref{ex1prop1} as possibilities for contractions of pairs of the form $t+e_a, t+e_b$, since no two such long edges can be contracted by the same $i$. If an edge is dash line in the Figure~\ref{ex1prop1}, then it is the result of a contraction of some pair of points in $\{t\}\cup N(x)$. As dot lines with letter \textbf{P} are shown the edges that will be results of applying the Proposition~\ref{tps}.\\
\emph{Case 1}. Suppose that we have diagram A. We see that we have diagrams A.1 and A.2 up to symmetry or otherwise some $\rho(z)$ is too small. However, diagram A.1 is impossible since $\rho(t + e_1) \leq C_2 \mu$ and $\diam\{t + e_1, t + e_1 + e_1, t + e_1 + e_2\} \leq \lambda C_2\mu$, which does not exist by the assumption. Hence, it is diagram A.2 that must occur, so we have $y$ with $\rho(y) \leq (4 + 6\lambda)\mu, d(y, y+e_3) \leq 2\lambda \mu, d(y+e_1, y +e_2) \leq 4\lambda\mu$.\\
\emph{Case 2}. Suppose that we have diagram B. As above, we can distinguish diagrams B.1, B.2, B.3, up to symmetry. First of all, if we have diagram B.3, we can apply the Proposition~\ref{n1neighProp} to $t$, as $\lambda < 1/(13 C_1), 1/(78.2)$, to obtain $\rho(t +e_i) \leq 14\mu$ for some $i$. Using this, we see that we have $\rho(y + e_3) \leq 15\mu, d(y + e_3 + e_1, y + e_3 + e_2) \leq 4\lambda\mu, d(y + e_3, y + 2e_3)\leq 2\lambda\mu$, as desired.\\
Consider now diagrams B.1 and B.2. We can denote $N(t) = \{t_1, t_2, t_3\}$ so that $t_1 + e_2, t_1 + e_3$ is a result of a contraction in $N(t)$ and so on. Observe that $\diam\{t_i + e_j:i,j\in[3], i\not=j\} \leq 12\lambda \mu$ and that $\rho(t) \leq 2\mu$, and in diagram B.1 $d(t + e_1, t +e_3) \leq 8\lambda \mu$, while in diagram B.2 $d(t + e_1, t + e_2) \leq 10\lambda \mu$, we can apply the Proposition~\ref{tps}, as $\lambda < (9840C_1)$, to $(t; t_1, t_2, t_3;t)$ with constant 12 to see that $d(t + e_2, t_2 + e_2) \leq 12\cdot 16\lambda\mu = 192\lambda\mu$ in diagram B.1 and $d(t + e_3, t_3 + e_3) \leq 192\lambda\mu$. Hence, $t + e_2$ in diagram B.1 and $t + e_3$ in the diagram B.2 are the desired points.\\
\emph{Case 3}. As in previous case, we are able to reach the same conclusion using the similar arguments.\\
To sum up, without loss of generality, we can assume that there is $y_0$, with $\rho(y_0) \leq 15\mu$, $d(y_0 + e_1, y_0 + e_2)\leq4\lambda\mu$ and $d(y_0, y_0 + e_3)\leq192\lambda\mu$. We shall now use this point to obtain a contradiction.\\
\begin{figure}
\label{ex1prop2}
\includegraphics[scale=0.3]{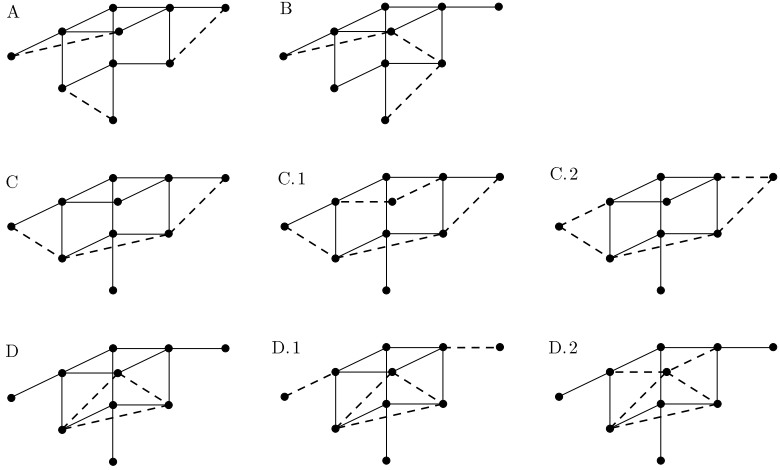}
\caption{Possible contractions in the neighborhood of an auxiliary point}
\end{figure}
Let $K = 20000$, and consider now those points which satisfy $\rho(y) \leq K\mu, d(y + e_i, y + e_j) \leq \lambda K\mu$ and $d(y, y+e_k) \leq \lambda K \mu$ for some $\{i,j,k\} = [3]$. We know that $y_0$ is one such point. Contract first the pairs inside $N(y)$, that is the long edges. As a few times before, it is not hard to see that for $i = 1, j = 2, k = 3$ we can only have the diagrams A, B, C and D Figure~\ref{ex1prop2} (if an edge is shown as dash line, that implies that it is a result of a contraction) and diagrams symmetric to these for different values of $i,j,k$. However, we can immediately reject diagram A, for if a point $y$ had diagram A, by contracting the short edges, we either obtain a point $t\in N(y)$ with $\rho(t) \leq 3K\mu$ and $\diam\{t, t + e_i, t + e_j\} \leq 3\lambda K \mu$, or we get a point $t\in N(y)$ with $\rho(y) \leq 4\lambda K \mu < \mu$, both resulting in a contradiction. Furthermore, if we were given a diagram B, then we can immediately apply the Proposition~\ref{tps} to $(y;y+e_3, y + e_2, y + e_1;y)$ with constant $6K$, as $\lambda < 1/(4920 C_1 K)$, which gives $d(y + e_3, y + e_1 + e_3) \leq 96\lambda K \mu$. Then we must have $y\cont{2}y  +e_3$, hence $\rho(y + e_1) \leq (1 + 97\lambda)K\mu < (K+1)\mu, \diam\{y + e_1, y + e_1 + e_1, y + e_1 + e_2\} \leq 5\lambda K \mu$ giving a contradiction once more.\\
Therefore, we must end up with either diagram C or D. Also observe that $y + e_i \cont{k} y + e_j$ must then hold for any $y$ that satisfies the properties stated above. Furthermore we must have $d(y + e_k, y + 2e_k) \leq 96\lambda K \mu$, as we can apply the Proposition~\ref{tps} to $(y; y_1, y_2, y_3; y)$, where $\{y_1, y_2, y_3\} = N(y)$ with constant $6K$. From this, we can conclude that neither $y \cont{k} y+e_i$ nor $y \cont{k} y+e_j$ can occur. Also we cannot have $y\cont{i}y+e_i$ and $y\cont{i}y+e_j$ simultaneously, as then $\rho(y + e_i) < \mu$, and similarly cannot have both $y\cont{j}y+e_i$ and $y\cont{j}y+e_j$. Hence, contracting the short edges implies that in fact we can only have the diagrams C.1, C.2, D.1 or D.2.\\
Observe that we can actually only have either C.1 and D.1, or C.2 and D.2 appearing. This is because if we had $y_1$ with the diagram among C.1 and D.1, and a point $y_2$ with a diagram among C.2 and D.2, we could first find the unique $e_i, e_j$ such that $d(y_1 + e_i, y_1 + e_i + e_1) \leq \lambda K\mu$ and $d(y_2 + e_j, y_2 + e_j + e_1) = \rho(y_2 + e_j)$. Now, apply the Proposition~\ref{tps} to $(y_1; y_1 + e_i, y_1 + e_k, y_1 + e_3; y_2+e_j)$ with constant $6K$, where $k \in[2]$ distinct from $i$, to obtain $\rho(y_2 + e_j) = d(y_2 + e_j, y_2 + e_j + e_1) \leq d(y_2 + e_j, y_2) + d(y_2, y_1) + d(y_1, y_1 + e_i) + d(y_1 + e_i, y_1 + e_i + e_1) + d(y_1 + e_i + e_1, y_2 + e_j + e_1) \leq K\mu + 2K\mu/(1-\lambda) + K\mu + \lambda K\mu + 96\lambda K\mu \leq 5K\mu$, while $\diam\{y_2 + e_j, y_2 + e_j + e_2, y_2 +e_j + e_3\} \leq 3\lambda K \mu$, which is a contradiction. Thus, we shall consider the cases depending on the pair of diagrams among these four that we allow.\\
\emph{Case 1}. We can only have diagrams C.1 and D.1.\\
Suppose that we had $y$ with $\rho(y) \leq K\mu/10, d(y + e_i, y + e_j) \leq \lambda K\mu/10, d(y, y +e_k)\leq \lambda K\mu/10$, for some $\{i,j,k\} =[3]$ that gave us diagram C.1 after contractions in $\{y\} \cup N(y)$. Without loss of generality, take $i = 1, j = 2$ and $k = 3$. Then, by the Proposition~\ref{n1neighProp} and the triangle inequality, we get $\rho(y + e_1), \rho(y + e_2) \leq K\mu$. In conjunction with $d(y + e_1 + e_1, y + e_1 + e_3), d(y + e_2 + e_2, y + e_2 + e_3) \leq \lambda K\mu/5$ and $d(y + e_1, y + e_1 + e_2), d(y + e_2, y + e_2 + e_1) \leq \lambda K \mu/10$, we see that $y + e_1, y + e_2$ are points whose neighborhoods contracting gives one of diagrams considered, in particular $y +e_1 + e_1 \cont{2} y+e_1+e_3$ and $y + e_2 + e_2 \cont{1} y + e_2+e_3$. But contract $y + e_1 + e_2$ with $y$, this gives $\rho(y + e_1 + e_2) \leq K\mu/5 < K\mu$ and $\diam N(y + e_1 + e_2) \leq \lambda^2 K \mu < \lambda K \mu$ which is a contradiction with Proposition~\ref{neighdiamProp}, since $\lambda < 1/(41 C_1 K)$.\\
Hence, as long as $y$ satisfies $\rho(y) \leq K\mu/10, d(y + e_i, y + e_j) \leq \lambda K\mu/10, d(y, y +e_k)\leq \lambda K\mu/10$, for some $\{i,j,k\} =[3]$ it must have diagram D.1. Start from $y_0$. Then we have $d(y_0 + e_3 + e_1, y_0 + e_3 + e_2) \leq \lambda ^ 2 K\mu, d(y_0 + e_3, y_0 + 2e_3) \leq \lambda^2K\mu$. Now, apply the Proposition~\ref{n1neighProp} to $y_0$ see that $\rho(y_0 + e_3) \leq 8 \rho(y_0)$. Therefore, contractions around $y_0 + e_3$ give us diagram D.1. But, contract $y_0 + e_1, y_0 + e_1 + e_3$ to obtain $\rho(y_0 + e_1 + e_3) <\mu$ or $\rho(y_0 + e_1) < \mu$.\\
\emph{Case 2}. We can only have diagrams C.2 and D.2.\\
Start from $y_0$ and define $y_n = y_0 + ne_3$ for all $n\geq 1$. By induction on $n$ we claim that $\rho(y_n) \leq 16\mu, d(y_n + e_1, y_n + e_2) \leq 4\lambda^{n+1}\mu, d(y_n + e_1, y_{n+1} + e_1) \leq (45 + 8n)\lambda^{n+1}\mu, d(y_n + e_2, y_{n+1} + e_2) \leq (45 + 8n)\lambda^{n+1}\mu, d(y_n, y_n + e_3) \leq 2000 \lambda \mu$.\\
For $n = 0$ the claim holds, since $y_0$ has diagram C.2 or D.2. Suppose the claim holds for some $n\geq 0$. Then it must have diagram C.2 or D.2, so $y_n + e_1\cont{3}y_n + e_2$, giving $d(y_{n+1} + e_1, y_{n+1} + e_2) \leq \lambda d(y_n + e_1, y_n + e_2) \leq 4\lambda^{n+1}\mu$. We can apply the Proposition~\ref{tps} to $(y_0;y_0 + e_2, y_0 + e_1, y_0 + e_3; y_n)$ or $(y_0;y_0 + e_1, y_0 + e_2, y_0 + e_3; y_n)$ (depending on the diagram of $y_0$) and to $(y_0;y_0 + e_2, y_0 + e_1, y_0 + e_3; y_{n+1})$ or $(y_0;y_0 + e_1, y_0 + e_2, y_0 + e_3; y_{n+1})$ with constant 60, so we get $d(y_0 + e_3, y_n + e_3), d(y_0 + e_3, y_{n+1} +e_3) \leq 960\lambda\mu$, thus $d(y_{n+1}, y_{n+1} + e_3) \leq 2000 \lambda\mu$. So $\rho(y_{n+1}) \leq (1+3\lambda)\rho(y_n) \leq 17\mu$, so $y_{n+1}$ has diagram C.2 or D.2. 
If the diagrams of $y_n$ and $y_{n+1}$ are distinct, then $y_n + e_1 \cont{3} y_{n +1 } + e_1$ and $y_n + e_2 \cont{3} y_{n +1} + e_2$, so the inequalities for $d(y_{n+1} + e_1, y_{n+2} + e_1)$ and $d(y_{n+1} + e_2, y_{n+2} + e_2)$ follow. Otherwise, $y_n + e_1 \cont{3} y_{n +1 } + e_2$ and $y_n + e_2 \cont{3} y_{n +1 } + e_1$, so $d(y_{n+1} + e_1, y_{n+2} + e_1) \leq d(y_{n+1} + e_1, y_{n+1} + e_2) + d(y_{n+1} + e_2, y_{n+2} + e_1) \leq 4\lambda^{n+2}\mu + \lambda (d(y_n + e_2, y_n + e_1) + d(y_n + e_1, y_{n+1} + e_1)) \leq 8\lambda ^{n+2}\mu + \lambda (45 + 8n)\lambda ^{n+1} = (45 + 8(n+1))\lambda^{n+2}\mu$. The inequality for $d(y_{n+1} + e_2, y_{n+2} + e_2)$ is proved in the same spirit.\\
Finally, by the triangle inequality we get $d(y_0 + e_1, y_{n+1} + e_1) \leq  d(y_0 + e_1, y_1 + e_2) + d(y_1 + e_2, y_1 + e_1) + d(y_1 + e_1, y_2+e_2) + \dots + d(y_n + e_1, y_{n+1} + e_1) < 50\lambda\mu$. Also $d(y_0, y_{n+1}) \leq d(y_0, y_0 + e_3) + d(y_0 + e_3, y_n + e_3) \leq 192\lambda \mu + 960\lambda \mu = 1152\lambda\mu$. Combining these conclusions further implies $\rho(y_{n+1}) \leq 16\mu$, as desired. Having established this claim, we can see that $(y_n + e_1)_{n\geq 0}$ is a 1-way Cauchy sequence, which is the final contradiction in this proof. \end{proof}

\begin{proposition}\label{ex2prop}Set $C_3 = 240000000000, C_{3,1} = 19000000000$ and let $i,j\in[3]$ be distinct. If $\lambda < 1 / (7380 C_1 C_{3,1})$, there is $x$ such that $\rho(x) \leq C_3 \mu; d(x + e_i, x + e_j) \leq \lambda C_3 \mu$.\end{proposition}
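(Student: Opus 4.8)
The plan is to derive Proposition~\ref{ex2prop} from Proposition~\ref{ex1prop} by a ``rotation'' argument. The hypotheses of Proposition~\ref{keyprop}, the standing assumption that there is no $1$-way Cauchy sequence, and the definition of $\mu$ are all symmetric under permuting $e_1,e_2,e_3$, so we may relabel the directions and it suffices to treat the target pair $\{i,j\}=\{1,3\}$. Proposition~\ref{ex1prop} supplies a point $y_0$ with $\rho(y_0)\le C_2\mu$ and $\diam\{y_0,y_0+e_a,y_0+e_b\}\le\lambda C_2\mu$ for some distinct $a,b$. If $\{a,b\}=\{1,3\}$ we are done at once, since $C_2<C_3$. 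Any two distinct $2$-subsets of $[3]$ meet, so otherwise $\{a,b\}$ shares exactly one direction with $\{1,3\}$, and after a further relabelling fixing $\{1,3\}$ we may assume $\{a,b\}=\{1,2\}$. Thus it remains to start from $y_0$ with $\rho(y_0)\le C_2\mu$, $d(y_0,y_0+e_1),d(y_0,y_0+e_2),d(y_0+e_1,y_0+e_2)\le\lambda C_2\mu$ and produce $x$ with $\rho(x)\le C_3\mu$ and $d(x+e_1,x+e_3)\le\lambda C_3\mu$; note $\rho(y_0)=d(y_0,y_0+e_3)$, as the other two neighbour-distances are below $\mu$.

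The first move is to \emph{rotate onto the long edge}: the edge $y_0+e_1,\,y_0+e_3$ has length at most $(1+\lambda)C_2\mu$ and is contracted by some direction $p\in[3]$; checking the three cases for $p$ and applying the triangle inequality, the point $z:=y_0+e_p$ satisfies $d(z+e_1,z+e_3)\le 2\lambda C_2\mu$. So $z$ is a candidate, and the whole difficulty is to bound $\rho(z)$. The first tool here is Proposition~\ref{n1neighProp} applied to $y_0$ with constant $C_2$, which yields $q\in[3]$ with $\rho(y_0+e_q)\le 7C_2\mu$: if $q=p$ then $\rho(z)\le 7C_2\mu\le C_3\mu$ and we are finished. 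The substance of the proof is the case $q\ne p$.

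When $q\ne p$, I would run through the contractions inside $\{y_0\}\cup N(y_0)$ — the short edges $y_0,\,y_0+e_r$ and the long edges $y_0+e_r,\,y_0+e_s$ — which the hypotheses (the small triangle on $\{y_0,y_0+e_1,y_0+e_2\}$, the bound $\rho(y_0+e_q)\le 7C_2\mu$, and $\rho(\cdot)\ge\mu$) cut down to a handful of local diagrams, organised via figures as in the proofs of Proposition~\ref{n1neighProp} and Proposition~\ref{ex1prop}. In the benign diagrams a short chain of triangle inequalities controls each neighbour $z+e_k$ of $z$ in terms of $\rho(y_0+e_q)$ and the small distances, giving $\rho(z)\le C_{3,1}\mu$ outright. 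In the remaining diagrams one either unearths a fresh point of small $\rho$ carrying a small triangle (whence one re-enters the argument or contradicts Proposition~\ref{neighdiamProp} or Proposition~\ref{wtps}), or one uses Lemma~\ref{1contrLemma} to place, in every $3$-way set, a point $t$ with $d(t,y_0)$ bounded, contracts $t$ against $\{y_0\}\cup N(y_0)$, and then either builds a $2$-way set of diameter $\le\lambda(\text{const})\mu$ in every $3$-way set — impossible by Proposition~\ref{every2Prop} — or applies Proposition~\ref{tps} to $(y_0;\,y_0+e_1,y_0+e_2,y_0+e_3;\,\cdot)$ (or Proposition~\ref{neighdiamProp}) with a fixed multiple of $C_{3,1}$ as constant; the factor $7380=820\cdot 9$ in the hypothesis $\lambda<1/(7380\,C_1C_{3,1})$ is exactly what these applications demand. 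This pins down a point $x$ with $\rho(x)\le C_{3,1}\mu$ and $d(x+e_1,x+e_3)$ controlled, and a final short bootstrap — one or two elementary applications of Lemma~\ref{1contrLemma} and Proposition~\ref{n1neighProp}, tracking constants — enlarges the $\rho$-bound from $C_{3,1}\mu$ to $C_3\mu$ while keeping the pair within $\lambda C_3\mu$.

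The main obstacle is precisely the step $q\ne p$: the direction $p$ contracting the long edge is out of our control, so $\rho(z)$ can a priori be enormous and no single clean estimate suffices; taming it requires the full diagram case analysis together with the $k$-way-set machinery (Propositions~\ref{every2Prop}, \ref{bounded3Prop}, \ref{neighdiamProp}, \ref{tps}, \ref{wtps}), with $C_{3,1}$ serving as the intermediate $\rho$-bound reached before the last bootstrap to $C_3$.
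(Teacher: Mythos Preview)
Your direct ``rotation'' approach is not the paper's, and I do not think it closes. The paper argues \emph{by contradiction}: it fixes the target pair, say $\{1,2\}$, assumes that no $x$ with $\rho(x)\le C_3\mu$ and $d(x+e_1,x+e_2)\le\lambda C_3\mu$ exists, and then studies $C_{3,1}$-good points $y$ (those with $\rho(y)\le C_{3,1}\mu$ and one of $d(y+e_1,y+e_3),d(y+e_2,y+e_3)\le\lambda C_{3,1}\mu$), which exist by Proposition~\ref{ex1prop}. The contradiction hypothesis is then used \emph{as a tool} to cut the list of admissible diagrams of such $y$ down to six (A,B,C,D,E,F and their mirror images): several diagrams are discarded precisely because they would produce a point with small $\rho$ and small $(e_1,e_2)$-edge, which is assumed not to exist. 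Your direct construction has no such hypothesis, so when your case analysis ``unearths a fresh point of small $\rho$ carrying a small triangle'' you cannot use it --- if the new small triangle is again a $\{1,2\}$-triangle you merely ``re-enter the argument'' with a worse constant, and nothing prevents this from looping forever. That is the concrete gap.

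Even granting a contradiction framing, your sketch misses the actual engine of the paper's proof. After restricting to six diagrams, the paper eliminates them one by one via six nested lemmas (Lemmata~\ref{ex2A}, \ref{ex2E}, \ref{ex2F}, \ref{ex2D}, \ref{ex2C}, \ref{ex2B}) at a descending chain of constants $C_{3,1}>C_{3,2}>\cdots>C_{3,7}=C_2$; each lemma assumes a good point with a given diagram, uses the already-eliminated diagrams and Proposition~\ref{tps} to force the diagram of a neighbouring point, and then \emph{builds an explicit $1$-way Cauchy sequence} (typically $y_n=y_0+ne_k$ for some $k$, with inductive control on $\rho(y_n)$ and on a geometric decay of edge-lengths). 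This Cauchy-sequence construction, repeated six times with different combinatorics, is the heart of the argument and does not appear in your outline at all; the $2$-way-set/Proposition~\ref{every2Prop} mechanism you invoke is not how any of these diagrams is actually dispatched. The hierarchy $C_{3,1}>\cdots>C_{3,7}$ (and the choice $C_3\approx 12\,C_{3,1}$) encodes the order in which the lemmas feed into one another, and your single ``bootstrap from $C_{3,1}$ to $C_3$'' does not capture it.
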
 

\begin{proof} 

\begin{figure}
\label{ex2prop1}
\includegraphics[scale=0.3]{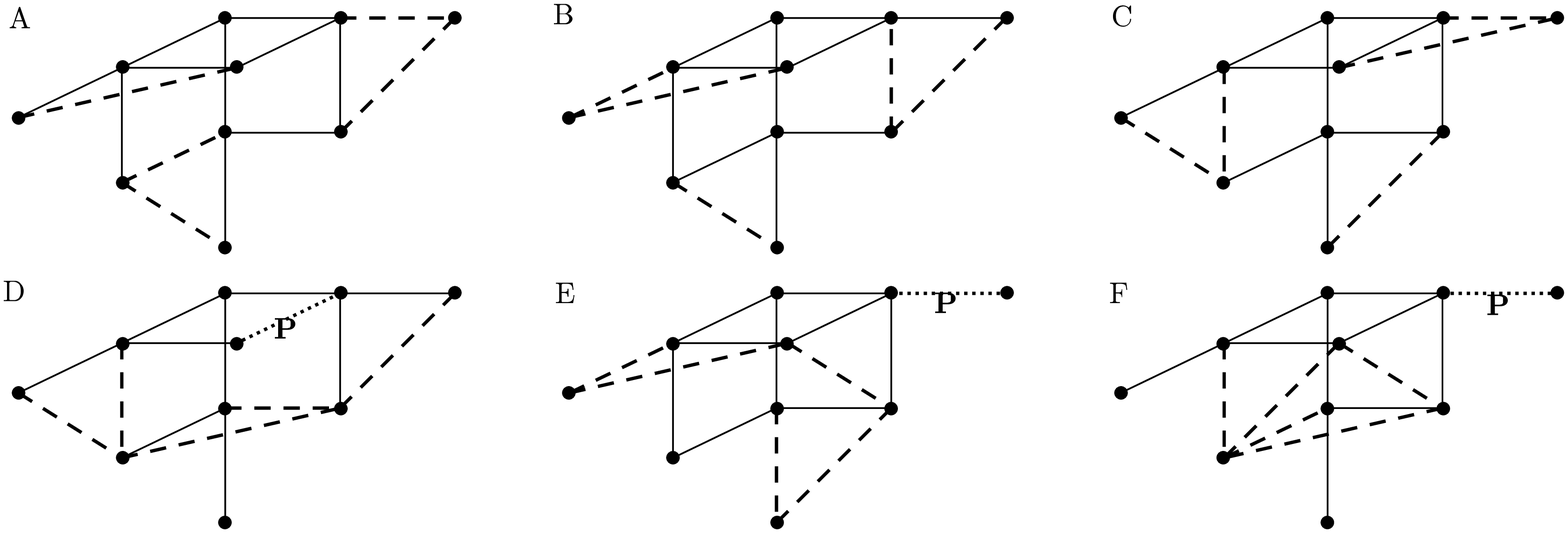}
\caption{Possible diagrams in the proof of the Proposition~\ref{ex2prop}}
\end{figure}

The proof will be a consequence of a few Lemmata, the last one being Lemma~\ref{ex2B}. It suffices to prove the claim for $i = 1, j = 2$. Suppose contrary, there is no such a point. Consider those $y$ which satisfy $\rho(y) \leq C_{3,1}\mu$ and $d(y + e_3, y  +e_i) \leq \lambda C_{3,1}\mu$. For such a point $y$ say that it is \emph{$C_{3,1}$-good}, and more generally use this definition for arbitrary constant instead of $C_{3,1}$. We already know that such a $y$ exists by the Proposition~\ref{ex1prop}. We list the possible diagrams of contractions in $\{y\} \cup N(y)$ for such a point, these are given in the Figure~\ref{ex2prop1} for $i = 1$. If an edge is shown as dash line, then it is a result of a contraction. Furthermore, with dot lines with letter \textbf{P} we mark edges whose length will be the result of applying the Proposition~\ref{tps}. It is not hard to show that these are the only possible diagrams, but for the sake of completeness we include an Appendix on the contraction diagrams, which in particular provides an explanation for the Figure~\ref{ex2prop1}. The symmetric diagrams to these for the case $i = 2$ are denoted by A', B', etc.\\
Our aim is to reject diagrams one by one. We shall start by discarding diagram A, and this method will then be used for the others. As we shall see, we can first apply the propositions proved so far to discard many diagrams in the presence of one given, and then the remaining ones can be fitted together so that we obtain a 1-way Cauchy sequence.\\
\begin{lemma}\label{ex2A}Set $C_{3,2} = 3100000000$. There is no $C_{3,2}$-good $y$ such that contractions give diagram A or A' for $y$.\end{lemma}
\begin{proof}[Proof of the Lemma~\ref{ex2A}] Suppose contrary, we do have such a point $y$, and without loss of generality $d(y + e_1, y + e_3) \leq \lambda C_{3,2}\mu$. Firstly, suppose that there was another point $z$ that is $C_{3,1}-$good, but whose diagram is among D, D', E, E', F, F'. By FNI we have $d(y,z) \leq (C_{3,2} + C_{3,1}) \mu/ (1-\lambda) < 2 C_{3,1} \mu$. Then, for a suitable choice $\{z_1, z_2,z_3\} = N(z)$, we can apply the Proposition~\ref{tps} to $(z;z_1,z_2,z_3;y + e_1)$ with constant $6C_{3,1}$ to get  $d(z_3, z_3 + e_3) \leq d(z_3, z) + d(z, y) + d(y, y + e_1 + e_3) + d(y + e_1 + e_3, z_3 + e_3) \leq C_{3,1}\mu + 2 C_{3,1}\mu + (1+\lambda)C_{3,2}\mu + 96\lambda C_{3,1}\mu < 4 C_{3,1}\mu$. Hence, $\rho(z_3) \leq 4C_{3,1}\mu$, except when the diagram is D or D', so we must apply the Proposition~\ref{n1neighProp} to $z$ first, so obtain $\rho(z_3) \leq 10 C_{3,1}\mu$. Also, $d(z_3+ e_1, z_3 + e_2) \leq 2\lambda C_{3,1}\mu$, but such a point $z$ cannot exist by the assumptions.\\
Now, take an arbitrary $(C_{3,1}/3)$-good point $z$ with diagram A. Consider the point $z + e_3$. We have $\rho(z + e_3) \leq (2+3\lambda)\rho(z), d(z + e_3 + e_1, z + e_3 + e_3) \leq 2\lambda \rho(z)$ so $z+e_3$ is $C_{3,1}-$good, so its diagram is one of A, B or C (it cannot be among the symmetric to these ones as then $\rho(z+e_3) < \mu$). If it was B, then contracting the pair $z + e_1, z + e_3 + e_2$ would give immediate contradiction, for we would obtain one of $\rho(z+e_1) < C_{3,1}\mu$, $\rho(z + e_2) < \mu$ or $\rho(z + e_2 +e_3) < \mu$. Similarly, it cannot be C, since contracting the same pair of points would give the contradiction once again as it would yield $\rho(z + e_2) < \mu$ or $\rho(z  +2e_3) <\mu$. Therefore, whenever we have a $(C_{3,1}/3)-$good point $z$ with diagram A, $z + e_3$ is $C_{3,1}-$good and has the same diagram.\\
Now, start from the $y$ given, and define $y_{n} = y + ne_3$, for $n\geq 0$. We shall now show that $(y_n)_{n\geq0}$ is a Cauchy sequence and hence obtain a contradiction. By induction on $n$ we claim $d(y_n, y_n + e_1) \leq \lambda^n C_{3,2}\mu, d(y_n + e_1, y_{n+1})\leq \lambda^{n+1} C_{3,2}\mu$, $\rho(y_n) < (2 + 10\lambda)C_{3,2}\mu$ and diagram of $y_n$ is A. This is clearly true for $n=0$.\\
Suppose that the claim holds for $n\geq0$. Note $d(y_0, y_{n+1}) \leq d(y_0, y_1) + d(y_1, y_2) + \dots + d(y_n + y_{n+1}) \leq C_{3,2}\mu + 2\lambda C_{3,2}\mu + 2\lambda^2 C_{3,2}\mu + \dots < (1 + 3\lambda) C_{3,2}\mu$. The fact that $y_n$ has the diagram A and is in fact $C_{3,1}/3-$good implies that $y_{n+1}$ is $C_{3,1}-$good and itself has diagram A. Further, $y_n\cont{3}y_n+e_1$ and $y_n+e_1\cont{3}y_n + e_3$. This is then sufficient to obtain the next two inequalities. Also $d(y_0 + e_2, y_{n+1}) < C_{3,2}\mu + 3C_{3,2}\mu/(1-\lambda) < 5C_{3,2}\mu$. Hence, we must have $y_0 + e_2 \cont{2} y_{n+1}$, for otherwise $\rho(y_0 + e_1) < \mu$ or $\rho(y_1) < \mu$. So $d(y_0, y_{n+1} + e_2) \leq d(y_0, y_0 + 2e_2) + \lambda d(y_0 + e_2, y_{n+1}) \leq C_{3,2}\mu + \lambda C_{3,2}\mu + 5\lambda C_{3,2}\mu$, from which we can infer $\rho(y_{n+1}) = d(y_{n+1}, y_{n+1} + e_2) \leq d(y_{n+1}, y_0) + d(y_0, y_0 + e_2) + d(y_0 + e_2, y_0 + 2e_3) + d(y_0 + 2e_2, y_{n+1} + e_2) < (2 + 10\lambda)C_{3,2}\mu$, as claimed.\\
From this we immediately get that $(y_n)_{n\geq0}$ is a Cauchy sequence. \end{proof}

\begin{lemma}\label{ex2E}Set $C_{3,3} = 1029000000$. There is no $C_{3,3}-$good point $y$ with diagram E or E'. \end{lemma}
\begin{proof}[Proof of the Lemma~\ref{ex2E}] Suppose contrary, without loss of generality $d(y + e_1, y + e_3) \leq \lambda C_{3,3} \mu$. Firstly, suppose there was a $C_{3,2}$-good point $z$ with $d(z + e_1, z + e_3) \leq \lambda C_{3,2}\mu$ a diagram among B, C, D. Take $p = z + e_2$ for diagrams B, D, $p = z + e_3$ for C, and apply the Proposition~\ref{tps} with constant $3C_{3,2}$ (for $d(p,y) \leq 3C_{3,2}\mu$ and for such a constant the other necessary assumptions also hold) to $(y;y+e_3, y + e_2, y + e_1;p)$ to obtain a contradiction at $y + e_1$, as it has $d(y + e_1  +e_1, y + e_1 +e_2) \leq 2C_{3,3} \lambda \mu$ and $\rho(y + e_1) = d(y + e_1, y + e_1 + e_3) \leq d(y + e_1, y) + d(y, z) + d(z, p + e_1) + d(p + e_1, y  +e_3 + e_1) \leq C_{3,3}\mu + (C_{3,3} + C_{3,2})\mu/(1-\lambda) + 2C_{3,2}\mu + 48 \lambda C_{3,2} < C_{3,1}\mu$. Hence, any such a $C_{3,2}-$good point $z$ can only have diagram E, or F.\\
Now, return to the point $y$ and define $y_n = y + ne_2$, for all $n\geq0$. We shall show that $(y_n)_{n\geq0}$ is a Cauchy sequence. By induction on $n$ we show that $d(y_n + e_1, y_n + e_3) \leq \lambda^{n+1} C_{3,3}\mu, d(y_n + e_3, y_{n+1} + e_3) \leq (3 + 2n)\lambda^{n+1} C_{3,3}\mu$ and $\rho(y_n) < 3C_{3,3}\mu$. Case $n=0$ is clear.\\
Suppose the claim holds for some $n \geq 0$. Firstly, $y_n$ is $C_{3,2}-$good, so it has diagram E or F, so in particular $d(y_{n+1} + e_1, y_{n+1} + e_3) \leq \lambda d(y_n + e_1, y_n + e_3) \leq \lambda ^ {n+1} C_{3,3}\mu$. Applying the triangle inequality gives $d(y_{n+1} + e_3, y_0 + e_3) \leq d(y_{n+1} + e_3, y_n + e_3) + \dots + d(y_1 + e_3, y_0 + e_3) \leq (5 + 2n)\lambda^{n+1} C_{3,3}\mu + \dots 3\lambda C_{3,3} \mu < 3\lambda C_{3,3} \mu/ (1-2\lambda)$. Further, since $d(y_0, y_{n+1}) \leq d(y_0, y_n) + d(y_n, y_{n+1}) \leq (\rho(y_0) + \rho(y_n))/(1-\lambda) + \rho(y_n) \leq 8 C_{3,3}\mu$ apply the Proposition~\ref{tps} to $(y; y + e_3, y+e_2, y+e_1;y_{n+1})$ with constant $8C_{3,3}$ which gives $d(y_1, y_{n+2}) \leq 128 \lambda C_{3,3}\mu$. Therefore, $\rho(y_{n+1}) < 3 C_{3,3}\mu$, in particular is $C_{3,2}$-good, hence its diagram can also only be E or F. If $y_n$ and $y_{n+1}$ have the same diagram, then contract $y_n + e_1, y_{n+1} + e_3$, otherwise $y_n + e_3, y_{n+1} + e_3$. These must be contracted by 2, so using the triangle inequality gives in the former case $d(y_{n+1} + e_3, y_{n+2} + e_3) \leq d(y_{n+1} + e_3, y_{n+1} + e_1) + d(y_{n+1} + e_1, y_{n+2} + e_3) \leq \lambda ^{n+2} C_{3,3}\mu + \lambda d(y_n + e_1, y_{n+1} + e_3) \leq \lambda ^{n+2} C_{3,3}\mu + \lambda (d(y_n+e_1, y_n+e_3) + d(y_n + e_3, y_{n+1} + e_3)) \leq 2\lambda ^{n+2} C_{3,3}\mu + \lambda d(y_n + e_3, y_{n+1} + e_3) \leq (5 + 2n) \lambda ^{n+2} C_{3,3}\mu$ as desired. In later case we are immediately done.\\
Furthermore this claim implies that $(y_n + e_1)_{n\geq0}$ is a Cauchy sequence, so we obtain a contradiction.\end{proof}

\begin{lemma}\label{ex2F}Set $C_{3,4} = 147000000$. There is no $C_{3,4}-$good point $y$ with diagram F or F'.\end{lemma}
\begin{proof}[Proof of the Lemma~\ref{ex2F}] Suppose contrary, there is such a point $y$ and without loss of generality we may assume $d(y + e_1, y + e_3) \leq \lambda C_{3,4} \mu$.\\
Suppose that we have a point $z$ that is $3C_{3,4}-$good with diagram F and that $d(z + e_1, z + e_3) \leq 3\lambda C_{3,4}\mu$, and that $z + e_2$ being $C_{3,3}-$good has diagram B, C or D. If it was B, we would immediately obtain a contradiction by contracting $z + e_1, z + 2e_2$, and if it was C, contracting $z + e_1, z + e_2 + e_3$, would once again end the proof, both giving a point $p$ with $\rho(p) < \mu$, so suppose that it was D. Apply the Proposition~\ref{tps} to $(z; z + e_1, z+e_2, z+e_3; z + e_2 +e_3)$  and to $(z; z + e_1, z+e_2, z+e_3; z + 2e_2)$ with constant $12C_{3,4}$. Now $z + e_1$ is $7C_{3,4}-$good, so it has diagram among B', C', D', F'. However $\diam\{z + e_1 + e_2, z + 2e_1 + e_2, z + e_1 + e_2 +e_3\}\leq 4\lambda C_{3,4}\mu$, so it must in fact be F'. Apply the Proposition~\ref{tps} to $(z; z + e_1, z+e_2, z+e_3; z + e_1 +e_3)$  with constant $12C_{3,4}$. Thus $z + e_3 \cont{3} z + 2e_3$. Write $r = d(z  +e_3, z + 2e_3)$, so we see that FNI implies $ r - \rho(z)\leq  d(z, z +2e_3) \leq \lambda (r + \rho(z))/(1-\lambda)$, but $r \geq C_3\mu$ and $\rho(z) \leq 3C_{3,4}\mu$ give contradiction.\\
Hence, whenever $z$ is a $3C_{3,4}-$good point with diagram F, $z +e_2$ is $C_{3,3}-$good and has the same diagram. Now $(y + ne_2)_{n\geq0}$ is Cauchy by the arguments from the proof of the Lemma~\ref{ex2E}, since there we allow both E and F as diagrams. \end{proof}

\begin{lemma}\label{ex2D} Set $C_{3,5} = 21000000$. There is no $C_{3,5}-$good point $y$ with diagram D or D'.\end{lemma}
\begin{proof}[Proof of the Lemma~\ref{ex2D}] Suppose contrary, there is such a point $y$ and without loss of generality we may assume $d(y + e_1, y + e_3) \leq \lambda C_{3,5} \mu$.\\
Now consider a point $3C_{3,5}-$good point $z$ with the diagram D and $d(z + e_1, z + e_3) \leq \lambda 3C_{3,5}\mu$. Since $z + e_1$ is $C_{3,4}-$good, it can only have diagram B, C or D. If it was not D, contract $z + e_2, z + e_1 + e_3$ for the sake of contradiction, namely, if it was B we would get $\rho(z + e_2) < \mu$ or $\rho(z + 2e_1) \leq C_3 \mu$, but $d(z + 2e_1 + e_1, z + 2e_1 + e_2) \leq 2\lambda C_{3,5}\mu$ and if it was C, we would obtain $\rho(z + e_1 + e_3) < \mu$ or $\rho(z + 2e_1) < \mu$. Hence, whenever $z$ has the given properties, $z + e_1$ has diagram D.\\
Now, return to $y$, and consider the sequence $y_n = y + ne_1$, for $n\geq0$. By induction on $n$, we show that $\rho(y_n) \leq 3C_{3,5}\mu, d(y_n, y_n + e_3) \leq \lambda^n C_{3,5}\mu$ and $d(y_n + e_3, y_{n+1}) \leq \lambda^{n+1}C_{3,5}\mu$. Clearly true for $n=0$.\\
Suppose that the claim holds for some $n\geq0$. Then $y_n$ is $3C_{3,5}-$good so it has diagram D. Hence, $y_n \cont{1} y_n + e_3$ and $y_{n+1} \cont{1} y_n + e_3$, which establishes two of the necessary inequalities. Also, by the triangle inequality $d(y_{n+1}, y_0) \leq C_{3,5}\mu + 2\lambda C_{3,5}\mu / (1-\lambda)$, so we can apply the Proposition~\ref{tps} to $(y_0; y_0 +e_2, y_0 + e_1, y_0 + e_3; y_{n+1})$ with constant $6 C_{3,5}$ to get $d(y_{n+1} + e_2, y_0 + e_1 + e_2) \leq 96 C_{3,5}\mu$, in particular $\rho(y_{n+1}) \leq 3C_{3,5}\mu$, as desired.\\
Now it follows that $(y_n)_{n\geq0}$ is a Cauchy sequence.\\
\end{proof}

\begin{lemma}\label{ex2C} Set $C_{3,6} = 3000000$. There is no $C_{3,6}-$good point $y$ with diagram C or C'.\end{lemma}

\begin{proof}[Proof of the Lemma~\ref{ex2C}] Suppose contrary, there is such a point $y$ and without loss of generality we may assume $d(y + e_1, y + e_3) \leq \lambda C_{3,6} \mu$.\\
Firstly, suppose that we have a $3C_{3,6}$-good point $z$ such that $d(z + e_1, z + e_3) \leq 3\lambda C_{3,6}\mu$, and $z + e_1$ has diagram B. We shall obtain a contradiction by considering contractions in such a situation. First of all we can observe that $z + e_3 \cont{3} z + e_1 + e_3$. Note that $d(z + 2e_1, z + 2e_1 + e_3) > C_3\mu$, so $d(z + e_3, z + 2e_3) \geq d(z + 2e_1, z + 2e_1 + e_3) - d(z + e_3, z + 2e_1) - d(z + 2e_3, z + 2e_1 + e_3) > C_3\mu - 24\lambda C_{3,6}\mu$.\\
\emph{Case 1}. Suppose that $z + e_3 \cont{2} z + 2e_3$.\\
We see that $z + e_2 + e_3, z + 2e_3$ is not contracted by 1, and from FNI, we must have $\rho(z + 2e_3) \geq (1-\lambda) d(z, z+2e_3) - \rho(z) \geq (1-\lambda) d(z+e_3, z +2e_3) - (2-\lambda) \rho(z)$, thus $z + e_2 + e_3, z + 2e_3$ is neither contracted by 3, hence  $z + e_2 + e_3 \cont{2} z + 2e_3$. Now suppose that $z + 2e_2 \cont{3} z + e_1 + e_2$. Then $d(z + e_3, z  +2e_3) \leq d(z + e_3, z + e_2 +2e_3) + d(z + e_2 +2e_3, z + e_2 + e_3) + d(z + e_2 + e_3, z + 2e_3)$ so $d(z + e_3, z + 2e_3) (1-\lambda) \leq 3\rho(z)$ which is impossible.\\ 
Therefore we must have $z + 2e_2 \cont{2} z + e_1 + e_2$ and $z + 2e_1 \cont{3} z + 2e_2$, otherwise $\rho(z + e_1 + e_2) < \mu$. Finally, contract $z + 2e_1 $with $ z + 2e_3$ to get $\rho(z + 2e_1) < \mu$ or $\rho(z + 2e_3) < \mu$.\\
\emph{Case 2}. Suppose that $z + e_3 \cont{3} z + 2e_3$.\\ 
By FNI applied to $z, z+2e_3$ we see that $\rho(z + 2e_3) \geq (1-\lambda) d(z+e_3, z+2e_3) - (2-\lambda) \rho(z)$, hence $\rho(z + 2e_3) = d(z + 2e_3, z + e_2 +2e_3) \geq (1-\lambda) d(z+e_3, z+2e_3) - (2-\lambda) \rho(z)$. So we have $z +2e_3 \cont{2} z + e_2 + e_3$. Also $z + 2e_2 \cont{2} z + e_3$, from which we see that $z + 2e_2 \cont{2} z + 2e_1$ giving contradiction.\\
Thus, whenever we have a point $z$ as described, we must have $z+e_1$ with diagram C as well. Now, set $y_n = y + ne_1$ for $n \geq 0$. By induction on $n$ we claim that $d(y_n, y_n  +e_3) \leq \lambda^n C_{3,6}\mu, d(y_n+e_3, y_{n+1})\leq \lambda ^{n+1} C_{3,6}\mu, \rho(y_n) \leq 3C_{3,6}\mu$ and $y_n$ has diagram C. This is clear for $n=0$.\\
Suppose the claim holds for some $n\geq0$, so $y_n$ must have diagram C, from which the first two inequalities follow. Observe that $d(y_{n+1}, y_0) < \rho(y_0) + 2\lambda \rho(y_0)/(1-\lambda)$ and $d(y_0 + e_2, y_0 +e_2 + e_3) > C_3\mu$, so $y_{n+1}\cont{2}y_0 + e_2$. Therefore $\rho(y_{n+1}) < 3\rho(y_0) \leq 3C_{3,6}\mu$, which gives the rest of the claim, as $y_{n+1} = y_n + e_1$ must have diagram C, by the previous conclusions.\\
Hence $(y_n)_{n\geq0}$ is a 1-way Cauchy sequence, which is a contradiction. \end{proof}

\begin{lemma}\label{ex2B} Set $C_{3,7} = 100000$. There is no $C_{3,7}-$good point $y$ with diagram B or B'.\end{lemma}

\begin{proof}[Proof of the Lemma~\ref{ex2B}] Suppose contrary, there is such a point $y$ and without loss of generality we may assume $d(y + e_1, y + e_3) \leq \lambda C_{3,7} \mu$.\\
Consider a $6C_{3,7}-$good point $z$, which has $d(z + e_1, z + e_3) \leq 6\lambda C_{3,7}\mu$, which therefore must have diagram B. We have $\rho(z + e_2) \leq (2+3\lambda)\rho(z), d(z + e_2 + e_2, z + e_2 +e_3)\leq 2\lambda \rho(z)$, so $z + e_2$ is $C_{3,6}-$good so has diagram B'. Observe that $z + e_3\cont{3}z+e_2+e_3$ as $d(z+ (1,0,1), z + (1,1,1)) \geq R - 4\lambda \rho(z)$ and $d(z + (0,1,1), z + (0,2,1)) > C_3\mu - 2\lambda \rho(z)$, where $R = d(z + e_1, z + e_1 + e_3) > C_3\mu$. Also $z + e_1 \cont{3} z + e_3$ since $z$ has diagram B. Similarly, since $z+e_2$ has diagram B', we must have $z + 2e_2 \cont{3} z + e_2 + e_3$. Furthermore $\rho(z + e_1 + e_2) \leq (2+3\lambda)\rho(z + e_2) \leq (2+3\lambda)^2\rho(z), d(z + e_1 +e_2 + e_1, z + e_1 + e_2 + e_3) \leq 2\lambda \rho(z + e_2) \leq 5\lambda \rho(z)$, so $z + e_1 + e_2$ is $C_{3,6}-$good, hence itself has diagram B, from which we infer $z + (0,1,1) \cont{3} z + (1,1,1)$.\\
Suppose that $z + e_1 +e_3 \cont{3} z + e_2 + e_3$, so have $d(z +(1,0,2), z+(0,1,2)) \leq \lambda (R + 3\rho(z))$ and $d(z + (1,0,2), z + (0,0,2)) \leq \lambda (R + 6\rho(z))$. Thus $d(z + (1,0,1), z + (1,0,2)) \leq \lambda(R + 8\rho(z))$, hence $z \cont{3} z + 2e_3$, which implies $d(z + 2e_3, z + 3e_3) \geq R(1- \lambda) - 3\rho(z)$, so $z + e_1 +e_3\cont{1} z + 2e_3$ (if $z + e_1 +e_3\cont{2} z + 2e_3$, then $\rho(z + 2e_1 + e_2) < \mu$) giving $d(z + (2,0,1), z + (1,0,1)) \leq \lambda (R + 10\rho(z))$. Also $z + (1,1,0) \cont{1} z + (2,0,0)$ and $z + 2e_1 \cont{3} z + 2e_2$, but then contracting $z + 2e_1, z + 2e_3$, results in contradiction.\\
Thus $z + (1,0,1) \cont{1} z + (0,1,1)$, as otherwise $R(1-\lambda) \leq 2C_{3,7}\mu$, which is not possible. From the fact that $z$ has the diagram B, we have $z \cont{1} z + e_1$. Also, we must have $z + e_1 \cont{1} z + e_1 + e_2$. As $d(z + e_1 + e_3, z + 2e_1 + e_3) \geq (1-\lambda)R - 7\lambda \rho(z)$, we cannot have $z + e_1 \cont{3} z + 2e_1$. Suppose that $z + e_1 \cont{1} z + 2e_1$, then contracting $z + 2e_2, z + e_1$ and $z + 2e_2, z + 2e_1$ (both must be in the direction $e_3$) gives $d(z + e_1 + e_3, z + 2e_1 + e_3) \leq 6\lambda \rho(z)$, which is a contradiction.\\ 
We conclude that $z \cont{1} z + e_1$, $z+e_1 \cont{1} z + e_1 + e_2$ and $z+e_1 \cont{2} z + 2e_1$, for such a $z$. By symmetry, when $d(z + e_2, z + e_3) \leq 6\lambda C_{3,7}\mu$ holds instead of $d(z + e_1, z +e_3) \leq 6 \lambda C_{3,7}$, then we must have $z \cont{2} z + e_2$, $z+e_2 \cont{2} z + e_1 + e_2$ and $z+e_2 \cont{1} z + 2e_2$.\\
Return now to the point $y$ and consider the sequence given as $y_0 = y$, when $k$ is even set $y_{k+1} = y + e_2$, otherwise $y_{k+1} = y + e_1$. By induction on $k$ we obtain $\rho(y_k) \leq 3C_{3,7}\mu, d(y_{k}, y_{k+2}) \leq 3\lambda^{k} \frac{1+\lambda^ 2}{1-\lambda} C_{3,7}\mu, d(y_k, y_k + e_3) \leq \lambda^k C_{3,7}\mu$ and $d(y_k, y_k + e_1) \leq 3\lambda^k C_{3,7}\mu$ for even $k$, $d(y_k, y_k + e_2) \leq 3\lambda^k C_{3,7}\mu$ for odd $k$.\\
When $k = 0$, the claim clearly holds. Suppose that the claim is true for all values less than or equal to some even $k\geq0$. We shall argue when $k$ is even, the same argument works in the opposite situation. By the triangle inequality, we have $d(y_0, y_i) \leq 3\frac{1+\lambda^ 2}{(1-\lambda)(1-\lambda^2)} C_{3,7}\mu$ for even $i \leq k + 2$ and $d(y_1, y_i) \leq 3\lambda\frac{1+\lambda^ 2}{(1-\lambda)(1-\lambda^2)} C_{3,7}\mu$ for the odd $i \leq k +2$. In particular, as $y_k$ is $C_{3,6}-$good, it has diagram B, so $\rho(y_{k +1}) = d(y_{k+1}, y_{k+2}) \leq d(y_{k+1}, y_1) + \rho(y_0) + d(y_0, y_{k+2}) \leq 3(1+\lambda)\frac{1+\lambda^ 2}{(1-\lambda)(1-\lambda^2)} C_{3,7}\mu + C_{3,7}\mu \leq 5C_{3,7}\mu$ and $d(y_{k+1} + e_2, y_{k+1} + e_3) \leq 2\lambda \rho(y_k) \leq 10\lambda C_{3,7}\mu$. Then $y_{k+1}$ is $10C_{3,7}-$good, so it must have diagram B'. From the contractions implied by this diagram described previously, we get that $d(y_{k+1}, y_{k+1} + e_3) \leq \lambda^{k+1} C_{3,7}\mu$. Moreover, $y_{k+1} \cont{2} y_{k+1} + e_2$, $y_{k+1}+e_2 \cont{2} y_{k+1} + e_1 + e_2$ and $y_{k+1}+e_2 \cont{1} y_{k+1} + 2e_2$. Therefore $d(y_{k+1} + e_2, y_{k+3}) \leq d(y_{k+1} + e_2, y_{k+1} + 2e_2) + d(y_{k+1} + 2e_2, y_{k+1} + 2e_2 + e_1) + d(y_{k+1} + 2e_2 +e_1, y_{k+3}) \leq \lambda d(y_{k+1} + e_2, y_{k+3}) + (1+\lambda) d(y_{k+1} + e_2, y_{k+1} + 2e_2) \leq \lambda d(y_{k+1} + e_2, y_{k+3}) + \lambda(1+\lambda) d(y_{k+1}, y_{k+1} + e_2)$. Hence $d(y_{k+1}, y_{k+3}) \leq \frac{1 + \lambda ^2}{1-\lambda} d(y_{k+1}, y_{k+1} + e_2)$, proving the claim.\\
Now, we can infer that $y_0, y_0 + e_1, y_1, y_1 + e_1, y_2,\dots$ is a 1-way Cauchy sequence, which is a contradiction. \end{proof}

But now, the Proposition~\ref{ex1prop} provides us with a $C_{3,7}-$good point, which however cannot exist because of the Lemmata we have shown during this proof. \end{proof}

\section{Final contradiction}

In the remaining of the proof of the Proposition~\ref{keyprop}, an important role will be played by the sets $S_i(K,x_0) = \{y: d(x_0,y) \leq K\mu, d(y, y + e_i) \leq K\mu\}$, defined for any point $x_0$, constant $K$ and $i \in [3]$. Given any point $t$, the set $S_i(K, x_0)$ serves to give approximate versions of contractions of $x_0$ and $t$ in the direction $i$, in the following sense. If $t \cont{i} y$ for some $y \in S_i(K, x_0)$, then we have
\begin{multline*}d(x_0, t + e_i) \leq d(x_0, y) + d(y, y+e_i) + d(y+e_i, t+e_i) \leq K\mu + K\mu + \lambda d(y,t)\\
\leq 2K\mu + \lambda (d(y, x_0) + d(x_0, t)) \leq (2+\lambda)K\mu + \lambda d(x_0, t).\end{multline*}
Using this idea, unless $t$ never contracts with $S_i(K, x_0)$ in the direction $i$ for some $i$, we can get 3-way sets of small diameter, as we shall see in the proof of the next proposition.\\
An additional benefit of using these sets is that they usually do not only consist of $x_0$ (note $x_0 \in S_i(K, x_0)$ if $\rho(x_0) \leq K \mu$), and that for example, under certain circumstances, we can find a point $y$ with the property that $y, y +e_3 \in S_3(K, x_0)$. Such points will be used then in proving the Propositions~\ref{closecase} and~\ref{farcase}, which combined with the following proposition, finish the main proof of this paper.\\
Recall that $x \notcont{i} y$ means that $d(x + e_i, y + e_i) > \lambda d(x,y)$.
  
\begin{proposition}Fix arbitrary $x_0$ with $\rho(x_0) < 2\mu$. Given $K \geq 2$, when $i\in[3]$, define $S_i(K,x_0) = \{y: d(x_0,y) \leq K\mu, d(y, y + e_i) \leq K\mu\}$. Provided $1 > 2\lambda K C_1 (2+\lambda)^2 /(1-\lambda)$, in every $\tw{z}$ there is $t$ such that $d(t,x_0) \leq \frac{2+\lambda}{1-\lambda} K \mu$, but for some $i$ we have  when $s \in S_i(K,x_0)$. \end{proposition}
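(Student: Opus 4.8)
The plan is to argue by contradiction, exploiting the sets $S_i(K,x_0)$ exactly as in the paragraph preceding the statement. Write $L=\frac{2+\lambda}{1-\lambda}K\mu$ and suppose that for some $\tw{z}$ the conclusion fails: every $t\in\tw{z}$ with $d(t,x_0)\le L$ has the property that for each $i\in[3]$ there is $s_i\in S_i(K,x_0)$ with $s_i\cont{i}t$. The number $L$ is chosen so that it is the fixed point of the affine map $u\mapsto (2+\lambda)K\mu+\lambda u$, which is precisely the recursion produced by one ``approximate contraction'' step against $S_i(K,x_0)$.

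First I would locate a starting point $w_0\in\tw{z}$ with $d(w_0,x_0)\le L$. Applying Lemma~\ref{1contrLemma} to the pair $x_0,z$ yields a $1$-way subset $S\ni z$, contained in $\tw{z}$ since it is obtained from $z$ by successively adding basis vectors, and such that all but finitely many of its points $s$ satisfy $d(s,x_0)\le \frac{1}{1-\lambda}\rho(x_0)+\epsilon$. Since $\rho(x_0)<2\mu$ and $K\ge 2$, we have $\frac{1}{1-\lambda}\rho(x_0)<\frac{2\mu}{1-\lambda}\le\frac{(2+\lambda)K\mu}{1-\lambda}=L$, so for a small enough $\epsilon$ any such point works; take $w_0$ to be one of them.

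Next I would show $d(w_0+k,x_0)\le L$ for every $k\in\grid$, by induction on $k_1+k_2+k_3$, the base case being the previous step. For the inductive step write $k=k'+e_i$ with $k'\in\grid$; by induction $t:=w_0+k'$ lies within $L$ of $x_0$, and $t\in\tw{w_0}\subseteq\tw{z}$, so the failure hypothesis gives $s_i\in S_i(K,x_0)$ with $s_i\cont{i}t$, whence
\[ d(x_0,t+e_i)\le d(x_0,s_i)+d(s_i,s_i+e_i)+\lambda d(s_i,t)\le 2K\mu+\lambda(K\mu+L)=(2+\lambda)K\mu+\lambda L=L, \]
completing the induction. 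Thus $\tw{w_0}$ is a genuine $3$-way set all of whose points lie within $L$ of $x_0$, so $\diam\tw{w_0}\le 2L=\frac{2(2+\lambda)}{1-\lambda}K\mu$. Applying Proposition~\ref{bounded3Prop} with constant $K'=\frac{2(2+\lambda)}{1-\lambda}K$, its hypothesis $1>(2+\lambda)\lambda K'C_1$ becomes exactly $1>\frac{2(2+\lambda)^2\lambda KC_1}{1-\lambda}$, the assumed bound on $\lambda$; hence no such $3$-way set can exist, a contradiction.

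I do not expect a serious obstacle here: the argument is short, and the only points needing a little care are checking that $w_0$ genuinely lies in $\tw{z}$ (so that $\tw{w_0}\subseteq\tw{z}$ and the failure hypothesis applies to every point met in the induction), and verifying that the self-reproducing choice of $L$ is tuned so that the contraction estimate closes on itself while $2L$ hits the threshold of Proposition~\ref{bounded3Prop} precisely at the stated value of $\lambda$.
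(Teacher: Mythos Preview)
Your proposal is correct and follows essentially the same route as the paper: assume failure, use Lemma~\ref{1contrLemma} to land on a point of $\tw{z}$ within $L=\frac{2+\lambda}{1-\lambda}K\mu$ of $x_0$, then iterate the approximate-contraction estimate to trap an entire $3$-way set in the ball of radius $L$, contradicting Proposition~\ref{bounded3Prop}. The paper's version is terser (it only writes out the step from $y$ to $y+e_i$ and says ``so we have constructed a 3-way set''), whereas you make the induction on $k_1+k_2+k_3$ explicit and carefully check that $w_0\in\tw{z}$ so that the failure hypothesis propagates; but the idea, the choice of $L$ as the fixed point of the recursion, and the final appeal to Proposition~\ref{bounded3Prop} with exactly the stated bound on $\lambda$ are identical.
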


\begin{proof} First of all, we have $x_0 \in S_1(K, x_0),S_2(K, x_0),S_3(K, x_0)$, making these non-empty, as $K\mu \geq \rho(x_0) \geq d(x_0, x_0 + e_i)$ for all $i\in[3]$. Suppose contrary to our statement, there is $z$ without any $t$ described above. Since $\frac{2+\lambda}{1-\lambda} K \mu > \rho(x_0) / (1-\lambda)$, we know that there is $y \in \tw{z}$ such that $d(x_0, y) \leq \frac{2+\lambda}{1-\lambda} K \mu$, by the Lemma~\ref{1contrLemma}. Then we have $s_1 \in S_1$ such that $s_1\cont{1}y$. Hence $d(y+e_1, x_0) \leq d(y+e_1, s_1 + e_1) + d(s_1 + e_1, s_1) + d(s_1, x_0) \leq \lambda (d(y, x_0) + d(x_0, s_1)) + 2K\mu \leq \lambda (\frac{2+\lambda}{1-\lambda} K \mu + K \mu) + 2 K \mu = \frac{2+\lambda}{1-\lambda} K \mu$. Similarly, we get the same result for $y + e_2, y+e_3$, and so we have constructed a 3-way set of diameter not greater $2\frac{2+\lambda}{1-\lambda} K \mu$, but there are no these since $1 > 2\lambda K C_1 (2+\lambda)^2 /(1-\lambda)$ by the Proposition~\ref{bounded3Prop}, giving contradiction.\end{proof}

Similarly as before, we use tighter constraints on $\lambda$. Here we use $\lambda < 1/10$ implies $\frac{2+\lambda}{1-\lambda} < 3$ and $(2+\lambda)^2 /(1-\lambda) < 5$. Note that this is the Proposition~\ref{casescorO} described in the overview of the proof.

\begin{corollary}\label{casescor}Fix arbitrary $x_0$ with $\rho(x_0) < 2\mu$. Given $K \geq 2$, when $i\in[3]$, define $S_i(K,x_0) = \{y: d(x_0,y) \leq K\mu, d(y, y + e_i) \leq K\mu\}$. Provided $1 > 10\lambda K C_1$, in every $\tw{z}$ there is $t$ such that $d(t,x_0) \leq 3K\mu$, but for some $i$ we have $s \notcont{i} t$ when $s \in S_i(K,x_0)$.\label{forbCor}\end{corollary}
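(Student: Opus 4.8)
The plan is to derive this corollary directly from the unnumbered Proposition that immediately precedes it, which already establishes essentially the same statement but under the hypothesis $1 > 2\lambda K C_1 (2+\lambda)^2/(1-\lambda)$ and with the weaker distance conclusion $d(t,x_0) \leq \frac{2+\lambda}{1-\lambda} K\mu$. Thus I only need to verify two numerical facts: first, that $1 > 10\lambda K C_1$ implies the hypothesis $1 > 2\lambda K C_1 (2+\lambda)^2/(1-\lambda)$ of that Proposition, so that it may be applied; and second, that $\frac{2+\lambda}{1-\lambda} K\mu \leq 3K\mu$, so that the point $t$ it produces in every $\tw{z}$ also satisfies the bound $d(t,x_0) \leq 3K\mu$ demanded here. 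The remaining property, that for some $i$ we have $s \notcont{i} t$ for all $s \in S_i(K,x_0)$, then carries over verbatim.

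Both verifications reduce to the single observation that $\lambda < 1/10$, which I would extract from the hypotheses as follows: since $C_1 = 49158$ and $K \geq 2$ we have $10 K C_1 > 10$, and $1 > 10\lambda K C_1$ then forces $\lambda < 1/(10 K C_1) < 1/10$. Given $\lambda < 1/10$, the crude bounds $\frac{2+\lambda}{1-\lambda} < \frac{21/10}{9/10} < 3$ and $(2+\lambda)^2/(1-\lambda) < \frac{(21/10)^2}{9/10} < 5$ hold. The first is exactly the relaxation of the distance bound we want. The second gives $2\lambda K C_1 (2+\lambda)^2/(1-\lambda) < 2\lambda K C_1 \cdot 5 = 10\lambda K C_1 < 1$, which is precisely the hypothesis needed to invoke the Proposition.

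Putting these together, the Proposition applies and hands us, inside any $\tw{z}$, the required point $t$ together with the ``no contraction with $S_i(K,x_0)$ in direction $i$'' property, and its distance guarantee $d(t,x_0) \leq \frac{2+\lambda}{1-\lambda} K\mu$ is already at most $3K\mu$. I expect no genuine obstacle here: the entire content is the bookkeeping confirming that the smallness of $\lambda$ forced by $K \geq 2$ and the explicit value of $C_1$ suffices to make the two blunt inequalities $\frac{2+\lambda}{1-\lambda} < 3$ and $(2+\lambda)^2/(1-\lambda) < 5$ true, which they comfortably are for $\lambda < 1/10$. I would write this up as one short paragraph rather than a displayed computation.
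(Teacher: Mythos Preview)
Your proposal is correct and follows essentially the same approach as the paper: the corollary is obtained from the immediately preceding Proposition by observing that $\lambda < 1/10$ (forced by $1 > 10\lambda K C_1$ together with $K\geq 2$ and the explicit value of $C_1$) gives the two crude bounds $\frac{2+\lambda}{1-\lambda} < 3$ and $(2+\lambda)^2/(1-\lambda) < 5$, which respectively relax the distance conclusion to $3K\mu$ and reduce the Proposition's hypothesis to $1 > 10\lambda K C_1$. The paper states exactly these two inequalities in the sentence preceding the Corollary, so your write-up matches it almost verbatim.
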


Based on this, we shall reach the final contradiction in the proof of the Proposition~\ref{keyprop}. To do this, we shall consider the possible cases on the $d(t + e_j, t + e_k)$ where $\{i,j,k\} = [3]$ and $t$ is a point given by the Corollary~\ref{forbCor}. Namely, suppose that $d(t + e_j, t + e_k)$ is small enough, and in fact $j = 1, k = 2, i = 3$. Then whenever we have a point $y$ with $y \in S_3(K, x_0)$ and if $d(y + e_1, y + e_2)$ is small we shall have $\diam\{y + e_1, y + e_2, t + e_1, t + e_2\}$ small as well. On the other hand, if $d(t + e_1, t + e_2)$ is large, and $y_1, y_2 \in S_3(K, x_0)$ with $d(y_1 + e_1, y_1 + e_2), d(y_2 + e_1, y_2 + e_2)$ small, but $d(y_1 + e_1, y_2 + e_1)$ large, we shall have pairs $t,y_1$ and $t,y_2$ contracted by the different values in $\{1,2\}$. Of course, we need to specify what do we mean by small and large in this context, and this is done in the following two propositions.

\begin{proposition}\label{closecase} Let $C_4 = 16C_3$. Fix $x_0$ with $\rho(x_0) < 2\mu$. Let $\{i,j,k\}=[3]$. Given $K$, provided $\lambda < 1/(44C_3 + 6C_4 + K), 1/(34440 C_1 C_3)$, we have $d(t + e_j, t + e_k) > K\lambda \mu$, when $t$ is such that $d(t, x_0) \leq 3 C_4 \mu$ and $s\notcont{i}t$ when $s \in S_i(C_4, x_0)$. \end{proposition}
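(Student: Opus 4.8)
The plan is to argue by contradiction. Suppose $\{i,j,k\}=[3]$ and $t$ satisfies $d(t,x_0)\le 3C_4\mu$ and $s\notcont{i}t$ for every $s\in S_i(C_4,x_0)$, yet $d(t+e_j,t+e_k)\le K\lambda\mu$; relabelling the coordinates we may take $i=3$, $j=1$, $k=2$. By Proposition~\ref{ex2prop} there is a point $x$ with $\rho(x)\le C_3\mu$ and $d(x+e_1,x+e_2)\le\lambda C_3\mu$, and the FNI gives $d(x,x_0)\le(\rho(x)+\rho(x_0))/(1-\lambda)<2C_3\mu$, so $d(x,t)<2C_3\mu+3C_4\mu=50C_3\mu$ since $C_4=16C_3$. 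As $\rho(x)\le C_3\mu\le C_4\mu$ and $d(x,x_0)<C_4\mu$ we have $x\in S_3(C_4,x_0)$, and of course $x_0\in S_3(C_4,x_0)$, so $x\notcont{3}t$ and $x_0\notcont{3}t$; more generally $t$ can never be contracted in direction $3$ with a point of $S_3(C_4,x_0)$, hence all such contractions fall in direction $1$ or $2$. In particular $x\cont{1}t$ or $x\cont{2}t$; I treat $x\cont{1}t$, the other case being the mirror image under $e_1\leftrightarrow e_2$. Then $d(x+e_1,t+e_1)\le\lambda d(x,t)<50\lambda C_3\mu$, and combining this with $d(x+e_1,x+e_2)\le\lambda C_3\mu$ and the assumed $d(t+e_1,t+e_2)\le K\lambda\mu$, the triangle inequality also pins the second neighbour: $d(x+e_2,t+e_2)\le(51C_3+K)\lambda\mu$. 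So $t+e_1$ and $t+e_2$ both lie within $(51C_3+K)\lambda\mu$ of the short cluster $\{x+e_1,x+e_2\}$.

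Next I would pin the remaining neighbour $t+e_3$ by means of Proposition~\ref{tps}. For this one first analyses the contractions in, and one layer beyond, $N(x)$ by the same kind of short diagram case-analysis used repeatedly in the earlier sections: because the edge $x+e_1,x+e_2$ already has length $\le\lambda C_3\mu$, only a handful of possibilities survive, and each either lands on a point of $\rho<\mu$ (impossible) or, together with $x$, yields a quadruple $(z;z_1,z_2,z_3)$ with $\rho(z)\le 42C_3\mu$, $d(z,z_l)\le 42C_3\mu$ and $\diam\{z_l+e_m:l\ne m\}\le 42\lambda C_3\mu$, with base point $z$ placed (not necessarily at $x_0$) close enough that both $x$ and $t$ lie within $42C_3\mu$ of it. Since $\lambda<1/(34440C_1C_3)=1/\bigl(820C_1\cdot 42C_3\bigr)$, Proposition~\ref{tps} applies with constant $42C_3$, fed either with $x$ or with $t$ (whose $e_1,e_2$-edge has length $\le K\lambda\mu\le 42\lambda C_3\mu$): it gives $d(x+e_3,z_3+e_3),\,d(t+e_3,z_3+e_3)\le 16\cdot 42\,\lambda C_3\mu$, hence $d(x+e_3,t+e_3)$ is of order $\lambda C_3\mu$.

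Now $d(t+e_l,x+e_l)$ is of order $\lambda C_3\mu$ (with an extra $K\lambda\mu$ term when $l=2$) for every $l\in[3]$, so first $\rho(t)\le d(t,x)+\rho(x)+\max_l d(t+e_l,x+e_l)\le(44C_3+6C_4+K)\mu$. Secondly, iterating the pinning one more step --- the neighbourhoods of $t$ and $x$ being now pointwise close, and the short $e_1,e_2$-edge of $x$ persisting --- one isolates a point $w$ near $t$ with $\rho(w)\le(44C_3+6C_4+K)\mu$ and $\diam N(w)\le\lambda(44C_3+6C_4+K)\mu$. Since $44C_3+6C_4+K\le 840C_3$, the hypothesis $\lambda<1/\bigl(41(44C_3+6C_4+K)C_1\bigr)$ of Proposition~\ref{neighdiamProp}, applied with constant $44C_3+6C_4+K$, is implied by $\lambda<1/(34440C_1C_3)$, so $w$ cannot exist --- contradiction, which proves the proposition.

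The substantive part is the second paragraph: carrying out the diagram case-analysis around $N(x)$ to produce the Proposition~\ref{tps}-configuration, and the accompanying constant bookkeeping --- in particular verifying that both fed-in points $x$ and $t$ really do lie within $42C_3\mu$ of the configuration's base point, which requires handling the crude bound $d(t,x_0)\le 3C_4\mu$ with some care. The first and third paragraphs are then routine triangle-inequality estimates together with invocations of the FNI, Proposition~\ref{ex2prop}, Proposition~\ref{tps} and Proposition~\ref{neighdiamProp}.
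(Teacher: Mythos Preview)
Your approach has a genuine gap. You try to derive a direct contradiction via Proposition~\ref{neighdiamProp}: pin $t+e_1,t+e_2$ near $x+e_1,x+e_2$, pin $t+e_3$ near $x+e_3$ via Proposition~\ref{tps}, then ``iterate one more step'' to produce a point $w$ with $\diam N(w)$ of order $\lambda$. But this last step is unjustified and in fact cannot work in general. Knowing that $d(x+e_l,t+e_l)$ is $O(\lambda)$ for all $l$ and that $d(x+e_1,x+e_2)$ is $O(\lambda)$ tells you nothing about $d(x+e_1,x+e_3)$ or $d(t+e_1,t+e_3)$; these can remain of order $\mu$. No amount of ``iterating'' the $e_1,e_2$ closeness forces the $e_3$-neighbour into the same cluster. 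Concretely, when $x$ has one of the diagrams the paper calls B.1, C.1, or D.1, all your hypotheses are satisfied, yet no point with small $\diam N$ arises --- which is exactly why the paper's actual proof is forced to construct $1$-way Cauchy sequences in those cases rather than invoking Proposition~\ref{neighdiamProp}.

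There are also two smaller problems. First, your second paragraph asserts that the diagram analysis around $x$ ``either lands on a point of $\rho<\mu$ or yields a \ref{tps}-quadruple''; this is false for the A-type diagrams (A.1, A.2 in Figure~8), which give neither. In the paper these are eliminated precisely by contracting with $t_0$ --- an ingredient your second paragraph does not mention. Second, your numerical bookkeeping does not close: you compute $d(x,t)<50C_3\mu$, yet want to apply Proposition~\ref{tps} with constant $42C_3$ (matching $34440=820\cdot 42$) to a configuration based near $x$, so the required bound $d(t,z)\le 42C_3\mu$ fails.

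The paper's proof is structurally different: it uses $t_0$ to eliminate diagrams A.1, A.2, C.2, D.2 for any $7C_3$-good point $y$ (by showing $y,y+e_3$ or $y,y+e_1$ lie in $S_3(C_4,x_0)$ and forcing $\rho<\mu$), then among the survivors B.1, B.2, C.1, D.1 it shows C.1/D.1 persistence along $e_3$ gives a Cauchy sequence $(y+ne_3)$, and finally that B.1/B.2 alone give a Cauchy sequence moving in the $e_1,e_2$ directions. The contradiction comes from the nonexistence of $1$-way Cauchy sequences, not from Proposition~\ref{neighdiamProp}.
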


\begin{proposition}\label{farcase} Let $C_5 = 1000 C_3$. Fix $x_0$ with $\rho(x_0) < 2\mu$. Let $\{i,j,k\}=[3]$. Provided $\lambda < 1/(8200000 C_1 C_3)$, we have $d(t + e_j, t + e_k) \leq 10C_5\lambda \mu$, when $t$ is such that $d(t, x_0) \leq 3 C_5 \mu$ and $s\notcont{i}t$ when $s \in S_i(C_5, x_0)$.\end{proposition}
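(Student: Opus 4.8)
The proof is by contradiction, and it follows the now-familiar pattern: assume a configuration, move it around with contractions, and land in a finite configuration forbidden by the earlier propositions. After relabelling coordinates we may take $i=3$ and $\{j,k\}=\{1,2\}$, and we assume $d(t+e_1,t+e_2)>10C_5\lambda\mu$. Since $\rho(x_0)<2\mu<C_5\mu$ we have $x_0\in S_3(C_5,x_0)$, so the hypothesis $s\notcont{3}t$ already forces $t\cont{1}x_0$ or $t\cont{2}x_0$; the substantial use of the hypothesis, however, will be to play $t$ against a whole family of well-controlled points lying inside $S_3(C_5,x_0)$, exactly the points alluded to in the discussion preceding Corollary~\ref{forbCor}.

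\emph{Step 1: produce a controlled point inside $S_3(C_5,x_0)$.} I would first establish that $S_3(C_5,x_0)$ contains a point $y$ with $\rho(y)=O(C_3\mu)$, with a collapsed long edge $d(y+e_1,y+e_2)=O(\lambda C_3\mu)$, and with $y+e_3\in S_3(C_5,x_0)$ (also collapsed around $y+e_3$). Proposition~\ref{ex2prop}, applied to the pair $1,2$, supplies a point $w$ with $\rho(w)\le C_3\mu$ and $d(w+e_1,w+e_2)\le\lambda C_3\mu$ somewhere in the grid; to drag such a point into the ball of radius $C_5\mu$ about $x_0$ while keeping the $e_1,e_2$ long edge short, I would run the $1$-way construction of Lemma~\ref{1contrLemma} aimed at $x_0$, invoking Proposition~\ref{n1neighProp} at each step so that $\rho$ of the points produced stays bounded by a fixed multiple of $C_3\mu$, and Proposition~\ref{tps} (with constant a bounded multiple of $C_3$) so that the long edge stays of size $O(\lambda C_3\mu)$ along the way. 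The slack $C_5=1000\,C_3$ is exactly what makes room for these bounded inflations, so that the transported point, together with one further $e_3$-step of it, lands in $S_3(C_5,x_0)$.

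\emph{Step 2: contract $t$ against $y$, $y+e_3$ and $x_0$.} Since $y,y+e_3\in S_3(C_5,x_0)$, the hypothesis gives directions $a,b\in\{1,2\}$ with $t\cont{a}y$, $t\cont{b}(y+e_3)$, and $d(t,y),d(t,y+e_3)\le 4C_5\mu$. If $a\ne b$ (say $a=1$, $b=2$), then $t+e_1$ lies within $O(C_5\lambda\mu)$ of $y+e_1$ and $t+e_2$ within $O(C_5\lambda\mu)$ of $(y+e_3)+e_2$; chasing the collapsed edges of $y$ and of $y+e_3$ through the triangle inequality pins both $t+e_1$ and $t+e_2$ to within $O(C_5\lambda\mu)$ of a common cluster of second neighbours of $y$, whence $d(t+e_1,t+e_2)=O(C_5\lambda\mu)\le 10C_5\lambda\mu$, contradicting our assumption. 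If $a=b$, comparing $d(t+e_a,y+e_a)$ with $d(t+e_a,(y+e_3)+e_a)$ forces the $e_3$-edge at $y+e_a$ to be $O(C_5\lambda\mu)$; combined with the already collapsed neighbourhood of $y$ this produces a point whose neighbourhood, together with the relevant second-neighbour long edges, has diameter $O(\lambda C_3\mu)$ while its $\rho$ is $O(C_3\mu)$, which is impossible by Proposition~\ref{wtps} (or Proposition~\ref{neighdiamProp}) for the given $\lambda$. Either way we reach a contradiction, completing the proof.

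\emph{Main obstacle.} Nothing in the above is conceptually new — the difficulty is entirely in the bookkeeping of constants. The genuinely delicate point is Step~1: one must run the $1$-way transport so that the $e_1,e_2$ long edge never grows, control exactly the multiple of $C_3$ by which $\rho$ may inflate (through repeated applications of Proposition~\ref{n1neighProp}), and check that the transported point still satisfies $d(x_0,\cdot)\le C_5\mu$ and $\rho\le C_5\mu$ — it is here that the choice $C_5=1000\,C_3$ is used, and here that $\lambda<1/(8200000\,C_1C_3)$ must be strong enough to let every invocation of Propositions~\ref{tps}, \ref{neighdiamProp}, \ref{wtps}, \ref{bounded3Prop} and \ref{every2Prop} go through. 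The factor $10C_5$ in the conclusion is precisely what is needed to absorb the $16$ coming from Proposition~\ref{tps} together with the bounded multiples of $C_3$ accumulated during the transport and the contractions with $t$.
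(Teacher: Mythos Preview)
Your Step~1 is based on a misconception: no transport is needed. By the furthest-neighbour inequality, any point $w$ with $\rho(w)\le C_3\mu$ already satisfies $d(w,x_0)\le(\rho(w)+\rho(x_0))/(1-\lambda)<2C_3\mu\ll C_5\mu$, so the point produced by Proposition~\ref{ex2prop} lies in $S_3(C_5,x_0)$ automatically. The elaborate dragging procedure you describe, with its repeated inflation control, is solving a nonexistent problem.

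The genuine gap is in Step~2. Your dichotomy ``$a=b$ versus $a\ne b$'' does dispose of several diagrams (this is essentially how the paper eliminates diagrams B.1, C.1, C.2, D.1, D.2), but it fails for diagrams of type A. Take diagram A.1, where $\diam\{y,y+e_1,y+e_2\}\le 3\lambda\rho(y)$ but $y+e_3+e_2$ sits at distance $\approx\rho(y+e_3)$ from this cluster. In the case $a\ne b$ you get $t_0+e_1$ near $y+e_1$ and $t_0+e_2$ near $(y+e_3)+e_2$, but there is no $O(\lambda C_5\mu)$ chain between these: the ``collapsed edges of $y$ and of $y+e_3$'' you invoke connect $y+e_1$ to $y+e_2$ and $(y+e_3)+e_1$ to $(y+e_3)+e_2$, not one level to the other. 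In the case $a=b$ you learn that $d(y+e_a,y+e_a+e_3)=O(\lambda C_5\mu)$, but with only one further short edge at $y+e_a$ this is not enough to trigger Proposition~\ref{wtps} or~\ref{neighdiamProp}. So for A.1/A.2 neither branch of your dichotomy closes.

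The paper's proof does begin with exactly your kind of contraction argument to discard diagrams B, C, D, but then spends the bulk of the work on the surviving A-type diagrams: a Cauchy-sequence lemma forbidding infinite A.1 runs, the consequent existence of an A.2 point, a bootstrapped lower bound $d(t_0+e_1,t_0+e_2)>5C_3\mu$ (not merely $>10\lambda C_5\mu$), and a clustering lemma showing $d(y_1+e_3,y_2+e_3)=O(\lambda C_3\mu)$ for \emph{all} points with $\rho\le C_3\mu$. Only by combining this last lemma with the three points supplied by Proposition~\ref{ex2prop} (one for each pair of directions) does one force $\diam N(x_1)=O(\lambda C_5\mu)$ and reach Proposition~\ref{neighdiamProp}. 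Your sketch omits precisely this part, which is where the real difficulty lies.
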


Once we have shown these propositions, we just need to take $\lambda$ small enough so that they both hold. Now, let us prove a lemma that classifies the relevant possible diagrams which will then be used for arguing afterwards. Once that is done, we proceed to establish the propositions.

\begin{figure}
\includegraphics[scale = 0.3]{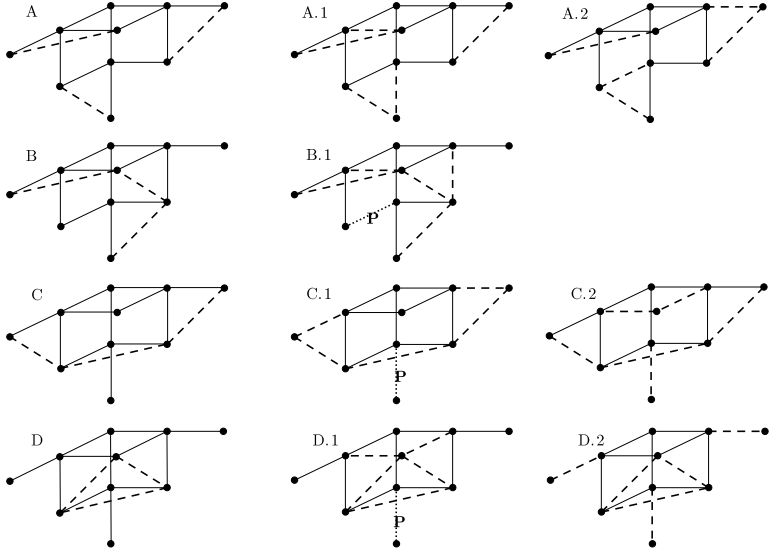}
\label{diagfig}
\caption{Possible diagrams for $\rho(y) \leq K\mu, d(y + e_1, y + e_2) \leq \lambda K\mu$.}
\end{figure}

\begin{lemma}\label{diaglemma} Let $K \geq 1$ and $\lambda < 1/(4920 K C_1)$. Suppose that we have a point $y$ with $\rho(y) \leq K\mu$ and $d(y + e_1, y + e_2) \leq \lambda K\mu$. Then $y$ must have one of the diagrams shown in the Figure~\ref{diagfig} (up to symmetry). \end{lemma}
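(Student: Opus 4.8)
The plan is to analyse the contractions among the six edges of $\{y\}\cup N(y)$ --- the three \emph{short} edges $(y,y+e_i)$ and the three \emph{long} edges $(y+e_i,y+e_j)$, $i\neq j$ --- and to cut down the possibilities by repeatedly forcing some $\rho$ below $\mu$ or by invoking Propositions~\ref{wtps}, \ref{n1neighProp} and~\ref{neighdiamProp}. First I would record the elementary length bounds: every short edge has length at most $\rho(y)\le K\mu$, and by the triangle inequality every long edge has length at most $2K\mu$, while $d(y+e_1,y+e_2)\le\lambda K\mu$ is given. Moreover no second long edge can be that short: if, say, $d(y+e_1,y+e_3)\le\lambda K\mu$ as well, then $\diam N(y)\le 2\lambda K\mu$, and since $\rho(y)\le K\mu$ this contradicts Proposition~\ref{neighdiamProp} applied with constant $2K$ (legitimate because $\lambda<1/(4920KC_1)<1/(82KC_1)$).

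The key structural step is that the three long edges are contracted by three \emph{distinct} directions. If two of them were contracted by the same $k$, their images are two overlapping close pairs inside $N(y+e_k)$, so $\diam N(y+e_k)\le 2\lambda K\mu+2\lambda K\mu=4\lambda K\mu$; contracting the short edge $(y,y+e_k)$ in its direction $m$ gives $d(y+e_k,y+e_k+e_m)\le d(y+e_k,y+e_m)+\lambda K\mu\le(2+\lambda)K\mu$ (or $\le\lambda K\mu$ if $m=k$), whence $\rho(y+e_k)\le d(y+e_k,y+e_k+e_m)+\diam N(y+e_k)\le 3K\mu$, and together with $\diam N(y+e_k)\le 4\lambda K\mu$ this violates Proposition~\ref{neighdiamProp} with constant $4K$. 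Hence the assignment of directions to long edges is a bijection of $[3]$; up to the $e_1\leftrightarrow e_2$ symmetry of the hypothesis there are only a few such bijections, and in each the direction contracting the tiny edge $(y+e_1,y+e_2)$ forces two of the corner points $y+e_a+e_b$ ($a\neq b$) to lie within $\lambda^2K\mu$ of one another.

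From here the proof splits into cases according to the long-edge bijection and the directions of the three short edges. The role of the hypothesis $\lambda<1/(4920KC_1)=1/(820C_1\cdot 6K)$ is that Proposition~\ref{tps} may be applied with constant $6K$ whenever the corner set $\{y+e_a+e_b:a\neq b\}$ --- which equals $\{y_i+e_j:i\neq j\}$ for any relabelling $\{y_1,y_2,y_3\}=N(y)$ --- has diameter at most $6\lambda K\mu$; this is exactly the configuration produced by the contracted long edges in the relevant cases, and it yields that the dotted ``$P$-edges'' of Figure~\ref{diagfig} have length at most $96\lambda K\mu$. Feeding these back, together with the already-contracted short edges and, where needed, a preliminary application of Proposition~\ref{n1neighProp} to bound some $\rho(y+e_i)$ by a fixed multiple of $K\mu$, each non-admissible configuration collapses: either some $\rho(y+e_i)<\mu$, or one obtains a point with $\rho\le C'K\mu$ and neighbourhood diameter $\le\lambda C'K\mu$ (impossible by Proposition~\ref{neighdiamProp}), or a configuration forbidden by Proposition~\ref{wtps}. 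What survives is precisely the list in Figure~\ref{diagfig}.

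I expect the main obstacle to be the case analysis itself. After Step~2 the long-edge directions are pinned, but the short-edge directions still vary freely, so there is a moderately large family of diagrams to run through; in each one must choose the right relabelling $\{y_1,y_2,y_3\}=N(y)$ to feed into Proposition~\ref{tps}, propagate the resulting $P$-edge lengths through several triangle inequalities, and check that the final $\rho$- or diameter-estimate actually triggers one of the three forbidding propositions --- all while keeping every auxiliary constant below the threshold allowed by the single hypothesis $\lambda<1/(4920KC_1)$. (This bookkeeping is also why the systematic enumeration of contraction diagrams is deferred to the Appendix.)
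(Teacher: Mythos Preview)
Your proposal is correct and follows essentially the same approach as the paper: first show the three long edges are contracted in three distinct directions (via Proposition~\ref{neighdiamProp}), then apply Proposition~\ref{tps} with constant $6K$ to obtain the $P$-edges in the non-trivial cases, and finally run through the short-edge contractions, discarding each bad option by forcing some $\rho(p)<\mu$. The paper's proof is terser---it defers the long-edge enumeration to the Appendix and for the short-edge step simply says ``otherwise we obtain a point $p\in\{y\}\cup N(y)$ with $\rho(p)<\mu$''---so your invocation of Propositions~\ref{wtps} and~\ref{n1neighProp} is more machinery than is actually needed there, but the overall structure is the same.
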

\begin{proof}[Proof of the Lemma~\ref{diaglemma}]
Contracting the long edges in $N(y) \cup \{y\}$ can only, up to symmetry, give us diagrams A, B, C and D, as described in the first part of the Appendix, with the requirement $1/(164 K C_1) > \lambda$. Observe that in B, C and D, we can apply the Proposition~\ref{tps} to $(y;y+e_3, y+e_2, y + e_1; y+e_1), (y;y+e_2, y+e_1, y + e_3; y+e_3)$ and $(y;y+e_1, y+e_2, y + e_3; y+e_3)$ respectfully with constant $6K$, as long as $\lambda < 1/(4920 K C_1)$. Further, by contracting the short edges, we can only obtain diagrams A.1, A.2, B.1, etc. shown in the Figure~\ref{diagfig}, up to symmetry, as otherwise we obtain a point $p \in \{y\}\cup N(y)$ with $\rho(p) < \mu$. 
\end{proof}

\begin{proof}[Proof of the Proposition~\ref{closecase}]
We prove the claim for $i = 3, j = 2, k = 1$, the other cases follow from symmetry. Suppose contrary, for some $K$ and  $\lambda < 1/(44C_3 + 6C_4 + K), 1/(34440 C_1 C_3)$, we have $t_0$ such that $d(t_0 + e_1, t_0 + e_2) \leq K\lambda \mu, d(t_0, x_0) \leq 3 C_4\mu$ and $s\notcont{3}t_0$ whenever $s\in S_3(C_4, x_0)$, where $x_0$ is a point with $\rho(x_0) < 2\mu$.\\
Consider now points $y$ with $\rho(y) \leq 7C_3\mu, d(y + e_1, y  +e_2) \leq 7\lambda C_3\mu$, existence of such a point is granted by the Proposition~\ref{ex2prop}. Apply the Lemma~\ref{diaglemma} to $y$ and now we shall discard some of the diagrams by contractions with $t_0$.\\
Suppose that $y$ had the diagram A.1. By FNI $d(y, x_0) \leq (7C_3 + 2)\mu / (1-\lambda) \leq 8 C_3\mu$, and so $y, y + e_3 \in S_3(C_4, x_0)$. Hence $y, t_0$ and $y + e_3, t_0$ would be contracted by 1 or 2. However, from this we see that if $y + e_3\cont{1} t_0$ we get $\rho(y + e_1) = d(y + e_1, y+e_3 + e_1) \leq d(y  + e_1, t_0 + e_1) + d(t_0 + e_1, y + e_3 + e_1) \leq d(y + e_1, y + e_2) +  \lambda d(t_0, y) + d(t_0 + e_1, t_0 + e_2) + \lambda (d(t_0,y) + d(y, y + e_3)) \leq 7\lambda C_3 \mu + 3\lambda C_4\mu + \lambda d(x_0, y) +  \lambda K\mu + 3\lambda C_4\mu + \lambda d(x_0, y) + 7\lambda C_3\mu \leq \lambda(30 C_3 + 6C_4 + K)\mu < \mu$. On the other hand if $y + e_3\cont{2}t_0$, we get $\rho(y + e_2) \leq d(y + e_3 + e_2, y + e_2) + d(y + e_2 + e_3, y + e_2 + e_2) \leq d(y + e_3 + e_2, t_0 + e_2) + d(t_0 + e_2, y + e_2) + 14\lambda C_3\mu \leq \lambda (d(t_0,x_0) + d(x_0, y) + d(y, y + e_3)) + d(y + e_1, y + e_2) + d(t_0 + e_1, t_0 + e_2) + \lambda d(y, t_0) + 14 \lambda C_3\leq 3\lambda C_4\mu + 8\lambda C_3 \mu + 7\lambda C_3\mu + 7\lambda C_3\mu + \lambda K\mu + 3\lambda C_4 \mu + 8\lambda C_3\mu + 14 \lambda C_3 \mu\leq \lambda (44C_3 + 6 C_4 + K)\mu < \mu$.\\Similarly, if it was A.2 instead of A.1, we would have $y, y + e_1 \in S_3(C_4, x_0)$ and so contracting these two points with $t_0$, would give $\rho(y + e_2) = d(y + e_2 + e_1, y+e_2) \leq d(y+e_1 + e_2, y + 2e_1) + \lambda d(y + e_1, t_0)  + d(y + e_1, y + e_2) + \lambda d(y, t_0) + \lambda K\mu \leq \lambda (14C_3 + 15 C_3 + 3C_4 + 8C_3 + 3C_4 + K)\mu < \mu$.\\
Now consider the diagrams C.2 and D.2. We have $y, y + e_3 \in S_3(C_4, x_0)$ so contracting these points with $t_0$ must be by 1 or 2, so we immediately get $\rho(y + e_1) \leq \lambda (44C_3 + 6C_4 + K)\mu< \mu$.\\
Therefore, we can only have diagrams B.1, C.1, D.1 and diagram symmetric to B.1, which we shall refer to as B.2. Suppose now that $y$ with $\rho(y) \leq 7C_3\mu, d(y + e_1, y + e_2)\leq 7\lambda C_3\mu$ had diagram C.1 or D.1. Also, assume $\rho(y + e_3) \leq 7C_3\mu , d(y + e_3 + e_1, y +e_3+ e_2) \leq \lambda C_3\mu$, thus $y + e_3$ itself has one of the mentioned diagrams. Suppose that it had diagram B.1 or B.2. Without loss of generality, it was B.1, the other case is symmetric to this one.\\
Suppose $y$ has diagram C.1. Then given any point $z$ with $d(z,y) \leq 2\rho(y)$, suppose $d(y + e_1, z+ e_1),d(y + e_2, z+e_2) > 5\lambda \rho(y)$. Then $z \cont{3} y$ and so $y + e_1 \cont{2} z$, $y + e_2 \cont{1} z$. However, we can apply the Proposition~\ref{tps} to $(y; y + e_2, y + e_1, y + e_3; y + 2e_3)$ with constant $42C_3$, to see that $\diam N(z) \leq 800\lambda C_3\mu$, so after contracting $y, z$ we obtain $\rho(z) < 12C_3\mu$ and applying the Proposition~\ref{neighdiamProp} gives the contradiction, provided $\lambda < 1/(32800 C_1 C_3)$. So whenever $d(z,y) \leq 2\rho(y)$, we must have $d(y + e_1, z+ e_1) \leq 5\lambda \rho(y)$ or $d(y + e_2, z+e_2) \leq 5\lambda \rho(y)$. But contract $z$ with $y + e_2$ in the former case and with $y + e_1$ in the later to see that for some choice of distinct $i,j\in[3]$ we must have $d(z + e_i, y + e_1), d(z + e_j, y + e_1) \leq 20\lambda \rho(y)$, so $d(z + e_i, y), d(z + e_j,y) \leq 2\rho(y)$, thus we can repeat these arguments to points $z + e_i, z + e_j$. Doing so, we obtain a 2-way set of diameter at most $280 \lambda C_3\mu$ by considering the distance from $y + e_1$, if the point $z$ is dropped out. But, by the Lemma~\ref{1contrLemma}, we get such a 2-way set in every 3-way set, which is a contradiction by the Proposition~\ref{every2Prop}, since $\lambda < 1 /(840 C_3)$. Similarly we argue if $y$ had diagram D.1.\\
We conclude that if $y$ is as described and has diagram C.1 or D.1, then $y+e_3$ also has diagram among these two. Now, start from a point $y_0$ with $d(y_0 + e_1, y_0 + e_2) \leq 3\lambda C_3\mu, \rho(y_0) \leq 3C_3\mu$ and diagram C.1 or D.1, provided such a point exists. Define the sequence $y_n = y_0 + n e_3$, for all $n \geq 0$, aim is to show that this is Cauchy. By induction on $n$ we show that $\rho(y_n) \leq 7C_3\mu, d(y_n+e_1, y_{n+1} + e_1), d(y_n+e_2, y_{n+1} + e_2) \leq (n+3) \lambda ^ {n+1} C_3 \mu, d(y_n + e_1, y_n + e_2) \leq 3 C_3 \lambda ^{n+1}\mu$ and $y_n$ has either diagram C.1 or diagram D.1, which is true for $n = 0$.\\
Suppose the claim holds for all $m$ not greater than some $n \geq 0$. By the Proposition~\ref{tps} applied to $(y_0; p_1, p_2, p_3; y_n)$ with constant $18C_3$ with suitable $\{p_1, p_2, p_3\} = N(y_0)$ we get $d(y_1, y_{n+1}) \leq 288 \lambda C_3\mu$, so we infer that $\rho(y_{n+1}) \leq d(y_{n+1}, y_{n+1} + e_1) + d(y_{n+1} + e_1, y_{n+1} + e_2) \leq d(y_{n+1}, y_1) + d(y_1, y_0 + e_2) + d(y_0 + e_2, y_1 + e_2) + d(y_1 + e_2, y_2 + e_2) + \dots + d(y_n + e_2, y_{n+1} + e_2) \leq 7C_3\mu$ and $y_n +e_1\cont{3}y_n + e_2$ so $d(y_{n+1}  +e_1, y_{n+1} + e_2) \leq 3\lambda^{n+2} C_3$ therefore, $y_{n+1}$ must itself have diagram C.1 or D.1. If $y_n$ and $y_{n+1}$ have the same diagram, then we can see that $y_n + e_1 \cont{3} y_{n+1} + e_2$ and $y_{n+1} + e_1 \cont{3} y_n + e_2$ which is sufficient to establish the claim, as we obtain $d(y_{n+1} + e_1, y_{n+2} + e_1) \leq d(y_{n+1} + e_1, y_{n+2} + e_2) + d(y_{n+2} + e_2, y_{n+2} + e_1) \leq \lambda (d(y_n + e_1, y_{n+1} + e_1) + d(y_{n+1} + e_1, y_{n+1} + e_2)) + \lambda d(y_{n+1} + e_1, y_{n+1} + e_2) \leq \lambda d(y_n + e_1, y_{n+1} + e_2) + 6\lambda ^ {n+3} C_3 \mu \leq (n+3)\lambda^{n+2}C_3\mu + \lambda ^{n+2} C_3 \mu \leq (n+4)\lambda ^ {n+2} C_3\mu$. Likewise, we get the bound on $d(y_{n+1} + e_2, y_{n+2} + e_2)$. If the diagrams are different, it must be the case that $y_n + e_1 \cont{3} y_{n+1} + e_1$ and $y_n + e_2 \cont{3} y_{n+1} + e_2$, once again proving the claim, this time this is immediate.\\
Hence, if $y$ is a point such that $\rho(y) \leq 3C_3\mu, d(y + e_1, y + e_2) \leq 3\lambda C_3\mu$, then it can only have diagram B.1 or B.2. In the light of this, pick $y_0$ with $\rho(y_0) \leq C_3 \mu, d(y_0 + e_1, y_0 + e_2) \leq \lambda C_3 \mu$, whose existence is provided by the Proposition~\ref{ex2prop}, so it has diagram B.1, without loss of generality. Set $y_1 = y_0 + e_1$ and so have $\diam \{y_1, y_1 + e_1, y_1 + e_2\} \leq 3\lambda \rho(y_0)$ for the diagram for $y_0$. Also, by the Proposition~\ref{tps} applied to $(y_0; y_0 + e_3, y_0 + e_2, y_0 + e_1; y_1)$ with constant $6C_3$ we get $\rho(y_1) \leq 3C_3\mu$, so $y_1$ has diagram B.1 or B.2. If it is B.1 define $y_2$ to be $y_1 + e_1$, otherwise $y_1 + e_2$. Similarly, proceed, and as long as $y_k$ is defined and has one of these diagrams, define $y_{k +1 } = y_k + e_1$ when $y_k$ has diagram B.1, and set $y_{k+1} = y_k + e_2$ if it has diagram B.2. We now claim by induction on $k$ that $y_k$ is defined, $\rho(y_k) \leq 3C_3\mu$ and $\diam \{y_k, y_k + e_1, y_k + e_2\} \leq 3 (3\lambda)^k C_3\mu$. This is clear for $k = 0$.\\
Suppose the claim holds for some $k\geq0$. Then we have that $y_k$ has B.1 or B.2 for its diagram, suppose it is the former, we argue in the same way for the other option. Firstly, $y_{k+1}$ is defined. Then, from contractions implied by the diagram B.1, we get $\diam\{y_{k+1}, y_{k+1} + e_1, y_{k+1} + e_2\} \leq 3\lambda \diam\{y_k, y_k + e_1, y_k+e_2\}$. Finally, as $d(y_0, y_k) \leq (\rho(y_0) + \rho(y_k))/(1-\lambda) < 5C_3\mu$, we may apply the Proposition~\ref{tps} to $(y_0; y_0 + e_3, y_0 + e_2, y_0 + e_1; y_{k+1})$ with constant $6C_3$ to obtain $\rho(y_{k+1}) = d(y_{k+1}, y_{k + 1} + e_3) \leq d(y_{k+1}, y_0) + d(y_0, y_0 + e_3) +d (y_0 + e_3, y_{k+1} + e_3) \leq d(y_{k+1}, y_k) + d(y_k, y_{k-1}) + \dots + d(y_1, y_0) + \rho(y_0) + 96\lambda C_3\mu \leq 9\lambda C_3 \mu / (1-3\lambda) + 2C_3\mu + 96\lambda C_3\mu \leq 3C_3\mu$, which proves the claim. \\
This brings us to the conclusion that $(y_k)_{k\geq0}$ is a 1-way Cauchy sequence, providing us with a contradiction.\end{proof}

\begin{proof}[Proof of the Proposition~\ref{farcase}]
During the course of our argument, we shall prove a few auxiliary Lemmata, the last one being the Lemma~\ref{e3dist}, allowing us to conclude the proof.  It suffices to prove the claim for $i = 3, j = 2, k = 1$. Suppose contrary, there is $t_0$ with $d(t_0 + e_1, t_0 + e_2) > 10\lambda C_5\mu$, $d(t_0, x_0) \leq 3C_5\mu$ and whenever $s \in C_{3}(C_5, x_0)$, we must have either $s\cont{1}t_0$ or $s\cont{2}t_0$.\\

\begin{figure}
\caption{Possible diagrams of points $p$ with $d(p + e_1, p + e_2) \leq\lambda C_{5,1}\mu, \rho(p) \leq C_{5,1}\mu$}
\includegraphics[scale = 0.35]{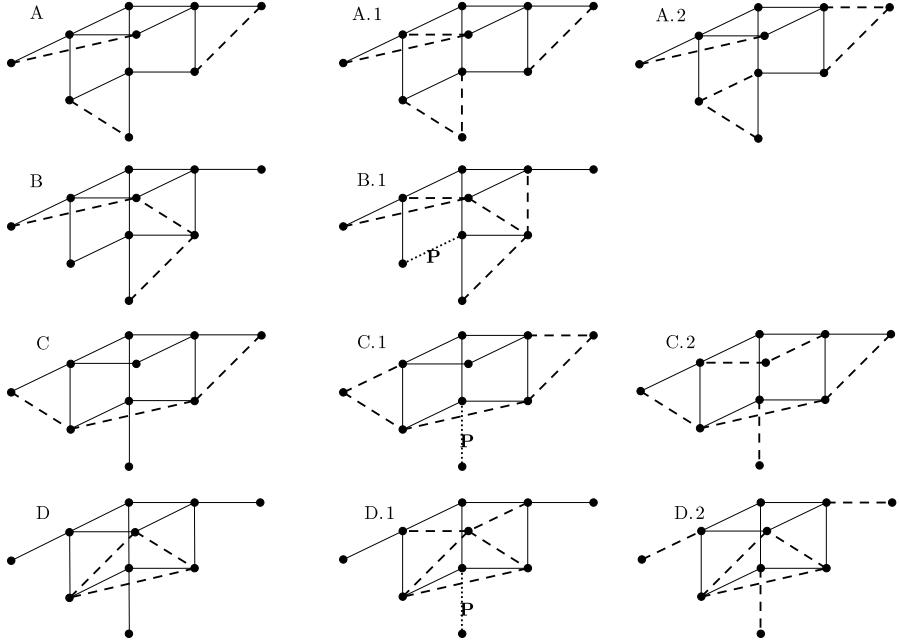}
\label{Figfar1}
\end{figure}

Set $C_{5,1} = 100C_3$ and consider the points $y$ with $\rho(y) \leq C_{5,1}\mu, d(y + e_1, y +e_2) \leq\lambda C_{5,1}\mu$. Note that such a point exists by the Proposition~\ref{ex2prop}. The possible diagrams of contractions are shown in the Figure~\ref{Figfar1}, and the arguments to justify these are provided in the Appendix. These are precisely the same diagrams as in the previous Proposition. Using $d(t_0 + e_1, t_0 + e_2) > 10\lambda C_5\mu$, we reject most of these.\\
\begin{itemize}
\item[B.1] Suppose that $y$ as above has diagram B.1. First of all, as $\lambda < 1/(4920 C_1 C_{5,1})$, apply the Proposition~\ref{tps} to $(y; y + e_3, y + e_2, y + e_1; y)$ with constant $6 C_{5,1}$ to see that in particular $y, y + e_1, y + e_2, y + e_3$ are all in $C_3(x_0, C_5)$, as $d(y, x_0) \leq (C_{5,1} + 2)\mu/(1-\lambda)$, $\rho(y) \leq C_{5,1}\mu$, $d(y, y + e_3) \leq C_{5,1}\mu, d(y + e_1, y + e_1 + e_3) \leq (2 + 96\lambda) C_{5,1}\mu, d(y + e_2, y + e_2 + e_3) \leq \lambda C_{5,1}\mu$ and $d(y+ e_3, y + 2e_3) \leq (2+3\lambda) C_{5,1}\mu$.\\
If $t_0 \cont{1} y$, then contract $t_0, y + e_3$ to get $\rho(y + e_1) < 6\lambda (C_{5,1} + C_5)\mu < \mu$ when $t_0 \cont{1} y + e_3$ or $d(t_0 + e_1, t_0 + e_2) \leq d(t_0 + e_1, y + e_1) + d(y + e_1, y + e_3 + e_2) + d(y + e_3 + e_2, t_0 + e_2) \leq \lambda (3C_{5,1} + 3C_5)\mu + 3\lambda C_{5,1}\mu + (3C_{5,1} + 3C_5)\mu < 10\lambda C_5\mu$ otherwise, both of which are not permissible.\\
\item[C.1] Suppose that $y$ as above has diagram C.1. Then $d(y, y + e_3) \leq C_{5,1}\mu, d(y + e_1, y + e_1 + e_3), d(y + e_2, y +e_2 + e_3) \leq 3\lambda C_{5,1}\mu, d(y + e_3, y + 2e_3) \leq 96\lambda C_{5,1}\mu$. Also $d(y, x_0) \leq (C_{5,1} + 2)\mu/(1-\lambda), \rho(y) \leq C_{5,1}\mu$, so $y, y + e_1, y + e_2, y + e_3 \in S_3(x_0, C_5)$. Without loss of generality $y \cont{1} t_0$. But if $y + e_2 \cont{1} t_0$, then $\rho(y + e_1) = d(y + e_1, y + e_1 + e_2) \leq \lambda d(y, t_0) + \lambda d(y + e_2, t_0) \leq 6\lambda (C_{5,1} + C_5)\mu < \mu$. However, $y + e_2 \cont{2} t_0$ is impossible as well, for that implies $d(t_0 + e_1, t_0 + e_2) \leq d(t_0 + e_1, y + e_1) + d(y + e_1, y + 2e_2) + d(y+2e_2, t_0 + e_2) \leq 6\lambda (C_{5,1} + C_5) \mu + 7\lambda C_{5,1} < 10 \lambda C_5\mu$.
\item[C.2] Assume that $y$ as above has diagram C.2. First of all apply the Proposition~\ref{n1neighProp} to $y$ (we have $\lambda < 1/(78 C_{5,1})$) to see that $d(y + e_1, y + e_1 + e_3), d(y + e_2, y + e_2 + e_3) \leq 9C_{5,1}\mu$. Also $d(y + e_3, y + 2e_3) \leq \lambda C_{5,1}\mu, \rho(y)\leq C_{5,1}\mu, d(y, x_0) \leq (C_{5,1} + 2)\mu/(1-\lambda)$, so $y, y + e_1, y + e_2, y + e_3 \in S_3(x_0, C_5)$. Without loss of generality $y \cont{1} t_0$. If $y + e_1 \cont{1} t_0$ then $\rho(y + e_1) \leq d(y + e_1, y + 2e_1) + d(y + 2e_1, y + e_1 + e_2) \leq \lambda (d(y, t_0) + d(y + e_1, t_0)) + 2\lambda C_{5,1}\mu \leq 6\lambda (C_{5,1} + C_5)\mu + 2\lambda C_{5,1} < \mu$. So, we must have $y + e_1 \cont{2} t_0$, but this also implies contradiction as $d(t_0 + e_1, t_0 + e_2) \leq d(t_0 + e_1, y + e_1) +d(y + e_1, y + y_1 + e_2) + d(y + e_1 + e_2, t_0 + e_2) \leq \lambda d(y, t_0) + \lambda C_{5,1}\mu + \lambda d(y + e_1, t_0) \leq 6\lambda (C_5 + C_{5,1})\mu + \lambda C_{5,1}\mu < 10\lambda C_5 \mu$.
\item[D.1] Let $y$ have diagram D.1. Then $d(y, y + e_3) \leq C_{5,1}\mu, d(y + e_1, y + e_1 + e_3), d(y + e_2, y +e_2 + e_3) \leq 3\lambda C_{5,1}\mu, d(y + e_3, y + 2e_3) \leq 96\lambda C_{5,1}\mu$. Also $d(y, x_0) \leq (C_{5,1} + 2)\mu/(1-\lambda), \rho(y) \leq C_{5,1}\mu$, so $y, y + e_1, y + e_2, y + e_3 \in S_3(x_0, C_5)$. Without loss of generality $y \cont{1} t_0$. If $t_0 \cont{1} y + e_1$, then $\rho(y + e_1) = d(y + e_1, y + 2e_1) \leq d(y + e_1, t_0 + e_1) + d(t_0 + e_1, y +2e_1) \leq \lambda (d(y, t_0) + d(t_0, y + e_1)) \leq 6\lambda (C_5 + C_{5,1}) < \mu$. On the other hand $t_0 \cont{2} y + e_1$ implies $d(t_0 + e_1, t_0 + e_2) \leq d(t_0 + e_1, y + e_1) + d(y + e_1, y + e_1 + e_2) + d(y + e_1 + e_2, t_0 + e_2) \leq \lambda (6C_5 + 7C_{5,1})\mu < 10 \lambda C_5\mu$. Thus, $y$ cannot have diagram D.1.
\item[D.2] Suppose that $y$ as above has diagram D.2. First of all apply the Proposition~\ref{n1neighProp} to $y$ to see that $d(y + e_1, y + e_1 + e_3), d(y + e_2, y + e_2 + e_3) \leq 9C_{5,1}\mu$. Also $d(y + e_3, y + 2e_3) \leq \lambda C_{5,1}\mu, \rho(y)\leq C_{5,1}\mu, d(y, x_0) \leq (C_{5,1} + 2)\mu/(1-\lambda)$, so $y, y + e_1, y + e_2, y + e_3 \in S_3(x_0, C_5)$. Without loss of generality $y \cont{1} t_0$. Now contract $y + e_2,t_0$. If these are contracted by 1, then $\rho(y + e_1) \leq d(y+e_1, y + e_1 +e_2) + d(y + e_1 + e_2, y + e_1 + e_3) \leq \lambda (6C_5 + 8C_{5,1})\mu < \mu$, which is a contradiction. Therefore $t_0\cont{2} y  +e_2$, which gives $d(t_0 + e_1, t_0 + e_2) \leq d(t_0 + e_1, y + e_1) + d(y + e_1, y + 2e_2) + d(y + 2e_2, t_0 + e_2) \leq \lambda (6C_5 + 8C_{5,1})\mu < 10\lambda C_5\mu$.
\end{itemize}
Thus, we are only left with diagrams A.1 and A.2. Let A.1' and A.2' be symmetric to these after swapping $e_1$ and $e_2$. Let $y$ once again be the same point as before. We know distinguish the possibilities for contractions with $t_0$.
\begin{itemize}
\item If $y$ has diagram A.1, then $y, y + e_1, y + e_3 \in S_3(x_0, C_5)$, and it is easy to see that $t_0,y$ and $t_0, y+e_1$ are contracted in the same direction, while $t_0, y + e_3$ is contracted in the other. Similarly, we see the possible contractions with $t_0$ for diagram A.1'.
\item If $y$ has the diagram A.2, we have all the points in $\{y\}\cup N(y)$ being members of $S_3(x_0, C_5)$, and pairs $t_0, y$ and $t_0, y + e_1$ must be contracted in the different directions (otherwise $\rho(y + e_2) < \mu$). Same holds for the pairs $y + e_1, t_0$ and $y + e_3, t_0$. From this we see that $t_0 \cont{2} y, t_0 \cont{2} y + e_3, t_0 \cont{1} y + e_1$. Analogously, we classify the contractions for A.2'. 
\end{itemize}

\begin{lemma}\label{A1seq} Let $K \leq C_{5,1}$. There is no sequence $(y_k)_{k\in I}$ for suitable index set $I \subset \mathbb{N}_0$, with the following properties:\begin{enumerate}
\item $y_0$ is defined, has $\rho(y_0) \leq K / (2 + 6\lambda), d(y_0 + e_1, y_0 + e_2) \leq \lambda K/(2+\lambda)\mu$,
\item If $y_k$ is defined, and satisfies $\rho(y_k) \leq K\mu, d(y_0 + e_1, y_0 + e_2) \leq \lambda K\mu$, then $y_k$ has diagram A.1 or A.1', and we define $y_{k+1} = y_k + e_i$, with $i= 1$ when diagram of $y_k$ is A.1 and $i = 2$ otherwise.
\end{enumerate}
\end{lemma}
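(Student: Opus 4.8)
The plan is to assume, for contradiction, that such a sequence $(y_k)_{k\in I}$ exists, and to show that it is then forced to be infinite and Cauchy. Since $y_{k+1}-y_k\in\{e_1,e_2\}\subset\{e_1,e_2,e_3\}$, such a Cauchy sequence is a $1$-way Cauchy sequence, contradicting the standing assumption made at the start of the proof of Proposition~\ref{keyprop}.

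The heart of the matter is an induction on $k$ establishing simultaneously that $y_k$ is defined, that $\rho(y_k)\le K\mu$, that $d(y_k+e_1,y_k+e_2)\le\lambda K\mu$, and that for $k\ge1$ one has $\diam\{y_k,y_k+e_1,y_k+e_2\}\le\eta_k$ for a summable sequence $\eta_k$ satisfying $\eta_{k+1}\le c_0\lambda\,\eta_k$ with $c_0$ an absolute constant, so that the $\eta_k$ decay geometrically. The first three items are exactly what makes hypothesis 2 applicable at $y_k$; the base case is hypothesis 1, whose slack factors $2+6\lambda$ and $2+\lambda$ are chosen precisely so that the induction closes. For the inductive step, $\rho(y_k)\le K\mu$ and $d(y_k+e_1,y_k+e_2)\le\lambda K\mu$ let us apply hypothesis 2: $y_k$ has diagram A.1 or A.1' (in the sense of Lemma~\ref{diaglemma} and Figure~\ref{diagfig}), and $y_{k+1}=y_k+e_i$ with $i=1$ for A.1 and $i=2$ for A.1'. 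Reading off the contractions recorded in diagram A.1 --- in particular that direction $1$ contracts the short edge $(y_k,y_k+e_1)$, the short edge $(y_k,y_k+e_2)$, and the long edge $(y_k+e_1,y_k+e_2)$ --- one gets $\diam\{y_{k+1},y_{k+1}+e_1,y_{k+1}+e_2\}\le\lambda\,\diam\{y_k,y_k+e_1,y_k+e_2\}$, and symmetrically for A.1'. When the diagrams of $y_k$ and $y_{k+1}$ differ, one inserts the same triangle-inequality detour as in the different-diagram case of the proof of Lemma~\ref{ex2E}; this preserves geometric decay at the cost of replacing $\lambda$ by $c_0\lambda$ for an absolute constant $c_0$.

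The only quantity not visibly controlled by these diagram contractions is $d(y_k,y_k+e_3)$, the part of $\rho(y_k)$ in the third direction. Here the plan is to invoke Proposition~\ref{tps}, applied to $(y_0;p_1,p_2,p_3;y_{k+1})$ with $\{p_1,p_2,p_3\}=N(y_0)$ in a suitable order and with a constant equal to a fixed small multiple of $K$: its hypotheses hold because the second-neighbourhood corners $\{y_0+e_i+e_j:i\ne j\}$ have diameter $O(\lambda K\mu)$ (a feature of diagram A / A'), because $\rho(y_0)\le K\mu$, and because $d(y_0,y_{k+1})\le\rho(y_0)+\sum_{m\ge1}\eta_m$ is an absolute multiple of $K\mu$; the conclusion places $y_{k+1}+e_3$ within $O(\lambda K\mu)$ of a shifted neighbour of $y_0$, and then the standard triangle-inequality estimate of Lemmata~\ref{ex2A} and~\ref{ex2C} --- using that $y_0$'s diagram contracts the short edge $(y_0,y_0+e_3)$ and that an auxiliary contraction such as $y_0+e_3\cont{3}y_{k+1}$ must hold on pain of some $\rho$ dropping below $\mu$ --- gives $\rho(y_{k+1})\le K\mu$. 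This closes the induction; summing $d(y_k,y_{k+1})\le\eta_k$ over $k\ge1$ (with the single term $d(y_0,y_1)\le\rho(y_0)$ harmless) shows that $(y_k)$ is Cauchy, which is the desired contradiction.

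I expect the main obstacle to be the constant bookkeeping: one must check that the $O(\lambda)$ losses accumulated through the diagram switches and through each use of Proposition~\ref{tps} (with its factor $16$ and its roughly sixfold inflation of the auxiliary constant) still close the induction with the clean bounds $\rho(y_k)\le K\mu$ and $d(y_k+e_1,y_k+e_2)\le\lambda K\mu$, and that the various smallness requirements on $\lambda$ --- those of Proposition~\ref{tps}, Lemma~\ref{diaglemma}, Proposition~\ref{every2Prop} and Proposition~\ref{neighdiamProp} --- are all implied by the ambient hypothesis, which is the case since $K\le C_{5,1}=100C_3$ and $\lambda<1/(8200000\,C_1C_3)$ in Proposition~\ref{farcase}. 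A secondary point needing care is correctly extracting from A.1 and A.1' exactly which short and long edges they force to contract in the propagation direction, since it is those contractions, and not the rest of the diagram, that drive the geometric decay.
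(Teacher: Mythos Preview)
Your overall architecture is right and matches the paper: assume such a sequence exists, prove by induction that each $y_k$ stays in the regime where hypothesis~2 applies, extract geometric decay of $\diam\{y_k,y_k+e_1,y_k+e_2\}$, and conclude that $(y_k)$ is a $1$-way Cauchy sequence. The decay step $\diam\{y_{k+1},y_{k+1}+e_1,y_{k+1}+e_2\}\le 3\lambda\,\diam\{y_k,y_k+e_1,y_k+e_2\}$ is also correct and is exactly what the paper uses (no case split on whether consecutive diagrams agree is needed here, since the bound comes purely from the contractions in the diagram of $y_k$, not of $y_{k+1}$).

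The genuine gap is your control of $d(y_k,y_k+e_3)$ via Proposition~\ref{tps}. You assert that ``the second-neighbourhood corners $\{y_0+e_i+e_j:i\ne j\}$ have diameter $O(\lambda K\mu)$ (a feature of diagram A / A')''. This is false, and in fact diagram~A is singled out among A,\,B,\,C,\,D in Lemma~\ref{diaglemma} precisely because it is the one pattern of long-edge contractions for which this fails. Concretely, in diagram~A the three long-edge contractions produce three \emph{separate} two-point clusters $\{y_0+2e_1,\,y_0+e_1+e_2\}$, $\{y_0+2e_2,\,y_0+e_2+e_3\}$, $\{y_0+2e_3,\,y_0+e_1+e_3\}$, and no permutation $\{p_1,p_2,p_3\}=N(y_0)$ makes $\{p_i+e_j:i\ne j\}$ lie in a single $O(\lambda K\mu)$-ball. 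So Proposition~\ref{tps} cannot be invoked as you propose.

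The paper bypasses this entirely with the direct contraction you mention only in passing: one contracts $y_0+e_3$ with $y_k$ and argues from the specific short- and long-edge data of A.1 that directions $1$ and $2$ are impossible (they would force $\rho(y_0+e_1)<\mu$ or $\rho(y_0+e_2)<\mu$), so necessarily $y_0+e_3\cont{3}y_k$. Combined with $d(y_0+e_3,y_0+2e_3)\le\lambda\rho(y_0)$ (from the short edge $y_0,y_0+e_3$ being contracted by $3$ in A.1) this gives $d(y_k+e_3,y_0+e_3)<2\lambda K\mu$, and then the triangle inequality through $y_0$ closes the bound $\rho(y_k)\le K\mu$ exactly, with the slack factor $2+6\lambda$ in hypothesis~1 chosen for this purpose. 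Promote this step from ``auxiliary'' to the main engine and drop the appeal to Proposition~\ref{tps}; the rest of your outline then goes through.
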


\begin{proof}[Proof of the Lemma~\ref{A1seq}]
By induction on $k$, we claim that $y_k$ is defined and $\diam \{y_k, y_k + e_1, y_k + e_2\} \leq (3\lambda)^k K\mu / (2 + 6\lambda)$. This trivially holds for $k=0$. Also, without loss of generality $y_0$ has diagram A.1.\\
Suppose that the claim holds for all $k'$ not greater than $k$, where $k \geq 0$. Observe that $d(y_0, y_k) \leq d(y_0, y_1) + d(y_1, y_2) + \dots d(y_{k-1}, y_k) \leq (1 + 3\lambda + \dots + (3\lambda)^{k-1})K\mu/(2 + 6\lambda) <  \frac{1}{(1-3\lambda)(2 + 6\lambda)} K\mu$. Now, contract $y_0 + e_3, y_k$. It is contracted neither by 1 nor by 2, since we either get $\rho(y_0 + e_1) < \mu$ or $\rho(y_0 + e_2) < \mu$. Hence $y_k\cont{3}y_0 + e_3$, so $d(y_k + e_3, y_0 + e_3) \leq d(y_k + e_3, y_0 + 2e_3) + d(y_0 + 2e_3, y_0 + e_3) \leq \frac{\lambda(3 - 6\lambda)}{(1- 3\lambda)(2 + 6\lambda)} K\mu < 2\lambda K \mu$. Finally, we establish $\rho(y_k) \leq (2 + 6\lambda) K\mu/(2 + 6\lambda) = K\mu$, which combined with $d(y_k + e_1, y_k + e_2) \leq \lambda K\mu$ gives that $y_k$ itself has diagram A.1 or A.1'. Hence $y_{k+1}$ is defined, and $\diam\{y_{k+1}, y_{k+1}+e_1, y_{k+1}+e_2\} \leq 3\lambda \diam\{y_k, y_k + e_1, y_k + e_2\}$, as desired.\\
However, this shows that $(y_k)_{k\geq0}$ is a 1-way Cauchy sequence, which is not allowed. Therefore, we reach the contradiction, and the end of the proof. \end{proof}

\begin{corollary} There exists a point $y$ with $\rho(y) \leq 3C_3\mu, d(y + e_1, y + e_2) \leq 3\lambda C_3\mu$ with diagram A.2 or A.2'.
\label{farA2}\end{corollary}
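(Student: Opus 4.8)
The plan is to view Corollary~\ref{farA2} as essentially the contrapositive of Lemma~\ref{A1seq}: having already cut the list of admissible diagrams for a small point down to A.1, A.1', A.2, A.2' earlier in the proof of Proposition~\ref{farcase}, the impossibility of an A.1/A.1'-chain forces the existence of an A.2 or A.2' point. So I would argue by contradiction, assuming that no point $y$ with $\rho(y)\le 3C_3\mu$ and $d(y+e_1,y+e_2)\le 3\lambda C_3\mu$ has diagram A.2 or A.2'.

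First I would apply Proposition~\ref{ex2prop} to obtain a point $y_0$ with $\rho(y_0)\le C_3\mu$ and $d(y_0+e_1,y_0+e_2)\le\lambda C_3\mu$; note that if $y_0$ had diagram A.2 or A.2' this would already contradict our assumption (its size being well within the claimed bounds), so we may use it as the starting term of a sequence. Indeed $y_0$ satisfies condition (1) of Lemma~\ref{A1seq} taken with $K=3C_3$: since $\lambda$ is tiny, $\rho(y_0)\le C_3\mu\le K\mu/(2+6\lambda)$ and $d(y_0+e_1,y_0+e_2)\le\lambda C_3\mu\le\lambda K\mu/(2+\lambda)$.

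The substance of the argument is verifying condition (2) of Lemma~\ref{A1seq} under the contradiction hypothesis. Suppose $y_k$ is defined with $\rho(y_k)\le 3C_3\mu$ and $d(y_k+e_1,y_k+e_2)\le 3\lambda C_3\mu$. Since $3C_3\le C_{5,1}$ and $\lambda<1/(4920\,C_{5,1}C_1)$, Lemma~\ref{diaglemma} applies to $y_k$; and the bullet-by-bullet rejections performed earlier in this proof, which use only the fixed point $t_0$ and the inequality $d(t_0+e_1,t_0+e_2)>10\lambda C_5\mu$ and hence apply to \emph{any} point of size $\rho\le C_{5,1}\mu$, $d(\cdot+e_1,\cdot+e_2)\le\lambda C_{5,1}\mu$, leave $y_k$ with diagram A.1, A.1', A.2 or A.2' up to symmetry. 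The contradiction hypothesis excludes A.2 and A.2', so $y_k$ has diagram A.1 or A.1', and the sequence continues with $y_{k+1}=y_k+e_1$ or $y_{k+1}=y_k+e_2$ as prescribed. Thus $(y_k)$ meets conditions (1) and (2) of Lemma~\ref{A1seq}, contradicting that lemma; therefore a point $y$ with the stated bounds and diagram A.2 or A.2' must exist.

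The main obstacle here is bookkeeping rather than conceptual: one must confirm that every $\lambda$-threshold used (for Lemma~\ref{diaglemma}, for Lemma~\ref{A1seq}, and inside each rejection bullet) is implied by the standing hypothesis $\lambda<1/(8200000\,C_1C_3)$, and --- the one genuinely delicate point --- that those rejection bullets were indeed proved for an arbitrary point of the relevant size, not only for the particular points arising in the $e_1$- or $e_2$-sequences earlier, so that they may be reused verbatim inside condition (2).
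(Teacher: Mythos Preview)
Your proposal is correct and follows essentially the same approach as the paper: argue by contradiction, take $y_0$ from Proposition~\ref{ex2prop}, and feed it into Lemma~\ref{A1seq} with $K=3C_3$, using the earlier diagram rejections plus the contradiction hypothesis to verify condition~(2). Your extra care in checking the numerical thresholds and the scope of the rejection bullets is well placed but only makes explicit what the paper leaves implicit.
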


\begin{proof}[Proof of the Corollary~\ref{farA2}] Suppose contrary, and let $y_0$ be a point with $\rho(y_0) \leq C_3\mu, d(y_0 + e_1, y_0 + e_2) \leq \lambda C_3\mu$, given by the Proposition~\ref{ex2prop}. We shall now define a sequence $(y_k)$ inductively, as long as we can. The starting point $y_0$ is as above. Given $y_k$, provided it satisfies $\rho(y_k) \leq 3C_3 \mu, d(y_k +e_1, y_k + e_2) \leq 3\lambda C_3\mu$, define $y_{k+1}$ to be $y_k + e_1$ when $y_k$ has diagram A.1 and $y_k + e_2$ if $y_k$ has diagram A.1', (note that by assumption these two are the only permissible diagrams). But this gives contradiction by the Lemma~\ref{A1seq} with $K = 3C_3$.\end{proof}

\begin{corollary} We have $d(t_0 + e_1, t_0 + e_2) > 5C_3\mu$. \label{t0far}\end{corollary}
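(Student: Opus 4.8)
The plan is to argue by contradiction: assume $d(t_0+e_1,t_0+e_2)\le 5C_3\mu$. Invoke Corollary~\ref{farA2} to get a point $y$ with $\rho(y)\le 3C_3\mu$, $d(y+e_1,y+e_2)\le 3\lambda C_3\mu$, having diagram A.2 or A.2'; say it is A.2, the other case being symmetric. Since $3C_3\le C_{5,1}=100C_3$, the point $y$ is one of the points treated in the diagram analysis in the proof of Proposition~\ref{farcase}, so that analysis applies verbatim: all four points of $\{y\}\cup N(y)$ lie in $S_3(C_5,x_0)$, and the hypothesis on $t_0$ forces $t_0\cont{2}y$, $t_0\cont{1}y+e_1$ and $t_0\cont{2}y+e_3$. (Note that the assumption $d(t_0+e_1,t_0+e_2)\le 5C_3\mu$ is compatible with the standing hypothesis $d(t_0+e_1,t_0+e_2)>10\lambda C_5\mu$ of that proof, so the forced contractions remain valid.) By the furthest‑neighbour inequality (Lemma~\ref{fni}), $d(y,x_0)\le(3C_3+2)\mu/(1-\lambda)\le 4C_3\mu$, hence $d(y,t_0)\le 4C_3\mu+3C_5\mu\le 3004C_3\mu$ and $d(y+e_1,t_0)\le\rho(y)+d(y,t_0)\le 3010C_3\mu$.

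Next I would squeeze $\rho(y+e_2)$. In diagram A.2 one has $\rho(y+e_2)=d(y+e_2,y+e_1+e_2)$, and the internal contraction of the long edge $y+e_1,y+e_2$ recorded in the diagram gives $d(y+2e_1,y+e_1+e_2)\le\lambda d(y+e_1,y+e_2)\le 3\lambda^2C_3\mu$. Walking along $y+e_2\to t_0+e_2\to t_0+e_1\to y+2e_1\to y+e_1+e_2$, using $d(y+e_2,t_0+e_2)\le\lambda d(y,t_0)$ (from $t_0\cont{2}y$) and $d(t_0+e_1,y+2e_1)\le\lambda d(y+e_1,t_0)$ (from $t_0\cont{1}y+e_1$), one gets
\[
\rho(y+e_2)\le\lambda d(y,t_0)+d(t_0+e_1,t_0+e_2)+\lambda d(y+e_1,t_0)+3\lambda^2C_3\mu\le 5C_3\mu+7000\lambda C_3\mu\le 6C_3\mu,
\]
where the last inequalities use $\lambda<1/(8200000C_1C_3)$.

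The remaining — and main — step is to turn the bound $\rho(y+e_2)\le 6C_3\mu$ into a contradiction. From the internal contractions of diagram A.2 one also reads off that the short edges of $y+e_2$ other than $y+e_2,y+e_1+e_2$ have length $O(\lambda C_3\mu)$, so $y+e_2$ is a ``good'' point (after a fixed relabelling of the coordinates) with the modest constant $6C_3$; consequently the diagram dichotomy of Proposition~\ref{farcase} applies to it as well, and the forced contractions with $t_0$ persist because $y+e_2$ (and, inductively, its iterates) again lie in $S_3(C_5,x_0)$. Iterating the construction along the coordinate direction in which $N(y)$ is contracting — exactly as in Lemma~\ref{A1seq} and Corollary~\ref{farA2}, but now with the geometric contraction factor coming from the short‑edge contractions rather than being absent — produces a $1$‑way Cauchy sequence, contradicting the standing assumption in the proof of Proposition~\ref{keyprop}. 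Hence $d(t_0+e_1,t_0+e_2)>5C_3\mu$. I expect the hard part to be purely the bookkeeping: one must keep every constant below the thresholds required by Propositions~\ref{n1neighProp}, \ref{neighdiamProp} and~\ref{tps} at each step, and, crucially, verify that the relevant diameters genuinely contract by a factor $O(\lambda)$ at each iteration (so that one really obtains a Cauchy sequence) rather than merely staying bounded by $O(\lambda C_3\mu)$.
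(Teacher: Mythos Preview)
Your opening is fine: the bound $\rho(y+e_2)\le 6C_3\mu$ via the path $y+e_2\to t_0+e_2\to t_0+e_1\to y+2e_1\to y+e_1+e_2$ is correct and uses exactly the forced contractions $t_0\cont{2}y$, $t_0\cont{1}y+e_1$ for a point $y$ with diagram A.2. The gap is in the iteration. You pass to $y+e_2$ and assert that ``after a fixed relabelling of the coordinates'' it is again a good point to which the diagram dichotomy applies. But the diagram dichotomy set up in Proposition~\ref{farcase} is specifically for points $p$ with $d(p+e_1,p+e_2)$ small, and it works only because $t_0$ has the property $s\notcont{3}t_0$ for $s\in S_3(C_5,x_0)$. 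If the short edges of $y+e_2$ that are small are $d(y+e_2,y+2e_2)$ and $d(y+e_2,y+e_2+e_3)$ (which is what diagram A.2 gives), then $y+e_2$ has its close pair in directions $(e_2,e_3)$, not $(e_1,e_2)$; relabelling coordinates to make it look like a $(e_1,e_2)$-good point would simultaneously relabel the forbidden direction for $t_0$, and you have no $t_0$ forbidding $\cont{1}$. So the ``forced contractions with $t_0$ persist'' claim fails, and with it the inductive step.

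The paper avoids this by iterating in the direction $e_1$ rather than $e_2$, so that the successor $y+e_1$ again has $d((y+e_1)+e_1,(y+e_1)+e_2)$ small and the same $t_0$ continues to act. The price is that one must rule out $y+e_1$ having diagram A.2 or A.2'; the paper does this by a short case analysis (using the known contractions $t_0\cont{1}y+e_1$ and the internal contractions of A.2, A.2', together with Proposition~\ref{neighdiamProp} and Proposition~\ref{tps}). A parallel argument shows that if $y$ has diagram A.1 then $y+e_1$ also cannot have A.2 or A.2'. Once every successor is forced to have diagram A.1 or A.1', Lemma~\ref{A1seq} (with $K=17C_3$) gives the contradiction directly. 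Your sketch would be repaired by switching the iteration to $y+e_1$ and supplying this ``A.2 cannot follow A.2 or A.1'' case analysis; the hard bookkeeping you anticipate is then replaced by two short contraction arguments.
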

\begin{proof}[Proof of the Corollary~\ref{t0far}]
Suppose contrary.  In order to reach the contradiction, we shall obtain a Cauchy sequence as in the previous proof. Consider a point $y$ with $\rho(y) \leq 36C_3\mu, d(y +e_1, y + e_2) \leq 36\lambda C_3\mu$. Assume that this point has diagram A.2. Recall that we have $t_0 \cont{1} y + e_1, t_0 \cont{2} y$. This gives $\rho(y + e_1) \leq C_{5,1}\mu$ and from contractions of $\{y\} \cup N(y)$ we get that $d(y + e_1 + e_1, y + e_1 + e_2) \leq \lambda C_{5,1}\mu$ holds as well, so $y + e_1$ has one of the four diagrams considered so far. However, we immediately see that it is not possible for $y + e_1$ to have diagram A.2, for $t_0 \cont{1} y + e_1$.\\
Suppose that $y + e_1$ had diagram A.2'. Firstly, suppose that $y + e_1\cont{3} y + e_2 + e_3$. Then contract $y, y + 2e_3$. If it is by 3, we have $\rho(y + 2e_3) < \mu$, otherwise we obtain $\rho(y +e_3) < \mu$. Hence $y + e_1 \cont{2} y + e_2 + e_3$. This further implies $y+e_1\cont{2}y +2e_2$ (or otherwise $\rho(y + 2e_1) < \mu$). However $y +2e_2\in S_3(x_0, C_5)$, so contract $y +e_2, t_0$ to get a contradiction.\\
Suppose now that $y$ has diagram A.1 and $\rho(y) \leq 17C_3\mu, d(y + e_1, y + e_2) \leq 17\lambda C_3\mu$. If $y + e_1$ has diagram A.2, then $y + e_1 \cont{2} t_0, y + e_1 + e_3 \cont{2} t_0, y +2e_1 \cont{1} t_0$. But $y$ has diagram A.1, so $t_0$ contracts with $y + e_1, y$ in the same direction, thus in $e_2$, and $t_0, y+e_3$ in the other, i.e. $e_1$. However, then $\diam N_1(x+2e_1) < 10 \lambda C_5\mu$, which is contradiction with Proposition~\ref{neighdiamProp} used with constant $10C_5$ after contracting $y,y +2e_1$.\\
Assume that $y + e_1$ has diagram A.2'. Thus $t_0 \cont{1} y + e_1, t_0 \cont{1} y + e_1 + e_3$ and $t_0 \cont{2} y + e_1 + e_2$. As $y$ has diagram A.1, we have $t_0 \cont{1} y$ and $t_0 \cont{2} y + e_3$. But, as $d(t_0 + e_1, t_0 + e_2) \leq 5C_3\mu$, we have $y + e_1 + e_2$ $100C_3-$good, so by the previous discussion $y + e_1 + e_2$ can only have diagram A.1 or A.1' (as $y + e_1$ is $36C_3-$good).\\
If $y + e_1 + e_2$ has diagram A.1 then $t_0 \cont{1} y + e_1 + e_2 + e_3$, so $\rho(y + e_1) < \mu$, so we may assume $y + e_1 + e_2$ has diagram A.1', which implies $t_0 \cont{1} y + e_1 + e_2 +e_3$. Look at pairs $y + 2e_1, y + 2e_1 + e_3$ and $y + 2e_1 + e_2, y + 2e_1 + e_3$, both have length at most $6C_3\mu$, so cannot be contracted by 2, as otherwise $d(t_0 + e_1, t_0 + e_2) < 10 C_5\mu$. Suppose that at least one of these pairs is contracted by 1. Then apply the Proposition~\ref{tps} to $(y;y + 3e_1, y + 2e_1, y+ e_1; y + e_2)$ with constant $10C_5$ (since $\lambda < 1/(8200C_1C_5)$), to see that $\rho(y +3e_3) < \mu$. Hence, the two considered pairs are contracted by 3. But, contract $y + e_2, y + 2e_1 + e_3$ to get $\rho(y + 2e_1 + e_3) < \mu$ or $d(t_0 + e_1, t_0 + e_2) < 200 \lambda C_5\mu$ giving $\rho(y + e_2) < \mu$.\\
Now, start from $y_0'$ with $\rho(y_0') \leq C_3\mu, d(y_0' + e_1, y_0' + e_2) \leq \lambda C_3\mu$, given by the Proposition~\ref{ex2prop}. If $y_0'$ has diagram A.1 or A.1' set $y_0 = y_0'$, otherwise set $y_0 = y_0' + e_1$ if the diagram is A.2, and $y_0 = y_0' + e_2$ if the diagram is A.2'. Hence, $y_0$ satisfies $\rho(y_0) \leq 6C_3\mu, d(y_0 + e_1, y_0 + e_2) \leq 6\lambda C_3\mu$, and defining the sequence as in the Lemma~\ref{A1seq}, gives a contradiction for $K = 17C_3$ by the discussion above.\end{proof}

\begin{lemma}\label{e3dist} Suppose that $y_1, y_2$ are two points with $\rho(y_1), \rho(y_2) \leq C_3\mu$. Then $d(y_1 + e_3, y_2 + e_3) \leq 40\lambda C_3\mu$.\end{lemma}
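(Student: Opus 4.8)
The plan is to pin down, for every point of small $\rho$, both the direction in which it contracts against $t_0$ and the location of its $e_3$--neighbour.

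\emph{Step 1: all small-$\rho$ points contract against $t_0$ in one fixed direction.} Since $\rho(y_i)\le C_3\mu$, the furthest neighbour inequality (Lemma~\ref{fni}) together with $\rho(x_0)<2\mu$ gives $d(x_0,y_i)\le(\rho(x_0)+\rho(y_i))/(1-\lambda)<2C_3\mu$, and $d(y_i,y_i+e_3)\le\rho(y_i)\le C_3\mu\le C_5\mu$ because $C_5=1000C_3$; the same holds with $y_i$ replaced by $x_0$ or by any $z$ with $\rho(z)\le C_3\mu$. Hence all these points lie in $S_3(C_5,x_0)$, so each of them contracts against $t_0$ in direction $e_1$ or $e_2$; relabelling $e_1,e_2$ we assume $x_0\cont{1}t_0$. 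If some $z$ with $\rho(z)\le C_3\mu$ had $z\cont{2}t_0$, then
\[d(t_0+e_1,t_0+e_2)\le d(t_0+e_1,x_0+e_1)+d(x_0+e_1,z+e_2)+d(z+e_2,t_0+e_2),\]
where $d(t_0+e_1,x_0+e_1)\le\lambda d(t_0,x_0)\le 3\lambda C_5\mu$, $d(z+e_2,t_0+e_2)\le\lambda d(z,t_0)\le\lambda(d(z,x_0)+d(x_0,t_0))<\lambda(2C_3+3C_5)\mu$, and $d(x_0+e_1,z+e_2)\le\rho(x_0)+d(x_0,z)+\rho(z)<4C_3\mu$, so the right-hand side is $<5C_3\mu$ for $\lambda$ as small as assumed, contradicting Corollary~\ref{t0far}. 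Thus $z\cont{1}t_0$ for every $z$ with $\rho(z)\le C_3\mu$, and an analogous estimate (using $z$ at both ends in place of $x_0$) shows $z\notcont{2}t_0$ as well.

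\emph{Step 2: the easy case.} From $z\cont{1}t_0$ we get $d(z+e_1,t_0+e_1)\le\lambda d(z,t_0)<3002\lambda C_3\mu$ for every such $z$, hence $d(y_1+e_1,y_2+e_1)<6004\lambda C_3\mu$. Now contract $y_1$ and $y_2$; since $d(y_1,y_2)\le(\rho(y_1)+\rho(y_2))/(1-\lambda)<3C_3\mu$, if the contracting direction is $e_3$ then $d(y_1+e_3,y_2+e_3)\le\lambda d(y_1,y_2)<3\lambda C_3\mu\le 40\lambda C_3\mu$ and we are done. It remains to treat the case where $y_1,y_2$ are contracted in direction $e_1$ or $e_2$.

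\emph{Step 3: the main case, and the obstacle.} Here I would route through the point $y^*$ of Corollary~\ref{farA2}: $\rho(y^*)\le 3C_3\mu$, $d(y^*+e_1,y^*+e_2)\le 3\lambda C_3\mu$, and $y^*$ has diagram A.2 or A.2', say A.2. For this diagram all of $\{y^*\}\cup N(y^*)$ lies in $S_3(C_5,x_0)$ and every contraction inside $\{y^*\}\cup N(y^*)$ is forced (see the Appendix); in particular this locates $y^*+e_3$ --- one reads off that $d(y^*+e_3+e_1,y^*+e_3+e_2)$ and $d(y^*+e_3,y^*+2e_3)$ are at most a bounded multiple of $\lambda C_3\mu$ --- and it forces $y^*+e_1$ to contract against $t_0$ in the opposite direction to both $y^*$ and $y^*+e_3$. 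Using this together with Step~1 (every $z$ with $\rho(z)\le C_3\mu$ has $z\cont{1}t_0$ and $z\notcont{2}t_0$) and $d(t_0+e_1,t_0+e_2)>5C_3\mu$, I would show that every such $z$ satisfies $d(z+e_3,y^*+e_3)\le 20\lambda C_3\mu$: contract $z$ in turn against $y^*$, $y^*+e_1$ and $y^*+e_3$ (each within $O(C_3\mu)$ of $z$), and run the resulting short case analysis on the contracting directions, the ``wrong'' branches being eliminated by the largeness of $d(t_0+e_1,t_0+e_2)$ exactly as in Step~1; the surviving branches all place $z+e_3$ within $20\lambda C_3\mu$ of $y^*+e_3$. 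Applying this to $z=y_1$ and to $z=y_2$ and adding then gives $d(y_1+e_3,y_2+e_3)\le 40\lambda C_3\mu$. The main obstacle is precisely this last implication: the naive bounds coming from the size of $d(z,t_0)$ are far too weak, so the case analysis must invoke the exact contraction pattern of diagram A.2 and the largeness of $d(t_0+e_1,t_0+e_2)$ at every step, with careful bookkeeping of the triangle inequalities.
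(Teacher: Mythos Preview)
Your outline is on the right track and is essentially the paper's argument, but the ``obstacle'' you flag in Step 3 is not there. The paper skips your Steps 1--2 and does Step 3 directly in a paragraph: WLOG the point $y_0$ of Corollary~\ref{farA2} has diagram A.2, so $t_0\cont{2}y_0$ and $t_0\cont{1}y_0+e_1$. For any $z$ with $\rho(z)\le C_3\mu$ one splits on the direction of $z$ against $t_0$, contracting $z$ with $y_0$ when $t_0\cont{1}z$ and with $y_0+e_1$ when $t_0\cont{2}z$; in either case directions $1$ and $2$ are eliminated (they would make $d(t_0+e_1,t_0+e_2)<5C_3\mu$, contradicting Corollary~\ref{t0far}), so the contraction is by $e_3$ and $d(z+e_3,y_0+e_3)\le 20\lambda C_3\mu$. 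The reason there is no obstacle is that $d(y_0,z)\le(\rho(y_0)+\rho(z))/(1-\lambda)<8C_3\mu$ by FNI, so the contraction of $z$ with $y_0$ (or $y_0+e_1$) contributes only $O(\lambda C_3\mu)$; the only $O(\lambda C_5\mu)$ terms are the contractions against $t_0$, and those are precisely what produce the contradiction, since $\lambda C_5$ is tiny compared to $C_3$. No multi-point case analysis is needed --- one contraction per $z$ suffices.

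Two minor points. Your Step 1 is correct and gives a clean global statement, but having fixed $x_0\cont{1}t_0$ by a WLOG you may no longer write ``say A.2'' in Step 3: that is a second symmetry choice on top of the first, and A.2$'$ must be handled separately (just use $y^*+e_2$ in place of $y^*$, since $t_0\cont{2}y^*+e_2$ in that case). Your Step 2 is an unnecessary detour: the direct contraction of $y_1,y_2$ contributes nothing that Step 3 does not already give uniformly.
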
 
\begin{proof}[Proof of the Lemma~\ref{e3dist}]
Recall that we have a point $y_0$ with $\rho(y_0) \leq 6C_3\mu, d(y_0 + e_1, y_0 + e_2) \leq 6\lambda C_3\mu$, with diagram A.2 or A.2', given by the Corollary~\ref{farA2}. Without loss of generality it is A.2.\\
Let $z$ be any point with $\rho(z) \leq C_3\mu$. We shall prove $d(y_0 + e_3, z + e_3) \leq 20\lambda C_3\mu$, which is clearly sufficient. Note that we have $d(z, x_0) \leq (C_3 + 2)\mu/(1-\lambda) \leq C_5\mu$,$d(z, t_0) \leq d(z,x_0) + d(x_0, t_0)\leq (C_3 + 2)\mu/(1-\lambda) + 3C_5\mu \leq 4C_5\mu$, and similarly $d(y_0, z) \leq 4C_5\mu$ and $y_0,z \in S_3(x_0, C_5)$.\\
Assume $t_0 \cont{1} z$. Recall that $t_0\cont{2}y_0$. If $y_0 \cont{1} z$, then $d(t_0 + e_1, t_0 + e_2) \leq d(t_0 + e_1, z + e_1) + d(z + e_1, y_0 + e_1) + d(y_0 + e_1, y_0 + e_2) + d(y_0 + e_2, t_0 + e_2) \leq \lambda 4C_5\mu + \lambda 7C_3/(1-\lambda) + 6\lambda C_3\mu + 4\lambda C_5\mu < 5C_3\mu < d(t_0 +e_1, t_0 + e_2)$, which is a contradiction. Similarly we discard the case $y_0 \cont{2} z$, as then $d(t_0 + e_1, t_0 + e_2) \leq d(t_0 + e_1, z + e_1) + d(z + e_1, z+e_2) + d(z + e_2, y_0 + e_2) + d(y_0 + e_2, t_0 + e_2) \leq 5C_3\mu$. Therefore, $y_0 \cont{3} z$, so $d(y_0 + e_3, z + e_3) \leq \lambda 7C_3\mu /(1-\lambda) < 8\lambda C_3\mu$.\\
Thus, we must have $z \cont{2} t_0$. But we cannot have neither $y_0 + e_1 \cont{1} z$ nor $y_0 + e_1 \cont{2} z$, for otherwise we obtain $d(t_0 + e_1, t_0 + e_2) \leq d(t_0 + e_1, y_0 + 2e_1) + d(y_0 + 2e_1, z + e_2) + d(z + e_2, t_0 + e_2) \leq \lambda d(t_0, y_0 + e_1) + d(y_0 + 2e_1, y_0 + e_1 + e_2) + \lambda d(y+e_1,z) + 2\rho(z) + \lambda d(z, t_0) \leq 5C_3\mu$. Hence, we get $y_0 + e_1 \cont{3} z$, so $d(y_0 + e_3, z + e_3) \leq \lambda d(y_0 + e_1, z) + d(y_0 + e_1 + e_3, y_0 + e_3) \leq 14\lambda C_3\mu + 6\lambda C_3\mu = 20\lambda C_3\mu$, as desired.\end{proof}

We are now ready to establish the final contradiction. By the Proposition~\ref{ex2prop}, we have points $x_1, x_2, x_3$ with whenever $\{i,j,k\} = [3]$, we have $\rho(x_i) \leq C_3\mu, d(x_i + e_j, x_i + e_k) \leq \lambda C_3\mu$. First of all, $x_1, x_2, x_3$ all belong to $S_3(x_0, C_5)$, since $d(x_0, x_i) \leq (C_3 + 2)\mu / (1-\lambda)$. Suppose that for some $i,j$ we have $t_0\cont{1}x_i$ and $t_0 \cont{2}x_j$. Then, by the triangle inequality and FNI, $d(t_0 + e_1, t_0 + e_2) \leq d(t_0 + e_1, x_i + e_1) + d(x_i + e_1,x_i) + d(x_i, x_j) + d(x_j, x_j + e_2) + d(x_j + e_2, t_0 + e_2) \leq \lambda (d(t_0,x_0) + d(x_0, x_i)) + \rho(x_i) + (\rho(x_i) + \rho(x_j))/(1-\lambda) + \rho(x_j) + \lambda (d(x_j,x_0) + d(x_0, t_0)) \leq \lambda (3C_5\mu + (\rho(x_0) + \rho(x_i))/(1-\lambda)) + C_3\mu + 2C_3\mu/(1-\lambda) + C_3\mu + \lambda((\rho(x_j) + \rho(x_0))/(1-\lambda) + 3C_5\mu) \leq 5C_3\mu$, which is not possible, hence $t_0$ contracts with $x_1, x_2, x_3$ in the same direction, $e_1$ without loss of generality. But also the Lemma~\ref{e3dist} gives $\diam\{x_1 + e_3, x_2 + e_3, x_3 + e_3\} \leq 40\lambda C_3\mu$, and $\diam\{x_1 + e_1, x_2 + e_1, x_3 + e_1\}\leq 8\lambda C_5\mu$ so $\diam N(x_1) \leq 9\lambda C_5\mu$, which is a contradiction due to the Proposition~\ref{neighdiamProp}. \end{proof}

Now combine the Corollary~\ref{casescor} with the Propositions~\ref{closecase} and~\ref{farcase} to obtain a contradiction.\end{proof}

\section{Concluding Remarks}
Let us restrict our attention once again to the Austin's conjecture in its generality. Thus, we can formulate the following hypothesis, which serves to capture its essence.
\begin{conjecture}\label{conjA} Let $n$ be a positive integer and $\lambda$ a real with $0 \leq \lambda < 1$. Suppose $(\mathbb{N}_0^n,d)$ be an $n$-dimensional $\lambda$-contractive grid, that is a pseudometric space with the property that given $x,y \in \mathbb{N}_0^n$ we have some $i \in [n]$ with $d(x + e_i, y + e_i) \leq \lambda d(x,y)$. Then there is a 1-way Cauchy sequence. 
\end{conjecture}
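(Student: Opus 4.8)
The plan is to argue by induction on the dimension $n$, following the architecture of the proof of Proposition~\ref{keyprop} but striving to keep every estimate uniform in $n$. The case $n=1$ is Banach's theorem. For the inductive step, fix a $\lambda$-contractive grid on $\mathbb{N}_0^n$ and, as in Section~4, put $\rho(x)=\max_{i\in[n]}d(x,x+e_i)$ and $\mu=\inf_x\rho(x)$. The furthest-neighbour inequality (Lemma~\ref{fni}) and the one-direction construction (Lemma~\ref{1contrLemma}) carry over verbatim, so if $\mu=0$ we obtain a $1$-way Cauchy sequence at once; hence we may assume $\mu>0$ and must manufacture a $1$-way Cauchy sequence from this structural hypothesis. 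The inductive hypothesis would be invoked whenever a sub-configuration degenerates to an effectively lower-dimensional grid --- for instance, inside a flat on which the furthest-neighbour inequality forces a single coordinate direction to dominate, the ambient situation is that of an $(n-1)$-dimensional $\lambda$-contractive grid, to which the conjecture applies.

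The core of the argument should be a $k$-way-set descent for $k=0,1,\dots,n$. Two statements need to be generalised. First, a bounded $n$-way set must contain an $(n-1)$-way subset of diameter smaller by a factor $O_n(\lambda)$; the natural route is an $n$-dimensional analogue of the ``trivial colouring'' phenomenon of Lemma~\ref{strongcolLemma}, whose combinatorial core (cf.\ Lemma~\ref{lemma5}, currently stated for three colours) would have to be redone for $n$ colours and is itself not obviously clean. Second, a sufficiently small $(n-1)$-way set must yield a small $n$-way set; this is Propositions~\ref{2wayqpProp} and~\ref{qpProp} with quarter-planes replaced by $(n-1)$-dimensional flats. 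Chaining the two produces $n$-way sets of arbitrarily small diameter, and then, by selecting points in a nested sequence of such sets and threading a $1$-way path through them as in the proof of Proposition~\ref{every2Prop}, a $1$-way Cauchy sequence --- the desired contradiction. What remains, as in Sections~6 and~7, is to force a small $n$-way set (or some small low-dimensional configuration from which the descent can be started) to exist in the first place.

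This last step is where the main difficulty lies, and it is the reason the statement is still only a conjecture. The present paper, for $n=3$, forces such configurations by running through an explicit catalogue of local contraction ``diagrams''; this does not scale, since the number of orientations of the $\binom n2$ ``long edges'' in $\{x\}\cup N(x)$ grows super-exponentially in $n$, so the case analysis of Sections~6--7 must be replaced by a single conceptual statement --- presumably, again, a rigidity/triviality theorem for grid-structured colourings --- that excludes all local patterns simultaneously. A second, and more fundamental, obstruction is the range of $\lambda$: essentially every estimate in the scheme loses a factor governed by $1/(1-\lambda)$ (already visible in Lemma~\ref{fni}), and these losses compound through the chain of propositions, so even for fixed $n$ the method as it stands only covers $\lambda<\lambda_0(n)$ with $\lambda_0(n)\to0$. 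Reaching the full range $0\le\lambda<1$ seems to need a genuinely new idea for $\lambda$ near $1$; a plausible device is to pass to an asymptotic cone (ultralimit) of $(\mathbb{N}_0^n,d)$ after rescaling so that $\mu=1$, since the limiting pseudometric space is still $\lambda$-contractive and still has $\rho$ bounded below but is far more homogeneous, and one may hope to prove a clean, $\lambda$-independent rigidity statement there and transfer it back. Establishing such a limiting rigidity statement is, I expect, the principal obstacle.
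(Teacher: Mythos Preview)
The statement you are addressing is labelled \emph{Conjecture} in the paper and appears in the Concluding Remarks; the paper gives no proof of it, only the $n=3$, small-$\lambda$ case via Proposition~\ref{keyprop}. So there is no ``paper's own proof'' to compare against.

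Your submission is not a proof either, and you say so yourself: it is a research outline that correctly identifies the two genuine obstructions (the explosion of the diagram case analysis in $n$, and the $\lambda\to\lambda_0(n)$ degradation of all constants). As a programme it is sensible, and the idea of seeking a single rigidity statement for grid colourings to replace the case analysis is in line with the questions the paper itself raises. But none of the key steps are actually carried out: the $n$-colour analogue of Lemma~\ref{lemma5}, the higher-dimensional versions of Propositions~\ref{2wayqpProp} and~\ref{qpProp}, and above all the mechanism that \emph{forces} a small $n$-way set to exist are each left as ``would have to be redone'' or ``should be''. The ultralimit suggestion for $\lambda$ near $1$ is speculative and would require, at minimum, showing that a $1$-way Cauchy sequence in the cone can be pulled back to one in the grid, which is not automatic.

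In short: neither the paper nor you prove the conjecture; your text is an honest discussion of why it is hard, not a proof attempt with a fixable gap.
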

Of course, having in mind the proof of~\ref{thmMain} there are similar versions of this hypothesis that can be asked. Recall $\mu = \inf \rho(x)$, where $x$ ranges over all points in the grid and set $\mu_\infty = \lim_{k\to\infty} \inf_{x \in S_k} \rho(x)$, where $S_k$ is the $n-$way set generated by $(k,k,\dots,k)$. Also, say that a pseudometric space is a contractive grid if it is $n-$dimensional $\lambda-$contractive grid, for $0\leq \lambda < 1$ and a positive integer $n$.
\begin{question} Can $\mu > 0$ occur in a contractive grid? \end{question}
\begin{question} Can $\mu_\infty = \infty$ occur in a contractive grid? \end{question}  
Note that the proof we give here are of combinatorial nature, with the flavor of Ramsey theory, and seems that the complete proof of~\ref{conjA} should be based on a similar approach. Note that the proof of Generalized Banach Theorem rests on Ramsey Theorem. 
\begin{question} What is the combinatorial principle behind the Conjecture~\ref{conjA}? Is it related to Ramsey theory? \end{question}
Finally, we consider some points of the proof Theorem~\ref{thmMain} and pose the following questions and conjectures.
\begin{question} What is the relation between $k-$way sets in higher dimensional grids?\end{question}
\begin{conjecture} For each $k \geq 2$ there is a positive constant $C_k$ with the following property:\\
Given a $k-$colouring of a countably infinite graph $G$, we can find sets of vertices $A_1, A_2, \dots, A_{k-1}$ which cover the graph, and for suitable colors $c_1, c_2, \dots, c_{k-1} \in [k]$, we have $\diam_{c_i} G[A_i] \leq C_k$ for all $i\in[k-1]$. \end{conjecture}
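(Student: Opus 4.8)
The plan is to prove the statement by induction on $k$, read (as in Lemma~\ref{lemma5}) as a statement about an edge-$k$-colouring of a \emph{complete} graph; for an arbitrary host graph (e.g.\ an infinite matching) it fails, so $G$ must be taken complete, and by a routine compactness argument of de Bruijn--Erd\H{o}s type it suffices to produce a uniform constant $C_k$ valid for all finite $K_n$. The base case $k=2$ is the classical fact that in a $2$-edge-colouring of $K_n$ some colour class is connected of diameter at most $3$: if one class is disconnected the other has diameter $\le 2$; and if a colour class contains a pair $u,v$ at distance $\ge 4$ in it, then there is no edge of that colour between $\{u\}\cup N(u)$ and $\{v\}\cup N(v)$ (where $N$ denotes the monochromatic neighbourhood), so every vertex avoids one of $N(u),N(v)$ and the other colour class is a double star of diameter $\le 3$. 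The case $k=3$ is precisely Lemma~\ref{lemma5}, with $C_3=8$; so take $k\ge 4$ and assume the result for $k-1$.

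For the inductive step, fix a $k$-colouring of $K_V$, a vertex $v$, and let $N_1,\dots,N_k$ be the monochromatic neighbourhoods of $v$; each $\{v\}\cup N_i$ is a colour-$i$ star of diameter $\le 2$, and together they cover $V$. If some $N_i$ is empty, the remaining $k-1$ stars cover $V$ and we are done, so assume each $N_i\ne\emptyset$. Imitating the absorption idea in the proof of Lemma~\ref{lemma5}, for distinct $i,j$ put $B_{i,j}=\{a\in N_i:\ c(a,b)\ne j\text{ for every }b\in N_j\}$, the colour-$i$ neighbours of $v$ that cannot be absorbed into the colour-$j$ structure around $N_j$; then $\{v\}\cup N_i\cup(N_j\setminus B_{j,i})$ has colour-$i$ diameter $\le 4$. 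If for some $j$ the sets $N_j\setminus B_{j,i}$ over $i\ne j$ already cover $N_j$, i.e.\ $\bigcap_{i\ne j}B_{j,i}=\emptyset$, then $N_j$ can be parcelled out among the other $k-1$ stars and, suitably augmented, they cover $V$ with monochromatic diameter $\le 4$. So we may assume that for each $j$ there is a \emph{stubborn} vertex $w_j\in\bigcap_{i\ne j}B_{j,i}$; unwinding the definition, $c(v,w_j)=j$ and $c(w_j,w_\ell)\notin\{j,\ell\}$ for $j\ne\ell$, so $w_1,\dots,w_k$ span a clique whose edge $w_jw_\ell$ uses one of the $k-2$ colours other than $j,\ell$.

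In this remaining regime the plan is to sweep the leftover sets $B_{i,j}$ into boundedly many monochromatic sets of bounded diameter, using the stubborn vertices as pivots and the rigidity that any edge between $B_{i,j}$ and $B_{j,i}$ avoids colours $i$ and $j$ (and, more generally, that stubbornness forces long monochromatic paths through $v$ to break up), so as to merge the $k$ stars down to $k-1$ sets overall. I expect the genuine obstacle to be precisely the combinatorics of this step: for $k=3$, a single auxiliary vertex in $B_{3,1}\cap B_{3,2}$ together with a short case analysis closes everything, but for $k\ge 4$ the graph on $\bigcup_{i\ne j}B_{i,j}$ is itself $(k-2)$-coloured and need not simplify, so the induction does not self-sustain and new input seems required --- for instance a counting estimate on how many colours can simultaneously be stubborn, or a reduction to the closely related and not fully resolved problem of covering a $k$-coloured complete graph by $k-1$ monochromatic components. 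That connection also explains the target $k-1$: an affine plane of order $k-1$, when it exists, yields a $k$-colouring of $K_{(k-1)^2}$ in which \emph{every} monochromatic subset of finite diameter has at most $k-1$ vertices, so at least $k-1$ sets are needed and any complete proof must accommodate this extremal colouring.
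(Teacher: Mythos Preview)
This statement is a \emph{conjecture} in the paper, posed in the concluding remarks; the paper does not prove it and only establishes the case $k=3$ as Lemma~\ref{lemma5}. There is therefore no proof in the paper against which to compare your attempt.

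Your proposal is not a proof either, and you acknowledge as much. The base cases $k=2,3$ are handled correctly, your reading of the statement as being about the complete graph is the intended one, and the affine-plane construction is the right extremal example for the lower bound $k-1$. But the inductive step does not close: once every colour class has a stubborn vertex $w_j\in\bigcap_{i\ne j}B_{j,i}$, the edges among $w_1,\dots,w_k$ form a $(k-2)$-coloured clique with the additional constraint $c(w_jw_\ell)\notin\{j,\ell\}$, and nothing in your setup forces this residual structure to collapse. Your suggestion to invoke the covering-by-monochromatic-components problem is natural, but that problem controls connectedness, not diameter, and passing from ``connected'' to ``bounded diameter independent of $n$'' is exactly the content of the conjecture. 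So the gap you name is real and is the whole difficulty; what you have written is a reasonable outline of why the $k=3$ argument does not iterate, not a proof.
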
 
\begin{question} What conditions on a colouring $c$ of $\mathbb{N}_0^n$ ensure that the colouring is essentially trivial, that is, it is monochromatic on a $n-$way set?\end{question}
\textbf{Acknowledgments.} Let me thank Trinity College for making this project possible through its funding and support. I am also endebted to the anonymous referee for the very careful reading of the paper, improving the presentation significantly.
\appendix
\section{Discussion of The Possible Contraction Diagrams}
\begin{figure}
\includegraphics[scale = .3]{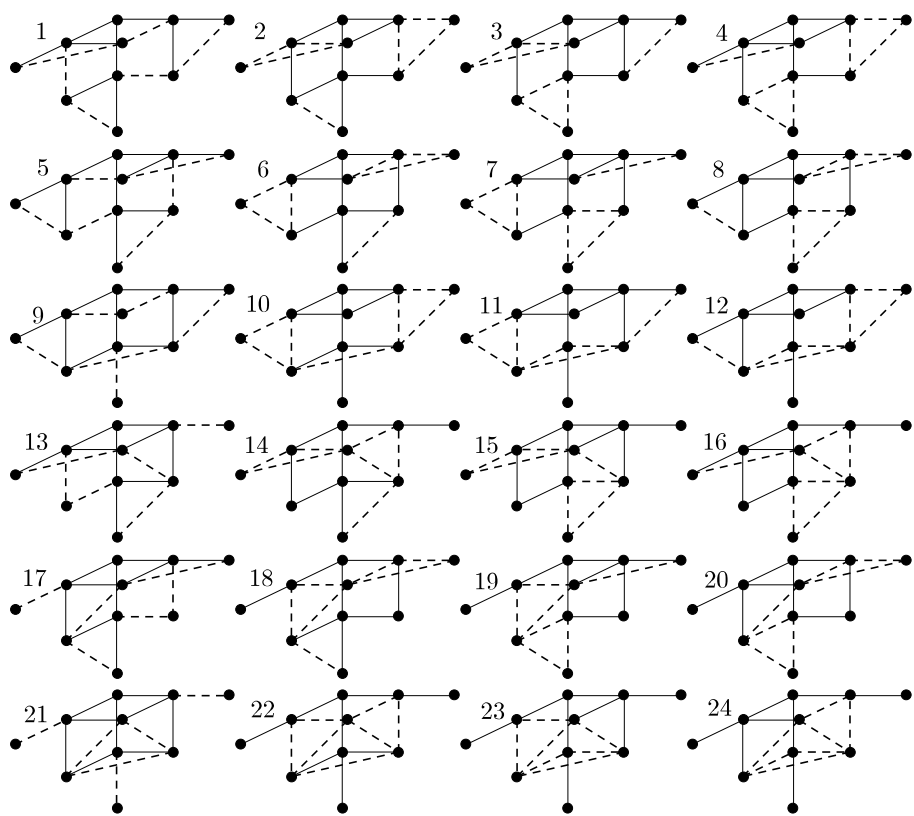}
\label{alldiagrams}
\caption{All possible contraction diagrams}
\end{figure}
In this appendix we discuss how we obtain the possible diagrams for contractions in the latter part of the proof of the Proposition~\ref{keyprop}. For the this discussion we assume the proposition prior to the Proposition~\ref{ex1prop} to hold.\\
Let us start with a point $x$ with $\rho(x) \leq K\mu$, for some $K \geq 1$. Consider first the contractions of the long edges, that is those of the form $x + e_i, x + e_j$, where $i,j$ are distinct elements of $[3]$. If two such edges are contracted in the same direction, say $k$, then $\diam N(x + e_k) \leq 4\lambda K \mu$. Furthermore, we can contract $x, x + e_k$, to get $\rho(x_k) \leq (2 + 5\lambda) K \mu$, which is a contradiction due to the Proposition~\ref{neighdiamProp}, provided $\lambda < 1/(164 C_1 K)$, which we shall assume is the case. Thus, all three long edges must be contracted in the different directions.\\
Contract now the short edges, i.e. those edges of the form $x, x + e_i$, for some $i\in[3]$. Given such an edge, there is a unique long edge $x + e_j, x + e_k$, such that $\{i,j,k\} = [3]$. We say that these edges are orthogonal. Suppose that a short edge $x + e_i$ is not  contracted in the same direction as its orthogonal long edge. Then $x + e_i$ must be contracted in the same direction $e_l$ as $x + e_i, x + e_j$, for some $j\not= i$. Let $k$ be such that $\{i, j, k\} = [3]$. Then $x + e_k$ cannot be contracted in the same direction as $x + e_i$, as otherwise $\rho(x + e_l) \leq 3\lambda K \mu < \mu$, which is impossible. So, $x + e_k$ is contracted in the same direction as one of its nonorthogonal long edges. Hence $\diam \{x + e_l, x + e_l +e_i, x + e_l + e_j\}, \diam \{x + e_m, x + e_m + e_k, x + e_m + e_n\} \leq 3\lambda \rho(x)$ holds for some $m,n \in[3]$ where $m \not = l$ and $n \not = k$. From this we can conclude that contractions in $\{x\} \cup N(x)$ can only give the diagrams shown in the Figure~\ref{alldiagrams}. There, an edge shown as dash line implies that its length is at most $3\lambda \rho(x)$.\\
\subsection{Diagrams in the proof of the Proposition~\ref{ex2prop}}
As in the proof of the Proposition~\ref{ex2prop} we consider point $y$ with $d(y + e_1, y + e_3) \leq \lambda C_{3,1} \mu$ and $\rho(y) \leq C_{3,1}\mu$, that is we set previously considered $K$ to be $C_{3,1}$ instead, and so assume $\lambda < 1/(164 C_1 C_{3,1})$. Consider the possible diagrams of contractions of edges in $\{y\} \cup N(y)$. Recall that our assumption is that there is no point $x$ with $\rho(x) \leq C_3 \mu$ and $d(x  +e_1, x + e_2) \leq \lambda C_3 \mu$. We now describe how to reject all diagrams except 2,4,6,11,15,23.
\begin{itemize}
\item[1] Immediately we get $\rho(y + e_3) \leq 4\lambda C_{3,1} \mu < \mu$.
\item[3] We have $\rho(y + e_1) \leq 4\lambda C_{3,1} \mu < \mu$.
\item[5] Similarly to previous ones $\rho(y + e_1) \leq 7\lambda C_{3,1}\mu$.
\item[7] We get $\rho(y + e_3) \leq 4\lambda C_{3,1} \mu < \mu$.
\item[8] Have $\rho(y + e_2) \leq (2 + 3\lambda)C_{3,1}\mu, d(y+e_2 + e_1, y + e_2 + e_2) \leq 3\lambda C_{3,1}$, but we assume that there are no such points.
\item[9] Diameter of $N(y)$ is at most $7\lambda C_{3,1}\mu$ and $\rho(y) \leq C_{3,1}\mu$ so apply the Proposition~\ref{neighdiamProp}, provided $\lambda < 1/(287 C_1 C_{3,1})$.
\item[10] Diameter of $N(y)$ is at most $10\lambda C_{3,1}\mu$ and $\rho(y) \leq C_{3,1}\mu$ so apply the Proposition~\ref{neighdiamProp}, provided $\lambda < 1/(410 C_1 C_{3,1})$.
\item[12] We apply the Proposition~\ref{tps} to $(y; y + e_2, y + e_1, y + e_3; y)$ with constant $9C_{3,1}$, so $\rho(y + e_2) \leq 144\lambda C_{3,1}\mu < \mu$, as long as $\lambda < 1/ (7380 C_1 C_{3,1})$.
\item[13] Use the Proposition~\ref{n1neighProp} to get $\rho(y + e_1) \leq (11 + 9\lambda)C_{3,1}\mu$ and $d(y + e_1 + e_1, y + e_1 + e_2) \leq 3\lambda C_{3,1}\mu$, as $\lambda < 1/(936 C_{3,1})$. This is a contradiction as $C_3 > 12 C_{3,1}$.
\item[14] Apply the Proposition~\ref{tps} to $(y; y +e_3, y + e_2, y+e_1;y)$ with constant $9C_{3,1}$ to get $\rho(y + e_2) \leq 144\lambda C_{3,1}\mu$. Here we need $\lambda < 1/(7380 C_1 C_{3,1})$.
\item[16] As 14.
\item[17] As for 13, get $\rho(y + e_2) \leq (11 + 9\lambda)C_{3,1}\mu$ and $d(y + e_2 + e_1, y + e_2 + e_2) \leq 3\lambda C_{3,1}\mu$.
\item[18] Apply the Proposition~\ref{tps} to $(y; y +e_1, y + e_3, y+e_2;y)$ with constant $9C_{3,1}$ to get $\rho(y + e_2) \leq 144\lambda C_{3,1}\mu < \mu$.
\item[19] Apply the Proposition~\ref{tps} to $(y; y +e_1, y + e_3, y+e_2;y+e_3)$ with constant $9C_{3,1}$ to get $\rho(y + e_2) \leq (2 + 6\lambda)C_{3,1}\mu, d(y + e_2 + e_1, y + e_2 + e_2) \leq 3\lambda C_{3,1}\mu$.
\item[20] As 18.
\item[21] Use the Proposition~\ref{n1neighProp} to get $\rho(y + e_3) \leq (9 + 3\lambda)C_{3,1}\mu$ and $d(y + e_3 + e_1, y + e_3 + e_2) \leq 3\lambda C_{3,1}\mu$, as $\lambda < 1/(78 C_{3,1})$.
\item[22] Have $\diam N(y) \leq 7\lambda K\mu$ which is in contradiction with the Proposition~\ref{neighdiamProp}, when $\lambda < 1/(287 C_1 C_{3,1})$.
\item[24] Apply the Proposition~\ref{tps} to $(y; y + e_1, y + e_2, y + e_3; y + e_2)$ with constant $6 C_{3,1}$ to get $\rho(y +e_2)\leq 96\lambda C_{3,1}\mu$.
\end{itemize}

Therefore, we have that for the $y$ given above, provided $\lambda <1/ (7380 C_1 C_{3,1})$, we can only have diagrams 2, 4,6,11,15,23. However, in all of these diagrams we can classify contractions more precisely.
\begin{itemize}
\item[2] Observe that we cannot have $y + e_1 \cont{2} y$ or $y + e_1 \cont{3} y$ as the first one of these gives $\rho(y + e_2) \leq 10\lambda C_{3,1}\mu < \mu$, while the later implies $\rho(y + e_1) \leq 10 C_{3,1}\mu < \mu$. Hence $y + e_1 \cont{1} y$. Similarly, we must have $y \cont{2} y + e_3$, otherwise we get a point $p$ with $\rho(p) \leq 10\lambda C_{3,1}\mu < \mu$.
\item[4] As in 2, if we do not have $y \cont{3} y + e_1$ and $y \cont{2} y + e_2$, we obtain a point $p$ with $\rho(p) \leq 10 \lambda C_{3,1}\mu < \mu$.
\item[6] As in 2, if we do not have $y \cont{2} y + e_2$ and $y \cont{1} y + e_3$, we obtain a point $p$ with $\rho(p) \leq 10 \lambda C_{3,1}\mu < \mu$.
\item[11] If $y \cont{3} y + e_3$, then $\rho(y + e_3) \leq 10\lambda C_{3,1}\mu <\mu$. On the other hand, if $y \cont{2} y + e_3$, then $\diam N(y) \leq 8\lambda C_{3,1} \mu$ and $\rho(y) \leq C_{3,1} \mu$ which is impossible by the Proposition~\ref{neighdiamProp}, if $\lambda < 1/(328 C_1 C_{3,1})$. Thus, $y\cont{1} y + e_3$, and in the same fashion $y \cont{3} y + e_2$. Furthermore, apply the Proposition~\ref{tps} to $(y;y+e_2,y+e_1,y+e_3; y)$ with constant $6\rho(y)/\mu$ to get $d(y + e_2, y + e_1 + e_2) \leq 96\lambda \rho(y)$. 
\item[15] As in 11, we obtain $y \cont{1} y + e_1$ and $y \cont{3} y + e_3$. Apply the Proposition~\ref{tps} to $(y;y+e_3,y+e_2,y+e_1;y)$ with constant $6\rho(y)/\mu$ to get $d(y + e_2, y + 2e_2) \leq 96\lambda \rho(y)$.
\item[23] As in 11, we obtain $y \cont{3} y + e_1$ and $y \cont{1} y + e_3$. Apply the Proposition~\ref{tps} to $(y;y+e_1,y+e_2,y+e_3;y)$ with constant $6\rho(y)/\mu$ to get $d(y + e_2, y + 2e_2) \leq 96\lambda \rho(y)$.
\end{itemize}

\end{document}